\newtheorem{thm}{Theorem}[section]
\newtheorem{lemma}[thm]{Lemma}
\newtheorem{prop}[thm]{Proposition}
\newtheorem{cor}[thm]{Corollary}
\newtheorem{defi}[thm]{Definition}
{\theoremstyle{definition}
\newtheorem{exa}[thm]{Example}
\newtheorem{rem}[thm]{Remark}}
\newcommand{\C}{\mathbb{C}}
\newcommand{\CP}{\mathbb{C}P}
\renewcommand{\P}{\mathbb{P}}
\renewcommand{\epsilon}{\varepsilon}
\newcommand{\R}{\mathbb{R}}
\newcommand{\Z}{\mathbb{Z}}
\newcommand{\D}{\mathcal{D}}
\renewcommand{\SS}{\mathcal{S}}
\newcommand{\x}{\underline{x}}
\newcommand{\X}{\widetilde X}
\newcommand{\N}{\mathcal N}
\newcommand{\CC}{\mathcal C}
\newcommand{\FF}{\mathcal F}
\newcommand{\YY}{\widetilde{\mathcal Y}}
\newcommand{\ZZ}{\widetilde{\mathcal Z}}
\newcommand{\Aut}{\text{Aut}}
\begin{document}

\title[Floor diagrams  relative to a conic]{Floor diagrams  relative to a conic, and GW-W
  invariants of Del Pezzo surfaces}

\author{Erwan Brugall\'e}
\address{\'Ecole polytechnique,
Centre Math\'ematiques Laurent Schwartz, 91 128 Palaiseau Cedex, France}

\address{Université Pierre et Marie Curie,
4 Place Jussieu, 75 005 Paris, France}
\email{erwan.brugalle@math.cnrs.fr}

\subjclass[2010]{Primary 14P05, 14N10; Secondary 14N35, 14P25}
\keywords{Gromov-Witten invariants, Welschinger invariants,
degeneration formula, Del Pezzo surfaces}

\begin{abstract}
We  enumerate, via floor diagrams, complex and real curves in 
 $\C P^2$ blown up in $n$ points on a
  conic. 
As an application, we deduce Gromov-Witten and Welschinger 
invariants of Del Pezzo surfaces. These results are mainly obtained
using Li's degeneration formula and its real counterpart.
\end{abstract}
\maketitle
\tableofcontents

\section{Introduction}

The main question addressed in this paper is 
``Given a Del Pezzo surface $X$, how many
algebraic curves  of a given genus and homology class pass through a
given configuration of points $\x$?''. The
cardinality of $\x$ is always chosen such that the number of curves is finite.
Recall that a Del Pezzo
surface is either isomorphic to $\C P^1\times \C P^1$ or to $\C P^2$
blown up in a generic configuration of  $n\le 8$ points. We
denote by $X_n$ a surface of this latter type.

A possible approach to solve such an enumerative problem 
is to
construct configurations $\x$ for which one can
exhibit
 \emph{all} curves of a given genus and homology class 
passing through $\x$. 
 Such configurations are called \emph{effective}.
The main advantage of effective configurations is to
 provide simultaneous
enumeration of both complex and real curves, furthermore \emph{without}
assuming any invariance with respect to $\x$. This is particularly
useful in real enumerative geometry where 
invariants are lacking.

\medskip
The goal of this paper is to construct effective configurations of
points in Del Pezzo
surfaces, and to compute the corresponding Gromov-Witten and Welschinger
invariants. This is done in two steps.
I first 
enumerate in Theorems \ref{NFD} and \ref{WFD}, via floor diagrams,
 curves passing through an effective configuration of points
in $\C P^2$ blown up at $n$ points located on a
 conic, 
 the resulting complex surface is  denoted by $\X_n$.
Next, by  a suitable degeneration of $X_6$ and $X_7$, the computations
of their enumerative invariants are reduced to enumeration
of curves in  the surfaces $\X_n$ with $n\le 8$. See Theorems
\ref{thm:NFD2}
and \ref{thm:W X6} in the case of $X_6$, and Theorems \ref{thm:GWX7}
and \ref{thm:WX7} in the case of $X_7$. 
The first degeneration is
classical: one degenerates $X_6$ into the union of $\X_6$ and 
$\C P^1\times \C P^1$, which
 basically corresponds to degenerating $X_6$ to a nodal Del
Pezzo surface. Enumerative invariants of $X_6$ are then computed by
enumerating curves on $\X_6$ thanks to the
Abramovich-Bertram-Vakil formula \cite{Vak2} 
and its real versions \cite{Br20,BP14}.
By blowing up an additional section of the previous degeneration of $X_6$,
one produces a degeneration of $X_7$ into the union of $\X_6$ and
$\X_2$. A very important feature of these two degenerations is that
\emph{no} multiple covers appear.
As a consequence, this method extends 
to 
 the case 
of $X_8$, see Theorems \ref{thm:GWX8}
and \ref{thm:WX8}. 
 By blowing up an additional section, one 
degenerates $X_8$ to the
union of $\X_{6,1}$ and $\X_2$, where $\X_{6,1}$ denotes the blow up
of $\C P^2$ at seven points, six of them lying on a conic.
Some non-trivial ramified coverings might appear during this
degeneration, however all of them are regular and can be treated 
using results from \cite{Shu13}. 
To the best of my knowledge, this is the first
explicit computation of Gromov-Witten invariants in any genus 
of $X_8$ (see \cite{CapHar1,Vak2,Shu13} for similar computations in
other Del Pezzo surfaces). 

Another nice property
 of  effective configurations is that,
in addition to allowing computations of enumerative
invariants, they often bring out some of their  qualitative properties. 
Results about the sign  of Welschinger invariants, their sharpness,
their arithmetical properties, their vanishing, and 
comparison of real and complex invariants were for example
previously obtained in this way in
\cite{IKS1,IKS2,IKS3,IKS10,IKS11,IKS13,Wel4,Br20,BP14}. Several
extensions of those results are deduced from 
the methods
presented here, see Corollaries \ref{cor:X6 1}, \ref{cor:X6 2},
\ref{cor:X8 dec}, \ref{cor:X7 van},
\ref{cor:X8 positive}, \ref{cor:X8 congruence}, \ref{cor:X8 van}, and Proposition
\ref{prop:X7 sign}.

\medskip
Among the available techniques to construct   effective
configurations, one can cite methods based on 
\emph{Tropical geometry}  \cite{Mik1}, 
and  on the degeneration of the target space $X$, such as
 \emph{Li's degeneration
 formula} \cite{Li02,Li04} in the algebraic setting, \emph{symplectic sum
 formulas} in the symplectic setting \cite{IP00,LiRu01,TehZin14}, or
 more generally  \emph{symplectic field theory} \cite{EGH}.
The results
 of this paper are obtained by degenerating the target space.
As a rough outline,  these methods consist of  degenerating
the ambient space $X$ into a union $\bigcup_i Y_i$ 
of ``simpler'' spaces $Y_i$, and 
to recover enumerative
invariants of $X$ out of those of the $Y_i$'s. 
Note that to achieve this second step, one has to consider
Gromov-Witten invariants of the surfaces $Y_i$ \emph{relative} to the
divisors $E_{i,j}=Y_i\cap Y_j$. 
In other words, one has to enumerate curves
satisfying
 some
  incidence conditions \emph{and} intersecting the divisors $E_{i,j}$ in
some prescribed way.
A very practical feature of 
these degeneration methods is that, in
nice cases, including the absence of ramified coverings, 
deformations of a curve in $\bigcup_i Y_i$ to a curve in $X$
 only depend on 
the intersections of the curve with 
the divisors $E_{i,j}$. In particular,
if one knows how to construct effective configurations in
the surfaces $Y_i$, one can construct effective configurations in $X$.
Since I am interested  here in the computation of 
 enumerative invariants of Del Pezzo surfaces, I made the choice
to work in the algebraic category, and to use Li's degeneration
formula. Nevertheless the whole paper should  be easily translated  in
the symplectic setting using symplectic sum formulas.

Using this general strategy,  it  usually remains  a non-trivial
task to find a 
suitable degeneration $\bigcup_i Y_i$ of a particular variety $X$, 
from which one can deduce 
effective configurations in $X$. 
The \emph{floor decomposition technique}, elaborated in collaboration
with Mikhalkin  \cite{Br7,Br6b}, provides in some cases such a
useful degeneration. The starting observation is that configurations
containing at most two points in a Hirzebruch surface
(i.e. holomorphic $\C
P^1$-bundle over $\C P^1$) are effective.
Then the strategy is to choose a suitable 
rational curve $E$ in $X$,  to degenerate $X$ into the union of
$X$ and a chain of copies of $\P(\mathcal N_{E/X}\oplus\C)$, and to
choose a configuration of at most two points in each of these copies. 
In lucky situations, the union of all those points can be deformed into 
 an effective configuration $\x$ in $X$. When this is the case,  
all complex and real  
curves passing through $\x$ can be encoded into purely combinatorial objects
called
 \emph{floor diagrams}. A more detailed outline of this
 technique together with its relation to
Caporaso and Harris approach
is given in Section \ref{sec:CH FD}.

\medskip
\emph{Personne n'est jamais assez fort pour ce calcul}.
Guided by this french adage, I illustrated the general theorems
\ref{thm:NFD2}, \ref{thm:W X6}, \ref{thm:GWX7}, \ref{thm:WX7},
\ref{thm:GWX8}, and  \ref{thm:WX8} 
by explicit examples including detailed computations.
I usually find it very useful,  as a reader as well as  an author, 
that a paper provides details in passing from 
 the general theory to particular examples.
This is specifically the case in
   enumerative geometry, where 
 formulas, sometimes abstruse at first sight, 
   may hinder the reader's understanding of  the
  geometrical phenomenons they describe.
Moreover, working out concrete examples in full details is an efficient way to
check the consistency of general theorems, and that no subtility
escaped the notice.
I hope that the detailed computations given here will help the
 reader to acquire a concrete feeling of the general
and sometimes subtle phenomenons coming into play.

In the same range of ideas, I chose to dedicate a separate section to
each of the surfaces $X_6$, $X_7$, and $X_8$,
 despite the fact that 
 Section \ref{sec:X6} is formally
contained in Section \ref{sec:X7}, which in
turn is partially
contained in Section \ref{sec:X8}. Indeed, the
combinatorics becomes more involved as the number of blown-up
points increases, 
 and reducing results about $X_n$
to $X_{n-1}$ still requires
some work. By giving a specialized formula in each 
case, I hope to make concrete computations accessible to the reader.

\medskip
The plan of the paper is the following. In the remaining part of the
introduction, I explain the basic ideas underlying the floor decomposition
technique, relate the results presented here with other works, and
settle the notations and convention used throughout this paper.
Complex and real 
enumerative problems  considered in this text are defined in
Section \ref{sec:defi enum}, which also contain a few  elementary
computations.   
Floor diagrams and their relation with effective
configurations of points in $\X_n$ is given in Section \ref{sec:FD}. 
This immediately applies to compute absolute
invariants of $X_6$, which is done in Section \ref{sec:X6}.
 Section
\ref{sec:proof} is devoted to the proof of Theorems \ref{NFD} and \ref{WFD}. 
In  Section
\ref{sec:X7}, I
reduce the enumeration of curves in $X_7$ to
enumeration of curves in $\X_{8}$. The reduction of enumerative
problems of 
$X_8$ to enumerative problems in $\X_{8,1}$ is proved in Section \ref{sec:X8}.
Finally, this paper ends  in Section \ref{sec:conclusion}
with some comments and possible generalizations of
the material presented here.

\subsection{Floor  diagrams and their relation to 
Caporaso-Harris type formulas}\label{sec:CH FD}
For the sake of simplicity, I restrict  to the problem of 
counting curves of a given genus, realizing a given homology class
in
$H_2(X;\Z)$, and passing through a  generic configuration $\x$ of
points on a (maybe singular)
complex algebraic surface $X$.
Recall that the
cardinality 
of $\x$ is such that the number  of curves is
finite.

The paradigm underlying a Caporaso-Harris type formula is the
following. Choose a  suitable  irreducible curve $E$  in $X$, and  specialize 
points in $\x$ one  after the other to $E$. 
After the specialization of
sufficiently many points,  one  expects  that  curves under consideration
degenerate into
reducible curves having $E$ as a component.  By forgetting this
component,
  one is reduced to an enumerative problem in $X$ concerning
curves realizing a ``smaller'' homology class. 
With a certain amount of optimism, one can 
then hope to solve the initial problem by induction.

This method has been first proposed and successfully applied 
 by Caporaso and Harris \cite{CapHar1}
in the case of $\C P^2$ together with a line, and has been since then
applied in several other situations. Directly related to this paper, 
one can cite the work of Vakil  in the case 
of $\X_6$ together with the strict transform of the conic
\cite{Vak2}, and its
generalization  by Shoval and Shustin \cite{Shu13}
to the case of
$\X_{n,1}$.
As a very nice fact, it turned out that this approach also provided
a way to compute  certain Welschinger invariants 
for configuration $\x$  only composed of real points 
\cite{IKS3,IKS10,IKS11,IKS13}.

\medskip
When $X$ and $E$ are smooth, Ionel and Parker observed in {\cite[Section 5]{IP98}} that
the method proposed by Caporaso and Harris 
could be interpreted in terms of
degeneration of the target space $X$.
I present below the 
algebro-geometric 
version of this interpretation
{\cite[Section 11]{Li04}}. 
The  ideas underlying  symplectic interpretation are similar, however the
 two  formalisms are quite different.
 I particularly refer to {\cite{Li04}}
for an introduction 
to this degeneration technique in enumerative geometry.
Given $X$ and $E$ as above,  denote by $\N_E=\P(\N_{E/X}\oplus
\C)$, and
 do the following:

\begin{enumerate}
\item   degenerate $X$ into a reducible surface $Y=X \cup \N_E$, and 
specialize exactly one point in $\N_E$  during this degeneration;

\item determine all possible degenerations in $Y$ of the enumerated curve; 

\item  for each such limit curve in $Y$,  compute the  number of 
curves  of which it is the limit.  
\end{enumerate}

This method produces recursive formulas à la Caporaso-Harris if all
limit curves in $Y$ can be recovered by solving separate enumerative
problems in its components $X$ and $\N_E$.

\medskip
The idea behind floor diagrams is to get rid of any recursion, 
which
implicitly 
refers to some invariance property of the enumerative problem under
consideration. 
 To do so, one 
considers a single degeneration
of $X$ into the union $Y_{max}$ of $X$ and a
 chain of copies of $\N_E$, and  specializes exactly one element of $\x$
 to each copy of $\N_E$. 
Floor diagrams then  correspond to  dual graphs of the
limit curves in $Y_{max}$, and the way they meet the points in $\x$
is encoded in a \emph{marking}.
In good situations, all limit curves in $Y_{max}$ can be
completely 
recovered only from the combinatorics of the marked floor
diagrams. In 
particular, effective configurations in $X$ can be deduced from effective
configurations in $\N_E$.

This  method  have been first successfully applied in collaboration
with  Mikhalkin  in \cite{Br7,Br6b},
in the case  when $X$ is a toric surface and $E$
is a toric divisor satisfying some \emph{$h$-transversality}
condition.
We used methods from tropical geometry, which in particular 
allowed us to get
rid of the  smoothness assumption on $X$ and $E$ required in
 Li's degeneration formula.
Note that when both floor diagrams and  Caporaso-Harris type formulas
are available, it follows  from the above description that these two
methods
 provide two different, although equivalent, ways of clustering
 curves under enumeration. 
Passing from one presentation to the other 
does not present any difficulty \cite{Br8}.

When both $X$ and $E$ are chosen to be real, 
floor diagrams   can also be adapted  to enumerate
real curves passing through a
real configurations of  $r$ real
points and $s$ pairs of complex conjugated points:  
\begin{enumerate}
\item[(1')]  degenerate  $X$ to the union $Y_{max}'$ of
$X$ and a chain of $r+s$ copies of
$\N_E$, specializing
  exactly one real point or one pair of complex conjugated points
 of $\x$ to each copy of $\N_E$;

\item[(2')] determine real curves in step $(2)$ above;

\item[(3')] adapt computations of step $(3)$ above to 
 determine real curves converging to a given
 real  limit curve.
\end{enumerate}
As in the complex situation, one can  associate floor diagrams
to  real limit curves in $Y_{max}'$, each of them being now naturally  
  equipped with an involution induced by
 the real structure of $X$.
Again in many situations, all
necessary information about enumeration of real curves in $Y_{max}'$ are
encoded by the
combinatorics of these \emph{real marked floor diagrams}. 
In the case of toric
surfaces equipped with their tautological real structure, 
this has been done in \cite{Br6b} under the $h$-transversality assumption. 
The present paper shows that this is also
the case when $X=\X_n$.
This reduction of an algebraic problem to a purely
 combinatorial question 
might
not seem so surprising when all points in $\x$ are real, since then the
situation is  similar to the complex one. However in the presence of
complex conjugated points,  I am still 
puzzled by the many
 cancellations  that allow this reduction.

\medskip
The floor diagram technique clearly takes advantage over the Caporaso-Harris
method  when one wants to count real curves passing
through general real configuration of points. 
In the enumeration of complex curves, or of real curves interpolating
configurations of real points, the use of any of these two  methods
 is certainly a matter of taste.
From my own experience, I could notice that floor diagrams provide a
more geometrical picture of   curve degenerations 
 which helps sometimes to minimize mistakes in practical computations. 
Finally, it is worth stressing that  floor diagrams also led to 
the discovery of new phenomenons also in complex enumerative geometry,
for example concerning the (piecewise-)polynomial behavior of Gromov-Witten 
invariants, e.g.
 \cite{FM,Blo11,ArdBlo,LiuOsse14,BA13}.

\subsection{Related works}
Higher dimensional versions exist of 
both Caporaso-Harris \cite{Vak1,Vak06} and floor diagram techniques
\cite{Br7,Br6}. 

Tropical geometry \cite{Mik1} provides also a powerful tool to construct
effective configurations. Historically it provided  the first
computations of Welschinger invariants of toric Del Pezzo surfaces.

Methods and results from \cite{Wel4} constituted the main
source of  motivation for me to study floor diagrams relative to a
conic. In this paper
 Welschinger uses symplectic field theory 
to decompose a real symplectic manifold into the disjoint
union of the complement
of 
a connected component of its real part on one hand, 
with the cotangent bundle of
this component on the other hand.
To the best of my knowledge, \cite{Wel4} is the first 
explicit use of 
degeneration of the target space in the framework of 
real enumerative geometry. 
Similar results using symplectic sums were also obtained in \cite{Teh10}.

In collaboration with Puignau, we 
also used symplectic field theory   in
\cite{Br20,BP14}  to 
provide relations among
Welschinger invariants of a given 4-symplectic manifold
 with
possibly different real structures,  and to obtain
 vanishing results.

As mentioned above, Itenberg, Kharlamov, and Shustin used the
Caporaso-Harris approach to study Welschinger invariants in the case
of configurations of real points. In a series of paper
\cite{IKS10,IKS11,IKS13},  they  thoroughly studied the case of 
all real structures on  Del Pezzo surfaces of degree at least two. 
Due to methods presenting some similarities, the present paper
and \cite{IKS11,IKS13} contain some results in common, nevertheless obtained
independently and more or less simultaneously.

Another treatment of  
effective configurations
has been proposed in 
\cite{CooPan12}.

A real version of the WDVV equations for rational $4$-symplectic
manifolds have been proposed by Solomon \cite{Sol1}. 
Those equations  
provide many relations among Welschinger invariants of a given
real $4$-symplectic
manifold, that hopefully reduce the computation of all 
invariants to the computation of finitely many simple cases. 
This program has
been completed  in \cite{HorSol12} in the case of  rational surfaces
equipped with a standard real structure, i.e. induced by the standard
real structure on $\C P^2$ via the blowing up
map. 
In a work in progress
in collaboration with
Solomon
\cite{BruSol12}, we combine symplectic field theory and 
real  WDVV equations to cover the case of all remaining
real rational algebraic
surfaces.  At the
time I am writing these lines, this project has been completed 
for all minimal real rational algebraic surfaces, except for the
minimal Del
Pezzo surface of degree 1. 
As a side remark, I  would like to stress that 
if real WDVV
equations are definitely better from a computational point of view
than floor diagrams, it seems nevertheless
very difficult to extract from them qualitative information about Welschinger
invariants.

A real WDVV equation in the case of odd dimensional
projective spaces has also been proposed by Georgieva and Zinger 
\cite{GeoZin13}.

\subsection{Conventions and notations}
\subsubsection{}
A real algebraic variety $(X,c)$ is a complex algebraic variety $X$
equipped with a antiholomorphic involution $c:X\to X$.
The real part of $(X,c)$, denoted by $\R X$, is by definition the set
of points of $X$ fixed by $c$. When the real structure $c$ is clear
from the context, I sometimes 
use the notation $\overline p$ instead of $c(p)$.

The complex projective space $\C P^N$ is always considered equipped
with its standard real structure given by the complex conjugation.

I assume that the reader has some acquaintance with the classification
of real rational algebraic surfaces. For some refreshment on the
subject, I recommend \cite{Kol1,DK}.

\subsubsection{}
The connected sum of $1+k$ copies of $\R P^2$ is denoted by $\R P^2_k$.

\subsubsection{}
The blow up of $\C P^2$ at $n$
points in general position is denoted by $X_n$.
The blow up of $\C P^2$ at $n$
points lying on a smooth conic $E$ is denoted by $\X_n$.
The blow up of $\C P^2$ at $n+1$ points, exactly $n$ of them 
 lying on a smooth conic $E$, is denoted by $\X_{n,1}$. Surfaces
 $\X_{n,1}$ will only appear in Section \ref{sec:X8}.
The strict transform of $E$ in  $\X_n$ or $\X_{n,1}$ is still denoted
by $E$.
In particular if $n\le 5$, the surface $\X_n$ denotes
 the surface $X_n$ together with the distinguished  curve  $E$.

The normal bundle of $E$ in $\X_n$ is denoted by $\N_{E/\X_n}$, and I
will use the notation $\N=\mathbb P(\N_{E/\X_n}\oplus \C)$ throughout
the text. 
The surface $\N$ contains two distinguished non-intersecting rational curves
$E_\infty=\P (\N_{E/\X_n}\oplus \{0\})$ and $E_0=\P (E\oplus \{1\})$.
Moreover the line bundle $\N_{E/\X_n}$ induces
a canonical $\C P^1$-bundle $\pi_E:\N\to
E_\infty$.

Suppose in addition that $E$ is
a smooth real conic in $\C P^2$ and that $\X_n$ is obtained by blowing
up  $n-2\kappa$ points on $\R E$ and $\kappa$ pairs of complex conjugated
points on $E$. 
The real structure on $\X_n$ induced by the real structure on $\C P^2$
via the blow up map is denoted by $\X_n(\kappa)$. In particular  
$\R \X_n(\kappa)=\R P^2_{n-2\kappa}$.
If $n=2\kappa$, the connected component of $\R \X_n(\kappa)\setminus
\R E$ with Euler characteristic $\epsilon\in\{0,1\}$ is denote by
$\widetilde L_\epsilon$.

\subsubsection{}
All invariants considered in this text do not  depend on the
deformation class of the complex or real algebraic surface under
consideration, see \cite{IKS14}. Consequently, the surfaces $X_n$ and the pairs
$(\X_n,E)$ are
 always implicitly considered up to deformation.

\subsubsection{}
The class realized in  $H_2(X;\Z)$ by 
 an algebraic curve $C$ in a complex algebraic surface $X$ is denoted by
$[C]$.

\subsubsection{}
The image $f(C)$ of an  algebraic map $f:C\to X$ denotes its scheme
theoretic image, i.e. irreducible components of $f(C)$ are
considered with multiplicities. If $Y\subset X$ is a divisor
intersecting $f(C)$ in finitely many points,   the pull
back of $Y$ to $C$ is denoted by $f^*(Y)$.

An isomorphism between two algebraic maps $f_1:C_1\to X$  and
$f_2:C_2\to X$ is an isomorphism $\phi:C_1\to C_2$ such that 
$f_1=f_2\circ\phi$.
Maps are always considered up to isomorphisms. 
The group of automorphisms of a map $f$ is denoted
by $Aut(f)$.

\subsubsection{}

If $X$ is a  complex algebraic surface, 
the intersection product of two elements
$d_1,d_2\in H_2(X;\Z)$ is
denoted by $d_1\cdot d_2\in \Z$.

\subsubsection{}

Given a vector $\alpha=(\alpha_i)_{1\le i\le\infty}  \in\Z_{\ge
  0}^\infty$, I use the notation
$$|\alpha|=\sum_{i=1}^\infty \alpha_i,\quad I\alpha=
\sum_{i=1}^\infty i\alpha_i,\quad \mbox{and}\quad I^\alpha=
\prod_{i=1}^\infty i^{\alpha_i}.$$
The vector in
$\Z_{\ge 0}^\infty$ whose all coordinates are equal to $0$, except the
$i$th one which is equal to 1, is denoted by $u_i$.

\subsubsection{}
The sets of vertices and edges of a finite graph $\Gamma$ are
respectively denoted by $Vert(\Gamma)$ and $Edge(\Gamma)$. 
If  $\Gamma$ is oriented, it is said to be 
{\em acyclic} if it does not contain any non-trivial oriented
cycle. Its set of
sources (i.e. vertices such that all their adjacent edges are
outgoing) is denoted by
$Vert^\infty(\Gamma)$, and $Edge^\infty(\Gamma)$ denotes 
the set of edges adjacent
to a source.

\subsection*{Acknowledgment}
I started to think about floor diagrams relative to a conic during the
fall 2009 program \emph{Tropical Geometry} held at MSRI in Berkeley. 
Significant progress have been made in fall 2011
during my stay  at
IMPA in Rio de 
Janeiro, and a consequent part of this manuscript has been written
during the program \emph{Tropical geometry in its complex and
  symplectic aspects} held in spring 2014
at CIB in Lausanne. I would like to thank
these three organizations for excellent working conditions, as well as the two institutions I have been affiliated to
during this period, Université Pierre et Marie Curie and École Polytechnique.

I am also grateful to Benoît Bertrand, Nicolas Puignau, and Kristin
Shaw for their comments on a preliminary version of the text.

\section{Enumeration of curves}\label{sec:defi enum}

\subsection{Absolute invariants of Del Pezzo surfaces}\label{sec:GWWXn}

Recall that $X_n$ 
denotes $\C P^2$ blown up in a generic
configuration of $n$ points. The group $H_2(X_n;\Z)$ is the free
abelian group generated by $[D],[E_1],\ldots ,[E_n]$ where
$E_1,\ldots ,E_n$ are the 
 exceptional curves of the $n$ blow-ups, and $D$ is 
the strict transform of a  line not passing through any of 
those $n$ points. The first Chern class of $X_n$ is given by
$$c_1(X_n)=3[D] -\sum_{i=1}^n[E_i]. $$
Given $n\le 8$ and $d\in H_2(X_n;\Z)$, the number of complex algebraic curves
of genus $g$, realizing the class $d$,
 and passing through a generic configuration $\x$ of
$c_1(X_n)\cdot d -1+g$ points in $X_n$ is finite and does not depend on
 $\x$ {\cite[Section 4.3]{Vak2}}. 
We denote this number, known as a \emph{Gromov-Witten
   invariant} of $X_n$, by $GW_{X_n}(d,g)$. 

Suppose now that $X_n$ is endowed with a real structure $c$. Then one may
consider \emph{real} configurations of points $\x$, i.e. satisfying
$c(\x)=\x$, and count real algebraic curves.
In this case, the number of such curves 
usually  heavily depends on the choice of $\x$. Nevertheless, in the
case when $g=0$, Welschinger \cite{Wel1b,Wel1} proposed a way to associate a
sign to each real curve 
so that counting them with this sign produces an invariant.

More precisely, let $L$ be a connected component of $\R X_n$, and
choose a decomposition $c_1(X_n)\cdot d -1=r+2s$ with $r,s\in\Z_{\ge 0}$.
Let $\x$ be a generic
real configuration of $c_1(X_n)\cdot d -1$ points in
$X_n$ such that $|\R\x|=r$ and  $\R\x\subset L$. 
We denote by $\R\CC_L(d,\x)$ the set of real rational algebraic curves $C$
in
$X_n$ realizing the class $d$,  passing through all
points in $\x$, and\footnote{This latter condition is empty as soon as
  $r\ge 1$.} such that $|\R C\cap L|=+\infty$.
Recall that a \emph{solitary node} $p\in\R X$ of a real algebraic curve $C$ in a real
algebraic surface $(X,c)$ is the transverse intersection of two smooth
$c$-conjugated branches of $C$.
To each curve $C\in\R\CC_L(d,\x)$, we associate two
\emph{masses}\footnote{The mass $m_{\R X}(C)$ is the quantity originally
  considered by Welschinger in \cite{Wel1b}. 
Itenberg, Kharlamov, and Shustin observed in \cite{IKS11} that
Welschinger's proof actually leaves room for different choices in the mass
one could associate to a real algebraic curve in order to get an invariant.
I decided to restrict here to these two particular masses since they
seem to be the most meaningful ones according to \cite{Wel4,BP14}.}:
\begin{enumerate}
\item $m_{\R Xn}(C)$ is the total number of solitary nodes of $C$; 
\item $m_L(C)$ is the number of solitary nodes of $C$ contained in $L$.
\end{enumerate}
Given $n\le 8$ and $L'=\R X_n$ or $L'=L$, the  number
$$W_{(X_n,c),L,L'}(d,s)=\sum_{C\in \R\CC_L(d,\x)}(-1)^{m_{L'}(C)} $$
does not depend on $\x$ as long as $|\R\x|=r$, neither on the
deformation class of $(X_n,c)$
\cite{Wel1b,Wel1,IKS11}. 
Those numbers are known
as \emph{Welschinger invariants} of $(X_n,c)$.
When $\R X_n$ is connected, we use the shorter notation $W_{(X_n,c)}(d,s)$
instead of $W_{(X_n,c),\R X_n,\R X_n}(d,s)$

\subsection{Relative invariants of $\X_n$}\label{sec:relative Xn}
Recall that $\X_n$ denotes $\C P^2$ blown up in $n$ distinct points on
a conic $E$.
 Again, we denote by $E_1,\ldots,E_n$ the exceptional
divisors of
$n$  blow ups, and by $D$ the strict transform  of a line not passing
through any of these $n$ points. 
 The group $H_2(\X_n;\Z)$ is the free abelian group  generated by
$[D],[E_1],\ldots,[E_n]$, and we have
$$c_1(\X_n)=3[D] -\sum_{i=1}^n[E_i] \quad \mbox{and}\quad [E]^2=4-n. $$
To define  Gromov-Witten invariants of $\X_n$ \emph{relative} to
the curve $E$, it is more convenient to consider maps $f:C\to\X_n$
rather that algebraic curves $C\subset\X_n$, because of the appearance
of non-trivial ramified coverings.
Note that in the definition of Gromov-Witten invariants of
$X_n$ with $n\le 8$ given in Section \ref{sec:GWWXn}, all 
curves under consideration in $X_n$ are reduced,  hence it makes no difference to
consider immersed or parametrized curves.

\medskip
Let $d\in H_2(\X_n;\Z)$ and 
$\alpha,\beta\in \Z_{\ge 0}^\infty$
such that
$$I\alpha + I\beta=d\cdot [E].$$
Choose a configuration $\x=\x^\circ\sqcup \x_E$ of points in $\X_n$,
with $\x^\circ$ a configuration of $d\cdot [D]-1+g  +  |\beta|$ points
in $\X_n\setminus E$, and $\x_E=\{p_{i,j}\}_{0\le j\le \alpha_i, i\ge 1}$ a configuration of
$|\alpha|$ points in $E$.
Let $\CC^{\alpha,\beta}(d,g,\x)$ be the set
 of holomorphic maps $f:C\to \X_n$ such that 
\begin{itemize}
\item $C$ is a connected  algebraic curve of arithmetic genus $g$;
\item $f(C)$ realizes the homology class $d$ in $\X_n$;
\item $\x\subset f(C)$;
\item $E$ is not a component of $f(C)$;
\item $f^*(E)=\sum_{i\ge 1} \sum_{j=1}^{\alpha_i}iq_{i,j} + 
\sum_{i\ge 1} \sum_{j=1}^{\beta_i}i\tilde q_{i,j}$, with $f(q_{i,j})=p_{i,j}$.
\end{itemize}
The Gromov-Witten invariant $GW^{\alpha,\beta}_{\X_n}(d,g)$ relative
to $E$ is defined as
$$GW^{\alpha,\beta}_{\X_n}(d,g)=
\sum_{f\in
  \CC^{\alpha,\beta}(d,g,\x)} \mu(f,\x^\circ)$$
for a generic choice of $\x$, where
 $$\mu(f,\x^\circ)=
\frac{1}{|\Aut(f)|}\prod_{p\in\x^\circ}|f^{-1}(p)|.$$
When $\alpha=0$ and $\beta=(d\cdot [E])u_1$, I use the shorter notation 
$GW_{\X_n}(d,g)$. 
Define  also
$$\CC_*^{\alpha,\beta}(d,g,\x)=\left\{f(C) \ | \ (f:C\to\X_n)\in
\CC^{\alpha,\beta}(d,g,\x)\right\}.$$  

Next proposition is a particular case of 
 {\cite[Proposition 2.1]{Shu13}}.

\begin{prop}[{\cite[Proposition 2.1]{Shu13}}]\label{prop:shoshu}
For a generic configuration  $\x$, 
the set
$\CC_*^{\alpha,\beta}(d,g,\x)$ is finite, and
its 
cardinal does not depend on $\x$. 
Moreover, if $d\ne l[E_i]$ with $l\ge 2$,
then   the map $\CC^{\alpha,\beta}(d,g,\x)\to
\CC_*^{\alpha,\beta}(d,g,\x)$ is 
one-to-one, and any element
 $f:C\to\X_n$ of
  $\CC^{\alpha,\beta}(d,g,\x)$  satisfies the following
properties: 
\begin{itemize}
\item the curve $C$ is smooth and irreducible;
\item $f$ is an immersion, birational onto its image (in particular it
  has no non-trivial automorphism);
\item $f(C)$ intersects the curve $E$ at non-singular points.
\end{itemize}
\end{prop}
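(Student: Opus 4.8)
The statement is attributed to \cite[Proposition 2.1]{Shu13}, so the natural plan is to specialize Shoval--Shustin's argument to the present setting and to isolate the hypothesis $d\ne l[E_i]$ as the one that rules out all degenerate behaviour. I would organize the proof around a dimension count for the relevant moduli spaces. First I would set up the space $M$ of maps $f\colon C\to\X_n$ with $C$ connected of arithmetic genus $g$, $[f(C)]=d$, $E$ not a component of $f(C)$, and with the tangency profile $f^*(E)$ prescribed by $\alpha,\beta$ along the marked points of $\x_E$. The expected dimension of $M$ (before imposing the point conditions from $\x^\circ$) is $d\cdot[D]-1+g+|\beta|$, which is exactly $|\x^\circ|$; so the generic fibre over a configuration $\x$ is zero-dimensional, and one gets finiteness and $\x$-independence from the usual transversality/Sard argument once one knows the relevant loci are of the expected dimension. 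The cardinality independence is then the statement that the degree of the evaluation map from a suitable compactified space is deformation invariant.

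The core of the proof is showing that, under the hypothesis $d\ne l[E_i]$ with $l\ge 2$, for generic $\x$ every $f$ in $\CC^{\alpha,\beta}(d,g,\x)$ is an immersion, birational onto its image, with $C$ smooth and irreducible, and with $f(C)$ meeting $E$ only at smooth points of $f(C)$. I would prove each failure mode occurs only in a locus of strictly smaller dimension, hence is avoided for generic $\x$:
\begin{itemize}
\item If $C$ were disconnected or reducible, or $f$ had degree $\ge 2$ onto its image, the source would decompose and the count of moduli versus conditions would drop — here one uses that a multiple cover forces $d=l[f(C)]$, and the only classes of self-intersection allowing the needed numerics with $l\ge2$ are the $l[E_i]$, which are excluded.
\item If $C$ were singular (as an abstract curve) or $f$ failed to be an immersion, one imposes the node/cusp as an extra condition on $(f,C)$; generically this is codimension $\ge 1$, so such $f$ cannot pass through a generic $\x^\circ$.
\item If $f(C)$ met $E$ at a singular point of $f(C)$, the prescribed tangency profile together with the singularity overdetermines the local behaviour along $E$; again this cuts the dimension.
\end{itemize}
Once all maps are birational immersions from smooth irreducible curves, the map $\CC^{\alpha,\beta}(d,g,\x)\to\CC^{\alpha,\beta}_*(d,g,\x)$ is injective because a birational immersion is determined by its image, and $\Aut(f)$ is trivial for the same reason.

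The main obstacle is the classification of \emph{which} degenerations genuinely drop the dimension, i.e.\ verifying that the exceptional classes $l[E_i]$ are the \emph{only} obstruction to the clean picture. This is where one must be careful: a priori a curve could split off a multiple cover of $E$ itself, or of some $(-1)$-curve, or of a component in a class $d'$ with $d'\cdot[D]=0$, and one has to check that the relative tangency conditions along $E$ (encoded in $I\alpha+I\beta=d\cdot[E]$) prevent all of these except the $l[E_i]$ case. I expect the rest — the transversality statements and the dimension bookkeeping — to be routine given the setup of \cite{Shu13}, so I would invoke that reference for the technical core and only spell out the specialization of the class analysis to $\X_n$, $E$ a conic. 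Since $E^2=4-n$, the relevant Brill--Noether-type estimate and the bound on how a multiple component can interact with $E$ follow from the explicit intersection form on $H_2(\X_n;\Z)$ recorded above.
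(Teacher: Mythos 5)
The paper gives no proof of this proposition: it is stated as a particular case of \cite[Proposition 2.1]{Shu13} and the reference is cited in lieu of an argument. Your sketch of the dimension count and of the role of the exclusion $d\ne l[E_i]$ is consistent with the standard argument, and since you likewise defer the technical core to \cite{Shu13}, your proposal takes essentially the same route as the paper.
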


\begin{rem}
Note that if $l\ge 2$, the set $\CC^{\alpha,\beta}(l[E_i],g,\emptyset)$
might  not be finite, however 
$\CC_*^{\alpha,\beta}(l[E_i],g,\x)$ is either empty or consists
of  the curve $E_i$ with multiplicity $l$. 
\end{rem}

\begin{prop}[{\cite[Proposition 2.5]{Shu13}}]\label{prop:initial values}
Suppose that
$d\cdot [D] -1+g
+|\beta|=0$. Then the number $GW^{\alpha,\beta}_{\X_n}(d,g)$ is
non-zero only in the following cases:
$$GW^{0,u_1}_{\X_n}([E_i],0)=GW^{2u_1,0}_{\X_n}([D],0)=
GW^{u_2,0}_{\X_n}([D],0)=1,$$
$$GW^{u_1,0}_{\X_n}([D]-[E_i],0)=GW^{0,0}_{\X_n}([D]-[E_i]
-[E_j],0)=1,$$
and
$$GW^{0,u_l}_{\X_n}(l[E_i],0)=+\infty \mbox{ if } l\ge 2,$$
where $i,j=1,\ldots,n$ and $i\ne j$.
\end{prop}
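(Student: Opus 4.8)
The plan is to go through all homology classes $d$ and all admissible pairs $(\alpha,\beta)$ with $d\cdot[D]-1+g+|\beta|=0$, and to argue that only finitely many produce a nonempty set $\CC_*^{\alpha,\beta}(d,g,\x)$, each then contributing exactly $1$ (except for the multiple-cover case). First I would exploit the constraint $d\cdot[D]-1+g+|\beta|=0$: since $g\ge 0$, $|\beta|\ge 0$, and $d\cdot[D]\ge 0$ for any effective class in $\X_n$, this forces $d\cdot[D]\le 1$, and in fact splits into the two cases $d\cdot[D]=0$ (then $g=|\beta|=0$) and $d\cdot[D]=1$ (then $g=|\beta|=0$ as well, so $\beta=0$). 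The classes with $d\cdot[D]=0$ that are represented by effective curves not containing $E$ as a component are precisely nonnegative combinations of the $[E_i]$'s; irreducibility/connectedness of $C$ together with the adjunction-type bound on arithmetic genus $g=0$ restricts these to $d=[E_i]$ or $d=l[E_i]$ (the latter realized only by the multiple cover $E_i$ with multiplicity $l$, via the Remark preceding the statement). The classes with $d\cdot[D]=1$ are $[D]$, $[D]-[E_i]$, and $[D]-[E_i]-[E_j]$ (a line through $0$, $1$, or $2$ of the blown-up points); any class $[D]-\sum_{k\in S}[E_k]$ with $|S|\ge 3$ is not effective since three of the blown-up points lie on the conic $E$, not on a line — here I would need to recall that the $n$ points are in the (mildly) general position of lying on a smooth conic, so no three are collinear.

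Next, for each surviving class I would compute $d\cdot[E]$ using $[D]\cdot[E]=2$, $[E_i]\cdot[E]=1$, to read off which $(\alpha,\beta)$ are compatible with $I\alpha+I\beta=d\cdot[E]$ and $|\alpha|=d\cdot[D]-1+g+|\beta|-(\text{points off }E)$; concretely $d\cdot[D]-1+g+|\beta|=0$ means $\x^\circ=\emptyset$, so $\mu(f,\x^\circ)=1/|\Aut(f)|$, and by Proposition \ref{prop:shoshu} (applicable since $d\ne l[E_i]$ with $l\ge 2$ in all these cases) the map $f$ has no nontrivial automorphism and $\CC^{\alpha,\beta}\to\CC_*^{\alpha,\beta}$ is a bijection. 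So each such $f$ contributes exactly $1$, and it remains to check that there is exactly one such $f$. For $d=[E_i]$: $[E_i]\cdot[E]=1$, so $(\alpha,\beta)=(0,u_1)$, and the unique curve is $E_i$ itself meeting $E$ transversally at one point — giving $GW^{0,u_1}_{\X_n}([E_i],0)=1$. For $d=[D]$: $[D]\cdot[E]=2$, so $(\alpha,\beta)\in\{(2u_1,0),(u_2,0),(0,2u_1),(0,u_2),(u_1,0)+(0,u_1)\}$; but $\x^\circ=\emptyset$ and $|\x_E|=|\alpha|$, and a line is determined by two conditions, so only $|\alpha|=2$ is nonempty among these with the marked points forced, which kills the $\beta$-only and mixed options in the count here because those would require $|\alpha|<2$ marked points on $E$ while still cutting out a unique line — I would verify that $(0,2u_1)$, $(0,u_2)$, $(u_1,0)+(0,u_1)$ give $d\cdot[D]-1+g+|\beta|=0-1+0+|\beta|$ which is $\ge 0$ only if $|\beta|\ge 1$, making $\x^\circ$ nonempty, contradiction; hence precisely $(2u_1,0)$ and $(u_2,0)$ survive, each counted by the unique line through two prescribed points of $E$ (for $u_2$, the line tangent to $E$ at a prescribed point), yielding the stated $=1$. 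The cases $d=[D]-[E_i]$ and $d=[D]-[E_i]-[E_j]$ are analogous: $d\cdot[E]=1$ resp. $0$, and one checks $(\alpha,\beta)=(u_1,0)$ resp. $(0,0)$ are the only admissible pairs with $\x^\circ=\emptyset$, the unique curve being the strict transform of the line through the one (resp. two) remaining prescribed point(s) and the point(s) blown up.

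The main obstacle, and the step requiring the most care, is the \emph{completeness} of the list: showing that no other class $d$ can give a nonempty $\CC_*^{\alpha,\beta}(d,g,\x)$ under the dimension constraint. This needs a clean argument that every effective class $d$ on $\X_n$ not containing $E$ as a component and containing an irreducible curve of arithmetic genus $g$ with $d\cdot[D]\le 1$ is on the list. I would handle this by combining: (i) $d\cdot[D]\ge 0$ and $d\cdot E_i\ge -1$ (since $E_i$ are the only possible $(-1)$-curves hit negatively, or $d=[E_i]$), (ii) positivity of $d\cdot[E]=I\alpha+I\beta\ge 0$, and (iii) the genus bound $p_a(d)=\tfrac{1}{2}(d^2+d\cdot K_{\X_n})+1\ge g=0$ for the irreducible curve furnished by Proposition \ref{prop:shoshu}; for the $d\cdot[D]=0$ sublattice this is a short finite check that the only effective, connected, genus-$0$ classes are $[E_i]$ and multiples $l[E_i]$, and for $d\cdot[D]=1$ it is the collinearity obstruction. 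A secondary subtlety is that when $d\cdot[D]=0$ but $d$ is a reducible combination like $[E_i]+[E_j]$, connectedness of $C$ fails (the $E_i$ are pairwise disjoint), so these are correctly excluded; I would state this explicitly. Once completeness is settled, the contribution-is-$1$ part is immediate from Proposition \ref{prop:shoshu} and the elementary geometry of lines, and the $+\infty$ in the multiple-cover case is exactly the content of the Remark preceding the proposition.
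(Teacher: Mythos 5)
The paper does not prove this proposition: it is imported verbatim from \cite{Shu13} (Proposition 2.5 there), with only the Remark afterwards commenting on the $+\infty$ value. So there is no in-paper argument to compare against, and your direct verification is the natural thing to do; its overall architecture (bound $d\cdot[D]$ by the dimension constraint, classify the effective classes, check each surviving $(\alpha,\beta)$ reduces to an elementary count of lines or exceptional curves, invoke Proposition \ref{prop:shoshu} to get multiplicity $1$) is sound.

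There is, however, one concrete error in your case split. From $d\cdot[D]-1+g+|\beta|=0$ you get $g+|\beta|=1-d\cdot[D]$, so in the case $d\cdot[D]=0$ the conclusion is $g+|\beta|=1$, \emph{not} $g=|\beta|=0$ as you wrote. Your own subsequent analysis of $d=[E_i]$ with $\beta=u_1$ (so $|\beta|=1$) contradicts the split as stated, and more importantly the subcase $g=1$, $\beta=0$ is never addressed. It is easy to dispose of, but it must be said: for $d\cdot[D]=0$ the class is $l[E_i]$ by your connectedness argument, so $\beta=0$ forces $I\alpha=d\cdot[E]=l\ge 1$, hence $|\alpha|\ge 1$ generic marked points on $E$; since the image of any such map is supported on $E_i$, which meets $E$ only at the blown-up point, the set $\CC^{\alpha,0}(l[E_i],1,\x)$ is empty for generic $\x_E$. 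The same generic-point-on-$E$ argument is also what rules out the pairs $(\alpha,u_j)$ with $j<l$ in the $g=0$ subcase, which you should state rather than leave implicit. With that repair the proof is complete; the remaining points (the list of effective classes with $d\cdot[D]\le 1$, the collinearity obstruction for $|S|\ge 3$, the triviality of $\Aut(f)$ and of the empty product in $\mu(f,\x^\circ)$, and the deferral of the $l[E_i]$, $l\ge 2$ case to the Remark) are all correct.
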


\begin{rem}
The value $GW^{0,u_l}_{\X_n}(l[E_i],0)=+\infty$ comes from the fact
that $\CC^{0,u_l}(l[E_i],0,\emptyset)$ has  dimension strictly bigger
than the expected one.
To define a better ``enumerative'' invariant,
one should 
consider the
virtual fundamental class of this space. 
This is doable, however useless  for
the purposes of this paper. Note however that it follows from Li's
degeneration formula \cite{Li02} 
combined with the proof of Corollary \ref{cor:no sEi} that this
finer invariant should be equal to $0$.
\end{rem}

\subsection{Enumeration of real curve in $\X_n$}\label{sec:real Xn}
Recall that $\X_n(\kappa)$ is the real surface obtained by blowing up $\C
P^2$ at
$\kappa$
pairs of conjugated points and $n-2\kappa$ real points on $E$.

\begin{defi}
A real  configuration $\x^\circ$ in $\X_n(\kappa)$ is said to be
$(E,s)$-compatible if $\x^\circ\cap  E=\emptyset$ and 
$\x^\circ$ contains $s$ pairs of complex conjugated points. 
If $L$ is a connected component of $\R \X_n(\kappa)\setminus \R E$, 
we say that $\x^\circ$ is
$(E,s,L)$-compatible
if $\R\x^\circ$ is in addition contained in $L$.

Given $\alpha^\Re,\alpha^\Im \in \Z_{\ge 0}^\infty$, a real configuration
$\x_E=\{p_{i,j}\}_{0\le j\le \alpha^\Re_i,\ i\ge 1}\sqcup 
\{q_{i,j},\overline{q_{i,j}}\}_{0\le  j\le \alpha^\Im_i,\ i\ge 1}$
 in $E\setminus \bigcup_{i=1}^n E_i$ is said to be 
of type
$(\alpha^\Re,\alpha^\Im)$ if $\{p_{i,j}\}\subset \R E$ 
and $\{q_{i,j}\}\subset E\setminus \R E$.
\end{defi}

Choose $d\in H_2(\X_n;\Z)$ so that $d\ne l[E_i]$ with $l\ge 2$, 
choose $r,s\in\Z_{\ge 0}$, and 
$\alpha^\Re,\beta^\Re,\alpha^\Im,\beta^\Im\in \Z_{\ge 0}^\infty$
such that
$$d\cdot [D]-1+g  +  |\beta^\Re|+2|\beta^\Im|=r+2s\quad\mbox{and}\quad
   I\alpha^\Re + I\beta^\Re +2I\alpha^\Im +2I\beta^\Im=d\cdot [E].$$
Choose a generic real configuration $\x=\x^\circ\sqcup \x_E$ of points in
$\X_n$,
with $\x^\circ$ a  $(E,s)$-compatible 
configuration of $d\cdot [D ]-1+g  +  |\beta^\Re|+2|\beta^\Im|$ points, 
and $\x_E$ a configuration of type $(\alpha^\Re,\alpha^\Im)$.
Denote by $\R\CC^{\alpha^\Re,\beta^\Re,\alpha^\Im,\beta^\Im}(d,s,\x)$ the set
of real maps $f:\C P^1\to \X_n(\kappa)$ in 
$\CC^{\alpha^\Re+2\alpha^\Im,\beta^\Re+2\beta^\Re}(d,0,\x)$ such that
 for any $i\ge 1$, the curve $f(C)$ has exactly $\beta^\Re_i$ real intersection
points (resp. $\beta^\Im_i$ pairs of conjugated intersection points) with
$E$ of multiplicity $i$ and disjoint from $\x_E$.
Then define the following number
$$W_{\X_n(\kappa)}^{\alpha^\Re,\beta^\Re,\alpha^\Im,\beta^\Im}(d,s,\x)=
\sum_{f\in\R\CC^{\alpha^\Re,\beta^\Re,\alpha^\Im,\beta^\Im}(d,s,\x)}(-1)^{m_{\R\X_n(\kappa)}(f(C))}. $$ 

Suppose now that $n=2\kappa$, in particular $\R E$ disconnects $\R
\X_n(\kappa)$.
 Given
 $L$ a connected component of $\R \X_n(\kappa)\setminus \R E$,
and a $(E,s,L)$-compatible configuration $\x^\circ$,
 denote by $\R\CC^{\alpha^\Re,\beta^\Re,\alpha^\Im,\beta^\Im}_L(d,s,\x)$ the set
of elements of $\R\CC^{\alpha^\Re,\beta^\Re,\alpha^\Im,\beta^\Im}(d,s,\x)$    such that
 $f(\R P^1)\subset L\cup\R E$.
For  $L'=\R \X_n(\kappa)$ or $L'=L$, define:
$$W_{\X_n(\kappa),L,L'}^{\alpha^\Re,\beta^\Re,\alpha^\Im,\beta^\Im}(d,s,\x)=
\sum_{f\in\R\CC^{\alpha^\Re,\beta^\Re,\alpha^\Im,\beta^\Im}_L(d,s,\x)}(-1)^{m_{L'}(f(C))}. $$ 
Note that these three series of
 numbers  may vary with the choice of $\x$.

\medskip
The following lemma will be needed later on, in particular in the proof of Theorem \ref{WFD}.
Recall that  $\R P^2\setminus \R E$  has two connected components: one
is homeomorphic to a disk and 
is called the \emph{interior} of $\R E$, while the other is homeomorphic to
a M\"obius band and is called its \emph{exterior}.

\begin{lemma}\label{lem:line conic}
Let $D$ be a non-real line in $\C P^2$, intersecting $E\setminus \R E$ in the points
$p$ and $q$ ($p=q$ if $D$ is tangent to $E$). Then $D$ intersects
$\R P^2$ in the interior of $\R E$ if and only if $p$ and $q$ are in the same
connected component of $E\setminus \R E$. 
\end{lemma}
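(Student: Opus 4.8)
The plan is to transfer the whole question onto the conic $E$ itself via the classical involution that a point of $\C P^2$ induces on a conic. First I would record the basic features of the single real point cut out by $D$. Since $D$ is non-real, $D$ and $\overline D$ are distinct lines, so they meet in a unique point $x_0$; this point is fixed by $c$ (as $c$ permutes the two lines) and hence real, and one checks $D\cap \R P^2=\{x_0\}$. Moreover $x_0\notin E$: were $x_0\in E$, then $x_0\in E\cap\R P^2=\R E$, while at the same time $x_0\in D\cap E=\{p,q\}\subset E\setminus\R E$, a contradiction. In particular $x_0$ lies strictly inside or strictly outside $\R E$, so the statement is well posed. The same reasoning shows $q\neq\overline p$ (otherwise $D$ would be the real line through the conjugate pair $\{p,\overline p\}$), so $p$ and $q$ genuinely lie in the two components of $E\setminus\R E$ and the right-hand condition is meaningful, including the tangent case $p=q$, where it holds trivially.

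Next I would introduce the involution $\sigma\colon E\to E$ attached to $x_0$: for $t\in E$, let $\sigma(t)$ be the residual intersection point of the line $\langle x_0,t\rangle$ with $E$, i.e. the point such that $\langle x_0,t\rangle\cap E=t+\sigma(t)$. This is the deck transformation of the degree-two projection of $E\cong\C P^1$ from $x_0\notin E$, hence an honest involution in $\Aut(E)=\mathrm{PGL}_2(\C)$; its two fixed points are exactly the contact points of the two tangent lines to $E$ through $x_0$. Because $x_0$ is real and $E$ is $c$-invariant, $c$ sends $\langle x_0,t\rangle$ to $\langle x_0,\overline t\rangle$, so $\sigma$ commutes with $c$, i.e. $\sigma$ is a real automorphism of $(E,c)$. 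Finally, since $x_0\in D$ and $D\cap E=p+q$, the line $\langle x_0,p\rangle$ equals $D$ and its residual intersection is $q$; thus $\sigma(p)=q$. So ``$p$ and $q$ in the same component of $E\setminus\R E$'' is precisely the statement that $\sigma$ does not exchange the two components.

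It then remains to prove two facts and combine them. The first is the classical criterion that $x_0$ lies inside the oval $\R E$ if and only if the two tangent lines from $x_0$ are complex conjugate, equivalently the fixed points of $\sigma$ form a non-real conjugate pair (when $x_0$ is outside, the tangents and their contact points are real). The second, which I expect to be the technical heart, is a linear computation on $\C P^1$: identifying $(E,c)$ with $(\C P^1,z\mapsto\overline z)$ so that the two components of $E\setminus\R E$ become the upper and lower half-planes, a real involution $\sigma\in\mathrm{PGL}_2(\R)$ is represented by a traceless real matrix $M$, and I would compute that $\sigma$ preserves each half-plane exactly when $\det M>0$, whereas the fixed-point discriminant has the opposite sign, so that $\sigma$ preserves the two components if and only if its fixed points are non-real, and swaps them if and only if its fixed points are real. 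Putting the two facts together gives $x_0$ inside $\iff$ fixed points of $\sigma$ non-real $\iff$ $\sigma$ preserves each component $\iff$ $q=\sigma(p)$ lies in the same component as $p$, which is the assertion. The main obstacle is keeping the sign bookkeeping in the $\mathrm{PGL}_2(\R)$ computation consistent, and cross-checking it against the degenerate tangent case $p=q$, where $p$ is itself a fixed point of $\sigma$, forcing the fixed points to be the non-real pair $\{p,\overline p\}$ and hence $x_0$ to be interior; I would verify all signs on the explicit model of the unit circle.
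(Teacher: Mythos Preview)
Your argument is correct. The reduction to the involution $\sigma$ on $E\cong\C P^1$ is clean, and the $\mathrm{PGL}_2(\R)$ computation goes through exactly as you outline: a traceless real matrix $M$ has fixed-point discriminant $-4\det M$, while $\mathrm{Im}\,\sigma(i)=\det M/(a^2+c^2)$, so the two conditions match up with the right sign.

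The paper, however, takes a much shorter route. It observes that by continuity the location of $x_0=D\cap\R P^2$ (interior vs.\ exterior) is constant on each of the two connected types of configuration of $(p,q)$ in $E\setminus\R E$, so it suffices to exhibit one example of each type. For ``same component'' it uses the tangent case $p=q$: the tangent lines from an interior point are non-real (the same classical fact you invoke), which pins down that direction. For ``different components'' it simply notes that non-real lines through exterior points exist, which forces the remaining case. Your approach trades this topological/continuity shortcut for an explicit algebraic computation; it is self-contained and gives more structural information (the involution $\sigma$ is a nice object), but the paper's proof is about three lines.
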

\begin{proof}
 By continuity, the
connected component of $\R P^2\setminus \R E$ containing $D\cap \R
P^2$ only depends on whether $p$ and $q$ are in the same connected
component of $E\setminus \R E$ or not. 

Clearly, the two lines tangent to $E$ and passing through a point $p$ in $\R P^2$
are  real if and only if $p$ lyes in the exterior of
$\R E$. Hence we get that $D\cap \R P^2$ is in the interior of $\R E$
if $p$ and $q$ are in the same
connected component of $E\setminus \R E$.
Since there exist non-real lines intersecting $\R P^2$ in the exterior
of $\R E$, the converse is proved.
\end{proof}

\subsection{Relative invariants of $\N$}\label{sec:relative N}
Recall that $\N=\P(\N_{E/\X_n}\oplus\C)$, and that $\N$ contains two
distinguished disjoint sections $E_\infty$ and $E_0$ of the $\C P^1$-bundle $\pi_E:\N\to E_\infty$.
One computes easily that 
$$[E]^2=[E_0]^2=-[E_\infty]^2= 4-n.$$
The group $H_2(\N;\Z)$ is the free abelian group  generated by
$[E_\infty]$ and $[F]$, where $F$ is a fiber of $\pi_E$, and the first
Chern class of $\N$ is given by
$$c_1(\N)=2[E_\infty] +(6-n)[F]. $$
Note that $[E_0]=[E_\infty] +(4-n)[F]$. 
Let $d\in H_2(\N;\Z)$  and 
$\alpha,\alpha',\beta,\beta'\in\Z_{\ge 0}^\infty$
such that
$$I\alpha + I\beta=d\cdot [E_0]\quad \mbox{ and }\quad I\alpha' +
I\beta'=d\cdot [E_\infty].$$ 
Choose a configuration $\x=\x^\circ\sqcup \x_{E_0}\sqcup
\x_{E_\infty}$ of points in $\N$, 
with $\x^\circ$ a configuration of $2d\cdot [F]-1+g  +  |\beta|+|\beta'|$ points
in $\N\setminus \left(E_0\cup E_\infty\right)$, 
and $\x_{E_0}=\{p_{i,j}\}_{0\le i\le \alpha_j, j\ge 0}$
(resp. $\x_{E_\infty}=\{p'_{i,j}\}_{0\le i\le \alpha'_j, j\ge 0}$) a configuration of
$|\alpha|$ (resp. $|\alpha'|$) points in $E_0$ (resp. $E_\infty$).
Let $\FF^{\alpha,\beta,\alpha',\beta'}(d,g,\x)$ be the set
 of holomorphic maps $f:C\to \X_n$ with
$C$  a connected  algebraic curve of arithmetic genus $g$, such that 
$f(C)$  realizes the homology class $d$ in $\N$, contains $\x$, does
 not contain neither $E_0$ nor $E_\infty$ as a component, and
$$f^*(E_0)=\sum_{i\ge 1} \sum_{j=1}^{\alpha_i}iq_{i,j} + 
\sum_{i\ge 1} \sum_{j=1}^{\beta_i}i\tilde q_{i,j}\quad \mbox{with} f(q_{i,j})=p_{i,j}$$
and
$$f^*(E_\infty)=\sum_{i\ge 1} \sum_{j=1}^{\alpha'_i}i q'_{i,j} + 
\sum_{i\ge 1} \sum_{j=1}^{\beta'_i}i\tilde q'_{i,j}\quad
\mbox{with} f( q'_{i,j})=p'_{i,j}.$$
The corresponding Gromov-Witten invariant 
relative to $E_0\cup E_\infty$ is defined by
$$GW_\N^{\alpha,\beta,\alpha',\beta'}(d,g)=\sum_{f\in 
\FF^{\alpha,\beta,\alpha',\beta'}(d,g,\x)}\mu(f,\x^\circ) $$ 
for a generic configuration $\x$, where $\mu(f,\x^\circ)$ is defined
as in Section \ref{sec:relative Xn}.

\begin{prop}[{\cite[Section 3]{Vak2}}]\label{prop:generic N}
The number $GW_\N^{\alpha,\beta,\alpha',\beta'}(d,g)$
 is finite and does not depend on $\x$.

If $d\ne l[F]$ with $l\ge 2$,
then    
any element $f:C\to\N$ of
  $\FF^{\alpha,\beta,\alpha',\beta'}(d,g,\x)$  satisfies the following
properties:
\begin{itemize}
\item the curve $C$ is smooth and irreducible;
\item $f$ is an immersion, birational onto its image (in particular it
  has no non-trivial automorphism);
\item $f(C)$ intersects the curves $E_0$ and $E_\infty$ at
  non-singular points.
\end{itemize}

If $d= l[F]$ with $l\ge 2$, then the set
$\FF^{\alpha,\beta,\alpha',\beta'}(d,g,\x)$ is either empty or has a unique element 
 $f:C\to\N$ which is a ramified covering of degree $l$ to its image,
with exactly two ramification points,  one of them is
mapped to $E_0$, and the other to $E_\infty$.
\end{prop}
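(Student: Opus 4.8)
The plan is to reduce everything to the geometry of the Hirzebruch surface $\N=\P(\N_{E/\X_n}\oplus\C)$, which is $\F_{|4-n|}$ (or $\F_0=\C P^1\times\C P^1$ when $n=4$), and to the standard dimension count for relative Gromov--Witten invariants of surfaces. First I would verify the invariance and finiteness: the expected dimension of the space of relative stable maps of genus $g$ and class $d$ with tangency profile $(\alpha,\beta)$ along $E_0$ and $(\alpha',\beta')$ along $E_\infty$ is, by the relative adjunction/Riemann--Roch computation, exactly $2d\cdot[F]-1+g+|\beta|+|\beta'|+|\alpha|+|\alpha'|$, which matches the number of point conditions imposed by $\x$. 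Li's construction of the moduli space of relative stable maps \cite{Li02,Li04} and the associated virtual class give invariance of the count for generic $\x$; alternatively one can cite \cite[Section 3]{Vak2} directly since that is exactly the setting there. So this part is essentially a matter of pointing to the existing machinery.

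Next I would establish the regularity statements when $d\neq l[F]$ with $l\ge 2$. The key point is that $\N$ is a rational surface with $-K_\N=2[E_\infty]+(6-n)[F]$ effective, and the relevant linear systems on Hirzebruch surfaces are well understood. For a generic configuration $\x$, a transversality argument (again in the spirit of \cite[Section 3]{Vak2}) shows: the source curve is smooth, because a nodal or reducible degeneration costs at least one in dimension and hence cannot meet a generic point configuration of the top dimension; irreducibility follows similarly once one rules out $d$ being a nontrivial multiple of a fiber class (which is why that case is excluded); $f$ is an immersion birational onto its image because a non-immersive point or a multiple cover again drops the dimension of the relevant stratum; and $f(C)$ meets $E_0$ and $E_\infty$ at smooth points of $f(C)$ for the same genericity reason. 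Here one uses crucially that on $\F_m$ the only curves with negative self-intersection are the section $E_\infty$ (self-intersection $-|4-n|$ when $n>4$) and, symmetrically, $E_0$ when $n<4$, together with the fibers, so apart from the excluded multiple-fiber classes there are no ``extra'' rigid components forcing pathological behaviour.

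Finally, the case $d=l[F]$ with $l\ge 2$: here $d\cdot[E_0]=l(4-n)$ may be nonzero only if $n\ne 4$, and $d\cdot[F]=0$, so $\x^\circ$ is empty and the only point conditions are the tangency conditions along $E_0$ and $E_\infty$. A connected genus-$g$ curve in class $l[F]$ with $E$ not a component must map onto a single fiber $F_0$ (since $[F]^2=0$ and distinct fibers are disjoint, any curve of class $l[F]$ is supported on fibers, and connectedness forces a single one), so $f:C\to F_0\cong\C P^1$ is a degree-$l$ cover; by Riemann--Hurwitz with $C$ connected of arithmetic genus $g$, generically (and in the only case consistent with the tangency data being concentrated at the two points $F_0\cap E_0$ and $F_0\cap E_\infty$) it has exactly two ramification points, one over each of these, recovering the stated description. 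I expect the main obstacle to be the transversality/regularity argument in the middle step: making precise, for \emph{every} boundary stratum of Li's relative moduli space, that its image in the configuration space has strictly smaller dimension, so that the generic $\x$ avoids all of them. In practice this is handled exactly as in \cite[Section 3]{Vak2}, and I would simply invoke that analysis rather than redo the stratification by hand.
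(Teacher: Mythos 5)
The paper itself gives no proof of this proposition: it is imported wholesale from \cite[Section 3]{Vak2}, and your sketch --- the dimension count showing that the expected dimension $2d\cdot[F]-1+g+|\alpha|+|\beta|+|\alpha'|+|\beta'|$ matches the number of point conditions, transversality for generic $\x$ deferred to Vakil's analysis, and the explicit description of the multiple-fiber case --- is exactly the standard route behind that citation, so your approach is consistent with the paper's. One arithmetic slip worth fixing: for $d=l[F]$ one has $d\cdot[E_0]=d\cdot[E_\infty]=l$ (not $l(4-n)$), since $[E_0]=[E_\infty]+(4-n)[F]$ and $[F]^2=0$; this does not affect your conclusion that the unique map is the degree-$l$ cover of a single fiber totally ramified over its two intersection points with $E_0$ and $E_\infty$ (forcing $g=0$ by Riemann--Hurwitz). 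Likewise, fibers have self-intersection $0$, not negative, but that remark is harmless to the argument.
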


\begin{prop}\label{prop:initial values N}
Suppose that
$2d\cdot [F] -1+g+|\beta|+|\beta'|=0$. 
Then the number $GW^{\alpha,\beta,\alpha',\beta'}_{\N}(d,g)$ is
non-zero only in the following cases
$$GW_\N^{u_l,0,0,u_l}(l[F],0)=GW_\N^{0,u_l,u_l,0}(l[F],0)=\frac{1}{l}.$$

Suppose that
$2d\cdot  [F] -1+g+|\beta|+|\beta'|=1$. 
Then the number $GW^{\alpha,\beta,\alpha',\beta'}_{\N}(d,g)$ is
non-zero only in the following cases:
$$GW_\N^{\alpha,0,\alpha',0}([E_\infty] +l[F],0)= GW_\N^{0,u_l,0,u_l}(l[F],0)=1, $$
where $I\alpha = l$ and $I\alpha' = n-4+l$.

Suppose that
$2d\cdot [F] -1+g+|\beta|+|\beta'|=2$. 
Then the number $GW^{\alpha,\beta,\alpha',\beta'}_{\N}(d,g)$ is
non-zero only in the following cases:
$$GW_\N^{\alpha,u_j,\alpha',0}([E_\infty]+l[F],0)=
GW_\N^{\alpha,0,\alpha',u_j}([E_\infty] +lF,0)=j . $$ 
where  $I\alpha = l$  and
$I\alpha' = n-4+l-j$ in the former case, and  $I\alpha = l-j$ and
$I\alpha' = n-4+l$ in the latter case. 
\end{prop}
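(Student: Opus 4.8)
We need to compute relative Gromov--Witten invariants of $\N = \P(\N_{E/\X_n}\oplus\C)$ in the three lowest-dimensional situations, namely when the number of point constraints $2d\cdot[F]-1+g+|\beta|+|\beta'|$ equals $0$, $1$, or $2$. The strategy is to dissect each case according to $d\cdot[F]$. Writing $d = k[E_\infty] + l[F]$ with $k = d\cdot[F]\ge 0$, we have $d\cdot[E_0] = k(4-n)+l + \text{(correction)}$ and $d\cdot[E_\infty] = -k(4-n)+l$; since curves must meet both $E_0$ and $E_\infty$ non-negatively, we get strong constraints on which classes $d$ can carry curves at all. When $k=0$, the only effective classes are multiples $l[F]$ of the fiber, and the relevant maps are exactly the ramified coverings $\C P^1\to F$ described in the last part of Proposition~\ref{prop:generic N}; when $k\ge 2$ the dimension count $2d\cdot[F]-1+g \ge 3$ already exceeds the bound $2$, so only $k=0,1$ occur. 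This reduces everything to a finite bookkeeping problem.

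First I would handle $k=0$. Here $d=l[F]$, the balancing condition forces $I\alpha + I\beta = I\alpha' + I\beta' = l$, and the constraint $2d\cdot[F]-1+g+|\beta|+|\beta'| = g-1+|\beta|+|\beta'|$ being $0,1,2$ pins down $(g,\beta,\beta')$. The moduli space is a point: a degree-$l$ cover of a fixed fiber $F\cong\C P^1$ totally ramified over $E_0\cap F$ and $E_\infty\cap F$ (this is the rational normal-cover picture, as in \cite[Section 3]{Vak2}). For the first block, $g=0$, $\beta=\beta'=0$ is impossible since $|\beta|+|\beta'|$ would have to be $1$, not $0$; actually one needs $g=0$, $|\beta|=|\beta'|=0$ forces $l=0$ unless $\beta,\beta'$ absorb the tangency, so the surviving cases are $GW_\N^{u_l,0,0,u_l}(l[F],0)$ and $GW_\N^{0,u_l,u_l,0}(l[F],0)$, where all the intersection with one of $E_0,E_\infty$ is a single point of multiplicity $l$ (either marked or not). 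The value $1/l$ is precisely $1/|\Aut(f)|$ for the cyclic cover, with no extra $|f^{-1}(p)|$ factors since $\x^\circ=\emptyset$. The same single-cover analysis, now allowing one free marked point on a fiber (contributing $|f^{-1}(p)| = l$ when the point is generic, but that case has $|\beta|=1$ and is killed by dimension) or allowing the cover to break, gives the fiber-class entries in blocks two and three: in block two $GW_\N^{0,u_l,0,u_l}(l[F],0)=1$ (the $1/l$ from $\Aut$ cancels against... no: here $g-1+|\beta|+|\beta'| = -1+1+1 = 1$, consistent, and the two unramified... I would recheck whether the automorphism group is trivial because the two special points are now distinguished by being marked on opposite sections, giving $1$), and in block three $j$ appears as a local multiplicity of the fiber cover.

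Next, for $k=1$, so $d = [E_\infty] + l[F]$, the curve $f(C)$ is a section of $\pi_E$ plus fiber components, or an irreducible curve in class $[E_\infty]+l[F]$; by Proposition~\ref{prop:generic N} (with $d\ne l[F]$) it is a smooth irreducible rational curve meeting $E_0$ and $E_\infty$ transversally-or-tangentially at nonsingular points, and $f$ is birational with no automorphisms, so $\mu(f,\x^\circ) = \prod_{p\in\x^\circ}|f^{-1}(p)| = 1$ since each point imposes a simple condition. The balancing conditions give $I\alpha+I\beta = l$ (intersection with $E_0$, as $[E_\infty]\cdot[E_0] = 0$ after using $[E_0]=[E_\infty]+(4-n)[F]$ — I'd be careful here: $([E_\infty]+l[F])\cdot[E_0] = [E_\infty]\cdot[E_\infty] + (4-n)[E_\infty]\cdot[F] + l[F]\cdot[E_0]$, which I would compute cleanly from $[E_\infty]^2 = -(4-n)$, $[F]^2=0$, $[E_\infty]\cdot[F]=1$) and similarly $I\alpha'+I\beta' = n-4+l$. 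With $2d\cdot[F]-1+g+|\beta|+|\beta'| = 1+g+|\beta|+|\beta'|$ equal to $1$ we must have $g=0$, $\beta=\beta'=0$, hence the section meets $E_0$ and $E_\infty$ in prescribed marked points with multiplicities $\alpha,\alpha'$, $I\alpha = l$, $I\alpha' = n-4+l$; the count is $1$ because such a section is unique once its tangency profile and the points it passes through on $E_0\cup E_\infty$ are fixed (this is the classical fact that $\N$ is a toric surface and these are its torus-invariant-transverse rational curves — uniqueness follows from e.g. the explicit parametrization, or by deforming to a tropical count as in \cite{Br7}). When instead that quantity equals $2$ (block three) we have $g=0$ and exactly one of $|\beta|,|\beta'|$ equal to $1$, say $\beta = u_j$: then the section has an extra, unmarked, order-$j$ tangency with $E_0$, contributing a factor $j$ to the count (the $j$ parametrizes the $j$ choices of which "ghost" intersection point the marking convention assigns, equivalently the multiplicity-$j$ local intersection contributes $j$ to the relative GW weight — this is the standard relative-stable-map multiplicity). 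That yields $GW_\N^{\alpha,u_j,\alpha',0}([E_\infty]+l[F],0) = j$ with $I\alpha = l$, $I\alpha' = n-4+l-j$, and symmetrically with the roles of $E_0,E_\infty$ swapped.

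\textbf{The main obstacle.} The genuinely delicate point is not the case division but justifying the precise numerical weights — the $1/l$ for the fiber covers and especially the factor $j$ attached to an unmarked order-$j$ tangency — directly from the definition of $GW_\N^{\alpha,\beta,\alpha',\beta'}$ via $\mu(f,\x^\circ) = \frac{1}{|\Aut(f)|}\prod_{p\in\x^\circ}|f^{-1}(p)|$. For the fiber covers the automorphism group of the cyclic cover has order $l$, giving $1/l$; but one must check that in the configurations where the answer is claimed to be $1$ (e.g. $GW_\N^{0,u_l,0,u_l}(l[F],0)$, $GW_\N^{0,u_l,u_l,0}(l[F],0)$) the marking breaks the symmetry so that $|\Aut(f)|=1$, or alternatively that an extra $l$ from a $|f^{-1}(p)|$ factor cancels the $1/l$. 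For the order-$j$ tangency weight, the cleanest justification is to invoke the relative GW formalism of \cite{Li02,Li04} — where a relative marked point with tangency $j$ carries a multiplicity $j$ — rather than re-deriving it; but since the paper works with the naive definition above, I expect the intended argument is a short direct deformation count (perturbing the unmarked tangency point into $j$ nearby simple intersections and observing the branch count), and making that rigorous in a line or two is where the real work sits. Everything else reduces to the uniqueness of torus-fixed rational curves in the Hirzebruch-type surface $\N$, which is classical.
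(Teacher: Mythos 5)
Your case division by $d\cdot[F]$ and the vanishing statements match the paper's argument in outline (the paper disposes of the blocks with at most one point constraint by citing \cite[Section 8]{Vak2}, and for two constraints writes $d=l_\infty[E_\infty]+l[F]$, shows $l_\infty\le 1$, and eliminates $l_\infty=0$ and $l_\infty=g=1$). But the one genuinely nontrivial number, the value $j$ in the third block, is not established by your argument, and the mechanism you lean on is the wrong one for this paper's definition. The invariant is $\sum_f\mu(f,\x^\circ)$ with $\mu(f,\x^\circ)=\frac{1}{|\Aut(f)|}\prod_{p\in\x^\circ}|f^{-1}(p)|$: an immersed, birational curve in class $[E_\infty]+l[F]$ through two generic points has trivial automorphism group and $|f^{-1}(p)|=1$, so it counts with multiplicity exactly $1$, and no weight $j$ is ever attached to an unmarked order-$j$ tangency. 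The answer $j$ must therefore come from there being $j$ \emph{distinct} curves, which is exactly what the paper proves: in toric coordinates such a curve has equation $ay(x-b)^j=Q(x)$ with $Q$ determined by $\x_{E_0}\sqcup\x_{E_\infty}$, and the two point conditions reduce to $\bigl(\tfrac{x_0-b}{x_1-b}\bigr)^j=\tfrac{y_1Q(x_0)}{y_0Q(x_1)}$, which has exactly $j$ solutions in $b$. This explicit parametrization is not a dispensable shortcut: it is reused verbatim in Lemma \ref{lem:m real} to decide which of the $j$ curves are real and on which side of $\R E_0\cup\R E_\infty$ their real parts lie, which is precisely why the paper gives the by-hand computation instead of citing Vakil's recursion \cite[Theorem 6.12]{Vak2}. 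Your alternative (perturb the tangency into $j$ simple intersections and count branches) could be made to work, but you explicitly leave it undone.

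A second, smaller error: for the fiber classes you suggest that $GW_\N^{0,u_l,0,u_l}(l[F],0)=1$ (rather than $1/l$) because the marking might break the symmetry and force $|\Aut(f)|=1$. It does not: an isomorphism of maps is any $\phi$ with $f_1=f_2\circ\phi$, the tangency points $q_{i,j}$ are not marked points of the domain, and the cyclic degree-$l$ cover of a fiber always has $|\Aut(f)|=l$. The difference between the first and second blocks is the factor $\prod_{p\in\x^\circ}|f^{-1}(p)|$: in the second block $\x^\circ$ consists of one point $p$ on the fiber away from the two ramification points, so $|f^{-1}(p)|=l$ and $\mu=\frac{1}{l}\cdot l=1$, whereas in the first block $\x^\circ=\emptyset$ and $\mu=\frac{1}{l}$. (Relatedly, your remark that "in block three $j$ appears as a local multiplicity of the fiber cover" is off: there are no fiber-class entries in the third block, since total ramification over $E_0$ and $E_\infty$ forces $|\beta|+|\beta'|\le 2$ when $d=l[F]$.)
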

\begin{proof}
The cases $2d\cdot [F] -1+g+|\beta|+|\beta'|\le 1$ are considered in
\cite[Section 8]{Vak2}, so  suppose that $2d\cdot [F]
-1+g+|\beta|+|\beta'|=2$.
Writing $d=l_\infty[E_\infty] + l[F]$ we get that
$$2l_\infty  + g + |\beta|= 3.$$
In particular we have $l_\infty\le 1$. One sees  easily that in both
case $l_\infty=0$ and $l_\infty=g=1$ we have 
$GW^{\alpha,\beta,\alpha',\beta'}_{\N}(d,g)=0$.
The value 
$GW_\N^{\alpha,u_j,\alpha',0}(E_\infty+lF,0)=
GW_\N^{\alpha,0,\alpha',u_j}(E_\infty +lF,0)=j $  can  be computed
using the recursion formula {\cite[Theorem 6.12]{Vak2}}, 
nevertheless I  give here a proof by hand that
 will  be useful  in Section \ref{sec:local real}.

The surface $\N$ is a toric compactification of $(\C^*)^2$, 
and for a suitable choice of coordinates, a curve in $\FF^{\alpha,u_j,\alpha',0}(E_\infty +lF,0,\x)$ is the
compactification of a curve in $(\C^*)^2$ with equation
$$ay(x-b)^j=Q(x)$$
with $Q(x)$ 
a monic rational function of degree 
$4-n+j$.
Note that 
$Q(x)$  is entirely determined by 
 $\x_{E_0}\sqcup\x_{E_\infty}$.
If 
$\x^\circ=\{(x_0,y_0),(x_1,y_1)\}$, then $a$ and $b$ satisfy the
two equations $ay_i(x_i-b)^j=Q(x_i)$.
Hence $a$ is determined by
 $b$, and this latter is a solution of the equation
$$\left(\frac{x_0-b}{x_1-b} \right)^j =
 \frac{y_1Q(x_0)}{y_0Q(x_1)}$$
which clearly has exactly $j$ solutions.
\end{proof}

\subsection{Enumeration of real curves in $\N$}\label{sec:local real}
Now suppose that  $E$ is  real  with 
 a non empty real part in $\X_n(\kappa)$. In this case
 the surface $\N$ has a natural real
structure induced by the real structure on $\X_n(\kappa)$,
 and both curves
$E_0$ and $E_\infty$ are real with a non-empty real part.
Note that the map $\pi_E$ is a real map.
The real part $\R\N$ is a Klein bottle if $n$ is odd, and a torus if
$n$ is even.
In this latter case $\R \N\setminus \left(\R E_0\cup \R E_\infty\right)$
has two connected components that we denote arbitrarily by $N^\pm$.

\begin{lemma}\label{lem:m real}
Suppose that $\x=\x^\circ \sqcup \x_{E_0}\sqcup \x_{E_\infty}$ is a
generic real configuration of points in $\N$ with
$\x^\circ=\{(x_0,y_0),(\overline{x_0},\overline{y_0})\}$. Then 
the $j$ elements $f:\C P^1\to\N$ of $\FF^{\alpha,u_j,\alpha',0}([E_\infty] +l[F],0,\x)$
are all real. If moreover $n$ is even, and  each point in 
$\R x_{E_0}\sqcup \R \x_{E_\infty}$ is a point of  even order of
contact of $f(\C P^1)$ with $E_0\sqcup E_\infty$,
then $\frac{j}{2}$ of those elements satisfy $f(\R P^1)\subset N^+\cup E_0\cup
E_\infty$, and $\frac{j}{2}$ elements  satisfy $f(\R P^1)\subset N^-\cup E_0\cup
E_\infty$.

An analogous statement holds for elements of
$\FF^{\alpha,0,\alpha',u_j}([E_\infty] +l[F],0,\x)$. 
\end{lemma}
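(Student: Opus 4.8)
The plan is to reuse the explicit model of curves in $\FF^{\alpha,u_j,\alpha',0}([E_\infty]+l[F],0,\x)$ set up in the proof of Proposition \ref{prop:initial values N}. Recall that in suitable real toric coordinates on $\N$ such a curve is the compactification of the affine curve $ay(x-b)^j=Q(x)$, where $Q$ is a monic rational function of degree $4-n+j$ determined by $\x_{E_0}\sqcup\x_{E_\infty}$, and the free parameter $b$ ranges over the $j$ solutions of
$$\left(\frac{x_0-b}{x_1-b}\right)^j=\frac{y_1Q(x_0)}{y_0Q(x_1)},$$
with $a$ then determined by $b$. First I would observe that, since $\x$ is a real configuration, $Q$ has real coefficients (it is pinned down by the real set $\x_{E_0}\sqcup\x_{E_\infty}$), and $\x^\circ=\{(x_0,y_0),(\overline{x_0},\overline{y_0})\}$ means $(x_1,y_1)=(\overline{x_0},\overline{y_0})$. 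Hence the right-hand side of the equation above is of the form $\overline{w}/w$ with $w=y_0Q(x_1)$, i.e. it is a complex number of modulus $1$. Taking $j$-th roots, $\frac{x_0-b}{x_1-b}$ must be one of the $j$ values $\zeta_k\,\eta$, where $\eta$ is a fixed $2j$-th root of $\overline w/w$ chosen on the unit circle and $\zeta_k=e^{2\pi i k/j}$. Each such equation $\frac{x_0-b}{\overline{x_0}-b}=\zeta_k\eta$ is a Möbius-type equation in $b$ whose unique solution I claim is real: indeed $b\mapsto\frac{x_0-b}{\overline{x_0}-b}$ sends $\R\cup\{\infty\}$ to the unit circle (it is the Cayley-type transform associated with the conjugate pair $x_0,\overline{x_0}$), and it is a bijection from $\R\cup\{\infty\}$ onto the unit circle; since the target value $\zeta_k\eta$ lies on the unit circle, $b\in\R\cup\{\infty\}$. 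Consequently $b$ is real, $a=Q(x_0)/\big(y_0(x_0-b)^j\big)$ is real because $ay_0(x_0-b)^j=Q(x_0)$ and the conjugate equation $\overline a\,\overline{y_0}(\overline{x_0}-b)^j=Q(\overline{x_0})$ together with genericity force $\overline a=a$; so the curve $ay(x-b)^j=Q(x)$ has real coefficients, i.e. $f$ is real. This proves the first assertion, and the analogous one for $\FF^{\alpha,0,\alpha',u_j}$ follows by the symmetry $E_0\leftrightarrow E_\infty$ exchanging the roles of $x=0$ and $x=\infty$.

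\textbf{The refined (even $n$) statement.} For the second assertion, assume $n$ even, so $\R\N\setminus(\R E_0\cup\R E_\infty)$ has the two components $N^\pm$, which in the affine chart $(\C^*)^2$ with real coordinates $(x,y)$ are distinguished by the sign of $y$ (one component is $\{y>0\}$, the other $\{y<0\}$, after identifying $\R E_0=\{y=0\}$ and $\R E_\infty=\{y=\infty\}$; the hypothesis that $n$ is even is exactly what makes these two sign-regions the two distinct components rather than a single Klein-bottle complement). A real curve $C_b:\ ay(x-b)^j=Q(x)$ has $f(\R P^1)\subset N^+\cup E_0\cup E_\infty$ iff $y=Q(x)/\big(a(x-b)^j\big)\ge 0$ for all real $x$, and $\subset N^-\cup E_0\cup E_\infty$ iff $y\le 0$ for all real $x$. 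The hypothesis that each real point of $\x_{E_0}\sqcup\x_{E_\infty}$ is a contact point of even order means every real root and every real pole of the rational function $Q(x)/(x-b)^j$ (other than possibly $b$ itself) has even multiplicity, hence $Q(x)/(x-b)^j$ has constant sign on $\R\setminus\{b\}$; therefore $y$ has constant sign on $\R\setminus\{b\}$, and the question of which component $f(\R P^1)$ lands in is decided by a single sign: $\mathrm{sgn}(a)$ times the (constant) sign of $Q(x)/(x-b)^j$ on $\R\setminus\{b\}$. So I must count how many of the $j$ real solutions $b$ give $+$ and how many give $-$.

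\textbf{The sign count — the main obstacle.} This last step is where the actual work lies. The plan is to track $\mathrm{sgn}(a)$ as a function of the root index $k$ in $\frac{x_0-b_k}{\overline{x_0}-b_k}=\zeta_k\eta$, $k=0,\dots,j-1$. Writing $x_0=\sigma+i\tau$ with $\tau\ne 0$ and parametrizing $b\in\R\cup\{\infty\}$, the Cayley transform $b\mapsto\frac{x_0-b}{\overline{x_0}-b}$ is an orientation-preserving homeomorphism $\R\cup\{\infty\}\to S^1$, so the $j$ solutions $b_0<b_1<\dots<b_{j-1}$ correspond to the $j$ equally-spaced points $\eta,\zeta_1\eta,\dots$ in cyclic order on $S^1$. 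Then $a_k=Q(x_0)/\big(y_0(x_0-b_k)^j\big)$ and one computes $\arg a_k$: since $(x_0-b_k)/(\overline{x_0}-b_k)=\zeta_k\eta$ has argument $2\pi k/j+\arg\eta$, and $|x_0-b_k|^2/\big((x_0-b_k)(\overline{x_0}-b_k)\big)$ contributes nothing to the argument, one gets $\arg\big((x_0-b_k)^j\big)=j\arg(x_0-b_k)$ and $\arg\big((x_0-b_k)(\overline{x_0}-b_k)\big)=0$, so $2\arg(x_0-b_k)=2\pi k/j+\arg\eta+\arg|x_0-b_k|^2\cdot 0$... — the upshot I expect is that $\arg a_k$ increases by $\pi$ as $k$ increases by $1$ (because $\arg(x_0-b_k)^j$ picks up $\pi k$ after using $j\eta$-related cancellations, $\eta^{2j}=\overline w/w$ being chosen so that the total $\arg(Q(x_0)/y_0)$ is absorbed), so $a_k$ is real with sign $(-1)^k$ times a fixed sign. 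Since $j$ is even (it equals a contact multiplicity $i$ with $i\alpha_i$-type bookkeeping — actually more robustly, $j$ even is forced here by the parity of the intersection data given that $n$ is even and all real contacts have even order, one can read $j\equiv d\cdot[E_0]\pmod 2$ off $I\alpha+I u_j=d\cdot[E_0]$), exactly $j/2$ of the indices $k$ are even and $j/2$ are odd, giving $j/2$ curves in each of $N^+$ and $N^-$. The main obstacle is thus the bookkeeping of arguments to prove cleanly that $\mathrm{sgn}(a_k)$ alternates in $k$ (equivalently, that consecutive real solutions $b_k$ lie in alternating components), together with pinning down that $j$ is even under the stated hypotheses; I would isolate the sign-alternation as a short lemma about the map $b\mapsto a(b)$ and then conclude by this parity count, and the $E_0\leftrightarrow E_\infty$ case follows verbatim by the coordinate swap $x\mapsto 1/x$.
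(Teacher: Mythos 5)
Your proposal follows essentially the same route as the paper's proof: the same explicit affine model $ay(x-b)^j=Q(x)$ from Proposition \ref{prop:initial values N}, realness of all $j$ solutions via the M\"obius map $b\mapsto\frac{x_0-b}{\overline{x_0}-b}$ sending $\R\cup\{\infty\}$ onto the unit circle, and the component being governed by $\mathrm{sgn}(a_k)$ once the even-contact hypothesis forces $Q(x)/(x-b)^j$ to have constant sign. The one step you leave as a sketch --- the alternation $\mathrm{sgn}(a_0/a_k)=(-1)^k$ --- is exactly what the paper finishes by computing $b_k$ explicitly and showing $(x_0-b_k)^j=e^{ik\pi}\bigl(e^{i\theta/2}\,\mathrm{Im}(x_0)/\sin(\theta/2+k\pi/j)\bigr)^j$; your argument-bookkeeping plan does go through (using $j$ even to kill the ambiguity in taking $\arg$ of a square root), so this is a matter of writing out the computation rather than a missing idea.
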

\begin{proof}
Let us use notations introduced in the proof of Proposition
\ref{prop:initial values N}. With the additional assumption of the
lemma, we have that $x_1=\overline{x_0}$ and
 $y_1=\overline{y_0}$, and that $Q(x)$ is real.
The $j$ elements of $\FF^{\alpha,u_j,\alpha',0}([E_\infty] +l[F],0,\x)$
correspond to the $j$ solutions of the equation
\begin{equation}\label{equ:b}
\left(\frac{x_0-b}{\overline{x_0}-b} \right)^j =
 \frac{\overline{y_0}Q(x_0)}{y_0Q(\overline{x_0})}.
\end{equation}
The projective transformation
 $b\mapsto \frac{x_0-b}{\overline{x_0}-b} $ maps the
real line to the set of
complex numbers of absolute value 1, hence all
elements of $\FF^{\alpha,u_j,\alpha',0}([E_\infty] +l[F],0,\x)$ are
real.

Suppose now that $n$ is even, and  each point in 
$\R x_{E_0}\sqcup \R \x_{E_\infty}$ is a point of even order of
contact of $f(\C P^1)$ with $E_0\sqcup E_\infty$.
In particular $j$ is even, and the sign of   $Q(x)$ is
constant.
The real part of an element of $\FF^{\alpha,u_j,\alpha',0}([E_\infty]
+l[F],0,\x)$ is mapped to 
 $N^\pm\cup \R E_0\cup\R E_\infty$  depending on the sign of
$\frac{Q(b)}{a}$, that is to say
 depending on the sign of 
$$a=\frac{Q(x_0)}{(x_0-b)^j y_0}.$$ 
Denoting by $e^{i\theta}$ any $j$-th root of 
$\frac{\overline{y_0}Q(x_0)}{y_0Q(\overline{x_0})}
$, the $j$ solutions of Equation (\ref{equ:b}) are given by
$$b_k=\frac{x_0 -\overline{x_0}e^{i(\theta +
    \frac{2k\pi}{j})}}{1-e^{i(\theta + \frac{2k\pi}{j})}},
 \quad k=0,\ldots, j-1,$$
and so
$$(x_0-b_k)^j=\left( \frac{-x_0e^{i(\theta + \frac{2k\pi}{j})} +\overline{x_0}e^{i(\theta +
    \frac{2k\pi}{j})}}{1-e^{i(\theta + \frac{2k\pi}{j})}}\right)^j=
e^{i  k\pi}\left(\frac{e^{\frac{i\theta}{2}} Im\left(x_0\right)}{sin\left( \frac{\theta}{2}
  +    \frac{k\pi}{j}\right)}\right)^j.$$
Hence the sign of $\frac{a_0}{a_k}$ coincides with
$(-1)^k$, and the lemma is proved in the case of
$\FF^{\alpha,u_j,\alpha',0}([E_\infty] +l[F],0,\x)$.
The proof in the case of $\FF^{\alpha,0,\alpha',u_j}([E_\infty]
+l[F],0,\x)$ is analogous. 
\end{proof}

Both sets $E_0\setminus \R E_0$ and  $E_\infty\setminus \R E_\infty$  have two
connected components, denoted respectively by $E_0^\pm$ and
$E_\infty^\pm$ in such a way that a non-real fiber of $\N$ intersects
both $E_0^+$ and $E_\infty^+$, or both $E_0^-$ and $E_\infty^-$.
Given a complex algebraic curve $C$ in $\N$,  denote by $n_C^\pm$ the
sum of the multiplicities of intersection points of $C$ with $E_0\cup
E_\infty$ contained in $E_0^\pm\cup E_\infty^\pm$.

\begin{lemma}\label{lem:even}
Suppose that $n$ is even, and
let $C$ be a complex algebraic curve in $\N$ realizing the class
$[E_\infty] + l[F]$, intersecting $\R\N$ transversely and 
in finitely many points, and
with $C\cap\left(\R E_0\cup \R E_\infty \right)=\emptyset$. 
Then $n_C^++n_C^-$ is even and both numbers $|C\cap N^+|$
and $|C\cap N^-|$ have the same parity as $ \frac{n_C^+-n_C^-}{2}$.

\end{lemma}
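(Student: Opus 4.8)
The plan is to reduce the statement to a parity count of intersection points on the two sections $E_0\cup E_\infty$, using the fact that $\R\N$ is a torus when $n$ is even. First I would set up the topological picture: $\R\N\setminus(\R E_0\cup\R E_\infty)=N^+\sqcup N^-$, and the two non-real halves $E_0^+\cup E_\infty^+$ (resp. $E_0^-\cup E_\infty^-$) sit ``on one side'' in the sense that a non-real fiber $F$ meets either both $+$-halves or both $-$-halves. The first observation is that $n_C^++n_C^-$ is the total multiplicity of intersection of $C$ with $E_0\cup E_\infty$ minus the multiplicity along the real parts, and by hypothesis $C\cap(\R E_0\cup\R E_\infty)=\emptyset$, so $n_C^++n_C^-=C\cdot([E_0]+[E_\infty])$. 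Since $[C]=[E_\infty]+l[F]$ and $[E_0]=[E_\infty]+(4-n)[F]$, one computes $C\cdot([E_0]+[E_\infty])=2[E_\infty]^2+(4-n)[E_\infty]\cdot[F]+\ldots$; plugging in $[E_\infty]^2=-(4-n)=n-4$ and $[E_\infty]\cdot[F]=1$, $[F]^2=0$, this equals $2(n-4)+(4-n)+\text{(terms from }l[F])=n-4+\ldots$, and the even-$n$ hypothesis makes this even. (I would double-check the arithmetic, but the point is that evenness of $n$ forces $n_C^++n_C^-$ even; hence $\tfrac{n_C^+-n_C^-}{2}$ is an integer, since $n_C^+-n_C^-$ is then also even.)

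The core of the argument is the parity of $|C\cap N^\pm|$. I would argue via a homological/mod-2 intersection count on the torus $T=\R\N$. Think of $C\cap\R\N$ as a properly embedded $1$-manifold (by transversality) in $\R\N$ with boundary-like behavior only near $\R E_0\cup\R E_\infty$, which it avoids — so actually $C\cap\R\N$ is a closed $1$-manifold, a disjoint union of circles in the torus $T$. Each such circle lies entirely in $N^+$, entirely in $N^-$, or crosses $\R E_0\cup\R E_\infty$. Now the key device is to look at $C\cap(\R\N)$ together with the ``complex'' points: the real locus $\R E_0\cup\R E_\infty$ consists of curves in $T$, and I would compute the mod-2 intersection number of the cycle $[C\cap\R\N]$ with a suitable curve in $T$ in two ways. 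Alternatively — and this is the approach I would actually carry out — I would use the projection $\pi_E\colon\N\to E_\infty$ restricted to the reals: over the real circle $\R E_\infty$, the fibers are real circles, and $N^+,N^-$ are the two ``halves'' of the annular complement. A point of $C\cap N^\pm$ contributes, after projecting, to the count of how $C$ winds; the discrepancy between $|C\cap N^+|$ and $|C\cap N^-|$ is governed precisely by how $C$ enters and exits across $E_0$ and $E_\infty$, i.e.\ by $n_C^+$ versus $n_C^-$. Concretely: a real arc of $C$ running from $E_0$ to $E_0$ on the same side contributes an even number (it can be pushed off), while an arc from an $E_0^+$ (or $E_\infty^+$) crossing to the other section changes which of $N^\pm$ it sits in; tracking the ends gives $|C\cap N^+|\equiv\tfrac{n_C^+-n_C^-}{2}\pmod 2$ and symmetrically for $N^-$, because the ``$+$'' endpoints and ``$-$'' endpoints pair up among themselves modulo arcs that can be removed.

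The step I expect to be the main obstacle is making the bookkeeping of arcs rigorous — i.e.\ justifying that $C\cap\R\N$, after removing the (even-contributing) closed components lying in a single $N^\pm$ and the arcs with both endpoints on the same side, reduces to a collection of arcs each joining a $+$-half to a $-$-half, and that the count of these is $\tfrac{n_C^+-n_C^-}{2}$ up to parity. The cleanest way to do this, which I would use, is to pass to the double branched along $\R E_0\cup\R E_\infty$, or equivalently to observe that $\R E_0$ and $\R E_\infty$ are each null-homologous or not in $T=\R\N$ depending on $n-4\bmod 2$; since $n$ is even, $[\R E_0]=[\R E_\infty]=[\R E_\infty]+(4-n)[\R F]$ and with $n$ even $(4-n)$ is even so these two real loci are homologous in $H_1(T;\Z/2)$, say to a class $\ell$. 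Then $[C\cap\R\N]$ defines a class in $H_1(T;\Z/2)$, and $|C\cap N^+|\equiv|C\cap N^-|\pmod 2$ would follow if $[C\cap\R\N]=0$ in $H_1(T\setminus(\R E_0\cup\R E_\infty))$; the refinement by $\tfrac{n_C^+-n_C^-}{2}$ comes from the relative cycle $[C\cap\overline{N^+}]$ whose boundary on $\R E_0\cup\R E_\infty$ is read off exactly by $n_C^\pm$. I would spell out this relative-homology computation; everything else is routine transversality and the $H_2$ arithmetic already done above.
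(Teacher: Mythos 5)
Your computation of the parity of $n_C^++n_C^-$ is fine and agrees with the paper's one-line appeal to ``homological reasons'': indeed $n_C^++n_C^-=C\cdot([E_0]+[E_\infty])=2l+n-4$, which is even exactly when $n$ is. The problem is with the core of your argument. You treat $C\cap\R\N$ as ``a properly embedded $1$-manifold \dots\ a disjoint union of circles in the torus $T$,'' and all of your subsequent bookkeeping (arcs joining the two sides, closed components contributing evenly, the class $[C\cap\R\N]\in H_1(T;\Z/2)$, the relative cycle $[C\cap\overline{N^+}]$) rests on that picture. But $C$ is a \emph{non-real} complex curve: it has real dimension $2$, $\R\N$ has real dimension $2$, and the ambient $\N$ has real dimension $4$, so a transverse intersection is $0$-dimensional. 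This is exactly what the hypothesis ``intersecting $\R\N$ transversely and in finitely many points'' says, and it is the only reading under which the quantities $|C\cap N^\pm|$ in the conclusion are finite. There are no arcs and no $1$-cycles to push around, and the hypothesis $C\cap(\R E_0\cup\R E_\infty)=\emptyset$ means there are not even endpoints on the real sections for your pairing argument to act on. So the main step of your proof does not get off the ground, and I do not see how to repair it within the homology-of-the-torus framework: the statement genuinely couples the $0$-dimensional set $C\cap\R\N$ with the distribution of the (complex) intersection points $C\cap(E_0\cup E_\infty)$ between the two halves $E_0^\pm\cup E_\infty^\pm$, which is not captured by a class in $H_1(T;\Z/2)$.

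For comparison, the paper's proof proceeds quite differently: after the homological observation, it notes that all the quantities in the statement are deformation invariants (the parities cannot jump under a continuous deformation keeping the hypotheses), reduces by continuity to the explicit model curve $y=i(x-i)^a(x+i)^b$ with $a+b$ even, then further to $y=i(x-i)^a$ with $a$ even, and finally counts the real solutions of $iX^a\in\R_{>0}$, $X+i\in\R$ and $iX^a\in\R_{<0}$, $X+i\in\R$ directly. If you want a topological proof you would need a different mechanism --- for instance a linking-number or monodromy argument relating the finitely many real points of $C$ to the position of $C\cap(E_0\cup E_\infty)$ on the two hemispheres of each section --- but that is not what your proposal does.
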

\begin{proof}
By homological reasons $n_C^++n_C^-$ has the same parity as $n$,
and is indeed even.
Next by continuity, it is enough to consider the case when $C$ has equation 
$y=i(x-i)^{a}(x+i)^{b}$ with $a,b\in\Z$ and $a+b$ even. 
Since
$(x-i)^{a}(x+i)^{b}=(x-i)^{a-b}(x^2+1)^{b}$, 
we can further restrict
 to the case when $C$ has equation 
$y=i(x-i)^{a}$ with $a\in 2\Z$.
By setting $X=x-i$, intersection points of $C$ with $\R N^+$ and $\R N^-$
respectively correspond to the
solutions of the systems of equations
$$\left\{ \begin{array}{l}iX^{a}\in \R_{>0}\\  X+i\in\R\end{array}\right.
\quad \mbox{ and}\quad \left\{ \begin{array}{l}iX^{a}\in \R_{<0}\\  X+i\in\R\end{array}\right..  $$
The number of solutions of these two systems is easily determined graphically,
see Figure \ref{fig:even}. 
\end{proof}
\begin{figure}[h!]
\centering
\begin{tabular}{c} 
\includegraphics[height=3cm, angle=0]{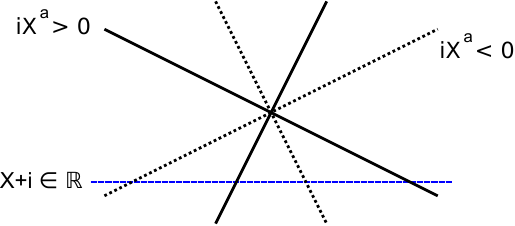}
\end{tabular}
\caption{Solutions of $iX^{a}\in \R$ and $  X+i\in\R$, with $a$ even}
\label{fig:even}
\end{figure}

\section{Floor diagrams relative to a conic}\label{sec:FD} 

\subsection{Floor diagrams}
A {\em weighted graph} is a graph $\Gamma$  equipped with 
a function $w:Edge(\Gamma)\to\Z_{>0}$.
The weight allows one to define the {\em divergence} at the vertices.
Namely, for a vertex $v\in Vert(\Gamma)$
we define the divergence $div(v)$ to be the sum of the weights of all
incoming edges minus the sum of the weights of all outgoing edges.

\begin{defi}
A connected weighted oriented graph $\D$ is called a
{\em floor diagram of genus $g$ and degree $d_\D$}
if the following conditions hold
\begin{itemize}
\item the oriented graph $\D$ is acyclic;
\item any element in $Vert^\infty(\D)$ is adjacent to exactly one edge of $\D$;
\item  $div(v)=2$ or $4$ for any 
$v\in Vert(\D)\setminus Vert^{\infty}(\D)$, and 
$div(v)\le  -1$
for every $v\in Vert^{\infty}(\D)$;
\item if  $div(v)=2$, then $v$ is a sink
  (i.e. all its adjacent edges are oriented toward $v$);
\item the first Betti number $b_1(\D)$ equals $g$;
\item one has
$$\sum_{v\in Vert^\infty (\D)}div(v)=-2d_\D. $$
\end{itemize}
A vertex $v\in Vert(\D)\setminus Vert^\infty(\D)$ is called {\em a floor of degree $\frac{div
  (v)}{2}$}.
\end{defi}
Formally, those objects should be called 
\emph{floor diagrams in $\C P^2$ relative to a conic}. However since
these are the only floor diagrams considered in this text, I opted for
an abusive but  shorter name.
 Note that there are slight differences with the original 
definition of
floor diagrams in \cite{Br7}, \cite{Br6b}, and \cite{Br6}.

Here are the convention I use to depict floor diagrams :
floors of degree 2 are represented by white ellipses; floors of degree 1 are
represented by grey ellipses; vertices in $Vert^\infty(\D)$ are not
represented; edges of $\D$
are represented by vertical lines, and
the orientation is
implicitly from down to up. We specify the weight of an edge only if
this latter is at least 2.

\begin{exa}
Figure  \ref{fig:ex FD} depicts all floor diagrams of  degree 1, 2
and 3 with  each edge in $Edge^\infty(\D)$  of weight 1.
\end{exa}

\begin{figure}[h]
\centering
\begin{tabular}{ccccccccc}
\includegraphics[width=1.5cm, angle=0]{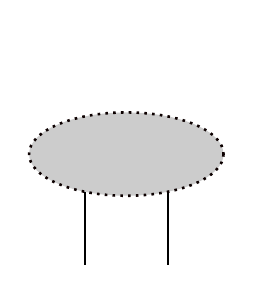}
&
\includegraphics[width=1.5cm, angle=0]{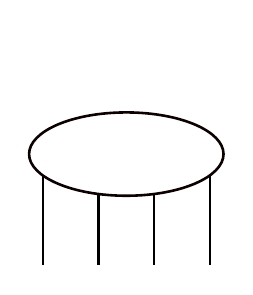}
&
\includegraphics[width=1.5cm, angle=0]{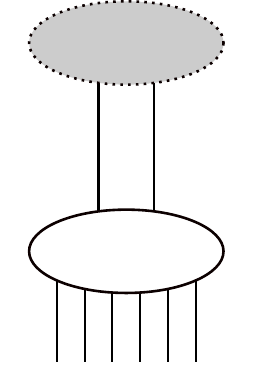}
&
\includegraphics[width=1.5cm, angle=0]{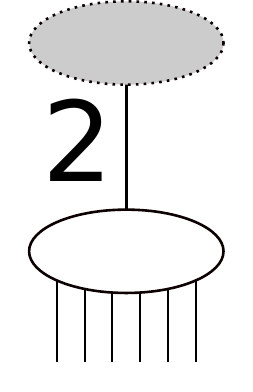}
&
\includegraphics[width=1.5cm, angle=0]{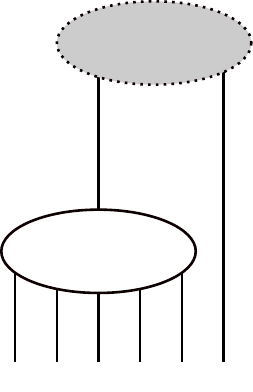}
\\
\\ a) $d=1$, $g=0$ & b)  $d=2$, $g=0$ & c)  $d=3$, $g=1$
 & d)  $d=3$, $g=0$ & e)  $d=3$, $g=0$
\end{tabular}
\caption{Examples of floor diagrams}
\label{fig:ex FD}
\end{figure}

A map $m$ between two partially ordered sets is said
to be \textit{increasing} if
$$m(i)>m(j)\Longrightarrow i>j$$
Note that a floor diagram 
inherits a partial ordering from the orientation of its
underlying graph.

\begin{defi}\label{def marking}
Choose two non-negative integers $n$ and $g$, a homology class $d\in
H_2(\X_n;\Z)$,  and two vectors $\alpha,\beta\in \Z_{\ge 0}^\infty$
such that
$$I\alpha+I\beta = d\cdot [E]. $$
Let $A_0,A_1,\ldots, A_n$ be some disjoint sets such that
$|A_i|=d\cdot [E_i]$ for
$i=1,\ldots, n$,  and
$$A_0=\left\{1,\ldots, d\cdot [D] -1 +g +|\alpha|+|\beta|\right\} $$
A $d$-marking of type $(\alpha,\beta)$ of a floor diagram $\D$ of genus
$g$ and degree $d\cdot [D]$   is a map $m  :
\bigcup_{i=0}^n A_i\rightarrow
\mathcal{D}$ such that 
\begin{enumerate}
\item the map $m$ is injective  and  increasing, with no floor of
  degree 1 of $\D$ contained in the image of $m$; 
\item for each vertex $v\in Vert^\infty(\D)$ adjacent to the edge $e\in Edge^\infty(\D)$,
  exactly one of the two elements $v$ and $e$ is in the image of $m$;
\item $m\left(\bigcup_{i=1}^n A_i \right)\subset Vert^\infty(\D)$;
\item   for
  each $i=1,\ldots, n$, 
a floor of $\D$ is adjacent to at most one edge adjacent to a vertex
in $m(A_i)$;
\item $m\left(\left\{ 1,\ldots, |\alpha| \right\}\right)= 
m(A_0)\cap Vert^\infty(\D)$;
\item for $1 \le k\le \alpha_j$, 
the edge adjacent to $m(\sum_{i=1}^{j-1}\alpha_i+k)$ is of weight $j$;
\item exactly $\beta_j$ edges in $Edge^\infty(\D)$ of weight $j$ are
  in the image of $m_{|A_0}$. 
\end{enumerate}
\end{defi}

Those conditions imply that all edges in
$m\left(\bigcup_{i=1}^n A_i \right)$ are of weight $1$.
A floor diagram 
enhanced with a  $d$-marking $m$ is called a \textit{$d$-marked floor
  diagram} and is said to be
marked by $m$.

\begin{defi}
Let  $\D$ be a floor diagram  equipped with two
$d$-markings
$$m  : A_0\cup \bigcup_{i=1}^n A_i\rightarrow
\mathcal{D}\quad \mbox{and} \quad m'  :
A_0\cup\bigcup_{i=1}^n A_i'\rightarrow
\mathcal{D}.$$
The markings $m$ and $m'$ 
are called
\textit{equivalent} if there exists an isomorphism of weighted oriented graphs
$\phi : \D\to\D$ and a  bijection 
$\psi: A_0\cup \bigcup_{i=1}^n A_i\rightarrow A_0\cup\bigcup_{i=1}^n
A_i'$, such that
\begin{itemize}
\item $\psi_{|A_0}=Id$;
\item  $\psi_{|A_i}:A_i\to A'_i$ is a bijection
for $i=1,\ldots,n$;
\item  $m'\circ \psi=\phi \circ m$.
\end{itemize}
\end{defi}

In particular, for $i=1,\ldots, n$, the equivalence class of $(\D,m)$ 
 depends on $m(A_i)$ rather than on
  $m_{|A_i}$.
From now on,  marked floor diagrams are considered up to equivalence.

\subsection{Enumeration of complex curves}
The complex multiplicity of a marked floor diagram is defined as in
\cite{Br6b,Br8}. 

\begin{defi}
The complex multiplicity of a marked floor diagram $(\mathcal D,m)$ of
type $(\alpha,\beta)$, 
denoted by $\mu^\C(\mathcal D,m)$, is defined as
$$\mu^\C(\mathcal D,m)=I^\beta
\prod_{e\in \text{Edge}(\D)\setminus Edge^\infty(\D)}w(e)^2 .$$
\end{defi}

Note that the complex multiplicity of a marked floor diagram
only depends on its type and the underlying floor diagram. 

\begin{thm}\label{NFD}
For any  $d\in H_2(\X_n;\Z)$ such that $d\cdot [D]\ge 1$, and any genus $g\ge 0$,  one has
$$GW_{\X_n}^{\alpha,\beta}(d,g)=\sum \mu^\C(\mathcal D,m)$$
where the sum is taken over all $d$-marked floor diagrams of
genus $g$ and type $(\alpha,\beta)$.
\end{thm}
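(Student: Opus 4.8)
The strategy is the one outlined in Section \ref{sec:CH FD}: degenerate $\X_n$ into $Y_{max}=\X_n\cup\N\cup\cdots\cup\N$, a chain obtained by iterating Li's degeneration along $E$, specializing exactly one point of $\x^\circ$ to each copy of $\N$, and then identifying the limit curves in $Y_{max}$ with $d$-marked floor diagrams of type $(\alpha,\beta)$ together with the correct count of their complex preimages. First I would set up the degeneration precisely: starting from a generic configuration $\x=\x^\circ\sqcup\x_E$ as in Section \ref{sec:relative Xn}, write $N^\circ=d\cdot[D]-1+g+|\beta|$ for the cardinality of $\x^\circ$, and perform $N^\circ$ successive degenerations to the normal cone of $E$ (respectively its strict transforms), pushing one marked point of $\x^\circ$ into the new $\N$-component at each step. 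By Li's degeneration formula \cite{Li02,Li04}, $GW^{\alpha,\beta}_{\X_n}(d,g)$ equals a sum over splittings of $d$, $g$, $\alpha$, $\beta$ and over matchings along the divisors $E_\infty=E_0$ of products of relative invariants $GW^{\cdots}_{\X_n}$ and $GW^{\cdots}_{\N}$ of the pieces, each matching edge weighted by its multiplicity; the condition $\alpha=0$, $\beta=(d\cdot[E])u_1$ at the \emph{bottom} $\X_n$-end is not imposed here, since in the statement $\x_E$ may be non-trivial, so one keeps the relative conditions $(\alpha,\beta)$ at the $\X_n$-end.

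\textbf{From the degeneration formula to floor diagrams.} The key combinatorial step is to organize the terms of the iterated degeneration formula into floor diagrams. Each copy of $\N$ in the chain, together with the single point of $\x^\circ$ assigned to it, contributes — by Propositions \ref{prop:generic N} and \ref{prop:initial values N} applied with $2d'\cdot[F]-1+0+|\beta'|+|\beta''|\in\{1,2\}$ — either a ``floor of degree $1$'' (a curve in class $[E_\infty]+l[F]$, contributing the value $1$), or the point is absorbed by a fiber/ramified-cover contribution sitting on an edge (contributing $GW_\N^{0,u_j,0,u_j}=1$ or, on non-marked levels, $GW_\N^{0,u_l,u_l,0}=\tfrac1l$, exactly producing the $w(e)^2$ and $I^\beta$ factors after one accounts for how many levels a given edge of weight $w$ traverses). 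The bottom $\X_n$-piece contributes the ``floors of degree $2$'' and the handling of the $n$ exceptional classes and of $\x_E$, governed by Proposition \ref{prop:initial values}; the relevant nonzero initial values $GW^{0,u_1}_{\X_n}([E_i],0)$, $GW^{2u_1,0}_{\X_n}([D],0)$, $GW^{u_2,0}_{\X_n}([D],0)$, $GW^{u_1,0}_{\X_n}([D]-[E_i],0)$, $GW^{0,0}_{\X_n}([D]-[E_i]-[E_j],0)$ are precisely what the divergence conditions ($div(v)\in\{2,4\}$, $div(v)\le-1$ at sources) and the marking axioms (1)--(7) of Definition \ref{def marking} encode. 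Thus I would construct a bijection between: (i) tuples (choice of chain length, choice of which level each marked point lands on, local limit curve on each level, matching data along all the interior divisors) appearing with nonzero coefficient in the iterated formula, modulo the symmetries of the formula; and (ii) pairs $(\D,m)$ of a floor diagram of genus $g$, degree $d\cdot[D]$, equipped with a $d$-marking of type $(\alpha,\beta)$, modulo equivalence. Under this bijection the product of local invariants and matching multiplicities in Li's formula equals $\mu^\C(\D,m)=I^\beta\prod_{e\in Edge(\D)\setminus Edge^\infty(\D)}w(e)^2$, where the $w(e)^2$ arises because each interior edge of weight $w$ is cut by the degeneration into segments, each contributing a factor $w$ from the gluing and a factor $\tfrac1w$ from $GW_\N^{0,u_w,u_w,0}=\tfrac1w$, telescoping to a single $w$, and an additional $w$ from the bottom gluing into the relative condition at $\X_n$; and the $I^\beta$ records the tangency multiplicities at $E$ imposed by $\beta$. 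That the limit curves genuinely contribute \emph{only} through this combinatorics — i.e. that each limit curve deforms back in exactly the number of ways predicted, with no extra multiple-cover corrections or non-reduced phenomena beyond those already isolated in the $l[E_i]$ exclusion and the $GW_\N^{0,u_l,u_l,0}$ factor — uses Proposition \ref{prop:generic N} (all relevant curves in $\N$ are immersed with no automorphisms except the explicitly described ramified covers of fibers) and Proposition \ref{prop:shoshu} on the $\X_n$-end, and is the place where the hypothesis $d\cdot[D]\ge1$ and $d\ne l[E_i]$ intervene.

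\textbf{Bookkeeping of markings and the genus.} I would then verify, term by term, that the seven axioms in Definition \ref{def marking} are forced. Axioms (2) and (5)--(7) encode on which levels the points of $\x^\circ$ and $\x_E$ land: a point of $\x_E$ on $E$ of contact order $j$ either becomes the label of a source adjacent to a weight-$j$ edge (contributing to $\alpha_j$, axiom (6)) or, if it is a free intersection, forces $\beta_j$ of the weight-$j$ boundary edges to be marked (axiom (7)); a point of $\x^\circ$ on a level either marks the floor created there (it cannot mark a degree-$1$ floor, axiom (1), because a degree-$1$ floor passing through a generic point already uses up its modulus via $GW_\N^{\alpha,0,\alpha',0}([E_\infty]+l[F],0)=1$ with no free point) or marks an edge of $Edge^\infty(\D)$ (axiom (2)). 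Axioms (3)--(4) reflect that the $n$ exceptional classes $[E_i]$ each contribute $d\cdot[E_i]$ weight-$1$ boundary edges, at most one per adjacent floor because of genericity of the blow-up points on $E$. The genus bookkeeping, $b_1(\D)=g$, follows from additivity of the arithmetic genus in Li's formula: each interior edge contributes a node to the limit curve, so the first Betti number of the dual graph equals the arithmetic genus of a connected limit curve, which is $g$. The main obstacle I anticipate is precisely this identification step — making the multi-indexed sum in the iterated degeneration formula collapse cleanly onto the (equivalence classes of) marked floor diagrams, in particular matching the overcounting inherent in the formula's sums over matchings and over orderings of the levels against the quotient by graph automorphisms and by the $\psi_{|A_i}$-ambiguity in the definition of equivalence of markings; this is the kind of careful but routine symmetry-counting that earlier floor-diagram papers \cite{Br7,Br6b,Br8} carry out in the toric case, and I would adapt their argument, the only new ingredients being the relative conditions at $E$ on the $\X_n$-end and the explicit $\N$-invariants of Propositions \ref{prop:generic N}--\ref{prop:initial values N} replacing their Hirzebruch-surface analogues.
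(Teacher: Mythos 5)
Your proposal follows essentially the same route as the paper's proof in Section \ref{sec:proof}: an iterated degeneration of $\X_n$ into a chain $\X_n\cup\N_{s+r}\cup\cdots\cup\N_0$ with one point of $\x^\circ$ per level (and $\x_E$ on the bottom copy), Proposition \ref{prop:degeneration} together with the initial values of Propositions \ref{prop:initial values} and \ref{prop:initial values N} to classify the limit curves, the construction of $(\D_{\overline C},m_{\overline C})$ from a limit curve, and the multiplicity matching of Lemmas \ref{lem:proof N3} and \ref{lem:proof N4}. The step you flag as the main obstacle (collapsing the iterated formula onto equivalence classes of marked floor diagrams and splitting $\mu^\C$ between the count of limit curves per diagram and the count of deformations per limit curve) is exactly what the paper's Lemmas \ref{lem:proof N1}--\ref{lem:proof N4} carry out.
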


As indicated in the introduction,  one easily
translates Theorem \ref{NFD} 
to a  Caporaso-Harris type formula
following the method exposed in
\cite{Br8}. One obtains in this way a formula similar to the one from
{\cite[Theorem 2.1]{Shu13}}.

\begin{exa}\label{ex1}
Theorem \ref{NFD} applied with $n\le 5$, $\alpha=0$, and
$\beta=(d\cdot [D])u_1$ gives
Gromov-Witten invariants of $X_n$.
In particular, as a simple application of Theorem \ref{NFD} one can use 
 floor diagrams depicted in Figure
\ref{fig:ex FD} to verify that
$$GW_{\C P^2}([D],0)= GW_{\C P^2}(2[D],0)=GW_{\C P^2}(3[D],1)=1\quad
\mbox{and}\quad GW_{\C P^2}(3[D],0 )=4+8=12.$$
\end{exa}

\begin{exa}\label{ex:NFD 4 and 6}
We illustrate Theorem \ref{NFD} with more details by computing
$GW_{\X_6}(4[D]-\sum_{i=1}^6[E_i],0)$ and $GW_{\X_6}(6[D]-2\sum_{i=1}^6[E_i],0)$.
These numbers have been first computed by Vakil {\cite{Vak2}}.

In Figure 
\ref{fig:FD42} are depicted all floor
diagrams of genus 0  admitting a
$(4[D]-\sum_{i=1}^6[E_i])$-marking of  type $(0,2u_1)$. 
Below 
each such floor diagram, I precised
 the sum of  complex multiplicity of all
$(4[D]-\sum_{i=1}^6[E_i])$-marked floor 
diagrams  of  type $(0,2u_1)$
with this underlying floor diagram (the signification of the array
attached to each floor diagram will be explained in Section \ref{sec:real FD}). In order to  make the
pictures clearer, 
I did not depict edges in $m\left( \bigcup_{i=1}^n A_i\right)$.
Theorem \ref{NFD} together with Figure \ref{fig:FD42}
 implies that
$$GW_{\X_6}(4[D]-\sum_{i=1}^6[E_i],0)= 616.$$

Similarly,  Figures \ref{fig:FD6} and \ref{fig:FD62} depict all floor
diagrams of genus 0 admitting a $(6[D]-2\sum_{i=1}^6[E_i])$-marking of
type $(0,0)$. Together with
Theorem \ref{NFD}, this imply that
$$GW_{\X_6}(6[D]-2\sum_{i=1}^6 [E_i],0)= 2002.$$
\end{exa}

\begin{figure}[h]
\centering
 \begin{tabular}{ccc}
\begin{tabular}{cc||c|c|c|c}
\multirow{5}{*}{\includegraphics[width=1.1cm,
    angle=0]{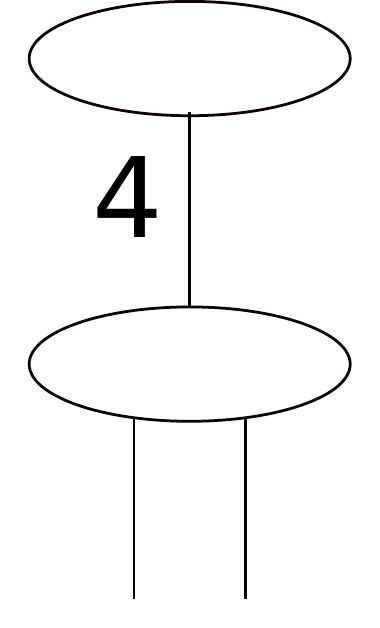}}&
$\kappa\backslash s$& 0 &  \multicolumn{2}{c|}{1} &  2 
\\ \cline{2-6}
&0 &  0& 0 &0 &0  
\\&1   &0 & 0& 0 & 0
\\  &2  &0 &0  &0 &0
\\ &3  &  0& 0& 0& 0
\end{tabular} &
\begin{tabular}{cc||c|c|c|c}
\multirow{5}{*}{\includegraphics[width=1.1cm,
    angle=0]{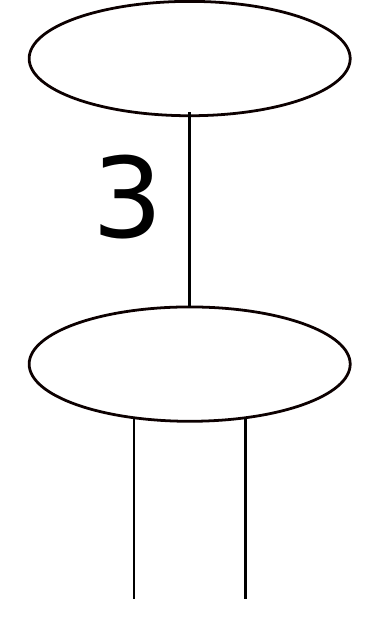}}&
$\kappa\backslash s$&0 &  \multicolumn{2}{c|}{1} & 2
\\ \cline{2-6}
&  0  & 6& 0 & 6 &   18
\\ & 1  & 4 & 0 & 4&   12
\\ & 2  & 2 & 0 & 2 &   6
\\ & 3  &  0&0 &0  &0 
\end{tabular} &
\begin{tabular}{cc||c|c|c|c}
\multirow{5}{*}{\includegraphics[width=1.1cm,
    angle=0]{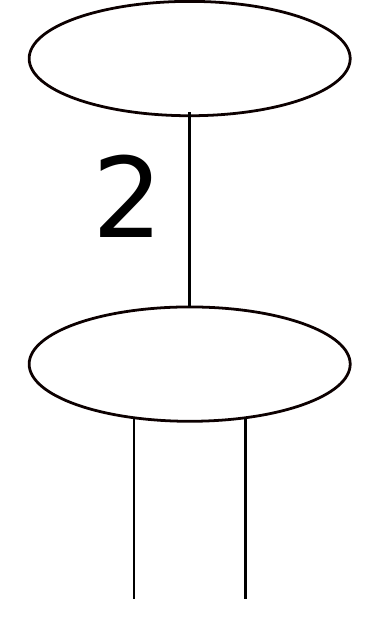}}&
$\kappa\backslash s$& 0 &  \multicolumn{2}{c|}{1} & 2 
\\ \cline{2-6}
&0 & 0&0 &0  & 0 
\\&1  &0 & 0& 0  &0 
\\  &2  & 0& 0 &0 &0
\\ &3  &0 & 0  &0 &0 
\end{tabular} 
\\  $\sum \mu^\C(\D,m) =  16$ &  $\sum \mu^\C(\D,m) = 54$&  $\sum \mu^\C(\D,m) = 60 $

\\ \\ 
\begin{tabular}{cc||c|c|c|c}
\multirow{5}{*}{\includegraphics[width=1.1cm,
    angle=0]{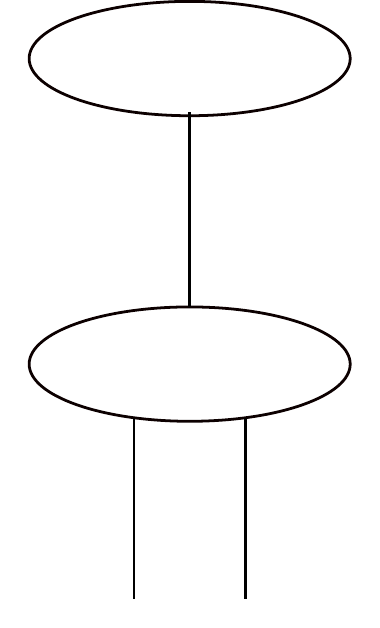}}&
$\kappa\backslash s$& 0 &  \multicolumn{2}{c|}{1} &2
\\ \cline{2-6}
&  0  & 20 & 0&  20&  20
\\ & 1  & 8 & 0& 8&  8
\\ & 2  &  4& 0& 4&  4
\\ & 3  &  0&0 & 0&0
\end{tabular} &
\begin{tabular}{cc||c|c|c|c}
\multirow{5}{*}{\includegraphics[width=1.1cm,
    angle=0]{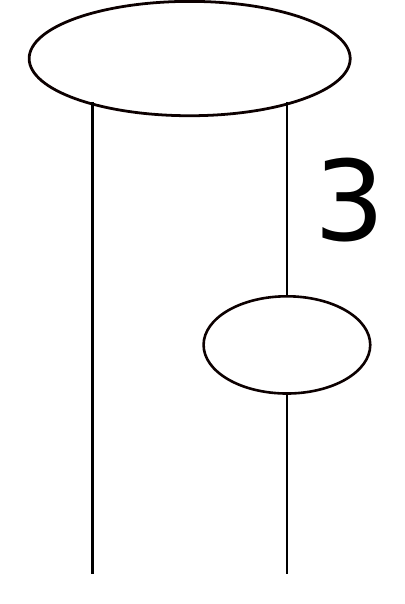}}&
$\kappa\backslash s$& 0 &  \multicolumn{2}{c|}{1} & 2 
\\ \cline{2-6}
&  0  & 4&  2& 0 &0
\\ & 1  & 4&  2& 0 &0
\\ & 2  & 4&  2& 0&0
\\ & 3  & 4&  2& 0&0
\end{tabular} &
\begin{tabular}{cc||c|c|c|c}
\multirow{5}{*}{\includegraphics[width=1.1cm,
    angle=0]{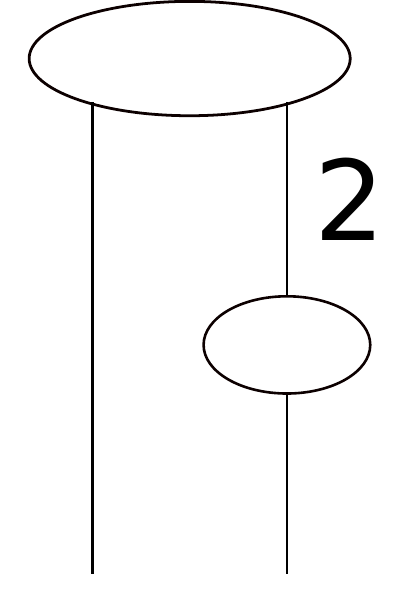}}&
$\kappa\backslash s$& 0 &  \multicolumn{2}{c|}{1} &  2 
\\ \cline{2-6}
&  0  & 0&0&0 &0
\\ & 1  &0 &0& 0&0
\\ & 2  &0 &0& 0&0
\\ & 3  &0 &0& 0&0
\end{tabular} 
\\  $\sum \mu^\C(\D,m) =  20$ &  $\sum \mu^\C(\D,m) = 36$&  $\sum \mu^\C(\D,m) = 96 $

\\\\
\begin{tabular}{cc||c|c|c|c}
\multirow{5}{*}{\includegraphics[width=1.1cm,
    angle=0]{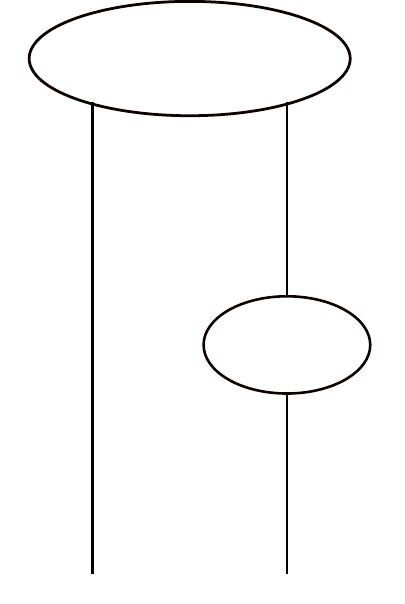}}&
$\kappa\backslash s$& 0 &  \multicolumn{2}{c|}{1} & 2
\\ \cline{2-6}
&  0  &60&  30 &0 &0 
\\ & 1  & 28 & 14 & 0  &0
\\ & 2  &  12 & 6&0&0 
\\ & 3  & 12 & 6&0&0 
\end{tabular} &
\begin{tabular}{cc||c|c|c|c}
\multirow{5}{*}{\includegraphics[width=1.1cm,
    angle=0]{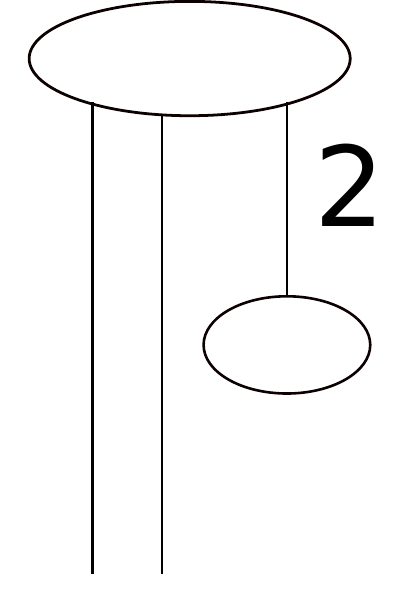}}&
$\kappa\backslash s$& 0 &  \multicolumn{2}{c|}{1} &  2
\\ \cline{2-6}
&  0  & 0&0&0 &0
\\ & 1  &0&0 &0 &0
\\ & 2  &0&0 &0 &0
\\ & 3  &0&0 &0 &0
\end{tabular} &
\begin{tabular}{cc||c|c|c|c}
\multirow{5}{*}{\includegraphics[width=1.1cm,
    angle=0]{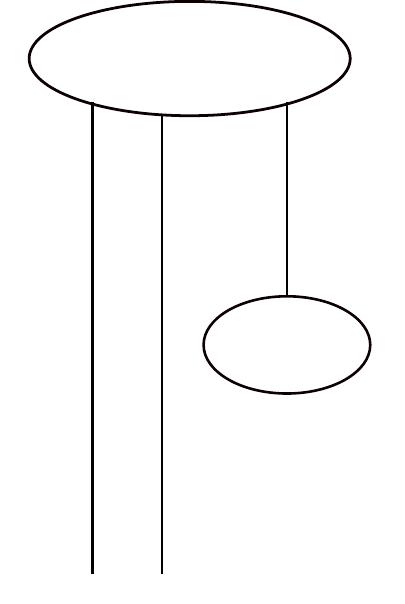}}&
$\kappa\backslash s$& 0 &  \multicolumn{2}{c|}{1} &  2
\\ \cline{2-6}
&  0  & 36 & 6 & 6 &  6
\\ & 1  & 24  & 4 & 4 & 4
\\ & 2  & 12 & 2&2 &2
\\ & 3  & 0&0&0&0 
\end{tabular} 
\\  $\sum \mu^\C(\D,m) =  60$ &  $\sum \mu^\C(\D,m) = 24$&  $\sum \mu^\C(\D,m) = 36 $

\\\\
\begin{tabular}{cc||c|c|c|c}
\multirow{5}{*}{\includegraphics[width=1.1cm,
    angle=0]{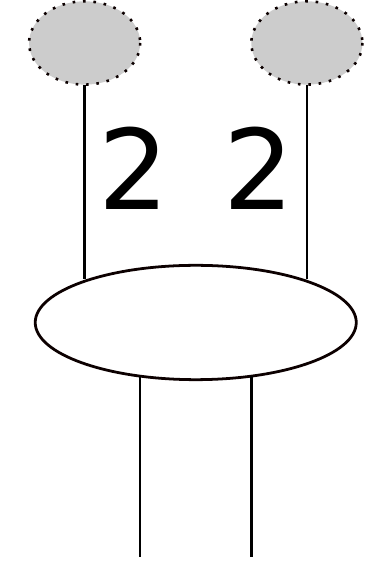}}&
$\kappa\backslash s$&0 &  \multicolumn{2}{c|}{1} &  2
\\ \cline{2-6}
&  0  & 0&0&0 &0
\\ & 1  & 0&0  &0 &0
\\ & 2  &0 &0 &0 &0
\\ & 3  &0 &0&0 &0
\end{tabular} &
\begin{tabular}{cc||c|c|c|c}
\multirow{5}{*}{\includegraphics[width=1.1cm,
    angle=0]{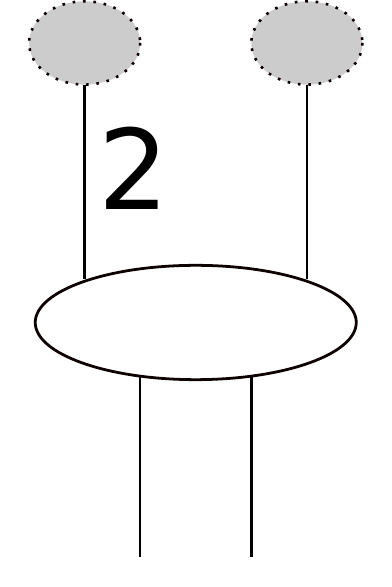}}&
$\kappa\backslash s$& 0 &  \multicolumn{2}{c|}{1} & 2
\\ \cline{2-6}
&  0  & 0&0& 0&0
\\ & 1  &0&0 &0 &0
\\ & 2  &0&0 &0 &0
\\ & 3  &0&0 &0 &0
\end{tabular} &
\begin{tabular}{cc||c|c|c|c}
\multirow{5}{*}{\includegraphics[width=1.1cm,
    angle=0]{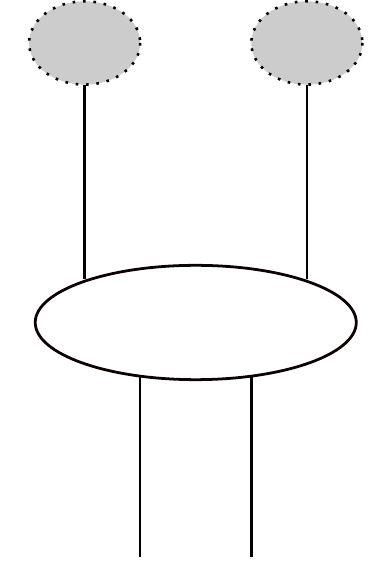}}&
$\kappa\backslash s$&0 &  \multicolumn{2}{c|}{1} & 2
\\ \cline{2-6}
&  0  & 30&  0 & 30 &  30
\\ & 1  & 12 &0 &12&  12
\\ & 2  & 2 & 0& 2&  2
\\ & 3  &0 &0&0&0 
\end{tabular} 
\\  $\sum \mu^\C(\D,m) =  16$ &  $\sum \mu^\C(\D,m) = 48$&  $\sum \mu^\C(\D,m) =  30$

\\\\
\begin{tabular}{cc||c|c|c|c}
\multirow{5}{*}{\includegraphics[width=1.1cm,
    angle=0]{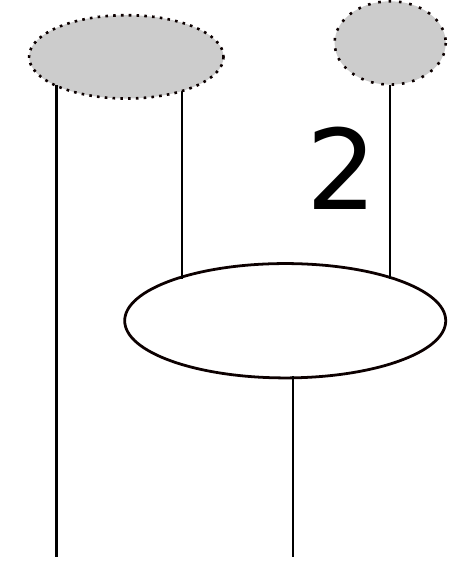}}&
$\kappa\backslash s$& 0 &  \multicolumn{2}{c|}{1} & 2 
\\ \cline{2-6}
&  0  & 0&0&0 &0
\\ & 1  & 0& 0 &0 &0
\\ & 2  &0&0  &0 &0
\\ & 3  &0&0 & 0&0
\end{tabular} &
\begin{tabular}{cc||c|c|c|c}
\multirow{5}{*}{\includegraphics[width=1.1cm,
    angle=0]{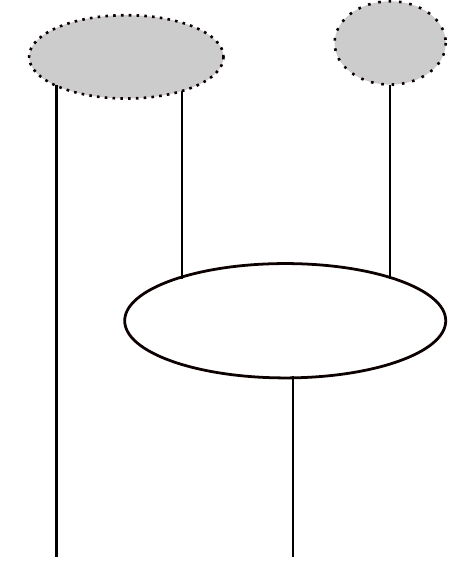}}&
$\kappa\backslash s$&0 &  \multicolumn{2}{c|}{1} & 2 
\\ \cline{2-6}
&  0  & 60&  36 & 0&0
\\ & 1  & 40 &24  & 0&0
\\ & 2  & 20 & 12& 0&0
\\ & 3  & 0&0&0 &0
\end{tabular} &
\begin{tabular}{cc||c|c|c|c}
\multirow{5}{*}{\includegraphics[width=1.1cm,
    angle=0]{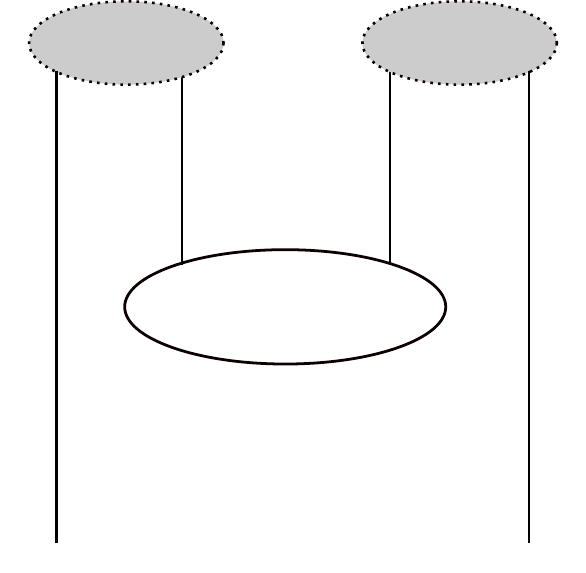}}&
$\kappa\backslash s$& 0 &  \multicolumn{2}{c|}{1} & 2 
\\ \cline{2-6}
&  0  & 20& 6& 0&0
\\ & 1  &20& 6&  0&0
\\ & 2  &20& 6&  0&0
\\ & 3  &20& 6&  0&0
\end{tabular} 
\\  $\sum \mu^\C(\D,m) = 40 $ &  $\sum \mu^\C(\D,m) = 60$&  $\sum \mu^\C(\D,m) =  20$

\end{tabular}
\caption{$(4[D]-\sum_{i=1}^6 [E_i])$-floor diagrams of genus
  0 and type $(0,2u_1)$}
\label{fig:FD42}
\end{figure}

\begin{figure}[h]
\centering
\begin{tabular}{ccc}
\begin{tabular}{cc||c|c|c}
\multirow{7}{*}{\includegraphics[width=1.1cm,
    angle=0]{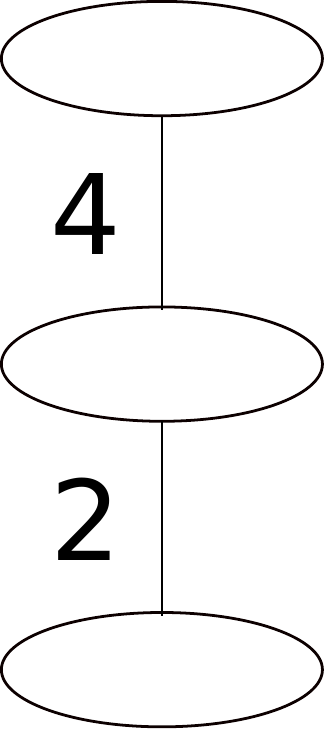}}&
$\kappa,\epsilon\backslash s$& 0 & 1 & 2
\\ \cline{2-5}
&  0  &  0 & 0 &0
\\ & 1  & 0 & 0&0
\\ & 2  & 0 & 0&0
\\ & 3  & 0 & 0&0
\\ \cline{2-5}
& 0  &  0& 0&0
\\&  1  & 0 &0 &0
\end{tabular} &
\begin{tabular}{cc||c|c|c}
\multirow{7}{*}{\includegraphics[width=1.1cm,
    angle=0]{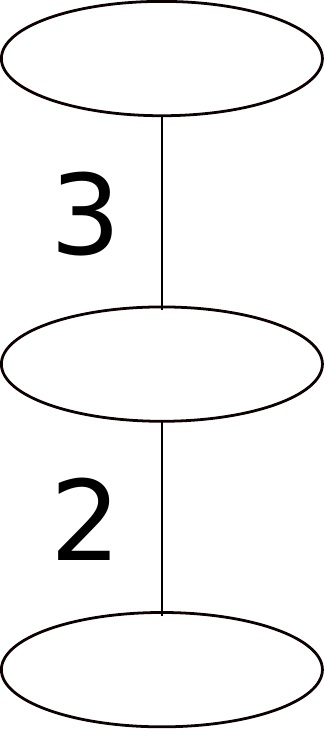}}&
$\kappa,\epsilon\backslash s$& 0 & 1 & 2
\\ \cline{2-5}
&  0  & 0&0 &0
\\ & 1  &0 & 0&0
\\ & 2  & 0& 0&0
\\ & 3  & 0& 0&0
\\ \cline{2-5}
& 0  & 0& 0&0
\\&  1  &0 & 0&0
\end{tabular} &
\begin{tabular}{cc||c|c|c}
\multirow{7}{*}{\includegraphics[width=1.1cm,
    angle=0]{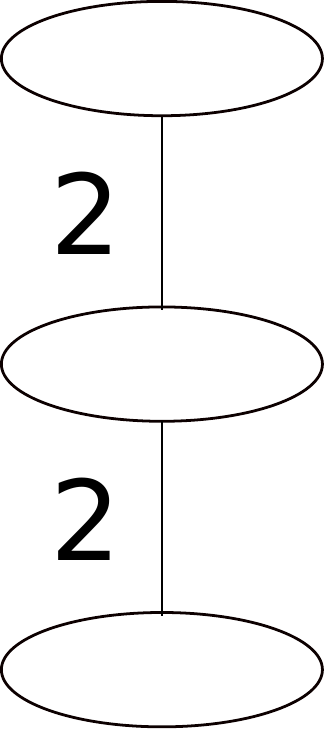}}&
$\kappa,\epsilon\backslash s$& 0 & 1 & 2
\\ \cline{2-5}
&  0  & 0&0 &0
\\ & 1&0 & 0&0
\\ & 2& 0& 0&0
\\ & 3& 0& 0&0
\\ \cline{2-5}
& 0  & 48& 24& 12
\\&  1  &48 & 24& 12
\end{tabular} 
\\  $\sum \mu^\C(\D,m) = 64 $ &  $\sum \mu^\C(\D,m) = 216$&  $\sum \mu^\C(\D,m) =  240$

\\\\\begin{tabular}{cc||c|c|c}
\multirow{7}{*}{\includegraphics[width=1.1cm,
    angle=0]{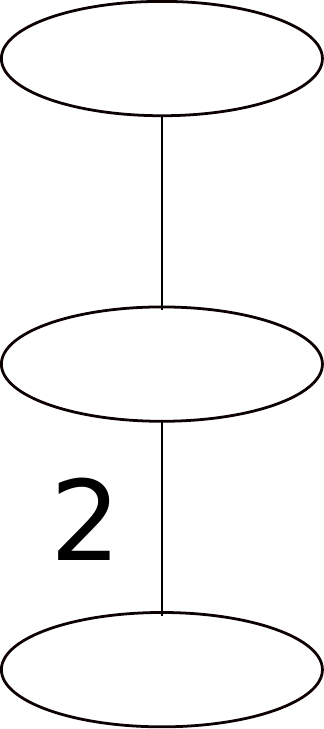}}&
$\kappa,\epsilon\backslash s$& 0 & 1 & 2
\\ \cline{2-5}
&  0  & 0 &0 &0
\\ & 1  & 0& 0&0
\\ & 2  & 0& 0&0
\\ & 3  & 0& 0&0
\\ \cline{2-5}
& 0  & 0& 0&0
\\&  1  &0 & 0&0
\end{tabular} &
\begin{tabular}{cc||c|c|c}
\multirow{7}{*}{\includegraphics[width=1.1cm,
    angle=0]{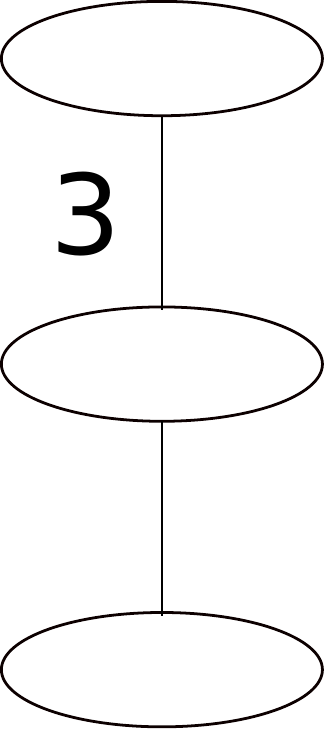}}&
$\kappa,\epsilon\backslash s$& 0 & 1 & 2
\\ \cline{2-5}
&  0  & 6& 6& 18
\\ & 1  &4 &4 & 12
\\ & 2  &2 & 2&6
\\ & 3  &0 & 0&0
\\ \cline{2-5}
& 0  & 0& 0&0
\\&  1  & 0& 0&0
\end{tabular} &
\begin{tabular}{cc||c|c|c}
\multirow{7}{*}{\includegraphics[width=1.1cm,
    angle=0]{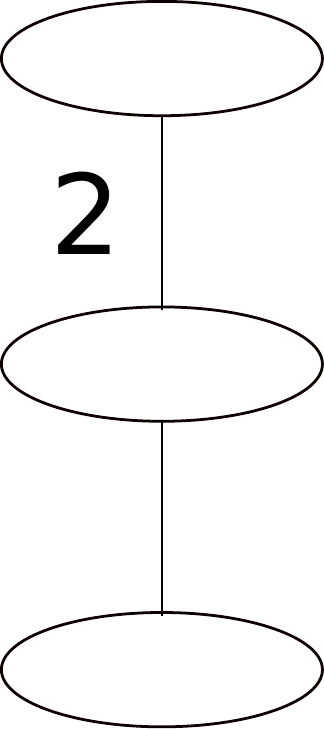}}&
$\kappa,\epsilon\backslash s$& 0 & 1 & 2
\\ \cline{2-5}
&  0  & 0& 0&0
\\ & 1  &0 & 0&0
\\ & 2  & 0& 0&0
\\ & 3  & 0& 0&0
\\ \cline{2-5}
& 0  & 0& 0&0
\\&  1  &0 & 0&0
\end{tabular} 
\\  $\sum \mu^\C(\D,m) =  80$ &  $\sum \mu^\C(\D,m) = 54$ &  $\sum \mu^\C(\D,m) =  120$

\\\\\begin{tabular}{cc||c|c|c}
\multirow{7}{*}{\includegraphics[width=1.1cm,
    angle=0]{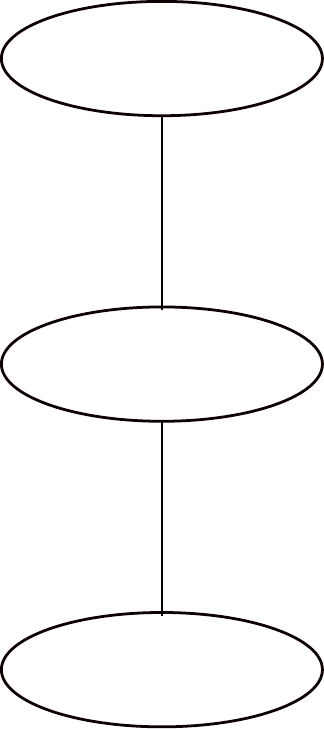}}&
$\kappa,\epsilon\backslash s$& 0 & 1 & 2
\\ \cline{2-5}
&  0  & 60& 60&60
\\ & 1  & 16& 16&16
\\ & 2  & 4& 4&4
\\ & 3  & 0& 0&0
\\ \cline{2-5}
& 0  & 0& 0&0
\\&  1  &0 & 0&0
\end{tabular} &
\begin{tabular}{cc||c|c|c}
\multirow{7}{*}{\includegraphics[width=1.1cm,
    angle=0]{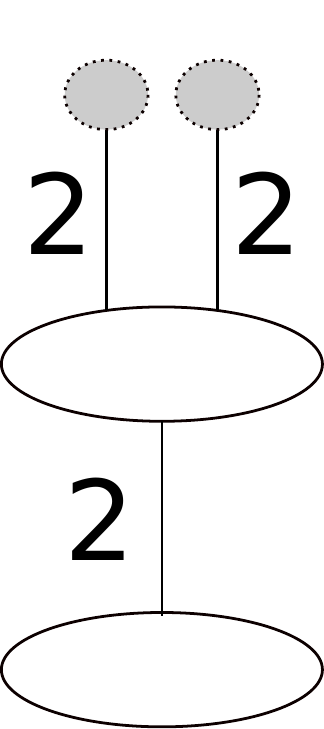}}&
$\kappa,\epsilon\backslash s$& 0 & 1 & 2
\\ \cline{2-5}
&  0  & 0& 0&0
\\ & 1  &0 & 0&0
\\ & 2  &0 & 0&0
\\ & 3  &0 & 0&0
\\ \cline{2-5}
& 0  & 64& 32&16
\\&  1  & 0& 0&0
\end{tabular} &
\begin{tabular}{cc||c|c|c}
\multirow{7}{*}{\includegraphics[width=1.1cm,
    angle=0]{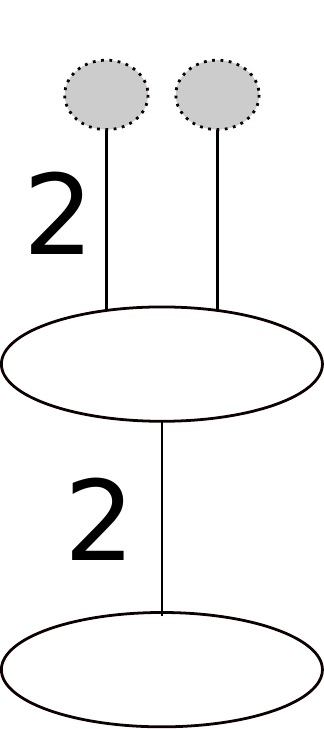}}&
$\kappa,\epsilon\backslash s$& 0 &  1& 2
\\ \cline{2-5}
&  0  & 0& 0&0
\\ & 1  &0 & 0&0
\\ & 2  & 0& 0&0
\\ & 3  & 0& 0&0
\\ \cline{2-5}
& 0  & 0& 0&0
\\&  1  &0 & 0&0
\end{tabular} 
\\  $\sum \mu^\C(\D,m) =  60$ &  $\sum \mu^\C(\D,m) =64 $&  $\sum \mu^\C(\D,m) = 192 $

\\\\\begin{tabular}{cc||c|c|c}
\multirow{7}{*}{\includegraphics[width=1.1cm,
    angle=0]{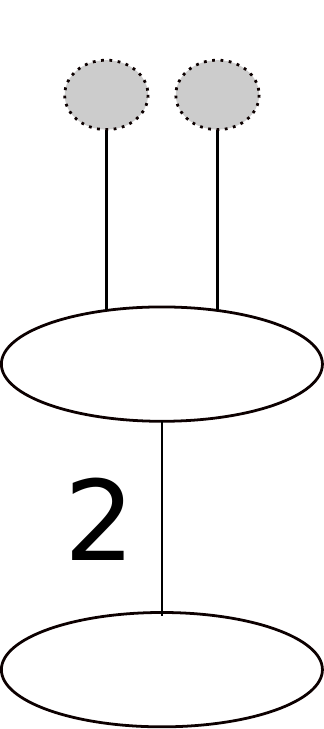}}&
$\kappa,\epsilon\backslash s$& 0 & 1& 2
\\ \cline{2-5}
&  0  & 0& 0&0
\\ & 1  &0 & 0&0
\\ & 2  &0 & 0&0
\\ & 3  &0 & 0&0
\\ \cline{2-5}
& 0  & 0& 0&0
\\&  1  &0 & 0&0
\end{tabular} &
\begin{tabular}{cc||c|c|c}
\multirow{7}{*}{\includegraphics[width=1.1cm,
    angle=0]{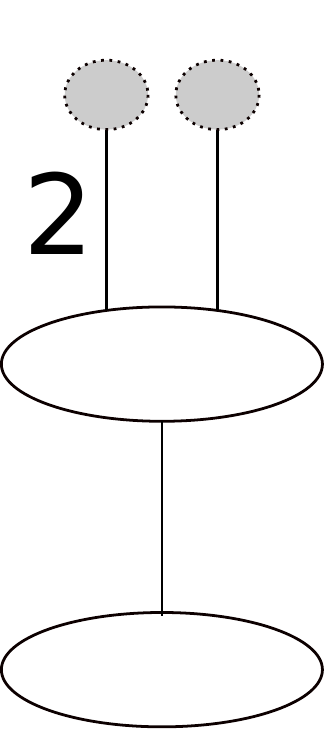}}&
$\kappa,\epsilon\backslash s$& 0 & 1 & 2
\\ \cline{2-5}
&  0  & 0& 0&0
\\ & 1  &0 & 0&0
\\ & 2  &0 & 0&0
\\ & 3  &0 & 0&0
\\ \cline{2-5}
& 0  & 0& 0&0
\\&  1  &0 & 0&0
\end{tabular} &
\begin{tabular}{cc||c|c|c}
\multirow{7}{*}{\includegraphics[width=1.1cm,
    angle=0]{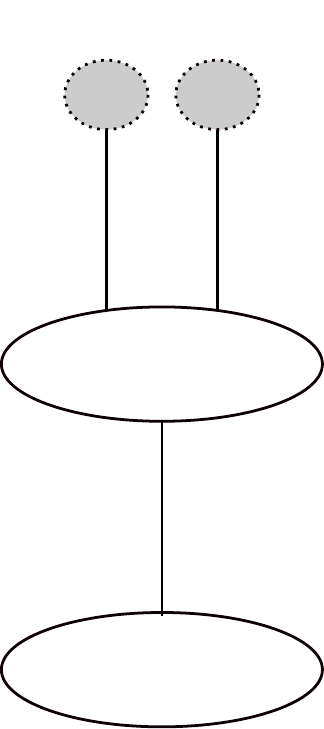}}&
$\kappa,\epsilon\backslash s$& 0 & 1 & 2
\\ \cline{2-5}
&  0  & 66& 66&66
\\ & 1  & 28& 28&28
\\ & 2  & 6& 6&6
\\ & 3  & 0& 0&0
\\ \cline{2-5}
& 0  & 0& 0&0
\\&  1  &0 & 0&0
\end{tabular} 
\\  $\sum \mu^\C(\D,m) =  120$ &  $\sum \mu^\C(\D,m) = 48$&  $\sum \mu^\C(\D,m) =  66$

\end{tabular}
\caption{$(6[D]-2\sum_{i=1}^6 [E_i])$-floor diagrams of genus 0 and type $(0,0)$}
\label{fig:FD6}
\end{figure}

\begin{figure}[h]
\centering
\begin{tabular}{ccc}
\begin{tabular}{cc||c|c|c}
\multirow{7}{*}{\includegraphics[width=1.6cm,
    angle=0]{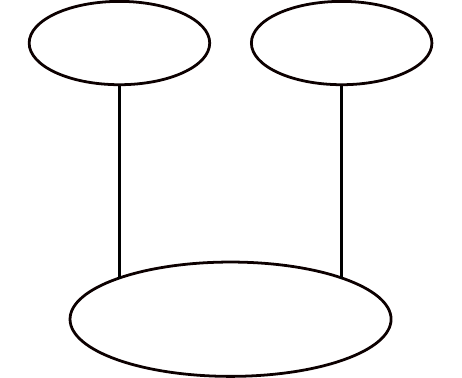}}&
$\kappa,\epsilon\backslash s$& 0 & 1 & 2
\\ \cline{2-5}
&  0  & 60& 60& 20
\\ & 1  &  24 & 24& 8
\\ & 2  &  12& 12& 4
\\ & 3  & 0& 0&0
\\ \cline{2-5}
& 0  & 0& 0&0
\\&  1  &0 & 0&0
\end{tabular} &
\begin{tabular}{cc||c|c|c}
\multirow{7}{*}{\includegraphics[width=1.6cm,
    angle=0]{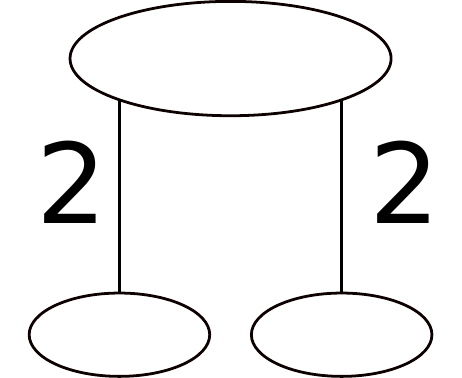}}&
$\kappa,\epsilon\backslash s$& 0 & 1 & 2
\\ \cline{2-5}
&  0  & 0& 0& 8
\\ & 1  &0 &0 & 8 
\\ & 2  &0 & 0&8
\\ & 3  &0 & 0&8
\\ \cline{2-5}
& 0  & 48& 8& -4
\\&  1  &48 &8 & -4
\end{tabular} &
\begin{tabular}{cc||c|c|c}
\multirow{7}{*}{\includegraphics[width=1.6cm,
    angle=0]{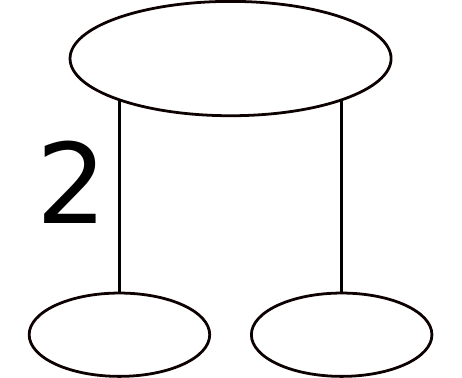}}&
$\kappa,\epsilon\backslash s$& 0 & 1 & 2
\\ \cline{2-5}
&  0  & 0& 0&0
\\ & 1  &0 & 0&0
\\ & 2  & 0& 0&0
\\ & 3  & 0& 0&0
\\ \cline{2-5}
& 0  & 0& 0&0
\\&  1  &0 & 0&0
\end{tabular} 
\\  $\sum \mu^\C(\D,m) = 60 $ &  $\sum \mu^\C(\D,m) = 48$&  $\sum \mu^\C(\D,m) =  144$

\\\\\begin{tabular}{cc||c|c|c}
\multirow{7}{*}{\includegraphics[width=1.6cm,
    angle=0]{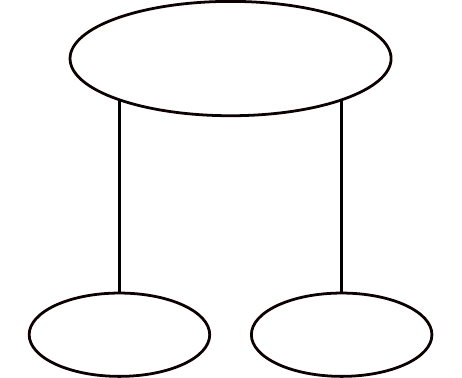}}&
$\kappa,\epsilon\backslash s$& 0 & 1 & 2
\\ \cline{2-5}
&  0  & 90& 30& 30
\\ & 1  & 36& 12& 16 
\\ & 2  & 6& 2& 10
\\ & 3  & 0& 0& 12
\\ \cline{2-5}
& 0  & 0& 0&0
\\&  1  &0 &0 &0
\end{tabular} &
\begin{tabular}{cc||c|c|c}
\multirow{7}{*}{\includegraphics[width=1.1cm,
    angle=0]{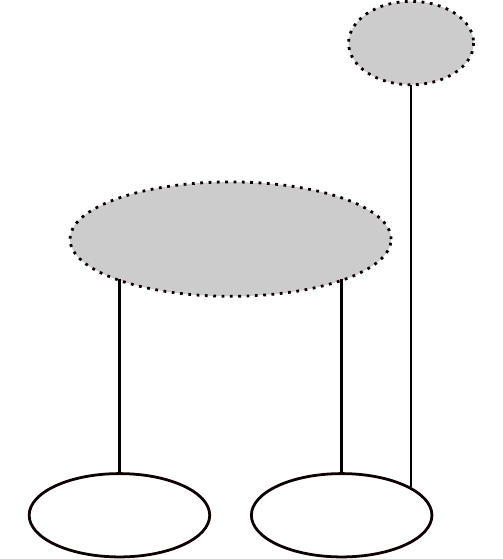}}&
$\kappa,\epsilon\backslash s$& 0 & 1 & 2
\\ \cline{2-5}
&  0  & 120& 48& 24
\\ & 1  & 80& 32&16
\\ & 2  & 40& 16&8
\\ & 3  & 0& 0&0
\\ \cline{2-5}
& 0  & 0& 0&0
\\&  1  &0 & 0&0
\end{tabular} &
\begin{tabular}{cc||c|c|c}
\multirow{7}{*}{\includegraphics[width=1.1cm,
    angle=0]{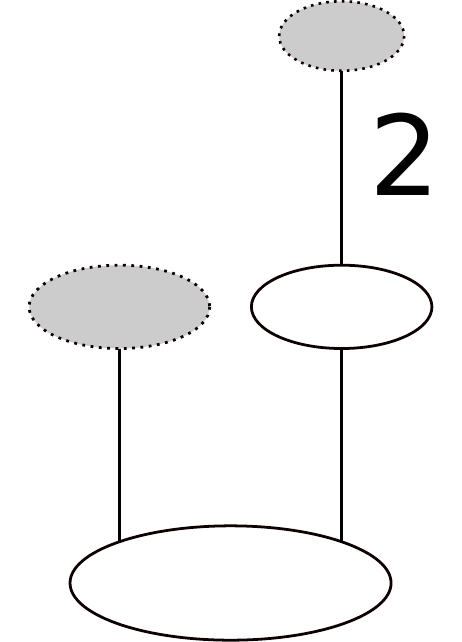}}&
$\kappa,\epsilon\backslash s$& 0 & 1 & 2
\\ \cline{2-5}
&  0  & 0& 0&0
\\ & 1  &0 & 0&0
\\ & 2  &0 & 0&0
\\ & 3  &0 & 0&0
\\ \cline{2-5}
& 0  & 0& 0&0
\\&  1  &0 & 0&0
\end{tabular} 
\\  $\sum \mu^\C(\D,m) =  90$ &  $\sum \mu^\C(\D,m) = 120$&  $\sum \mu^\C(\D,m) =  96$

\\\\ &
\begin{tabular}{cc||c|c|c}
\multirow{7}{*}{\includegraphics[width=1.1cm,
    angle=0]{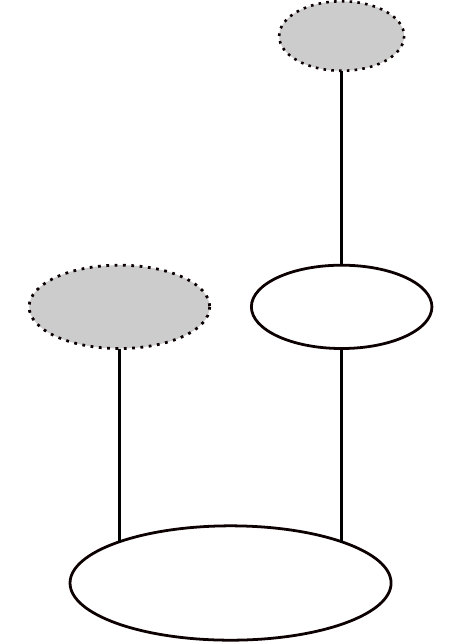}}&
$\kappa,\epsilon\backslash s$& 0 & 1 & 0
\\ \cline{2-5}
&  0  & 120& 120& 60
\\ & 1  & 48& 48& 24
\\ & 2  & 8& 8& 4
\\ & 3  & 0& 0& 0
\\ \cline{2-5}
& 0  & 0& 0&0
\\&  1  &0 & 0&0
\end{tabular}& 
\\  &$\sum \mu^\C(\D,m) =  120$&

\end{tabular}
\caption{$(6[D]-2\sum_{i=1}^6 [E_i])$-floor diagrams of genus 0 and type $(0,0)$, continued}
\label{fig:FD62}
\end{figure}

\subsection{Enumeration of real rational curves}\label{sec:real FD}
Let $(\mathcal D,m)$  be a $d$-marked
 floor diagram of  genus $0$, and let
 $\alpha^\Re,\beta^\Re,\alpha^\Im,\beta^\Im\in\Z_{\ge 0}^\infty$ such that
$$I\alpha^\Re+I\beta^\Re +2I\beta^\Im+2I\beta^\Im=d\cdot [E]. $$
Let $\zeta=d\cdot [D] -1 +|\alpha^\Re|+|\beta^\Re| +2|\alpha^\Im|+2|\beta^\Im|$, and
 choose two  integers $r,s\ge 0$ satisfying
 $\zeta =r +2s +|\alpha^\Re| +2|\alpha^\Im|$. 

The set $\{i,i+1\}\subset A_0$ is a called \textit{$s$-pair} if
 either $i=|\alpha^\Re|+2k-1$ with $1\le k\le |\alpha^\Im|$, or
$i=|\alpha^\Re| +2|\alpha^\Im|+2k-1$ with  $1\leq k \leq s$.
 Denote by
$\Im (m,s)$ the union of all the $s$-pairs $\{i,
i+1\}$ where $m(i)$ is not adjacent to  $m(i+1)$.
Let $\psi_{0,s}:\{1,\ldots, \zeta\} \rightarrow \{1,\ldots, \zeta\} $ be
the bijection
defined by  $\psi_{0,s} (i)=i$ if $i\notin
 \Im(m,s)$, and by $\psi_{0,s}(i)=j$ if
$\{i,j\}$ is a $s$-pair  contained in $ \Im(m,s)$.
Note that  $\psi_{0,s}$ is an involution, and that $\psi_{0,0}=Id$.

Now chose an integer $0\le \kappa\le \frac{n}{2}$ such that
$d\cdot [E_{2i-1}]=d\cdot [E_{2i}]$ for $i=1,\ldots,\kappa$.
For $i=2\kappa+1,\ldots, n$,  define $\psi_{i,\kappa}$ to be the identity on
$A_i$.
For  $i=1,\ldots, \kappa$, choose a bijection 
$\psi_{2i-1,\kappa}: A_{2i-1}\to A_{2i}$, 
and   define $\psi_{2i,\kappa}=\psi_{2i-1,\kappa}^{-1}$.
Finally  define the involution  $\rho_{s,\kappa}:\bigcup_{i=0}^n A_i\to
\bigcup_{i=0}^n A_i$ by setting $\rho_{s,\kappa|A_0}=
\psi_{0,s}$, and $\rho_{s,\kappa|A_i}=
\psi_{i,\kappa}$ for  $i=1,\ldots, n$.
Note that $\rho_{s,\kappa}=Id$ if $s=\kappa=0$.

\begin{defi}\label{defi real}
A $d$-marked floor diagram $(\mathcal D,m)$ of
genus 0
 is called $(s,\kappa)$-real if the two marked floor diagrams
$(\mathcal{D},m)$ and
$(\mathcal{D},m\circ \rho_{s,\kappa})$ are
equivalent.

A $(s,\kappa)$-real $d$-marked floor diagram $(\mathcal D,m)$ is said
to be of type $(\alpha^\Re,\beta^\Re,\alpha^\Im,\beta^\Im)$ if
\begin{enumerate}
\item the marked floor
diagram  $(\mathcal D,m)$ is of type
$(\alpha^\Re+2\alpha^\Im,\beta^\Re+2\beta^\Im)$; 
\item exactly  $2\beta^\Im_j$ edges of weight $j$ are contained in
  $Edge^{\infty}(\D)\cap 
m\left(\Im(m,s)\right)$ for any $j\ge 1$.
\end{enumerate}
\end{defi}

The set of $(s,\kappa)$-real
$d$-marked floor 
diagrams of genus $0$ and
of type $(\alpha^\Re,\beta^\Re,\alpha^\Im,\beta^\Im)$
is denoted by $\Phi^{\alpha^\Re,\beta^\Re,\alpha^\Im,\beta^\Im}(d,s,\kappa)$.
Note that the involution  $\rho_{s,\kappa}$ induces an involution,
 denoted by  $\rho_{m,s,\kappa}$, on the underlying floor
diagram of a real marked floor diagram.

The set of pairs of floors of
$\D$
 exchanged by $\rho_{m,s,\kappa}$ is denoted by $Vert_\Im(\D)$. The
 subset of $Vert_\Im(\D)$ formed by floors of degree $i$
is denoted by 
$Vert_{\Im,i}(\D)$.
To a pair  $\{v,v'\}\in Vert_\Im(\D)$, we associate the  following
numbers:
\begin{itemize}
\item $o_v$ is the sum of
the degree of $v$ and the number of its adjacent edges
which are in their turn adjacent to  $m\left(\bigcup_{i=2\kappa+1}^n
A_i\right)$;
\item $o'_v$ is the number of edges of weight  $2+ 4l$ adjacent to $v$.
\end{itemize}

The set of edges of $\D$ which are fixed (resp. exchanged) by
 $\rho_{m,s,\kappa}$ is
 denoted  by $Edge_\Re(\D)$ (resp. $Edge_\Im(\D)$). The number 
of edges contained in 
$m\left(\{\zeta -r+1,\ldots,\zeta\}\right)$ is denoted
 by $r_m$,
 and  the
 number of edges contained in 
$Edge_\Re(\D)\cap m\left(\{1,\ldots,\zeta-r\}\right)$
is denoted
 by $r'_m$.

If $n=2\kappa$ and  $\epsilon\in\{0,1\}$,  
a marked floor diagram $(\D,m)$ is said to be
\emph{$\epsilon$-sided} if  any edge in $Edge_\Re(\D)$ 
is of even weight, and, if 
$\epsilon=1$, any floor of degree 1 is contained in a pair
in $Vert_\Im(\D)$.
It is said to be 
\emph{significant} if 
it satisfies the three following additional conditions:
\begin{itemize}
\item any edge in $Edge_\Im(\D)\setminus m\left(\bigcup_{i=1}^nA_i\right)$ 
 is of even weight;
\item  any edge
 in $Edge_\Re(\D)\setminus Edge^\infty(\D)$ has   weight $2+4l$;
\item for any $\{v,v'\}\in Vert_{\Im}(\D)$ and  any $i=1,\ldots, n$, the vertex
$ v$ is adjacent to an 
   edge adjacent to  $m\left( A_i\right)$ if and only if so is $v'$.
\end{itemize}

Finally define 
$$E(\D)=\left(Edge(\D)\setminus Edge^\infty(\D)\right)\cap
m\left(\{1,\ldots,\zeta-r\} \right)
\quad \mbox{and}\quad 
\beta^\Re_{even}=\sum_{j\ge 0}\beta^\Re_{2j}.$$

\begin{defi}\label{def:real mult}
Let $(\D,m)$ be a  $(s,\kappa)$-real
$d$-marked floor 
diagram.
 The $(s,\kappa)$-real multiplicity of $(\D,m)$, denoted by
$\mu^\R_{s,\kappa}(\mathcal D,m)$, is defined by
$$\mu^\R_{s,\kappa}(\mathcal D,m)= 2^{\beta^\Re_{even}}\ 
I^{\beta^\Im} \prod_{\{v,v'\}\in Vert_\Im(\D)}(-1)^{o_{v}}
\prod_{e\in  E( \D)}w(e) $$
if  
$m(\Im(m,s))\bigcup Edge^\infty(\D)$ contains all edges of $\D$ of
even weight, and by 
$$\mu^\R_{s,\kappa}(\mathcal D,m)=0 $$
otherwise.

\medskip
If in addition $2\kappa=n$ and $(\D,m)$ is $\epsilon$-sided, 
we define an additional
$(s,\kappa)$-real multiplicities of $(\D,m)$ as follows
$$\nu^{\R,\epsilon}_{s}(\mathcal D,m)=(-1)^{\epsilon|Vert_{\Im,1}(\D)|} 
\ 2^{ 2r_m-r'_m+\beta^\Re_{even} }
\ I^{\beta^\Im}\prod_{\{v,v'\}\in Vert_{\Im,2}(\D)}(-1)^{o'_v}
\prod_{e\in E(\D)}w(e) 
$$
if $(\D,m)$ is significant,
and  by
$$\nu^{\R,\epsilon}_{s}(\mathcal D,m)=0 $$
otherwise.

\end{defi}

Next, choose $\alpha^\Re,\beta^\Re,\alpha^\Im,\beta^\Im$ and define
$$FW^{\alpha^\Re,\beta^\Re,\alpha^\Im,\beta^\Im}_{\X_n(\kappa)}(d,s)=\sum
\mu^\R_{s,\kappa}(\mathcal D,m) $$
where the sum is taken over all 
$(s,\kappa)$-real $d$-marked floor
diagrams of type $(\alpha^\Re,\beta^\Re,\alpha^\Im,\beta^\Im)$.

If in addition  $n=2\kappa$, 
 define the following numbers:
$$FW^{\alpha^\Re,\beta^\Re,\alpha^\Im,\beta^\Im}_{\X_n(\kappa),\epsilon}(d,s)=\sum
\mu^\R_{s,\kappa}(\mathcal D,m) $$
where the sum is taken over all $\epsilon$-sided 
$(s,\kappa)$-real $d$-marked floor
diagrams of type $(\alpha^\Re,\beta^\Re,\alpha^\Im,\beta^\Im)$, and 
$$FW_{\X_n(\kappa),\epsilon,\epsilon}^{\alpha^\Re,\beta^\Re,\alpha^\Im,\beta^\Im}(d,s)=\sum
\nu^{\R,\epsilon}_{s}(\mathcal D,m) $$
where the sum is taken over all significant $\epsilon$-sided
$(s,\kappa)$-real $d$-marked
 floor
diagrams of type $(\alpha^\Re,\beta^\Re,\alpha^\Im,\beta^\Im)$.
Note that by definition we have
$$FW_{\X_n(\kappa)}^{\alpha^\Re,\beta^\Re,\alpha^\Im,\beta^\Im}(d,s)=
FW_{\X_n(\kappa),\epsilon}^{\alpha^\Re,\beta^\Re,\alpha^\Im,\beta^\Im}(d,s)=
FW_{\X_n(\kappa),\epsilon,\epsilon}^{\alpha^\Re,\beta^\Re,\alpha^\Im,\beta^\Im}(d,s)=0$$
if $d\cdot [E]\ne I\alpha^\Re+I\beta^\Re+2I\alpha^\Im+2I\beta^\Im$.

\begin{lemma}\label{cor:vanish}
Given $n=2\kappa$, $\varepsilon\in\{0, 1\}$, and $r\ge |\beta^\Re|+2$,
we have 
$$FW^{\alpha^\Re,\beta^\Re,\alpha^\Im,\beta^\Im}_{\X_n(\kappa),\epsilon}(d,s)=0. $$
\end{lemma}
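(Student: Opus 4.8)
The statement asserts a vanishing: when $n = 2\kappa$ and the number of real point-constraints $r$ exceeds $|\beta^\Re| + 1$, the signed count of $\epsilon$-sided real marked floor diagrams is zero. The natural approach is to exhibit a sign-reversing involution on the set of $\epsilon$-sided $(s,\kappa)$-real $d$-marked floor diagrams of the given type, which pairs diagrams of opposite $\mu^\R_{s,\kappa}$-multiplicity and whose only fixed points have multiplicity zero. The hypothesis $r \ge |\beta^\Re| + 2$ guarantees there is some ``room'': among the $r$ real point-markings indexed by $\{\zeta - r + 1, \dots, \zeta\}$, at most $|\beta^\Re|$ of them can be attached to edges in $Edge^\infty(\D)$ (since those edges carry the boundary data $\beta^\Re$), so at least two of these markings land on floors or on internal edges. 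The plan is to use one such marking to define a local modification of $\D$ that flips a sign.

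First I would recall that, by Definition \ref{def:real mult}, the sign of $\mu^\R_{s,\kappa}(\D,m)$ is $\prod_{\{v,v'\}\in Vert_\Im(\D)}(-1)^{o_v}$, and that $\mu^\R_{s,\kappa}$ vanishes unless all even-weight edges lie in $m(\Im(m,s)) \cup Edge^\infty(\D)$. So I may restrict attention to diagrams surviving this constraint. Next I would isolate, using the counting argument above, a distinguished real point-marking — say the one with the largest index in $A_0$ among those attached to a floor of $\D$ (degree $2$ floor, since degree $1$ floors carry no markings by condition (1) of Definition \ref{def marking}). Because $(\D,m)$ is $(s,\kappa)$-real, the involution $\rho_{s,\kappa}$ sends $(\D,m)$ to an equivalent diagram; since this distinguished marking has index $> \zeta - r$ it is fixed by $\psi_{0,s}$, hence its image floor $v$ is either fixed by the induced involution $\rho_{m,s,\kappa}$ or forms a pair in $Vert_\Im(\D)$. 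In either case I would build the involution by a local surgery near $v$ — the standard move in this subject being to slide the marking across an adjacent edge, or to contract/expand a short edge, in a way compatible with the real structure (performing the mirror move simultaneously on $v'$ when $\{v,v'\}\in Vert_\Im(\D)$) — arranged so that exactly one pair $\{v,v'\}$ enters or leaves $Vert_\Im(\D)$, or so that a single $o_v$ changes parity, giving the sign flip. One checks the move preserves genus $0$, the degree, the type $(\alpha^\Re,\beta^\Re,\alpha^\Im,\beta^\Im)$, the $\epsilon$-sidedness, and multiplies the non-sign part of $\mu^\R_{s,\kappa}$ by $1$ (the $2^{\beta^\Re_{even}}$, $I^{\beta^\Im}$, and $\prod_{e\in E(\D)} w(e)$ factors being untouched, as the move only involves weight-$1$ internal edges coming from the $A_0$-markings).

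The fixed points of this involution are the diagrams where the surgery is obstructed — typically because the relevant configuration is already ``symmetric'' in a degenerate way — and for those I would show directly that some edge of even weight fails to lie in $m(\Im(m,s)) \cup Edge^\infty(\D)$, or that $(\D,m)$ fails to be $\epsilon$-sided, forcing $\mu^\R_{s,\kappa}(\D,m) = 0$ outright. Summing $\mu^\R_{s,\kappa}$ over the set, the non-fixed diagrams cancel in pairs and the fixed ones contribute nothing, so $FW^{\alpha^\Re,\beta^\Re,\alpha^\Im,\beta^\Im}_{\X_n(\kappa),\epsilon}(d,s) = 0$.

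**Main obstacle.** The delicate point is defining the local move so that it is a genuine involution on equivalence classes of \emph{real} marked floor diagrams: the surgery must commute with $\rho_{s,\kappa}$ (so that the modified diagram is again $(s,\kappa)$-real), and when the distinguished floor $v$ belongs to a pair $\{v,v'\}$ one must perform the mirrored surgery at $v'$ without accidentally creating an oriented cycle or violating the divergence conditions $div \in \{2,4\}$. Verifying that the move is well-defined, that it squares to the identity, and that it flips precisely one sign — rather than two, which would be useless — is where the real work lies, and it is exactly the kind of case-by-case check on the local shape of $\D$ near $v$ (how many adjacent edges, their weights, whether they hit $m(\bigcup_{i\ge 2\kappa+1} A_i)$) that I would expect to occupy most of the actual proof.
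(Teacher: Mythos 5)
Your proposal has a genuine gap: the sign-reversing involution that is supposed to do all the work is never constructed. You describe it only as ``the standard move in this subject being to slide the marking across an adjacent edge, or to contract/expand a short edge,'' and you yourself concede that verifying it is well-defined, squares to the identity, and flips exactly one sign ``is where the real work lies.'' As written, this is a plan for a proof, not a proof; and in fact no such cancellation mechanism is needed here, because under the hypotheses \emph{every individual summand already vanishes}. You come within one step of seeing this: your counting observation that at most $|\beta^\Re|$ of the $r$ real point-markings can land on edges of $Edge^\infty(\D)$ (only the fixed ones among those edges can receive a $\rho_{s,\kappa}$-fixed marking), so that at least two land on floors or internal edges, is exactly the right starting point. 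Since $g=0$ makes $\D$ a tree, the subgraph of elements fixed by $\rho_{m,s,\kappa}$ is connected, and two fixed elements outside $Edge^\infty(\D)$ force the existence of a fixed internal edge, i.e.\ $Edge_\Re(\D)\setminus Edge^\infty(\D)\ne\emptyset$. By $\epsilon$-sidedness that edge has even weight, and being fixed it cannot lie in $m(\Im(m,s))$ (whose elements are exchanged, not fixed, by construction of the $s$-pairs) nor in $Edge^\infty(\D)$. Definition \ref{def:real mult} then gives $\mu^\R_{s,\kappa}(\D,m)=0$ for every diagram in the sum, and the lemma follows with no pairing argument at all.

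So the correct conclusion is not that terms cancel in pairs with fixed points contributing zero, but that the set of contributing diagrams is effectively empty: what you relegate to ``the fixed points of the involution'' is in fact the situation of \emph{every} diagram. I would encourage you to drop the surgery construction entirely --- besides being unexecuted, it would require checking compatibility with the divergence conditions, acyclicity, the real structure, and the type $(\alpha^\Re,\beta^\Re,\alpha^\Im,\beta^\Im)$, none of which you carry out --- and instead push your counting argument through the tree structure of $\D$ to the vanishing criterion in Definition \ref{def:real mult}.
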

\begin{proof}
Let $(\D,m)$ be an $\epsilon$-sided 
$(s,\kappa)$-real $d$-marked floor
diagram of type $(\alpha^\Re,\beta^\Re,\alpha^\Im,\beta^\Im)$.
Since $\D$ is a tree, its subgraph formed by  elements fixed by
$\rho_{m,s,\kappa}$ is connected. In particular if $r\ge |\beta^\Re|+2$, the set
$Edges^\Re(\D)\setminus Edge^\infty(\D)$ is not empty. Since any edge
in this set has an even weight, the lemma follows from Definition
\ref{def:real mult}. 
\end{proof}

Next theorem relates the three series of numbers $FW$ 
to actual
enumeration of real 
curves in $\X_n(\kappa)$. Recall that when $n=2\kappa$, the connected
component of $\R\X_n(\kappa)\setminus \R E$ with Euler characteristic
$\epsilon$ is denoted by $\widetilde L_\epsilon$.

\begin{thm}\label{WFD}
Let  $\zeta_0,r,s,\kappa \ge 0$ be some integers such that  $\zeta_0=r+2s$.
Then
there exists a generic $(E,s)$-compatible
configuration $\x^\circ$ of 
$\zeta_0$ points in $\X_n$ such that:
 \begin{enumerate}
\item 
 for any  $d\in H_2(\X_n;\Z)$ with $d\cdot [D]\ge 1$,  any
 $\alpha^\Re,\beta^\Re,\alpha^\Im,\beta^\Im\in\Z_{\ge 0}^\infty$ such that
$$d\cdot [D]-1 +|\beta^\Re|+2|\beta^\Im|=\zeta_0\quad \mbox{and}\quad 
d\cdot [E]=I\alpha^\Re+ I\beta^\Re+2I\alpha^\Im +2I\beta^\Im, $$ 
and any generic real configuration $\x_E\subset E$ of type 
$(\alpha^\Re,\alpha^\Im)$, one has
$$W^{\alpha^\Re,\beta^\Re,\alpha^\Im,\beta^\Im}_{\X_n(\kappa)}(d,s,\x^\circ\sqcup \x_E)=FW^{\alpha^\Re,\beta^\Re,\alpha^\Im,\beta^\Im}_{\X_n(\kappa)}(d,s).$$

\item If moreover $n=2\kappa$,
and $\x^\circ$ is $(E,s,\widetilde L_\epsilon)$-compatible,
then
$$W^{\alpha^\Re,\beta^\Re,\alpha^\Im,\beta^\Im}_{\X_n(\kappa),\widetilde L_\epsilon,\R\X_n(\kappa)}(d,s,\x^\circ\sqcup \x_E)=FW^{\alpha^\Re,\beta^\Re,\alpha^\Im,\beta^\Im}_{\X_n(\kappa),\epsilon}(d,s),$$
and
$$W^{\alpha^\Re,\beta^\Re,\alpha^\Im,\beta^\Im}_{\X_n(\kappa),\widetilde L_\epsilon,,\widetilde L_\epsilon}
(d,s,\x^\circ\sqcup \x_E)=FW^{\alpha^\Re,\beta^\Re,\alpha^\Im,\beta^\Im}_{\X_n(\kappa),\epsilon,\epsilon}(d,s) .$$
\end{enumerate}
\end{thm}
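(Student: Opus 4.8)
The plan is to prove Theorems \ref{NFD} and \ref{WFD} together in Section \ref{sec:proof} by iterating the degeneration to the normal cone of the conic $E$ and applying Li's degeneration formula \cite{Li02,Li04}, and in the real case its real counterpart (in the spirit of \cite{Br20,BP14}). Given $\zeta_0,r,s,\kappa$, I would first fix, once and for all, a semistable degeneration of $(\X_n,E)$ whose central fibre is the chain $Y_{max}=\X_n\cup\N_1\cup\cdots\cup\N_{\zeta_0}$, where consecutive components are glued conic-to-conic (the divisor $E\subset\X_n$ being identified with $E_\infty(\N_1)$, and $E_0(\N_i)$ with $E_\infty(\N_{i+1})$), each $\N_i\cong\N$, and the relative divisor carrying the $\x_E$-conditions is $E_0(\N_{\zeta_0})$. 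The configuration $\x^\circ$ of the theorem is then taken to be a generic small perturbation, in a nearby fibre, of a configuration placing exactly one point of $\x^\circ$ (resp. one conjugate pair, for an $s$-pair) on each level $\N_i$; crucially this $\x^\circ$ depends only on $\zeta_0,r,s,\kappa$, which is what allows a single configuration to work simultaneously for every $d$, every type $(\alpha^\Re,\beta^\Re,\alpha^\Im,\beta^\Im)$, and every $\x_E$.

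Next I would apply the degeneration formula to $GW^{\alpha,\beta}_{\X_n}(d,g)$ (and, later, to its real weighted analogue): it rewrites the invariant as a sum, over all admissible distributions of the class $d$ and of the contact data along the internal conics, of products of relative Gromov--Witten invariants of the pieces times combinatorial gluing factors. Since all point constraints have been pushed onto the $\N_i$'s, Propositions \ref{prop:shoshu}, \ref{prop:initial values}, \ref{prop:generic N} and \ref{prop:initial values N} pin down the contributing pieces: the $\X_n$-component can realise only the ``initial-value'' curves of Proposition \ref{prop:initial values} (lines, the conic, exceptional curves, and multiples of exceptional curves), and each $\N_i$ contributes either a curve through its single constraint or a degree-$l$ multiple cover of a fibre. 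By the standard floor-decomposition dictionary of \cite{Br7,Br6b,Br8}, the combinatorial datum encoding such a distribution is exactly a $d$-marked floor diagram of genus $g$ and type $(\alpha,\beta)$ in the sense of Definitions \ref{def marking} and \ref{defi real}: the components meeting the internal conics are the floors, the intersection points with the gluing divisors are the edges (their intersection multiplicities being the weights), acyclicity and $b_1=g$ encode the chain structure and the genus, and the partition of the constraints among the levels and among the two gluing divisors is the marking. Matching Li's gluing factors (a weight $w(e)$ per internal edge, the factors $|f^{-1}(p)|$ at marked points, the $\frac{1}{l}$ and $\Aut$ contributions of fibre covers) against the relative invariants of $\N$ from Proposition \ref{prop:initial values N} reproduces $\mu^\C(\D,m)=I^\beta\prod_{e\notin Edge^\infty(\D)}w(e)^2$, which proves Theorem \ref{NFD}.

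For Theorem \ref{WFD}(1) I would rerun the same scheme with $\x^\circ$ real and with the real degeneration formula, so that each contributing limit curve in $Y_{max}$ carries a real structure, its dual graph is a tree equipped with the induced involution $\rho_{m,s,\kappa}$, and the output is organised by $(s,\kappa)$-real marked floor diagrams. Two further local analyses are then needed: which real limit curves actually deform to real curves in $\X_n(\kappa)$, and how many solitary nodes they acquire. For the $\X_n$-component this is governed by Lemma \ref{lem:line conic} (which side of $\R E$ a non-real line, or a non-real $[D]-[E_i]$-curve, lies on), and for each level $\N_i$ by Lemmas \ref{lem:m real} and \ref{lem:even} (Lemma \ref{lem:m real} produces, from a conjugate-pair constraint on a level with even contact orders, exactly $j$ real curves splitting evenly between the two halves $N^\pm$ of $\R\N_i$; Lemma \ref{lem:even} fixes the parity of $|C\cap N^\pm|$). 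Propagating these along the chain, the sign $(-1)^{m_{\R\X_n(\kappa)}}$ of a smoothed real curve turns into $\prod_{\{v,v'\}\in Vert_\Im(\D)}(-1)^{o_v}$, its multiplicity into $2^{\beta^\Re_{even}}\,I^{\beta^\Im}\prod_{e\in E(\D)}w(e)$, and --- this is the delicate cancellation flagged in the introduction --- entire packets of conjugate real curves cancel pairwise unless every even-weight edge of $\D$ is caught by $m(\Im(m,s))\cup Edge^\infty(\D)$, which is precisely the vanishing clause for $\mu^\R_{s,\kappa}$ in Definition \ref{def:real mult}. Summing over all $(s,\kappa)$-real marked floor diagrams of the prescribed type gives part (1), with Lemma \ref{cor:vanish} as a consistency check.

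For part (2), with $n=2\kappa$ so that $\R E$ disconnects $\R\X_n(\kappa)$ and $\R\x^\circ\subset\widetilde L_\epsilon$, I would refine the count in two ways. First, restricting to limit curves whose smoothing has real part in $\widetilde L_\epsilon\cup\R E$: Lemma \ref{lem:even} on each $\N_i$-component, together with Lemma \ref{lem:line conic} on the $\X_n$-component, shows that this side condition translates into exactly the requirements defining an \emph{$\epsilon$-sided} diagram (all $\rho_{m,s,\kappa}$-fixed internal edges of even weight, and, for $\epsilon=1$, every degree-$1$ floor lying in an imaginary pair), yielding $W_{\X_n(\kappa),\widetilde L_\epsilon,\R\X_n(\kappa)}^{\alpha^\Re,\beta^\Re,\alpha^\Im,\beta^\Im}=FW_{\X_n(\kappa),\epsilon}^{\alpha^\Re,\beta^\Re,\alpha^\Im,\beta^\Im}$. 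Second, replacing the total solitary-node count by the count of solitary nodes lying in $\widetilde L_\epsilon$: re-reading Lemmas \ref{lem:m real} and \ref{lem:even} along the single component $\widetilde L_\epsilon$ turns $(-1)^{o_v}$ into $(-1)^{o'_v}$ for degree-$2$ imaginary floors, introduces the factor $2^{2r_m-r'_m}$ from the real branches crossing $\R E$ and the sign $(-1)^{\epsilon|Vert_{\Im,1}(\D)|}$ from degree-$1$ imaginary floors, and forces the three ``significance'' conditions of Definition \ref{def:real mult} (outside of which the relevant local real count vanishes), reproducing $\nu^{\R,\epsilon}_s$ and hence the last identity. The hard part is this real side: establishing the real counterpart of Li's degeneration formula in the needed generality (a long chain over a possibly singular total space, relative to the conic) and, above all, controlling the sign cancellations among packets of complex-conjugate real curves so that the alternating sum over real limit curves collapses onto the clean expressions for $\mu^\R_{s,\kappa}$ and $\nu^{\R,\epsilon}_s$ --- the still-puzzling combinatorial heart of the argument; the complex statement is by comparison routine once the degeneration and the floor-diagram dictionary are in place.
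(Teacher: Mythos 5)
Your proposal follows essentially the same route as the paper: the iterated degeneration of $\X_n$ to a chain $\X_n\cup\N_{s+r}\cup\cdots\cup\N_0$ with one point or one conjugate pair of $\x^\circ$ per level, Li's degeneration formula and its real counterpart (Propositions \ref{prop:degeneration} and \ref{prop:real degeneration}), the dictionary between limit curves and marked floor diagrams, and the local real analyses of Lemmas \ref{lem:line conic}, \ref{lem:m real} and \ref{lem:even} producing the signs $(-1)^{o_v}$, $(-1)^{o'_v}$ and the $\epsilon$-sidedness and significance conditions. The only divergences are bookkeeping: the paper uses $r+s+1$ copies of $\N$, reserving a dedicated level $\N_0$ whose divisor $E_0$ carries the tangency conditions $\x_E$, and it organizes the cancellations you flag as ``the hard part'' via an explicit involution on markings (the case analysis preceding Lemma \ref{lem:node f 2} and Corollary \ref{lem:sign 2}).
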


\begin{exa}
If  $n\le 5$,  Theorem \ref{WFD}(1) computes
Welschinger invariants of $X_n$ equipped with a standard real structure.
In particular, applying Theorem \ref{WFD} with $n=0$, one verifies that 
$$W_{\C P^2}([D],s)=W_{\C P^2}(2[D],s)=1\quad \mbox{and} \quad W_{\C P^2}(3[D],s)=8-2s. $$ 
\end{exa}

\begin{exa}\label{ex:WFD 4 and 6}
Fix $n=6$ and $\zeta_0=5$. Given $0\le s\le 2$, let $\x^\circ_s$ be a
configuration whose 
existence is attested by Theorem \ref{WFD} with $r=6-2s$. 
Using Figures \ref{fig:ex FD} , \ref{fig:FD42}, \ref{fig:FD6},
and \ref{fig:FD62}, one computes all numbers
$W^{0,\beta^\Re,0,\beta^\Im}_{\X_6(\kappa)}(d_k,s,\x^\circ_s)$ for the classes
$d_k=6[D]-2\sum_{i=1}^6[E_i] -k[E]$ with $k=0,1,2$, as well as the numbers
$W^{0,\beta^\Re,0,\beta^\Im}_{\X_6(3),\widetilde L_\epsilon,\widetilde L_\epsilon}(d_0,s,\x^\circ_3)$.
In the case $k=2$, this value is 1 for $(\beta^\Re,\beta^\Im)$ given in Table
\ref{tab:comp41}a, and 0 otherwise.
In the case $k=1$, 
the numbers
$W^{0,\beta^\Re,0,\beta^\Im}_{\X_6(\kappa)}(d_1,s,\x^\circ_s)$ vanish for all
values of $\beta^\Re$ and $\beta^\Im$ not listed in Table \ref{tab:comp41}b.
In the case $k=0$, all $(s,3)$-real diagrams contributing to 
$W^{0,0,0,0}_{\X_6(3)}(d_0,s,\x^\circ_s)$
are
$\epsilon$-sided with $\epsilon\in\{0,1\}$, so we have
$W^{0,0,0,0}_{\X_6(3)}(d_0,s,\x^\circ_s)=
W^{0,0,0,0}_{\X_6(3),\widetilde L_\epsilon,\R \X_6(3)}(d_0,s,\x^\circ_s)$.  
  In Figures \ref{fig:FD42},
 \ref{fig:FD6}, and \ref{fig:FD62}, beside all floor diagrams  
is written the sum of  $(s,\kappa)$-multiplicity of all  corresponding 
$(s,\kappa)$-real marked floor
diagrams of type $(0,\beta^\Re,0,\beta^\Im)$ with this underlying floor diagram. 
The numbers $W^{0,0,0,0}_{\X_6(\kappa)}(d_0,0,\x^\circ_0)$ were first
computed in {\cite[Proposition 3.1]{Br20}}.

\begin{table}[!h]
\begin{center}
\begin{tabular}{ccc}
\begin{tabular}{ c |c|c}
 $s  $  &  $\beta^\Re$&$\beta^\Im$
\\\hline
  $0$   &  $4u_1$ & 0   
\\\hline
$1$ &  $2u_1$&  $u_1$  
\\\hline
$2$ & 0 & $2u_1$ 
\end{tabular}
&\hspace{5ex} &
\begin{tabular}{ c|c|c| c|c|c}
 \multicolumn{2}{c|}{$s \backslash \kappa$}  &0& 1& 2& 3
\\\hline
  $0$ & $\beta^\Re=2u_1$  &  236&  140 & 76 & 36 
\\\hline
\multirow{2}{*}{$1$}& $\beta^\Re=2u_1$  & 80   & 50   &28  & 14 
\\\cline{2-6}& $\beta^\Im=u_1$  &  62   & 28  & 10  &0
\\\hline
$2$ & $\beta^\Im=u_1$&   74 &   36 & 14 & 0
\end{tabular}
\\
\\ a) $W^{0,\beta^\Re,0,\beta^\Im}_{\X_6(\kappa)}(d_2,s,\x^\circ_s)=1$ & & 
b) $W^{0,\beta^\Re,0,\beta^\Im}_{\X_6(\kappa)}(d_1,s,\x^\circ_s)$
\end{tabular}
\end{center}
\caption{}
\label{tab:comp41}
\end{table}

\begin{table}[!h]
\begin{center}
\begin{tabular}{ c|c| c|c|c||c|c}
 $s \backslash \kappa,\epsilon $  & $0$& $1$& $2$& $3$& $0$&$1$
\\\hline
  $0$   & 522 & 236 & 78 & 0 & 160 & 96
\\\hline
$1$ &390 &  164&  50 & 0 &  64& 32 
\\\hline
$2$ & 286 & 128 &  50  & 20 & 24 & 8
\\ 
\end{tabular}

\begin{tabular}{c}
\end{tabular}
\end{center}
\caption{$W^{0,0,0,0}_{\X_6(\kappa)}(d_0,s,\x^\circ_s)$ and 
$W^{0,0,0,0}_{\X_6(3),\widetilde L_\epsilon,\widetilde L_\epsilon}(d_0,s,\x^\circ_s)$}
\label{tab:comp62}
\end{table}

\end{exa}

\section{Absolute invariants of $X_6$}\label{sec:X6}
\subsection{Gromov-Witten invariants}
When $n=6$, Theorem \ref{NFD} combined with 
{\cite[Theorem 4.5]{Vak2}} allows one to computes
Gromov-Witten invariants of $X_6$.

\begin{thm}\label{thm:NFD2}
For any  $d\in H_2(X_6;\Z)$ such that $d\cdot [D]\ge 1$, and
 any genus $g\ge 0$, one has
$$GW_{X_6}(d,g)=\sum_{k\ge 0}\left(\begin{array}{c}
 d\cdot [E] +2k \\ k\end{array}\right) \sum \mu^\C(\mathcal D,m)$$
where the second sum is taken over all $(d-k[E])$-marked floor diagrams of 
genus $g$ and type $(0,(d\cdot [E] +2k)u_1)$.
\end{thm}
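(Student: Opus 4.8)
The plan is to derive Theorem~\ref{thm:NFD2} from Theorem~\ref{NFD} by combining it with the Abramovich--Bertram--Vakil formula \cite[Theorem~4.5]{Vak2}, which relates Gromov--Witten invariants of $X_6$ to relative Gromov--Witten invariants of $\X_6$ through the degeneration of $X_6$ into $\X_6\cup_E(\C P^1\times\C P^1)$. Recall that $\X_6$ has $[E]^2=4-6=-2$, so $E$ is a $(-2)$-curve, and the degeneration produces no multiple covers; this is exactly the situation in which the ABV formula takes its simplest shape. The first step is therefore to recall the precise statement of \cite[Theorem~4.5]{Vak2}: for $d\in H_2(X_6;\Z)$ with $d\cdot[D]\ge 1$,
$$GW_{X_6}(d,g)=\sum_{k\ge 0}\binom{d\cdot[E]+2k}{k}\,GW_{\X_6}\bigl(d-k[E],g\bigr),$$
where on the right $GW_{\X_6}(d',g)$ is the relative invariant $GW_{\X_6}^{0,\beta}(d',g)$ with $\beta=(d'\cdot[E])u_1$, i.e.\ maximal tangency order one along $E$. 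Here one must check that $(d-k[E])\cdot[E]=d\cdot[E]+2k$, which is immediate since $[E]^2=-2$, so the binomial coefficient and the relative class are consistent; and that when $d-k[E]$ becomes a class with $(d-k[E])\cdot[D]<1$ the corresponding relative invariant vanishes, so the sum is effectively finite.

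The second step is to substitute Theorem~\ref{NFD} into each summand. Applying Theorem~\ref{NFD} with $n=6$, homology class $d'=d-k[E]$, genus $g$, $\alpha=0$, and $\beta=(d'\cdot[E])u_1=(d\cdot[E]+2k)u_1$ gives
$$GW_{\X_6}\bigl(d-k[E],g\bigr)=\sum_{(\D,m)}\mu^\C(\D,m),$$
the sum running over all $(d-k[E])$-marked floor diagrams of genus $g$ and type $(0,(d\cdot[E]+2k)u_1)$. One needs to verify that the numerical hypothesis of Theorem~\ref{NFD} is met, namely $I\alpha+I\beta=d'\cdot[E]$, which holds because $I\beta=(d\cdot[E]+2k)$ and $\alpha=0$; and that $d'\cdot[D]=d\cdot[D]\ge 1$ since $[E]\cdot[D]=0$, so Theorem~\ref{NFD} indeed applies to every term. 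Plugging this into the ABV formula yields exactly the double sum in the statement.

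\textbf{Main obstacle.} The genuinely substantive input is not in the bookkeeping above but in correctly invoking \cite[Theorem~4.5]{Vak2}: one must make sure the ABV multiplicity $\binom{d\cdot[E]+2k}{k}$ matches Vakil's conventions, that the relative invariant appearing there is the one normalized as in Section~\ref{sec:relative Xn} (same $\mu(f,\x^\circ)$ weighting, same treatment of $\Aut(f)$), and that the absence of multiple covers along $E$ — guaranteed because $E$ is a smooth rational $(-2)$-curve and, by Proposition~\ref{prop:shoshu}, a general relative curve in class $d-k[E]\ne l[E_i]$ meets $E$ transversally at smooth points — is what makes the ABV correspondence an honest equality of numbers rather than one requiring virtual corrections. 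Once these compatibility points are settled, the theorem follows formally; I expect the only real care to be in matching normalizations between \cite{Vak2} and the present setup, and in confirming that the finitely many degenerate classes $d-k[E]$ (for $k$ large) contribute zero.
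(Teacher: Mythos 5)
Your proposal is correct and follows exactly the paper's route: Theorem \ref{thm:NFD2} is obtained by substituting the floor-diagram expression of Theorem \ref{NFD} for each relative invariant $GW_{\X_6}(d-k[E],g)$ appearing in the Abramovich--Bertram--Vakil formula of {\cite[Theorem 4.5]{Vak2}}. One small slip in your bookkeeping: $[E]\cdot[D]=(2[D]-\sum_{i=1}^6[E_i])\cdot[D]=2$, not $0$, so $(d-k[E])\cdot[D]=d\cdot[D]-2k$ rather than $d\cdot[D]$; this does not damage the argument, since you already noted that the terms with $(d-k[E])\cdot[D]<1$ vanish (there are no floor diagrams of non-positive degree), and it is in fact precisely what makes the sum over $k$ terminate.
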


\begin{exa}
Theorem \ref{thm:NFD2} together with Examples \ref{ex1} and \ref{ex:NFD 4 and 6}
implies that 
$$GW_{X_6}(2c_1(X_6),0)=2002 + {2 \choose 1}\times 616 +  {4 \choose
  2}\times 1=3240.$$
 Performing analogous computations
in genus up to $4$, we obtain the value listed in Table \ref{tab:comp X6}. 
\begin{table}[!h]
\begin{center}
\begin{tabular}{ c|c| c|c|c|c}
 $g $  & $0$& $1$& $2$& $3$& $4$
\\\hline
 $GW_{X_6}(2c_1(X_6),g)$   & 3240  & 1740 & 369& 33 & 1 

\end{tabular}
\end{center}
\caption{$GW_{X_6}(2c_1(X_6),g)$}\label{tab:comp X6}
\end{table}
The value in the rational case has been
first  computed  by G\"ottsche and
Pandharipande in {\cite[Section 5.2]{PanGot98}}. The cases of higher genus have been first
treated in \cite{Vak2}.
\end{exa}

\subsection{Welschinger invariants}
Applying Theorem \ref{WFD} with $n=6$, one can also  compute
Welschinger invariants of $X_6$ with any real
structure.
Denote by $X_{6}(\kappa)$ with $\kappa=0,\ldots,4$ the surface
$X_6$ equipped with the real structure such that
$$\chi(\R X_{6}(\kappa))=-5+2\kappa.$$
Denote also by $L_\epsilon$
the connected
component of $\R X_6(4)$
with Euler characteristic
$\epsilon$. 
Next theorem is an immediate corollary of Theorems \ref{WFD} and
{\cite[Theorem 2.2]{Br20}} or {\cite[Theorem 4]{BP14}} (see also Section \ref{sec:WX7} or
\cite{IKS13} for a proof in the algebraic setting). 

\begin{thm}\label{thm:W X6}
For any  $d\in H_2(X_6;\Z)$ such that $d\cdot [D]\ge 1$, any $r,s\ge 0$ such
that $c_1(X_6)\cdot d-1=r+2s$, any $\kappa\in \{0,\ldots, 3\}$,
and any
$\epsilon\in\{0,1\}$, one has 
 \[\begin{aligned}
& W_{X_{6}(\kappa)}(d,s)=\sum_{k\ge 0}\ \ 
\sum_{k=r'+2s'}\ \ 
\sum_{\beta^\Re_1+2\beta^\Im_1=d\cdot [E]+2k}
{\beta^\Re_1 \choose r'}{\beta^\Im_1\choose
  s'}  FW_{\X_6(\kappa)}^{0,\beta^\Re_1u_1,0,\beta^\Im_1u_1}(d,s),\\  
& W_{X_{6}(\kappa+1)}(d,s)= \sum_{k\ge 0}(-2)^k
FW_{\X_6(\kappa)}^{0,0,0,ku_1}(d,s) \quad \mbox{if }\kappa\le 2,\\ 
&W_{X_{6}(4),L_{1+\epsilon}, \R X_6(4)}(d,s)= \sum_{k\ge 0}(-2)^k
FW_{\X_6(3),\epsilon}^{0,0,0,ku_1}(d,s)\quad \forall \epsilon\in\{0,1\},\\
&W_{X_{6}(4),L_{1+\epsilon},L_{1+\epsilon}}(d,s)=
FW_{\X_6(3),\epsilon,\epsilon}^{0,0,0,0}(d,s)\quad \forall \epsilon\in\{0,1\}.
\end{aligned}\]

\end{thm}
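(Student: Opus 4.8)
The plan is to read off all four identities from Theorem~\ref{WFD} together with the real Abramovich--Bertram--Vakil formula of \cite[Theorem~2.2]{Br20} (equivalently \cite[Theorem~4]{BP14}), in exactly the way Theorem~\ref{thm:NFD2} is obtained from Theorem~\ref{NFD} and \cite[Theorem~4.5]{Vak2}; an algebraic substitute for the input from \cite{Br20,BP14} is the argument reproduced in Section~\ref{sec:WX7} (or \cite{IKS13}).

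First I would recall the underlying degeneration. Moving the six blown-up points of $X_6$ onto a conic exhibits $X_6$ as the general fibre of a one-parameter flat degeneration to the transverse union $\X_6\cup_E Q$, where $Q\simeq\C P^1\times\C P^1$, the gluing divisor in each component is the curve $E$, and the normal bundles of $E$ in $\X_6$ and in $Q$ are mutually dual. Specializing part of the marked points to $Q$ and applying Li's degeneration formula --- in the real refinement of \cite{Br20,BP14} --- expresses $W_{(X_6,c)}(d,s)$ as a sum, over the admissible ways of distributing the enumerated curve between $\X_6$ and $Q$ with matching tangency data along $E$, of products of relative Welschinger invariants of $(\X_6,E)$ with relative Welschinger invariants of $(Q,E)$. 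The point on which the Abramovich--Bertram--Vakil-type formula rests is that the $(Q,E)$-factors are elementary: the part of a limit curve lying in $Q$ is a union of $k\ge0$ copies of $E$ with a rational curve meeting $E$ transversally, so the only remaining freedom is how the $2k$ extra intersection points with $E$ produced by the $k$ floating copies of $E$ are organized, and, in the real case, how many of them are real versus conjugate pairs. Carrying this out --- as is done in \cite{Br20,BP14} --- yields exactly the weights in the statement: $\binom{d\cdot[E]+2k}{k}$ over $\C$, its refinement $\binom{\beta^\Re_1}{r'}\binom{\beta^\Im_1}{s'}$ when the extra points split into $r'$ real points and $s'$ conjugate pairs, and the signed powers $(-2)^k$ in the cases where the real structure carried by $Q$ changes the topology of $\R X_6$.

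The four lines then correspond to the four families of real structures on $X_6$. For $\kappa\le3$ the structure with $\chi(\R X_6(\kappa))=-5+2\kappa$ degenerates to $\X_6(\kappa)$ glued to a real quadric; the structures $X_6(\kappa+1)$ for $\kappa\le2$, and $X_6(4)$, arise by gluing the same $\X_6(\kappa)$, respectively $\X_6(3)$, to quadrics with the other available real structures. Since $\R X_6(4)$ is disconnected, one must in addition record into which component $\widetilde L_\epsilon$ of $\R\X_6(3)\setminus\R E$ the relevant real loci fall; this is precisely the case $n=2\kappa$ of Theorem~\ref{WFD}, with $\widetilde L_\epsilon$ matching the component $L_{1+\epsilon}$ of $\R X_6(4)$. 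At this point each right-hand side has become a combinatorial combination of relative Welschinger invariants $W^{\alpha^\Re,\beta^\Re,\alpha^\Im,\beta^\Im}_{\X_6(\kappa)}(d-k[E],s)$ and, for $X_6(4)$, of their refinements $W^{\ldots}_{\X_6(3),\widetilde L_\epsilon,\R\X_6(3)}$ and $W^{\ldots}_{\X_6(3),\widetilde L_\epsilon,\widetilde L_\epsilon}$. Now I would invoke Theorem~\ref{WFD}: the configuration $\x^\circ$ it produces depends only on $\zeta_0=r+2s=c_1(X_6)\cdot d-1$ and not on the class, so one and the same $\x^\circ$ simultaneously computes all of these as the corresponding floor-diagram counts $FW$ (and, in the disconnected case, $FW_{\X_6(3),\epsilon}$ and $FW_{\X_6(3),\epsilon,\epsilon}$). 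Substituting these equalities gives the four formulas. In the last one only $k=0$ survives: the ``significant'' requirement in the definition of $\nu^{\R,\epsilon}_s$ forbids odd-weight imaginary boundary edges, hence kills every term of type $(0,0,0,ku_1)$ with $k\ge1$; geometrically, a limit curve with a component in a class $k[E]$, $k\ge1$, meets $\R E$ and so cannot contribute to a count localized at a single component $L_{1+\epsilon}$.

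I expect the only real obstacle to be the matching of these two ingredients rather than any new geometric input. Concretely, one has to check that the relative invariants of $(Q,E)$ reproduce the binomial and power-of-two weights precisely as written; that the solitary-node counts $m_{\R X_6}$ and $m_L$ are additive over the two components of the central fibre, so that Welschinger's signs pass to the limit; and that the dictionary between the real structures on $X_6$ and the pairs $(\X_6(\kappa),Q)$ is the one used above. All three are part of the content of \cite{Br20,BP14} (and of the algebraic argument of Section~\ref{sec:WX7}), so here they amount to reconciling conventions; the substance of the statement lies in Theorem~\ref{WFD}.
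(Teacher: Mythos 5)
Your argument is exactly the paper's: Theorem \ref{thm:W X6} is stated there as an immediate corollary of Theorem \ref{WFD} combined with the real Abramovich--Bertram--Vakil formula of \cite[Theorem 2.2]{Br20} / \cite[Theorem 4]{BP14} (with the algebraic variant of Section \ref{sec:WX7} or \cite{IKS13}), which is precisely the combination you carry out. Your supporting details --- the degeneration to $\X_6\cup Q$, the origin of the binomial and $(-2)^k$ weights, the role of a single configuration $\x^\circ$ valid for all classes $d-k[E]$, and the vanishing of the $k\ge 1$ terms in the last identity via the significance condition --- are all correct and consistent with the paper.
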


Theorem \ref{thm:W X6} has the two following  corollaries. 
\begin{cor}\label{cor:X6 1}
For any $d\in H_2(X_6;\Z)$, one has
$$W_{X_{6}(4), L_1,L_1}
(d,0)\ge W_{X_{6}(4), L_2,L_2}
(d,0)\ge 0.$$
Moreover both invariants are divisible by $4^{\left[\frac{d\cdot [D]}{2}\right]-1}$.
\end{cor}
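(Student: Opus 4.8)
The plan is to read off the inequalities and the divisibility directly from the combinatorial formulas in Theorem \ref{thm:W X6}, namely from
$$W_{X_6(4),L_{1+\epsilon},L_{1+\epsilon}}(d,0)=FW^{0,0,0,0}_{\X_6(3),\epsilon,\epsilon}(d,0)=\sum \nu^{\R,\epsilon}_0(\D,m),$$
where the sum runs over significant $\epsilon$-sided $(0,3)$-real $d$-marked floor diagrams of type $(0,0,0,0)$. First I would observe that when $s=0$ (so $\beta^\Im=0$) the involution $\rho_{0,\kappa}$ has $\psi_{0,0}=\mathrm{Id}$ on $A_0$, hence $\Im(m,0)=\emptyset$, and a significant $\epsilon$-sided diagram satisfies very rigid constraints: no edge of $Edge_\Im(\D)\setminus m(\bigcup A_i)$ and every edge of $Edge_\Re(\D)\setminus Edge^\infty(\D)$ has weight $2+4l$; since such a diagram has $\beta^\Re_{even}=0$, $r_m$ counts the edges in $m(\{\zeta-r+1,\dots,\zeta\})$ with $r=\zeta_0=c_1(X_6)\cdot d-1$, and $r'_m$ the fixed non-$\infty$ edges among $m(\{1,\dots,\zeta-r\})$. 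So
$$\nu^{\R,\epsilon}_0(\D,m)=(-1)^{\epsilon|Vert_{\Im,1}(\D)|}\,2^{2r_m-r'_m}\prod_{\{v,v'\}\in Vert_{\Im,2}(\D)}(-1)^{o'_v}\prod_{e\in E(\D)}w(e).$$

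Next I would establish \textbf{non-negativity of the $L_2$ invariant} ($\epsilon=0$): here the sign $(-1)^{0\cdot|Vert_{\Im,1}|}=1$ disappears, so the only possible sign comes from $\prod (-1)^{o'_v}$ over pairs of degree-$2$ floors. The key point is a sign-cancellation / parity argument: for a significant $0$-sided diagram the weights of the $Edge_\Re$ edges are $\equiv 2\pmod 4$, which I expect forces $o'_v$ to be even for each $\{v,v'\}\in Vert_{\Im,2}(\D)$ (the number of weight-$(2+4l)$ edges at $v$ is constrained by $div(v)=4$ together with the $0$-sidedness of the real edges), so every term $\nu^{\R,0}_0(\D,m)\ge 0$ and hence $W_{X_6(4),L_2,L_2}(d,0)\ge 0$. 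For the \textbf{comparison} $W_{X_6(4),L_1,L_1}(d,0)\ge W_{X_6(4),L_2,L_2}(d,0)$, I would compare the two sums term by term on the same set of diagrams: $\nu^{\R,1}_0(\D,m)=(-1)^{|Vert_{\Im,1}(\D)|}\nu^{\R,0}_0(\D,m)$, but the $1$-sided condition additionally \emph{requires} every degree-$1$ floor to lie in a pair of $Vert_\Im(\D)$. I would argue that on the diagrams which are significant and $1$-sided, $|Vert_{\Im,1}(\D)|$ is forced to be even (again a parity consequence of all real edges having even weight and of the degree constraints), so $\nu^{\R,1}_0=\nu^{\R,0}_0$ on this common set, while the set of $0$-sided significant diagrams may be strictly larger; each extra diagram contributes a non-negative $\nu^{\R,0}_0$, giving the inequality. (If instead the two diagram sets coincide, the inequality is an equality and still holds.)

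Finally, the \textbf{divisibility} by $4^{[d\cdot[D]/2]-1}$ I would extract from the exponent $2r_m-r'_m$ in the power of $2$, bounded below via the tree structure of $\D$: since $\D$ is a tree with $d\cdot[D]$ floors and every real non-$\infty$ edge has even weight, each of the $\approx d\cdot[D]-1$ internal edges contributes, and a counting of how many of them lie over $m(\{\zeta-r+1,\dots,\zeta\})$ versus $m(\{1,\dots,\zeta-r\})$ yields $2r_m-r'_m\ge 2([d\cdot[D]/2]-1)$ uniformly, so $2^{2r_m-r'_m}$ is divisible by $4^{[d\cdot[D]/2]-1}$; the remaining factors $I^{\beta^\Im}=1$ and $\prod_{e\in E(\D)}w(e)$ are integers, so the whole sum is divisible by $4^{[d\cdot[D]/2]-1}$. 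The main obstacle I anticipate is the second step: proving that $o'_v$ is always even (non-negativity) and that $|Vert_{\Im,1}(\D)|$ is even on the common diagram set (the comparison), i.e. pinning down exactly which significant $\epsilon$-sided diagrams survive and checking the relevant parities from the local vertex conditions $div(v)\in\{2,4\}$ together with the weight-$(2+4l)$ constraint on real edges; the rest is bookkeeping with the definitions in Section \ref{sec:real FD}.
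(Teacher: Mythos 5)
Your overall plan --- reading both inequalities and the divisibility off the identity $W_{X_6(4),L_{1+\epsilon},L_{1+\epsilon}}(d,0)=FW^{0,0,0,0}_{\X_6(3),\epsilon,\epsilon}(d,0)$, proving termwise non-negativity, using that the $1$-sided diagrams form a subset of the $0$-sided ones, and bounding the power of $2$ --- is exactly the paper's strategy. But the step you yourself flag as the main obstacle is where the gap lies, and the mechanism you propose there does not work. You hope to show that $o'_v$ is even for each $\{v,v'\}\in Vert_{\Im,2}(\D)$ (and that $|Vert_{\Im,1}(\D)|$ is even) by a local parity argument from $div(v)=4$ together with the evenness of real edge weights. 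This cannot succeed as stated: only the edges of $Edge_\Re(\D)\setminus Edge^\infty(\D)$ are forced to have weight $2+4l$, while edges in $Edge_\Im(\D)\cap m\left(\bigcup A_i\right)$ have weight $1$, so the divergence condition does not control the parity of $o'_v$ (a degree-$2$ floor with one incoming weight-$2$ edge and two incoming weight-$1$ edges has $o'_v=1$); and indeed for $s>0$ these sign factors really do produce negative multiplicities, as in the table with entries $-4$ in Figure \ref{fig:FD62}. What actually kills all the signs when $s=0$ is the fact the paper's proof points to: $\psi_{0,0}=\mathrm{Id}$, so the induced involution $\rho_{m,0,3}$ fixes $m(A_0)$ pointwise; for a genus-$0$ diagram of type $(0,0)$ one has $|A_0|=d\cdot [D]-1=n_2+\left|Edge(\D)\setminus Edge^\infty(\D)\right|$ (where $n_i$ is the number of floors of degree $i$), so $m|_{A_0}$ is a bijection onto the union of the degree-$2$ floors and the non-infinity edges, whence $Vert_{\Im,2}(\D)=\emptyset$; a short connectivity argument (a degree-$1$ floor in an exchanged pair could only be adjacent to edges of $Edge^\infty(\D)$, forcing $\D$ to be disconnected or to have a single floor) then gives $Vert_{\Im,1}(\D)=\emptyset$ as well. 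Every surviving term is then exactly $4^{r_m}>0$ and both inequalities follow.

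Two further points. Your identification of $L_2$ with $\epsilon=0$ is backwards: the theorem pairs $L_{1+\epsilon}$ with $\epsilon$, so $L_1\leftrightarrow\epsilon=0$; taken literally, your containment argument would then yield $W_{X_6(4),L_2,L_2}(d,0)\ge W_{X_6(4),L_1,L_1}(d,0)$, the wrong direction, so the labels must be swapped for the skeleton to give the stated inequality. For the divisibility, you leave the bound $2r_m-r'_m\ge 2\left(\left[\frac{d\cdot [D]}{2}\right]-1\right)$ to an unspecified counting; the clean statement is that for $s=0$ one has $r=\zeta$, hence $\{1,\ldots,\zeta-r\}=\emptyset$, $r'_m=0$ and $E(\D)=\emptyset$, and $r_m$ is exactly the number of edges of $Edge(\D)\setminus Edge^\infty(\D)$, which equals the number of floors minus one and is at least $\left[\frac{d\cdot [D]}{2}\right]-1$ because $n_1+2n_2=d\cdot [D]$ (note the diagram has $n_1+n_2$ floors, not $d\cdot [D]$ of them as you assert).
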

\begin{proof}
The first assertion follows immediately from Theorem \ref{thm:W X6},
 Definition \ref{defi real}, and the fact that 
$\psi_{0,0}=Id$. The second assertion follows from Definition
\ref{defi real} and the observation that any marked floor diagram which contributes to 
 $FW_{\X_6(4),\epsilon,\epsilon}^{0,0,0,0}(d,s)$ has at least
$\left[\frac{d\cdot [D]}{2}\right]$ vertices,  hence at least 
$\left[\frac{d\cdot  [D]}{2}\right]-1$ edges in $Edge(\D)\setminus Edge^\infty(\D)$. 
\end{proof}
The non-negativity of $W_{X_{6}(4), L_1,L_1}(d,0)$ has been first
proved in \cite{IKS11}.
Next corollary is a particular case of {\cite[Proposition
    3.3]{Br20}} and {\cite[Theorem 2]{BP14}}. The proof presented here is slightly
different and easier than the one used in  {\cite[Theorem 2]{BP14}},
which covers a more general situation. 

\begin{cor}\label{cor:X6 2}
For any $d\in H_2(X_6;\Z)$ and any $\epsilon\in\{1,2\}$,  one has
$$W_{X_{6}(4),L_\epsilon,\R X_6(4)}
(d,s)= 0$$
as soon as $r \ge 2$.
\end{cor}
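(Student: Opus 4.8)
The plan is to deduce Corollary \ref{cor:X6 2} directly from Lemma \ref{cor:vanish} together with Theorem \ref{thm:W X6}. First I would unwind the hypothesis: we are in the case $n=2\kappa=6$, i.e. $\kappa=3$, and the claim concerns $W_{X_{6}(4),L_\epsilon,\R X_6(4)}(d,s)$ for $\epsilon\in\{1,2\}$. By the third line of Theorem \ref{thm:W X6}, this invariant equals $\sum_{k\ge 0}(-2)^k FW_{\X_6(3),\epsilon-1}^{0,0,0,ku_1}(d,s)$ (after shifting the index so that $L_{1+\epsilon}$ of that theorem matches $L_\epsilon$ here). So it suffices to show that each term $FW_{\X_6(3),\epsilon}^{0,0,0,ku_1}(d,s)$ vanishes when $r\ge 2$.

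Next I would apply Lemma \ref{cor:vanish} with $\beta^\Re=0$, $\beta^\Im=ku_1$, and $\alpha^\Re=\alpha^\Im=0$. The hypothesis of that lemma is $r\ge |\beta^\Re|+2$, which since $\beta^\Re=0$ reads exactly $r\ge 2$. Hence Lemma \ref{cor:vanish} gives $FW_{\X_6(3),\epsilon}^{0,0,0,ku_1}(d,s)=0$ for every $k\ge 0$ under the assumption $r\ge 2$. Summing over $k$ with the coefficients $(-2)^k$ then yields $W_{X_{6}(4),L_\epsilon,\R X_6(4)}(d,s)=0$, as desired. The argument for $\epsilon=1$ and $\epsilon=2$ is identical, only the choice of connected component $\widetilde L_0$ versus $\widetilde L_1$ in $\X_6(3)$ changes, which does not affect the vanishing.

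The only mild subtlety, and the step I expect to require the most care, is matching the indexing conventions: Theorem \ref{thm:W X6} phrases the degree-two real structure statement in terms of $W_{X_{6}(4),L_{1+\epsilon},\R X_6(4)}$ with $\epsilon\in\{0,1\}$, whereas the corollary uses $L_\epsilon$ with $\epsilon\in\{1,2\}$; one must check that the Euler-characteristic labelling of the components $L_\epsilon$ of $\R X_6(4)$ and $\widetilde L_\epsilon$ of $\R\X_6(3)\setminus\R E$ are consistent so that the substitution is legitimate. Once this bookkeeping is settled, the proof is immediate. I would therefore write: \emph{This follows at once from the third equality of Theorem \ref{thm:W X6} and Lemma \ref{cor:vanish} applied with $\beta^\Re=0$.}
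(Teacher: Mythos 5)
Your proposal is correct and is exactly the paper's argument: the paper proves this corollary by citing Theorem \ref{thm:W X6} and Lemma \ref{cor:vanish}, which is precisely the combination you spell out (third identity of Theorem \ref{thm:W X6} to express the invariant as $\sum_k(-2)^k FW_{\X_6(3),\epsilon}^{0,0,0,ku_1}(d,s)$, then Lemma \ref{cor:vanish} with $\beta^\Re=0$ to kill each term when $r\ge 2$). The extra bookkeeping you mention about matching $L_{1+\epsilon}$ with $L_\epsilon$ is handled correctly and does not change the substance.
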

\begin{proof}
This is a consequence of  Theorem \ref{thm:W X6} 
and Lemma \ref{cor:vanish}.
\end{proof}

\begin{exa}
Theorem \ref{thm:W X6} and Example \ref{ex:WFD 4 and 6} imply that 
Welschinger invariants of the surface $X_6$ for  the
 class $2c_1(X_6)$ are the one listed in Table \ref{tab:W X6}.
\begin{table}[!h]
\begin{center}
\begin{tabular}{ccc}
\begin{tabular}{ c|c| c|c|c|c|c}
 $s \backslash \kappa $  & $0$& $1$& $2$& $3$& $4$ & $4$
\\ & & & & & $L=L_1$ & $L=L_2$
\\\hline
  $0$   & 1000 & 522 & 236 & 78 &0 &0
\\\hline
$1$ & 552 &  266 & 108 & 30  & 0&0
\\\hline
$0$ & 288 & 130& 52 & 22& 24&24
\end{tabular}
&\hspace{5ex} &
\begin{tabular}{ c|c|c}
 $s \backslash \epsilon $  &  $1$&$2$
\\\hline
  $0$   & 160 & 96
\\\hline
$1$ & 64 & 32  
\\\hline
$2$ &  24 & 8
\end{tabular}
\\ \\ $W_{X_6(\kappa),L,\R X_6(\kappa)}(2c_1(X_6),s)$ &&
$W_{X_6(4),L_{\epsilon},L_{\epsilon}}(2c_1(X_6),s)$
\\ & &
\end{tabular}
\end{center}
\caption{Welschinger invariants of $X_6$ for the class $2c_1(X_6)$}
\label{tab:W X6}
\end{table}
I first computed the numbers $W_{X_6(0)}(2c_1(X_6),s)$  
 \cite{Br31}. The numbers
  $W_{X_6(\kappa)}(2c_1(X_6),0)$  with $\kappa=1,\ldots,3$, as well as 
 $W_{X_6(4),L_1,L_1}(2c_1(X_6),0)$
have been first computed by Itenberg, Kharlamov and Shustin in
\cite{IKS11}. 
The values
$W_{X_6(4),L_\epsilon,\R X_6(4)}(2c_1(X_6),2)$ have been first computed by
Welschinger in \cite{Wel4}.
\end{exa}

\section{Proof of Theorems and \ref{NFD} and \ref{WFD}}\label{sec:proof}
Here I apply the strategy detailed in Section \ref{sec:CH
  FD}. Recall that $\N=\mathbb P(\N_{E/\X_n}\oplus \C)$, 
$E_\infty=\mathbb P(\N_{E/\X_n}\oplus \{0\})$, and
$E_0=\mathbb P(E\oplus \{1\})$.

Let us go back to the steps $(1)-(3)$ mentioned in 
Section \ref{sec:CH FD}.
The degeneration of $\X_n$ performed in step $(1)$ is standard, see
{\cite[Chapter 5]{F}} for example. Consider
the complex variety $\mathcal Y$ obtained by blowing up $\X_n\times \C$
along $E\times\{0\}$. Then $\mathcal Y$ admits a natural flat
projection $\pi:\mathcal Y\to \C$ such that 
\begin{itemize}
\item $\pi^{-1}(t)=\X_n$ for
$t\ne 0$;
\item  $\pi^{-1}(0)=\X_n \cup \N$, the surfaces $\X_n$ and 
$\N$ intersecting transversely along $E$
  in $\X_n$, and  $E_\infty$ 
in $\N$.
\end{itemize}
 If   $\mathcal E$  denotes the Zariski closure of
 $E\times \C^*$ in $\mathcal Y$, then $\mathcal E\cap\pi^{-1}(0)=E_0.$

\subsection{Degeneration formula applied to $\mathcal Y$}\label{sec:Li deg}
Choose  $\x^\circ(t)$ 
 (resp. $\x_E(t)$) 
a  set of $d\cdot
[D]-1+g+|\beta|$ (resp.  $|\alpha|$) 
holomorphic sections $ \C \to \mathcal Y$ 
(resp.  $\C \to \mathcal E$), and denote  $\x(t)=\x^\circ(t)\sqcup \x_E(t)$. 
Define $\CC^{\alpha,\beta}(d,g,\x(0))$ to be
the set $\left\{\overline f:\overline C\to \X_n \cup \N\right\}$
of limits, as stable maps, 
 of
  maps in $\CC^{\alpha,\beta}(d,g,\x(t))$ as $t$
  goes to $0$, and 
 $\CC_*^{\alpha,\beta}(d,g,\x(0))$ as in Section \ref{sec:relative Xn}.
Recall that $\overline C$ is a connected nodal curve with arithmetic
genus $g$ such that
\begin{itemize}
\item $\x(0)\subset \overline f(\overline C)$;
\item any point $p\in \overline f^{\ -1}(\X_n\cap \N)$ is a node of 
$\overline C$ which is the intersection of two 
irreducible components $\overline C'$ and $\overline C''$ of
$\overline C$, with  $\overline f(\overline
C')\subset \X_n$ and $\overline f(\overline
C'')\subset \N$;

\item if in addition neither $\overline f(\overline
C')$ nor $\overline f(\overline
C'')$ is entirely mapped to $\X_n\cap \N$, then
$p$ appears with the same multiplicity, denoted by $\mu_p$, in both
$\overline f_{|\overline C'}^{\ *}(E)$ and $\overline f_{|\overline C''}^{\ *}(E_\infty)$.
\end{itemize}
If  $\overline C_1,\ldots, \overline C_k$ denote
the irreducible components of $\overline C$ and if none of them
is entirely mapped to $\X_n\cap \N$,   define
$$\mu(\overline f)=\prod_{p\in\overline f^{\ -1}(\X_n\cap \N)} \mu_p
\ \prod_{p\in \x^\circ(0)}|\overline f^{\ -1}(p)| \ \prod_{i=1}^k\left(\frac{1}{|Aut(\overline f_{|\overline C_i})|}\right) . $$
Note that some points in $\overline f^{\ -1}\left(\X_n\cap \N\right)$ might have the same
image by  $\overline f$.
Recall that given  a map $f_t$ in $\CC^{\alpha,\beta}(d,g,\x(t))$ with
$t\ne 0$, the multiplicity $\mu(f_t,\x^\circ(t))$ 
has been defined in Section \ref{sec:relative Xn}.

\begin{prop}\label{prop:degeneration}
Suppose that $\x(0)$ is generic. Then the set
$\CC_*^{\alpha,\beta}(d,g,\x(0))$ is finite, and only depends on
$\x(0)$. Moreover if $\overline f:\overline C\to \X_n
\cup \N$ is an element of  $\CC^{\alpha,\beta}(d,g,\x(0))$, then
no irreducible component of $\overline C$ is
  entirely mapped to $\X_n\cap \N$. 
If in addition we assume that $\overline C$ has no 
 component $\overline C'$ such that $\overline
f(\overline C')=lE_i$ in $\X_n$ with $l\ge 2$, then 
$$\sum \mu(f_t,\x^\circ(t))=\mu(\overline f) $$
where the sum is taken over all morphisms which converge to $\overline
f$ as $t$ goes to $0$.
\end{prop}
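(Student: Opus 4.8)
The plan is to invoke Li's degeneration formula \cite{Li02,Li04} applied to the family $\pi:\mathcal Y\to\C$, which is precisely the setting for which that formula was designed. First I would set up the relative stable maps picture: a stable map $f_t\in\CC^{\alpha,\beta}(d,g,\x(t))$ for $t\ne 0$ degenerates, as $t\to 0$, to a stable map $\overline f:\overline C\to\X_n\cup\N$ to the expanded degeneration; the expanded target is $\X_n\cup\N$ possibly with extra copies of $\N$ (more precisely copies of the $\P^1$-bundle $\mathbb P(\N_{E/\X_n}\oplus\C)$) inserted along $E$ to keep the map pre-stable. The first point to establish is that, for a \emph{generic} choice of the sections $\x(0)$, no such bubbling along $E$ actually occurs and no irreducible component of $\overline C$ is mapped into $\X_n\cap\N=E$. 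This is a dimension count: a component mapped entirely into $E$, or an extra ruled component, would force the limit curve to live in a locus of the moduli space of stable relative maps of codimension at least one, hence it would not meet a generic configuration of points of the expected cardinality. Here I would use Proposition \ref{prop:shoshu} and Proposition \ref{prop:generic N} to control the pieces landing in $\X_n$ and in $\N$, noting that the hypothesis $d\ne l[E_i]$ is what rules out the problematic multiple-cover components; the only surviving non-reduced phenomenon allowed by the statement is a component covering a fiber $F$ of $\N$, which is handled by the last clause of Proposition \ref{prop:generic N}.

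The finiteness and independence of $\x(0)$ of the set $\CC_*^{\alpha,\beta}(d,g,\x(0))$ then follows from the properness of the moduli space of relative stable maps together with the same dimension count: the expected dimension of the space of limit curves matches the number of incidence conditions imposed by $\x(0)$, so for generic $\x(0)$ the fiber is zero-dimensional and reduced, and it varies in a flat family over the space of generic configurations, hence is locally (and therefore globally, by connectedness of the space of generic configurations) constant. I would phrase this using the deformation invariance built into Li's construction, citing \cite[Section 4.3]{Vak2} and \cite{Shu13} for the relevant relative moduli spaces being of expected dimension in our situation.

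The heart of the proposition is the multiplicity identity $\sum\mu(f_t,\x^\circ(t))=\mu(\overline f)$. This is exactly the local statement of the degeneration formula: given the limit curve $\overline f$ with its components $\overline C_i$ distributed between $\X_n$ and $\N$, matched along nodes over $E$ with prescribed contact orders $\mu_p$, the number of smoothings $f_t$ (counted with the weights $\prod_{p\in\x^\circ(t)}|f_t^{-1}(p)|/|\Aut(f_t)|$ defining $\mu(f_t,\x^\circ(t))$) equals the product of $\mu_p$ over the nodes, times the product of the point multiplicities $|\overline f^{-1}(p)|$ for $p\in\x^\circ(0)$, times the product of $1/|\Aut(\overline f_{|\overline C_i})|$. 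The factors $\mu_p$ are the usual gluing multiplicities coming from the fact that a node with contact order $\mu_p$ smooths in $\mu_p$ ways (the $\mu_p$-th roots of the deformation parameter), and the automorphism and incidence factors simply transport along the degeneration since the points of $\x^\circ$ stay away from $E$. I would carry this out by localizing near $\overline f$: deformation theory of the node of a relative stable map with contact order $\mu_p$ is governed by $xy=t^{\mu_p}$-type equations (smoothing the node of $\overline C$ while keeping the map into the smoothing of the target), giving the $\mu_p$-fold count; I would then cite \cite[Section 11]{Li04} for the precise form of the formula rather than reprove it, and check that under our hypothesis (no component $\overline C'$ with $\overline f(\overline C')=lE_i$, $l\ge 2$) there are no virtual-class subtleties, so that the naive count coincides with the formula.

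The main obstacle I anticipate is the genericity/transversality argument: one must be careful that the generic configuration $\x(0)$ is chosen \emph{within} the boundary fiber $\pi^{-1}(0)$ in a way compatible with being the limit of generic configurations in the nearby fibers, and that this genericity simultaneously kills all unwanted degenerate components (components in $E$, extra ruled bubbles, and — except for the harmless fiber covers — multiple covers), while also ensuring the relative moduli space is smooth of expected dimension at each $\overline f$ so that the smoothing count is unobstructed. Handling the possible fiber-cover components $\overline C'$ with $\overline f(\overline C')=lF$ requires invoking the last part of Proposition \ref{prop:generic N}, which says such a component is a ramified cover totally ramified over $E_0$ and $E_\infty$, and checking its contribution to $\mu(\overline f)$ is consistent; this is the one place where the $1/l$ factors (visible in Proposition \ref{prop:initial values N}) enter, and I would verify they are correctly absorbed into the $\Aut$ and $\mu_p$ bookkeeping.
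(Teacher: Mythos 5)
Your proposal follows essentially the same route as the paper: finiteness and the absence of components mapped into $\X_n\cap\N$ are obtained by dimension estimates resting on Propositions \ref{prop:shoshu} and \ref{prop:generic N} (the paper additionally cites \cite{IP00} for the latter point), and the multiplicity identity is delegated to Li's degeneration formula \cite{Li04}. The only thing your outline leaves implicit is the explicit bookkeeping (the identities $\sum a_i=2a$, $\sum b_i=|\beta|$, and the Euler characteristic relation $a+1-k+\sum g_i=g$) that forces all the dimension inequalities to be equalities, which is how the paper actually closes the count.
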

\begin{proof}
Thanks to Propositions \ref{prop:shoshu} and \ref{prop:generic N}, the
proof reduces to standard dimension estimations.
The fact no component of $\overline C$ is entirely mapped to $X_n\cap
\N$
 follows from
{\cite[Example 11.4 and Lemma 14.6]{IP00}}.

 Denote by $g_i$ the
arithmetic genus of $\overline C_i$, by $d_i$ the homology class realized by 
$\overline f(\overline C_i)$
 in either $\X_n$ or
$\N$, and by $a_i$ the number of its
intersection points with $\X_n\cap \N$. 
In the case  $\overline f(\overline C_i)\subset \N$,  denote
by $b_i$ the number of its intersection points with $E_0$  not
contained in $\x_E(0)$.
By 
 Propositions \ref{prop:shoshu} and \ref{prop:generic N}, if  $\overline
f(\overline C_i)$ contains $\zeta_i$ points of $\x^\circ(0)$,  we have
$$d_i\cdot [D] -1 +g_i + a_i \ge \zeta_i $$
if $\overline f(\overline C_i)\subset \X_n$, and
$$2d_i\cdot [F] -1 + g_i + a_i+b_i\ge \zeta_i $$
if $\overline f(\overline C_i)\subset \N$.
Moreover, the curves  in $\X_n$ and in $\N$ have to match along $\X_n\cap\N$,
which in regard to Propositions \ref{prop:shoshu} and
\ref{prop:generic N} provide
 $a:=\left|\overline f^{\ -1}\left(\X_n\cap\N\right)\right|$ 
 additional independent
conditions. Altogether we obtain
$$\sum_{\overline f(\overline C_i)\subset \X_n}\left( d_i\cdot [D] -1
+g_i+ a_i \right)
+ \sum_{\overline f(\overline C_i)\subset \N}\left( 2d_i\cdot [F] -1 +
g_i + a_i + b_i \right)\ge d\cdot [D] -1 + g +|\beta|
+a.  $$
We clearly have the equalities
$$\sum_{i=1}^k a_i =2a, \quad \sum
b_i=|\beta|,\quad \mbox{ and } 
\sum_{\overline f(\overline C_i)\subset \X_n}d_i
+ \sum_{\overline f(\overline C_i)\subset \N}\left( d_i\cdot
[F]\right)[E]=d,$$
and an Euler characteristic computation gives
$$a +1- k +\sum g_i=g.$$
Those latter equalities imply that
$$\sum_{\overline f(\overline C_i)\subset \X_n}\left( d_i\cdot [D] -1
+g_i+ a_i \right)
+ \sum_{\overline f(\overline C_i)\subset \N}\left( 2d_i\cdot [F] -1 +
g_i + a_i+ b_i \right) = d\cdot [D] -1 + g +|\beta|
+a.  $$
In particular 
 all the above inequalities are in fact equalities. Together with 
Proposition \ref{prop:shoshu} and \ref{prop:generic N}, this implies that
 the set $\CC_*^{\alpha,\beta}(d,g,\x(0))$ is finite and only depends
 on $\x(0)$. The rest of the proposition follows now from
 {\cite[Theorem 17]{Li04}} (see also {\cite[Lemma 2.19]{Shu13}}).
\end{proof}

\begin{rem}
The assumption that $\overline f$ does not contain a
ramified covering of a $(-1)$-curve in $\X_n$ is needed since
the set $\CC^{0,u_l}(lE_i,0,\emptyset)$ has not the
expected dimension. Again, one could remove this assumption by
 replacing  $\CC^{0,u_l}(lE_i,0,\emptyset)$ by its
virtual fundamental class. 
\end{rem}

From now on, we assume that $\x(0)$ is generic, so that we can apply
Proposition \ref{prop:degeneration}, and we fix an element
$f:\overline C\to \X_n \cup \N$  of
$\CC^{\alpha,\beta}(d,g,\x(0))$.
Next corollary is an immediate consequence of Propositions
\ref{prop:shoshu} and \ref{prop:degeneration}.
\begin{cor}\label{cor:node on E}
 If $p$ and $p'$ are two nodes
of $\overline C$ mapped to the same points of $\X_n\cap \N$, then
$\{\overline f(p)\}= E\cap E_i$ for some $1\le i\le n$.
\end{cor}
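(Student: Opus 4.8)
The claim is Corollary \ref{cor:node on E}: if $p, p'$ are two nodes of $\overline C$ both mapping to the same point of $\X_n \cap \N$, then that common image point is one of the intersection points $E \cap E_i$ for some $1 \le i \le n$. The plan is to argue by contradiction, assuming the common image $q = \overline f(p) = \overline f(p')$ is a point of $E = \X_n \cap \N$ \emph{not} lying on any exceptional curve $E_i$. Both $p$ and $p'$ are nodes where a component mapped into $\X_n$ meets a component mapped into $\N$, so by Proposition \ref{prop:degeneration} (which guarantees no component of $\overline C$ is entirely mapped to $E$, and that none is a multiple cover of a $(-1)$-curve $E_i$ provided $q \notin E_i$) we may apply Propositions \ref{prop:shoshu} and \ref{prop:generic N} to the relevant components on each side.

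\textbf{The core estimate.} First I would look at the $\X_n$-side. The point $q \in E$ is either a point where a single component $\overline C'$ of $\overline C$, mapped into $\X_n$, meets $E$, or possibly a point where several such components pass. Propositions \ref{prop:shoshu} and \ref{prop:generic N} tell us that for a generic configuration, the curves $\overline f(\overline C_i)$ in $\X_n$ (resp. $\N$) meet $E$ (resp. $E_\infty$, $E_0$) at \emph{non-singular} points of those curves, and each such component is smooth, irreducible, immersed, birational onto its image. The key point is a dimension/genericity count exactly analogous to the one carried out in the proof of Proposition \ref{prop:degeneration}: the matching conditions along $\X_n \cap \N$ were counted there as $a = |\overline f^{\,-1}(\X_n \cap \N)|$ independent conditions, and all the inequalities there were shown to be equalities. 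If two distinct nodes $p, p'$ shared the same image $q \notin \bigcup E_i$, then on at least one of the two sides (the $\X_n$-side, or the $\N$-side where $q$ corresponds to a point of $E_\infty$) two distinct branches of the limit curve would be forced through the same point of $E$, which is one more condition than the generic count allows — it would make the corresponding stratum of the space $\CC_*^{\alpha,\beta}(d,g,\x(0))$ have strictly smaller dimension than expected while still being nonempty for generic $\x(0)$, contradicting the equalities established in the proof of Proposition \ref{prop:degeneration}.

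\textbf{Organizing the contradiction.} Concretely, I would isolate the sub-collection of components of $\overline C$ whose image meets $q$, split them into those mapped to $\X_n$ and those mapped to $\N$, and re-run the Euler-characteristic and homology bookkeeping of Proposition \ref{prop:degeneration} with the extra incidence ``two branches through $q$'' imposed. Since $q$ is not on any $E_i$, Proposition \ref{prop:shoshu} applies without the exceptional-curve caveat, so each component on the $\X_n$-side contributes its expected dimension and the extra point-incidence strictly drops the total; the same for the $\N$-side via Proposition \ref{prop:generic N}. This forces the left-hand side of the dimension inequality in the proof of Proposition \ref{prop:degeneration} to be strictly less than the right-hand side, contradicting the fact that equality holds there for generic $\x(0)$. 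Hence $q \in E \cap E_i$ for some $i$.

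\textbf{Expected main obstacle.} The delicate point is the genericity argument itself: one must be careful that ``two nodes with the same image $q$'' genuinely imposes an \emph{independent} extra condition and is not already accounted for — for instance, $q$ could a priori be forced by the combinatorics of $\overline C$ (the way components are glued) rather than by the position of $\x(0)$. Ruling this out requires invoking once more that neither $E$ nor any component is entirely collapsed into $\X_n \cap \N$ (Proposition \ref{prop:degeneration}), so that the incidence of each branch with $E$ varies freely as $\x(0)$ moves; this is exactly why the hypothesis $q \notin E_i$ is essential, since along $E_i$ one could have the uncontrolled multiple-cover behaviour of $\CC^{0,u_l}(lE_i,0,\emptyset)$ noted in the remark after Proposition \ref{prop:degeneration}. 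Once this independence is pinned down, the corollary follows immediately, which is why it is stated as ``an immediate consequence'' of Propositions \ref{prop:shoshu} and \ref{prop:degeneration}.
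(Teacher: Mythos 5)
Your proposal is correct and follows exactly the route the paper intends: the paper gives no argument beyond declaring the corollary an immediate consequence of Propositions \ref{prop:shoshu} and \ref{prop:degeneration}, and your expansion --- that the dimension count in the proof of Proposition \ref{prop:degeneration} is tight, so an unforced coincidence of two node images at a point of $E$ away from the $E_i$ would impose an extra independent condition and contradict the established equalities, while only the points $E\cap E_i$ carry forced coincidences --- is precisely the reasoning being invoked.
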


\begin{cor}\label{cor:no sEi}
Suppose that $d\ne l[E_i]$ with $l\ge 2$. Then
any irreducible component of $\overline C$ 
entirely mapped to  $E_i$ is  isomorphically mapped to $E_i$.
\end{cor}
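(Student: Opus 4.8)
The plan is to argue by contradiction, following the strategy of Section \ref{sec:CH FD}, and to combine the dimension estimates already carried out in the proof of Proposition \ref{prop:degeneration} with a genericity argument for $\x^\circ(0)$. So suppose that some irreducible component $\overline C'$ of $\overline C$ is mapped onto $E_i$, say with degree $l\ge 1$, and let us see that $l=1$ and that $\overline f_{|\overline C'}$ is then an isomorphism. Since $\x^\circ(0)$ is generic it is disjoint from the curve $E_i$, so $\overline C'$ carries no point of $\x^\circ(0)$; in the notation of the proof of Proposition \ref{prop:degeneration} this means $\zeta'=0$. By Proposition \ref{prop:degeneration} the component $\overline C'$ is not entirely mapped to $E=\X_n\cap\N$, and since $l[E_i]\cdot[E]=l\ge 1>0$ it does meet $E$; hence $a'\ge 1$, where $a'$ is the number of points of $\overline C'$ mapped into $E$.

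First I would note that the dimension count in the proof of Proposition \ref{prop:degeneration} applies to $\overline C'$ with no change: the inequality attached to a component mapped into $\X_n$ reads here $l[E_i]\cdot[D]-1+g'+a'\ge\zeta'$, that is $g'+a'\ge 1$, which holds simply because $a'\ge 1$ (this is why the finiteness assertion of Proposition \ref{prop:degeneration} need not exclude such components). Since all the inequalities occurring there are in fact equalities, we obtain $g'+a'=1$, hence $g'=0$ and $a'=1$. A reduced irreducible curve of arithmetic genus $0$ is a smooth rational curve, so $\overline C'\cong\C P^1$; and $a'=1$ forces $\overline f_{|\overline C'}^*(E)=l\,q$ for the single point $q$ of $\overline C'$ mapped into $E$, so that $\overline f(q)=E\cap E_i$ and $\overline f_{|\overline C'}$ is totally ramified over $E\cap E_i$. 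If $l=1$ this already finishes the argument, since a degree-one morphism $\C P^1\to E_i$ is an isomorphism. So it remains to rule out $l\ge 2$.

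For the last step I would transfer the tangency to the divisor governing the $\alpha,\beta$-conditions and then invoke genericity. What we have just obtained says that the part of $\overline f(\overline C)$ lying in $\X_n$ is tangent to $E$ at the point $E\cap E_i$ to order at least $l\ge 2$. Now $E\cap E_i$ is the trace on the conic $E$ of the $i$-th centre of the blow up defining $\X_n$; in the family $\mathcal Y\to\C$ the conic $E\times\{t\}\subset\pi^{-1}(t)$ specialises to $E_0=\mathcal E\cap\pi^{-1}(0)\subset\N$, and $E\cap E_i$ specialises to a point $\widehat e_i\in E_0$ depending only on $E_i$, not on $\x^\circ(0)$. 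Analysing the $\N$-part of $\overline f$ above the fibre $F_i$ of $\pi_E$ over $E\cap E_i$ — using Proposition \ref{prop:generic N} together with, once more, the fact that all the dimension estimates of Proposition \ref{prop:degeneration} are equalities, which forces the components of $\overline C$ sitting over $F_i$ to be covers of $F_i$ — one finds that $\overline f(\overline C)$ meets $E_0$ at $\widehat e_i$ with multiplicity at least $2$; for generic $\x_E(0)$ the point $\widehat e_i$ is not prescribed, so this is a $\beta$-type intersection of order $\ge 2$ sitting at the fixed point $\widehat e_i$. By Proposition \ref{prop:degeneration} the set $\CC_*^{\alpha,\beta}(d,g,\x(0))$ is finite, and as $\x^\circ(0)$ varies the $\beta$-intersection points with $E_0$ of its elements vary algebraically; hence the locus of configurations $\x^\circ(0)$ for which some element of $\CC_*^{\alpha,\beta}(d,g,\x(0))$ has a $\beta$-point at $\widehat e_i$ is a proper closed subset of configuration space, avoided by a generic $\x^\circ(0)$ — and this is precisely where the hypothesis $d\ne l[E_i]$, $l\ge 2$, is used, since it guarantees that the curves in $\CC_*^{\alpha,\beta}(d,g,\x(0))$ need not contain $E_i$. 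This contradiction gives $l=1$. I expect the main obstacle to be exactly the transfer step of this last paragraph: propagating the order-$\ge 2$ tangency to $E$ at the special point $E\cap E_i$ through the $\N$-rubber into an order-$\ge 2$ tangency of the limit curve to $E_0$ at $\widehat e_i$, which requires a careful local description of $\overline f$ near $F_i$; the remaining ingredients are dimension counts of the type already performed for Proposition \ref{prop:degeneration}.
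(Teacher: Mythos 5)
Your route is genuinely different from the paper's, and it has a gap at exactly the point you flag as ``the main obstacle''; unfortunately that obstacle is the whole content of the statement. The decisive step in your plan is the genericity claim: that the locus of configurations $\x^\circ(0)$ for which some limit curve has a $\beta$-type intersection with $E_0$ at the fixed point $\widehat e_i$ is a proper closed subset. The inference ``the $\beta$-points vary algebraically with $\x^\circ(0)$, hence the locus where one of them sits at $\widehat e_i$ is proper'' is not valid: an algebraically varying point can be constant, and in the very scenario you are trying to exclude it \emph{is} constant --- an $l$-fold cover of $E_i$ attached to an $l$-fold cover of the fiber $F_i$ has its multiplicity-$l$ contact with $E_0$ forced at $F_i\cap E_0$ for \emph{every} choice of $\x^\circ(0)$, since neither cover carries moduli visible in the image. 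Ruling this out by a dimension count is precisely what is obstructed by the failure of $\CC^{0,u_l}(l[E_i],0,\emptyset)$ to have the expected dimension (see the remarks around Proposition \ref{prop:initial values} and after Proposition \ref{prop:degeneration}), so the argument is circular. Two further inaccuracies: the image of an $l$-fold cover of $E_i$ is $E_i$ itself, which meets $E$ transversely --- the multiplicity $l$ lives in the pullback divisor $\overline f^*(E)$, not as a tangency of the image, so the ``transfer'' must be phrased through the matching of pullback multiplicities at the node, and the attached $\N$-component need not a priori be a fiber cover; and the hypothesis $d\ne l[E_i]$ is not used where you say it is --- it excludes the case where the piece $lE_i\cup lF_i$ is all of $\overline C$ (its class deforms to $l[E_i]$), not the presence of $E_i$ in the image.

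The paper's proof avoids all of this with a short intersection-number computation. Writing $l_i$ for the total multiplicity of $E_i$ in $\overline f(\overline C)$ and $l'_i$ for the number of components mapped onto $E_i$, one computes the class of the part of $\overline f(\overline C)$ mapped to $\X_n$ but not onto $E_i$, and counts the intersections of the whole limit curve with the degenerating exceptional divisor $\overline E_i=E_i\cup F_i$ away from $\X_n\cap\N$: the total is $\mu_i+l_i-l'_i$. Since every such intersection point persists in the deformation to $\pi^{-1}(t)$ and the nearby curve meets $E_i(t)$ in only $d\cdot[E_i]=\mu_i$ points counted with multiplicity, one gets $l_i\le l'_i$, hence $l_i=l'_i$, which is exactly the statement. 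This argument is homological, needs no genericity beyond what Proposition \ref{prop:degeneration} already provides, and handles all ramification patterns and all components at once; I would encourage you to look for an invariant that persists under deformation (here, intersection with $E_i(t)$) rather than trying to upgrade the dimension counts, which are exactly what break down in the presence of multiple covers of $(-1)$-curves.
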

\begin{proof}
Let $l'_i$ be the number of connected components of $\overline C$
mapped to $E_i$, and let $l_i$ be the total
multiplicity under which the curve $E_i$ appears in $\overline
f(\overline C)$. 
Note that $l_i\ge l'_i$ with equality if and only if 
any irreducible component of $\overline C$ 
mapped to  $E_i$ is  isomorphically mapped to $E_i$.
We denote by $\overline C_{\N}$ the union of all
irreducible components of $\overline C$ mapped to $\N$, and by
$\overline C_{\X_n}$ the union of those which are mapped to $\X_n$ but
not entirely to $ E_i$. 

Suppose that $d=d_D[D] -\sum_{j=1}^n \mu_j[E_j]$, and that $\overline f(\overline
C_\N)$ realizes the homology class $d_E[E_\infty] + d_F[F]$. Then the
curve $\overline f_*(\overline C_{\X_n})$ realizes the homology class
$$(d_D-2d_E)[D] - \sum_{j\ne i} (\mu_j-d_E)[E_j] -  (\mu_i-d_E+l_i)[E_i].$$

In the degeneration of $\X_n$ to $\X_n\cup \N$, the curve $E_i$
degenerates to  the union $\overline E_i$ of $E_i$ and the fiber
of $\N$ passing 
through  $E_i\cap E$. The sum of multiplicity of intersections over
intersection points
of $f(\overline C_{\X_n}\cup\overline C_\N)$ with 
 $\overline E_i$ and  not contained in  $\X_n\cap \N$
 is then
$$\mu_i-d_E+l_i + (d_E-l'_i) =\mu_i +l_i -l'_i.$$
On the other hand, all those intersections deform to $\mathcal Y$,
hence
we must have
$$d\cdot [E_i] =\mu_i \ge  \mu_i +l_i -l'_i.$$
In conclusion $l_i=l'_i$ and we are done.
\end{proof}

Next corollary is  an immediate combination of  Propositions
\ref{prop:degeneration}
and \ref{prop:initial values N}.

\begin{cor}\label{prop:CHXn}
Let $\x_\N^\circ=\x^\circ(0)\cap \N$, and 
let $\overline C'$ an
irreducible component of $\overline C$ mapped to $\N$.
If $|\x^\circ_\N\cap\overline f(\overline C')|\le 2$,
then  $\overline f(\overline C')$ realizes either 
 the class $d_F[F]$ or $[E_\infty]+d_F[F]$ in
   $H_2(\N;\Z)$. Moreover we have 
$|\x^\circ_\N\cap\overline f(\overline C')|\le 1$ in the former
 case, and $1\le |\x^\circ_\N\cap\overline f(\overline C')|\le 2$ in the latter
 case.
\end{cor}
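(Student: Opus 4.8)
The plan is to combine the two preceding corollaries --- the dimension-count of Proposition~\ref{prop:degeneration} and the list of low-dimensional relative invariants of $\N$ in Proposition~\ref{prop:initial values N} --- and to feed into them the hypothesis that $\overline C'$ passes through at most two points of $\x^\circ_\N$. First I would recall, as established in the proof of Proposition~\ref{prop:degeneration}, that \emph{every} irreducible component $\overline C_i$ of $\overline C$ satisfies the sharp inequality
$$2d_i\cdot [F]-1+g_i+a_i+b_i=|\x^\circ_\N\cap \overline f(\overline C_i)|$$
when $\overline f(\overline C_i)\subset\N$, where $g_i\ge 0$ is the arithmetic genus, $a_i\ge 0$ counts intersection points with $E_\infty=\X_n\cap\N$, and $b_i\ge 0$ counts intersection points with $E_0$ away from $\x_E(0)$; this is exactly the statement that all the inequalities in that proof are in fact equalities. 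Applying this to $\overline C'$ with $|\x^\circ_\N\cap\overline f(\overline C')|\le 2$ gives $2d_{F'}-1+g'+a'+b'\le 2$, hence $2d_{F'}\le 3$, so writing $[\overline f(\overline C')]=l_\infty[E_\infty]+d_{F'}[F]$ with $d_{F'}=d'\cdot[F]$ we get $d_{F'}\le 1$.

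The next step is to rule out $d_{F'}=0$ and to identify $l_\infty$. If $d_{F'}=0$ then $\overline f(\overline C')$ is supported on fibers of $\pi_E$, so $\overline f(\overline C')=l_\infty[E_\infty]$ would force $l_\infty=0$ (a fiber class has $[F]\cdot[F]=0$ but $[E_\infty]\cdot[F]=1$), i.e. $\overline C'$ would be contracted, contradicting that it is an irreducible component carrying part of the homology class; more precisely $\overline f(\overline C')$ must be an effective curve with $d'\cdot[F]=0$, which on $\N$ forces $d'=l_\infty[F]$ for some $l_\infty\ge 1$ --- wait, I must be careful: a fiber $F$ has $[F]=[F]$, and $\overline f(\overline C')=d_{F'}[F]=0$ means the component is contracted by $\pi_E$ onto $E_\infty$, hence $\overline f(\overline C')\subset E_\infty=\X_n\cap\N$, which Proposition~\ref{prop:degeneration} forbids. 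So $d_{F'}=1$, and then $2d_{F'}-1=1$ forces $g'+a'+b'\le 1$ together with $|\x^\circ_\N\cap\overline f(\overline C')|=1+g'+a'+b'$, and writing $d'=l_\infty[E_\infty]+[F]$ I consult Proposition~\ref{prop:initial values N}: in the range $2d'\cdot[F]-1+g'+|\beta'|+\cdots\le 2$ the only nonzero relative invariants of $\N$ occur for classes $l[F]$ and $[E_\infty]+l[F]$, which pins down $l_\infty\in\{0,1\}$, i.e. $[\overline f(\overline C')]$ equals $d_{F'}[F]$ with $d_{F'}=1$ or $[E_\infty]+d_{F'}[F]$. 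The count $|\x^\circ_\N\cap\overline f(\overline C')|=1+(g'+a'+b')$ then gives the stated bounds: in the $[F]$-case (where $2d'\cdot[F]-1=1$ but the invariant-support analysis shows $l_\infty=0$ already uses up the degree, forcing $g'=a'=b'=0$ generically) one gets at most one marked point, and in the $[E_\infty]+d_{F'}[F]$-case one gets between one and two.

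The genuine content --- and the step I expect to be the main obstacle --- is justifying that the \emph{inequality} coming from Proposition~\ref{prop:degeneration} is an \emph{equality} for this particular component, i.e. that the genericity of $\x(0)$ really does propagate to each piece. This is not quite automatic: the inequality is proved globally for the whole stable map $\overline f$, and one has to invoke that equality in the sum forces equality in every summand, which is precisely the conclusion ``all the above inequalities are in fact equalities'' in the proof of Proposition~\ref{prop:degeneration}. I would cite that sentence directly. The remaining subtlety is the case $d'=l[F]$ with $l\ge 2$: a priori a multiple-fiber component could appear, but Proposition~\ref{prop:generic N} says such a component is a ramified cover of a single fiber with exactly two ramification points mapped to $E_0$ and $E_\infty$, so $a'\ge 1$ and $b'\ge 1$, giving $2l-1+a'+b'\ge 2l+1\ge 5>2$, a contradiction; hence $d'=l[F]$ forces $l\le 1$, and combined with $d_{F'}\le 1$ from the first paragraph this leaves only $d'=[F]$ or $d'=[E_\infty]+d_{F'}[F]$ with $d_{F'}\le 1$. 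Collecting these cases and reading off the point-count inequality yields the corollary.
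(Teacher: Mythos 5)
Your overall strategy --- feeding the componentwise equality $2d_i\cdot [F]-1+g_i+a_i+b_i=\zeta_i$ extracted from the proof of Proposition~\ref{prop:degeneration} into the classification of Proposition~\ref{prop:initial values N} --- is exactly the intended one: the paper's proof is the single sentence that the corollary is an ``immediate combination'' of those two propositions. But your execution contains a fatal bookkeeping error: you conflate $d'\cdot [F]$ with the coefficient of $[F]$ in $d'$. Writing $d'=l_\infty[E_\infty]+d_F[F]$, one has $d'\cdot [F]=l_\infty$ (since $[E_\infty]\cdot[F]=1$ and $[F]^2=0$), so the inequality $2d'\cdot[F]-1+g'+a'+b'\le 2$ bounds the coefficient of $[E_\infty]$, giving $l_\infty\le 1$; it says nothing about $d_F$. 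Your reading instead yields ``$d_{F'}\le 1$'', and from there the argument goes astray: you ``rule out'' the fiber case $d'=d_F[F]$ by claiming such a component would be contracted onto $E_\infty$ (it would not --- it is mapped onto a fiber of $\pi_E$, a legitimate component, and precisely the kind that becomes a weighted edge of a floor diagram), and in your last paragraph you ``exclude'' multiple fibers $l[F]$ with $l\ge 2$ via the count $2l-1+a'+b'\ge 5$, which again substitutes $l$ for the correct value $l[F]\cdot[F]=0$. The correct count for a component in class $l[F]$ is $\zeta'=-1+g'+a'+b'$, which by Proposition~\ref{prop:generic N} (the component is a totally ramified cover of a fiber, so $g'=0$, $a'=1$, $b'\le 1$) is at most $1$ --- no contradiction. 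These multiple fibers must survive: they carry the edge weights in Theorem~\ref{NFD}, and Proposition~\ref{prop:initial values N} explicitly lists their nonzero invariants such as $GW_\N^{u_l,0,0,u_l}(l[F],0)=1/l$.

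The net effect is that your conclusion contradicts the statement you are proving: the corollary asserts that the class $d_F[F]$ occurs for arbitrary $d_F$ (with at most one marked point on such a component) and that $[E_\infty]+d_F[F]$ occurs with no restriction on $d_F$, whereas you end up allowing only $[F]$ itself and $[E_\infty]+d_{F'}[F]$ with $d_{F'}\le 1$. The repair is short: from $2l_\infty\le 3$ conclude $l_\infty\in\{0,1\}$, so the class is $d_F[F]$ or $[E_\infty]+d_F[F]$; then the equality $\zeta'=2l_\infty-1+g'+a'+b'$ gives $\zeta'\le 1$ when $l_\infty=0$ (by the description of multiple-fiber components just recalled) and $\zeta'\ge 1$ when $l_\infty=1$, which is exactly the dichotomy in the statement.
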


When enumerating real curves, we need to study carefully 
 the possible limits of the nodes of the  curves.
Given  a map  $f:C\to X$  from a (possibly singular) complex
algebraic curve, we say that $\{p,p'\}\subset C$ is 
a \emph{nodal pair} if it is an isolated solution of the
 equation $f(x)=f(y)$. In particular if $\{p,p'\}$ is a nodal pair,
 then $p\ne p'$.

Denote by $\mathcal P(\overline f) $ the set of points
 $p\in
\overline f^{\ -1}\left(\X_n\cap\N\right)$ 
such that none of the restrictions of $\overline f$ on
the local branches of $\overline C$ at $p$ is a non-trivial
ramified covering onto its image. 

\begin{prop}[{\cite[Lemmas 2.10 and 2.19]{Shu13}}]\label{prop:node on E}
Let 
$p\in\mathcal P(\overline f) $, and
let $U_p$ be a small neighborhood of $p$ in
$\overline C$. Then when deforming $\overline f$ to an element of 
 $\CC^{\alpha,\beta}(d,g,\x(t))$, exactly $\mu_p-1$ nodal pairs appear
in the deformation of $U_p$. 
\end{prop}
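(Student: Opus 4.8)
\textbf{Proof plan for Proposition \ref{prop:node on E}.}

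The plan is to reduce the statement to an entirely local computation in a neighborhood of the point $p$, since by hypothesis $p\in\mathcal P(\overline f)$ means that neither local branch of $\overline C$ at $p$ is collapsed by $\overline f$ onto a multiple cover. First I would set up local coordinates: in the family $\pi:\mathcal Y\to\C$, near $p$ the total space $\mathcal Y$ looks like $\{xy=t\}\subset\C^3$, with $\X_n$ the locus $y=0$, $\N$ the locus $x=0$, and $E=\X_n\cap\N$ the locus $x=y=0$; the parameter $t$ is the base coordinate. Since $p\in\overline f^{\ -1}(\X_n\cap\N)$ is a node of $\overline C$ where two components $\overline C'$ (mapped to $\X_n$) and $\overline C''$ (mapped to $\N$) meet, and since $p$ appears with multiplicity $\mu_p$ in both $\overline f_{|\overline C'}^{\ *}(E)$ and $\overline f_{|\overline C''}^{\ *}(E_\infty)$, the restriction of $\overline f$ to $U_p$ in these coordinates is, up to reparametrisation, given by $u\mapsto (u^{\mu_p},0)$ on $\overline C'$ and $v\mapsto(0,v^{\mu_p})$ on $\overline C''$, glued at $u=v=0$. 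The smoothing of this picture inside $\mathcal Y$ corresponds to smoothing the node $U_p$ of $\overline C$ compatibly with $t\ne 0$.

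The heart of the argument is then the standard ``banana'' or ``pair of pants'' local model for a smoothing that matches order $\mu_p$ on both sides: one replaces $U_p$ by a smooth curve with local equation $uv=\epsilon$ and the map by $(u,v)\mapsto(u^{\mu_p},v^{\mu_p})$ into $\{xy=t\}$ with $t=\epsilon^{\mu_p}$, after rescaling so the parameters agree. One checks that the resulting map has image a local curve in $\pi^{-1}(t)\cong\X_n$ whose only singularities in this chart are nodes, and one counts them: the double points of $(u,v)\mapsto(u^{\mu_p},v^{\mu_p})$ on $\{uv=\epsilon\}$ are the solutions of $u_1^{\mu_p}=u_2^{\mu_p}$, $v_1^{\mu_p}=v_2^{\mu_p}$ with $u_iv_i=\epsilon$, $(u_1,v_1)\ne(u_2,v_2)$; writing $u_2=\zeta u_1$ with $\zeta^{\mu_p}=1$ forces $v_2=\zeta^{-1}v_1$, and each nontrivial $\mu_p$-th root of unity $\zeta$ gives exactly one unordered pair, so there are $\mu_p-1$ nodal pairs. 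I would invoke Proposition 2.1 of \cite{Shu13} (our Proposition \ref{prop:shoshu}) together with Proposition \ref{prop:generic N} to guarantee that for generic $\x(t)$ the deformed curve is nodal and that these local nodes are the only ones produced near $p$, so the count is sharp rather than merely a lower bound; this is exactly the content of Lemmas 2.10 and 2.19 of \cite{Shu13}, which I would cite for the details.

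The main obstacle I anticipate is not the node count itself but justifying that the local model above is \emph{the} deformation, i.e. that the smoothing of $\overline f$ inside the fixed family $\mathcal Y$ with the fixed incidence conditions $\x(t)$ genuinely induces this local smoothing of the node $U_p$ with the stated matching of contact orders, and that no extra nodes or more degenerate behavior can appear or disappear in the limit. This requires the transversality and dimension-count input of Proposition \ref{prop:degeneration}: because all the relevant inequalities there are equalities, the smoothing is unobstructed and the node $U_p$ must be smoothed (it cannot persist, since the limit curve would then not deform to an irreducible-enough curve of the right arithmetic genus), and the local contribution is governed solely by $\mu_p$. Since this verification is precisely what is carried out in \cite[Lemmas 2.10 and 2.19]{Shu13} in the setting of $\X_{n,1}$ relative to a conic, of which our situation is a special case, I would present the proof as an application of those results, spelling out only the local model and the $\mu_p-1$ count and referring to \cite{Shu13} for the analytic details.
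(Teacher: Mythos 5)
First, note that the paper itself gives no proof of Proposition \ref{prop:node on E}: it is imported wholesale from {\cite[Lemmas 2.10 and 2.19]{Shu13}}, and your decision to ultimately defer the transversality and ``no extra nodes'' statements to that reference is therefore entirely in line with what the paper does. The problem is with the local computation you volunteer in the middle, which as written does not establish the count of $\mu_p-1$.

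Your local model drops the coordinate along $E$. The total space $\mathcal Y$ is a $3$-fold, so near $p$ it is $\{xy=t\}\subset\C^3_{x,y,z}\times\C_t$ with $E=\{x=y=0\}$ parametrized by $z$; you wrote $\{xy=t\}\subset\C^3$, whose fibers over $t$ are curves, not surfaces, and accordingly your local maps $u\mapsto(u^{\mu_p},0)$ and $v\mapsto(0,v^{\mu_p})$ record only the contact order with $E$ and collapse $E$ to a point. With that model, the glued map $(u,v)\mapsto(u^{\mu_p},v^{\mu_p})$ on $\{uv=\epsilon\}$ is a $\mu_p$-fold \emph{covering} of the smooth curve $\{xy=\epsilon^{\mu_p}\}$: for each nontrivial $\mu_p$-th root of unity $\zeta$ the coincidences $u_2=\zeta u_1$, $v_2=\zeta^{-1}v_1$ form a whole one-parameter family, not one isolated unordered pair, so the equation-counting step ``each $\zeta$ gives exactly one pair'' is not valid, and in fact a multiple cover would contradict Proposition \ref{prop:shoshu}, which guarantees the deformed map is birational onto its image. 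The correct local picture must keep the $z$-direction: the branch in $\X_n$ is something like $x=u^{\mu_p}$, $z=u$ and the branch in $\N$ is $y=v^{\mu_p}$, $z=cv$, and the $\mu_p-1$ nodal pairs are the isolated solutions, for each $\zeta\ne 1$, of the additional equation that the $z$-coordinates of the two sheets agree; proving that there is exactly one such solution near $p$ for small $t$ (and that the intersections are transverse) is precisely the content of {\cite[Lemma 2.10]{Shu13}}. So either carry out that three-coordinate computation honestly, or omit the local model altogether and cite \cite{Shu13} as the paper does; the two-coordinate version you wrote is not a proof of the count. (For a consistency check rather than a proof, the adjunction bookkeeping in the proof of Corollary \ref{cor:total node} shows that the total deficit of nodes in the limit is exactly $\sum_{p\in\mathcal P(\overline f)}(\mu_p-1)$, which is what forces the local count once one knows the extra nodes are concentrated near the points of $\mathcal P(\overline f)$.)
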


\begin{cor}\label{cor:total node}
Let $f: C\to \X_n\in \CC^{\alpha,\beta}(d,g,\x(t))$, with $|t|<<1$,
 be a deformation of 
$\overline f:\overline C\to \pi^{-1}(0)$.
 Then any
nodal pair of $C$ is either the deformation of a nodal pair of
$\overline C$, or is contained
 in the deformation of a neighborhood of a point
$p\in\mathcal P(\overline f)$. 
\end{cor}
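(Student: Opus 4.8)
The plan is to carry out a limit argument: I track a nodal pair of $C_t$ as $t\to 0$ and determine what it can converge to. Using the properness of the moduli space of stable relative maps underlying Proposition~\ref{prop:degeneration}, a given nodal pair $\{p_t,p'_t\}$ of $C_t$ extends, after a finite base change $t=\tau^N$, to sections $\tau\mapsto p_\tau$ and $\tau\mapsto p'_\tau$ of the universal curve over a small disk, converging as $\tau\to 0$ to points $\overline p,\overline p'\in\overline C$ with $\overline f(\overline p)=\overline f(\overline p')$. Everything then reduces to locating $\overline p$ and $\overline p'$, and the outcome is a dichotomy according to whether $\overline p\neq\overline p'$ or $\overline p=\overline p'$.

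The crux is a local statement: \emph{no nodal pair of $C_t$ can converge to a point of $\overline f^{-1}(\X_n\cap\N)\setminus\mathcal P(\overline f)$, nor to a point lying on a component of $\overline C$ mapped as a nontrivial ramified covering onto a fiber of $\N$.} Indeed, by Propositions~\ref{prop:shoshu} and~\ref{prop:generic N} the relevant pieces of $\overline C$ admit explicit normal forms, and near such points the curve $f_t(C_t)$ is, for $|t|$ small, a disjoint union of smooth branches having prescribed contact orders with $E$, $E_0$ or $E_\infty$ at pairwise distinct points, hence locally injective; the precise count is the content of Proposition~\ref{prop:node on E} (\cite[Lemmas 2.10 and 2.19]{Shu13}). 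In particular $\overline p$ and $\overline p'$ avoid these loci.

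Granting this, the dichotomy follows. If $\overline p\neq\overline p'$, then $(\overline p,\overline p')$ is an \emph{isolated} solution of $\overline f(x)=\overline f(y)$: off the diagonal the solution set is finite, because by Propositions~\ref{prop:shoshu} and~\ref{prop:generic N} each component of $\overline C$ is mapped by an immersion birational onto its image and, by genericity of $\x(0)$, distinct components have images that are disjoint or meet transversally in finitely many points --- the only positive-dimensional contributions coming from multiple covers of $(-1)$-curves (excluded, since $d\cdot[D]\ge 1$) and of fibers of $\N$ (excluded by the crux), while the case $\overline f(\overline p)\in\X_n\cap\N$ is controlled by Corollary~\ref{cor:node on E}. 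Thus $\{\overline p,\overline p'\}$ is a nodal pair of $\overline C$ lying away from the neck locus, and a routine deformation argument identifies $\{p_t,p'_t\}$ with its deformation. If $\overline p=\overline p'=:\overline p$, then $\overline p$ is not a smooth point of $\overline C$ (no smooth point is mapped into $\X_n\cap\N$, and $\overline f$ is locally injective near a smooth point by Propositions~\ref{prop:shoshu} and~\ref{prop:generic N}), so it is a node of $\overline C$, necessarily in $\overline f^{-1}(\X_n\cap\N)$, and by the crux in $\mathcal P(\overline f)$; Proposition~\ref{prop:node on E} then exhibits $\{p_t,p'_t\}$ as one of the $\mu_{\overline p}-1$ nodal pairs appearing in the deformation of a neighborhood of $\overline p$, which is the second alternative.

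The main obstacle is exactly the crux of the second paragraph: the local analysis of the deformation near the points of $\overline f^{-1}(\X_n\cap\N)$ and near components mapping with high ramification to fibers of $\N$. This rests on the explicit local models of Propositions~\ref{prop:shoshu} and~\ref{prop:generic N} and on the deformation-theoretic lemmas of \cite{Shu13} quoted as Proposition~\ref{prop:node on E}; the remaining steps --- properness, and the deformation theory of isolated transverse self-intersections away from the neck locus --- are standard.
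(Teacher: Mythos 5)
Your argument takes a genuinely different route from the paper's, and it has a gap exactly at the step you yourself flag as the crux. You need to show that no nodal pair of $C_t$ can accumulate, as $t\to 0$, at a point of $\overline f^{\ -1}(\X_n\cap\N)\setminus\mathcal P(\overline f)$ or on a component of $\overline C$ that multiply covers a fiber of $\N$. For this you cite Propositions \ref{prop:shoshu} and \ref{prop:generic N} together with Proposition \ref{prop:node on E}, but none of these covers that locus: Propositions \ref{prop:shoshu} and \ref{prop:generic N} describe the limit curves inside each component of $\pi^{-1}(0)$ separately and say nothing about the local structure of the smoothing near the neck, while Proposition \ref{prop:node on E} is stated only for $p\in\mathcal P(\overline f)$, i.e.\ precisely for the complementary case where both local branches are birational onto their images. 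At a point excluded from $\mathcal P(\overline f)$ one branch is a totally ramified point of a $w$-fold cover of a fiber (with the adjacent floor component having contact order $w$ with $E$, $E_0$ or $E_\infty$ at the same point), so your description ``a disjoint union of smooth branches with prescribed contact orders at pairwise distinct points, hence locally injective'' does not apply there, and the assertion that the deformation of such a configuration creates no isolated self-intersections is left unproved. Since that is the only delicate case, the proposal as written does not establish the corollary.

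It is worth contrasting this with the paper's proof, which deliberately avoids any local analysis at those points. The paper decomposes $\overline C$ into the pieces mapped to $\X_n$, to $\N$, to the $E_i$, and to multiples of fibers, computes via the adjunction formula the number of nodal pairs of $\overline f$ not mapped to $\X_n\cap\N$, and checks by an Euler characteristic and homology bookkeeping that this number plus $\sum_{p\in\mathcal P(\overline f)}(\mu_p-1)$ equals $\tfrac{d^2-c_1(\X_n)\cdot d+2-2g}{2}$, the total number of nodal pairs of $C$. The conclusion that nothing extra appears near the ramified covers is then an output of this count rather than an input. To repair your argument you would either have to carry out the missing local deformation analysis near the ramified fiber covers (essentially redoing the relevant parts of \cite[Lemma 2.10]{Shu13} in that case), or replace the crux by the paper's global count, at which point the limit-tracking framework becomes unnecessary.
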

\begin{proof}
Let us decompose the curve $\overline C$ as follows
$$\overline C= \overline C'_{\X_n} \bigcup \overline C'_{\N} 
\bigcup \left(\bigcup_{i=1}^n\overline C^{(i)}\right)
\bigcup \overline G $$
where 
\begin{itemize}
\item  $\overline C'_{\X_n}$ is the union of irreducible components of
  $\overline C$ which are mapped to $\X_n$, but not entirely to one of the
  curves $E_i$;
\item $\overline C'_{\N}$ is the union of irreducible components of
  $\overline C$ which are mapped to $\N$ and whose image does not
  realize a multiple of the fiber class;
\item $\overline C^{(i)}$ is the union of irreducible components of
  $\overline C$ which are mapped to $E_i$; we denote by $l_i$ the
  number of such curves;
\item $\overline G$ is the union of irreducible components of
  $\overline C$ which are mapped to $\N$ and whose image
  realizes a multiple of the fiber class; we denote by $l$ the sum
  of all those multiplicities.
\end{itemize}

Let us denote by $d_{\X_n}=d_{D}[D] -\sum_{i=1}^n \mu_{i}[E_i]$ 
the homology class realized by $\overline f(\overline
C'_{\X_n})$ in $\X_n$, and by $d_\N=d_E[E_\infty] + d_F[F]$
 the one realized by $\overline f(\overline
C'_{\N})$ in $\N$.  
By the adjunction formula, the number of nodal pairs of $\overline
f_{|\overline C'_{\X_n}}$ is  
$$a_{\X_n}=\frac{d_{\X_n}^2 - c_1(\X_n)\cdot d_{\X_n} +\chi(\overline C'_{\X_n})}{2}. $$
Moreover, according to Propositions \ref{prop:degeneration} and
\ref{prop:shoshu}, none of those pairs is mapped to $E$. 
Similarly, 
Corollary
\ref{cor:no sEi} implies that
 the number of nodal pairs of $\overline
f_{|\overline C'_{\N}}$ which are not mapped to $E_\infty$ is exactly
$$a_\N=\frac{d_\N^2 - c_1(\N)\cdot d_\N +\chi(\overline C'_{\N}) - \sum_{i=1}^n
  l_i(l_i-1)}{2}. $$ 
Furthermore we have
$$d= (d_{D} +2d_E)[D] -\sum_{i=1}^n \left(\mu_{i}+d_E-l_i
\right)[E_i],$$
$$(d_\N +l [F])\cdot [E_\infty] = (d_{\X_n} +\sum_{i=1}^nl_i[E_i])\cdot [E],  
\quad 
\mbox{ and } 
\quad \chi(\overline C'_{\X_n} )+ \chi(\overline C'_{\N})=2-2g
+2a , $$
where $a$ is the number of intersection points of 
$\overline C'_{\X_n} $ and $\overline C'_{\N}$.
Thus we deduce that the total number of nodal pairs of $\overline f$
which are not mapped to $\X_n\cap \N$ is exactly
$$a_{\X_n}+ \sum_{i=1}^nl_i\mu_i+ a_\N + l d_E
=\frac{d^2 - c_1(\X_n)\cdot d + 2 - 2g}{2}  - (d_{\X_n}\cdot [E] -l - a).$$
Each of these nodal pairs deform to a unique nodal pair of $f$.
Combining this with Proposition \ref{prop:node on E} and the fact that
$$d_{\X_n}\cdot [E] -l -a =\sum_{p\in \mathcal P(\overline f)}(\mu_p -1),$$
 the corollary now follows from the adjunction formula.
\end{proof}

 Thanks to Proposition
 \ref{prop:degeneration}, we know how many
  elements of 
$\CC^{\alpha,\beta}(d,g,\x(t))$  converge, as $t$ goes to 0, to a
  given element $\overline f$ of 
$\CC^{\alpha,\beta}(d,g,\x(0))$. In the case when the situation is
  real,  we now determine how many of those
  complex maps are real.
So let us assume 
 that $\X_n$ is endowed with the real structure $ \X_n(\kappa)$.
The previous degeneration $\mathcal Y\to \C$ has a canonical real
structure compatible with the one of $\X_n$,
 and  let us choose the set of sections 
$\x:\C\to \mathcal Y$ to be real. 

Given $p\in \R(\X_n\cap\N)$,  choose a  neighborhood $V_p$ of $p$ in
$\R(\X_n\cup\N)$ homeomorphic to the union of two disks. The set
 $V_p\setminus \R (\X_n\cap\N)$ has four connected
components $V_{p,1},V_{p,2}\subset \R \X_n$ and 
$V_{p,3}, V_{p,4}\subset \R \N$, labeled so that 
when smoothing $\R(\X_n\cup\N)$ to $\R\pi^{-1}(t)$ with $t>0$,
 the
components $V_{p,1}$ and $V_{p,3}$ one hand hand, and $V_{p,2}$ and $V_{p,4}$ on the
other hand, glue together, see Figure \ref{fig:real part 1}a. 
Denote respectively by $V_{p,1,3}$ and $V_{p,2,4}$ a deformation of
$V_{p,1}\cup V_{p,3}$ and $V_{p,2}\cup V_{p,4}$ in $\R \pi^{-1}(t)$
with $t>0$.
 \begin{figure}[h]
\centering
\begin{tabular}{cc}
\includegraphics[height=2.6cm, angle=0]{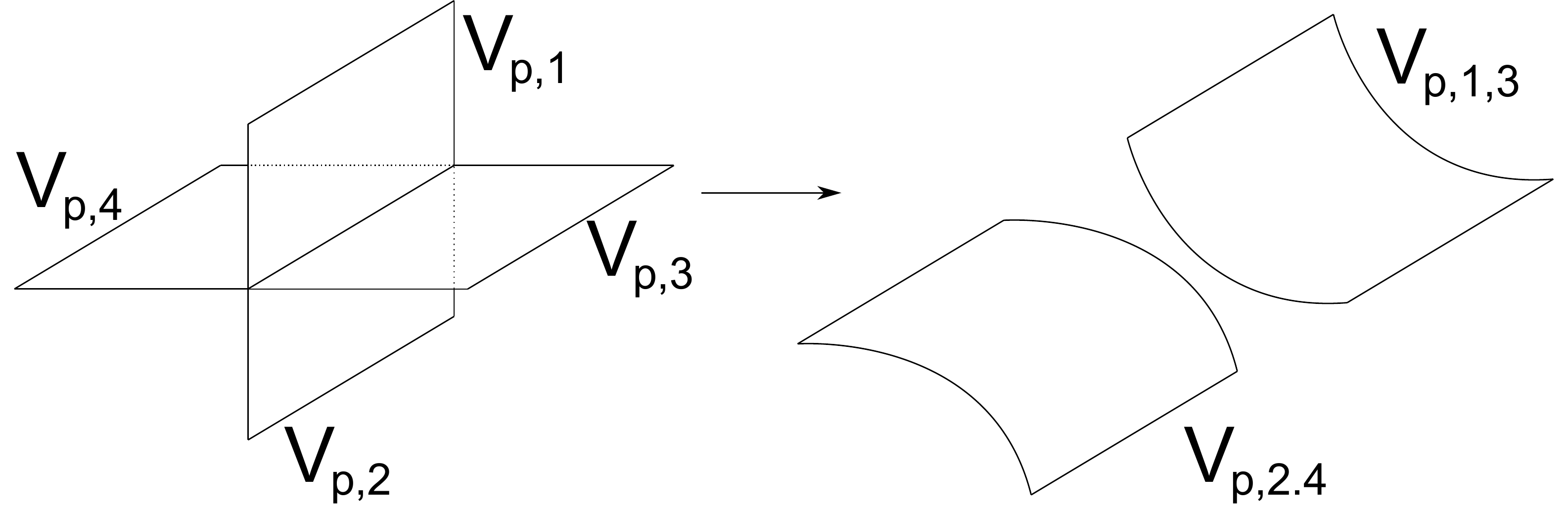}
&
\includegraphics[height=2.6cm, angle=0]{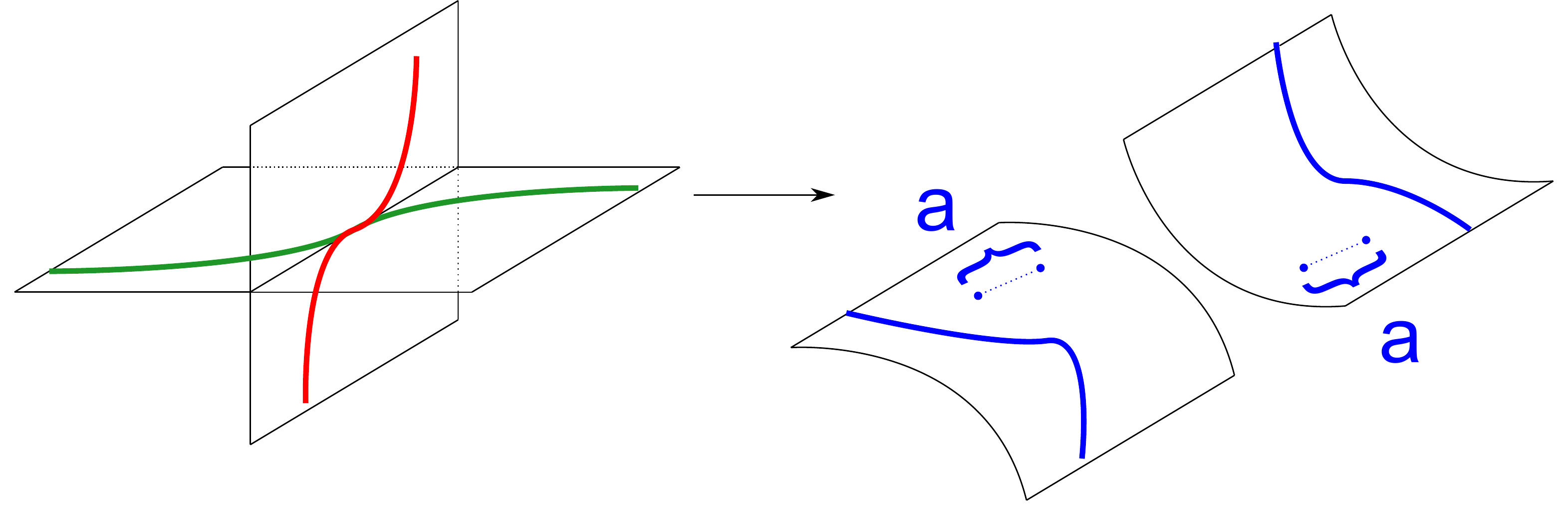}
\\
\\ a) & b) 
\end{tabular}
\caption{Real deformations of a real map 
$\overline f:\overline C\to\X_n\cup \N$}
\label{fig:real part 1}
\end{figure}

Let us fix  
  a real element $\overline f:\overline C\to \X_n\cup\N$
of $\CC^{\alpha,\beta}(d,g,\x(0))$.
Given $q\in   \overline f\ ^{-1} (\X_n\cap \N)$,
denote by
  $\overline C'_q$ 
the
 irreducible
 component
 of $\overline C$ containing $q$ and mapped to $\N$. 

Given a pair $\{q,\overline q\}$ of 
conjugated elements in $\overline f\ ^{-1} (\X_n\cap \N)$,
 define $\mu_{\{q,\overline q\}}=1$ if  $\overline f(\overline
 C'_q)\cap \x(0)= \emptyset$, and $\mu_{\{q,\overline q\}}=\mu_q$ otherwise.
Note that
$\mu_q=\mu_{\overline q}$ so
$\mu_{\{q,\overline q\}}$ is well defined.
Recall that $\overline f(\overline C'_q)\cap \x(0)= \emptyset$ implies
that $\overline f(\overline C'_q)$ is a multiple fiber of $\N$.
We denote by $\xi_0$  the product of the $\mu_{\{q,\overline q\}}$ where 
$\{q,\overline q\}$ ranges over all pairs of conjugated
elements in $\overline f\ ^{-1} (\X_n\cap \N)$.

 Given  $q\in \R  \left(\overline f\ ^{-1} (\X_n\cap \N)\right)$,  denote by
 $U_{q}$ a
 small neighborhood of $q$ in 
$\R \overline C$. 
If $\mu_q$ is even,   define  the integer $\xi_q $ as follows:
\begin{itemize}
\item  if 
$\overline f(U_{q}  )\subset V_{\overline f(q),1}\cup V_{\overline f(q),4}$ 
or $\overline f(U_{q}  )\subset 
V_{\overline f(q),2}\cup V_{\overline f(q),3}$, then $\xi_q =0$;
\item if 
$\overline f(U_{q}  )\subset V_{\overline f(q),1}\cup V_{\overline f(q),3}$ 
or $\overline f(U_{q}  )\subset 
V_{\overline f(q),2}\cup V_{\overline f(q),4}$, then
    $\xi_q =1$ if $\overline f(\overline C'_q)\cap \x(0)= \emptyset$,
and  $\xi_q =2$ otherwise.
\end{itemize}
Finally, define $\xi(\overline f)$ as the product of $\xi_0$ with all the $\xi_q$ where
$q$ ranges over all points in $ \R \left( \overline f\ ^{-1} (\X_n\cap \N)\right)$
with $\mu_q$ even.

\begin{prop}[
{\cite[Lemma 17]{IKS13}}]\label{prop:real degeneration}
The real map $\overline f$
 is the limit of exactly 
$\xi(\overline f)$ 
real maps in 
$\CC^{\alpha,\beta}(d,g,\x(t))$ with $t>0$.
Moreover for each $q\in \mathcal P(\overline f)$, one has
\begin{itemize}

\item if  $\mu_q$ is
  odd, then any real deformation
of $\overline f$ has exactly $a$
  solitary nodes in both $V_{\overline f(q),1,3}$ and $V_{\overline
    f(q),2,4}$, with $a =\frac{\mu_q -1}{2}$ or $a=0$ (see Figure
  \ref{fig:real part 1}b); 

\item if $\mu_q$ is
  even, then half of the
 real deformations of $\overline f$ have exactly $\frac{\mu_q-2}{2}$
  solitary nodes  in $V_{\overline f(q),1,3}$ and $\frac{\mu_q}{2}$
  solitary nodes  in $V_{\overline f(q),2,4}$, while the other half of
real deformations of $\overline f$ have no
  solitary nodes  in $V_{\overline f(q),1,3}\cup V_{\overline f(q),2,4}$
 (see Figure   \ref{fig:real part 2}).
\end{itemize}

\end{prop}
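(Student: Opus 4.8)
The plan is to reduce the statement to a local analysis in a neighbourhood of each point of $\overline f^{-1}(\X_n\cap\N)$ and to graft it onto the complex count already produced by Proposition \ref{prop:degeneration}. First I would record that, by Proposition \ref{prop:degeneration} together with Corollary \ref{cor:node on E} (which confines coincidences of nodes over $\X_n\cap\N$ to the points $E\cap E_i$), a deformation of $\overline f$ in $\CC^{\alpha,\beta}(d,g,\x(t))$ is specified by a choice, made independently at each node $q\in\overline f^{-1}(\X_n\cap\N)$, of a smoothing parameter gluing the two local branches of $\overline C$ at $q$; the image under the deformed map of a small neighbourhood of $q$ stays near $\overline f(q)\in E$ and is determined by this parameter alone, the automorphism group $\Aut$ intervening only through the multiple-fibre components. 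Hence it suffices to count the real choices of smoothing parameters, and then to analyse the real topology of the smoothed branch near each $q\in\mathcal P(\overline f)$.

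For the local model I would use Li's expanded degeneration: choose coordinates $(x,y)$ on $\X_n$ near $\overline f(q)$ with $E=\{y=0\}$, dual coordinates $(x,y')$ on $\N$ with $E_\infty=\{y'=0\}$, so that $\mathcal Y$ is locally $\{yy'=t\}$; the branch of $\overline C$ on the $\X_n$ side reads $y=s^{\mu_q}(a'+O(s))$ and the one on the $\N$ side reads $y'=(s')^{\mu_q}(a''+O(s'))$ with $a',a''\ne 0$ (using $q\in\mathcal P(\overline f)$ to rule out ramified branches). Replacing the node by $ss'=\delta$, the relation $yy'=t$ forces
$$\delta^{\mu_q}=\frac{t}{a'a''}\bigl(1+O(\delta)\bigr),$$
which recovers the $\mu_q$ complex smoothings. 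For a conjugate pair $\{q,\overline q\}$ reality means $\delta_{\overline q}=\overline{\delta_q}$, so all $\mu_q$ choices of $\delta_q$ persist, unless $\overline f(\overline C'_q)$ is disjoint from $\x(0)$, in which case $\overline C'_q$ is a multiple fibre (Corollary \ref{prop:CHXn}) whose cyclic automorphism group permutes the choices transitively and leaves one: this is the factor $\mu_{\{q,\overline q\}}$, and their product is $\xi_0$. For a real $q$ the scalar $a'a''$ is real and $t>0$, so $\delta^{\mu_q}=\tfrac{t}{a'a''}(1+\cdots)$ has one real root if $\mu_q$ is odd and two or none if $\mu_q$ is even according to $\mathrm{sign}(a'a'')$; since for even $\mu_q$ the branch stays on one side of $\R E$ (resp.\ $\R E_\infty$), the sign of $a'$ (resp.\ $a''$) records which region $V_{\overline f(q),i}$ is met, so $a'a''>0$ is exactly the case $\overline f(U_q)\subset V_{\overline f(q),1,3}\cup V_{\overline f(q),2,4}$, and a further division by $2$ occurs in the multiple-fibre case; this is the definition of $\xi_q$. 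Multiplying $\xi_0$ by the $\xi_q$ over real $q$ with $\mu_q$ even (odd ones contributing $1$) gives that $\overline f$ is the limit of exactly $\xi(\overline f)$ real maps.

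For the solitary nodes I would fix $q\in\mathcal P(\overline f)$ and a real smoothing $\delta$. By Proposition \ref{prop:node on E} the deformation carries exactly $\mu_q-1$ nodal pairs near $q$; writing the smoothed branch explicitly from the parametrisation above I would decide, pair by pair, whether the two local branches there are real (a crossing or a solitary node) or conjugate. When $\mu_q$ is odd the branch crosses $\R E$, $\mu_q-1$ is even, and a short sign computation shows that the $\mu_q-1$ pairs either split as $\tfrac{\mu_q-1}{2}$ solitary nodes in each of $V_{\overline f(q),1,3}$ and $V_{\overline f(q),2,4}$ or occur off the real locus, depending on one more real sign; in both cases the number of solitary nodes near $q$ is even, which is all that is needed for later sign counts since $(-1)^{2a}=1$. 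When $\mu_q$ is even the two real choices of $\delta$ differ by $\delta\mapsto-\delta$, i.e.\ $s\mapsto-s$ on the $\X_n$ branch: one realises the string of $\mu_q-1$ pairs as $\tfrac{\mu_q}{2}$ solitary nodes in $V_{\overline f(q),2,4}$ and $\tfrac{\mu_q-2}{2}$ in $V_{\overline f(q),1,3}$, the other places none of them on the real locus, as in Figure \ref{fig:real part 2}. Assembling the local contributions over all $q\in\overline f^{-1}(\X_n\cap\N)$ gives the proposition, in agreement with \cite[Lemma 17]{IKS13}.

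The hard part will be the last step: carrying out the explicit real local analysis of the smoothing and sorting its $\mu_q-1$ self-intersection points into solitary nodes, real crossings, and conjugate pairs, uniformly in $\mu_q$ and in the choice of smoothing parameter — in particular proving that for even $\mu_q$ exactly half of the real smoothings carry the whole string of $\mu_q-1$ solitary nodes and the other half carry none. Everything else is bookkeeping of $\mu_q$-th roots and of the automorphisms of the multiple-fibre components.
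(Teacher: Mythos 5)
The first thing to say is that the paper itself contains no proof of this proposition: it is imported wholesale as \cite[Lemma 17]{IKS13}, so there is no internal argument to measure yours against, and what you are really reconstructing is the proof of that cited lemma. Judged on those terms, your route is the standard (and correct) one: the local model $yy'=t$ for $\mathcal Y$ near a point of $\X_n\cap\N$, branches $y=s^{\mu_q}(a'+O(s))$ and $y'=(s')^{\mu_q}(a''+O(s'))$, gluing parameter $\delta$ with $\delta^{\mu_q}=\tfrac{t}{a'a''}(1+O(\delta))$, reality of the roots according to the parity of $\mu_q$ and the sign of $a'a''$, and the matching of that sign with the $V_{\overline f(q),1}\cup V_{\overline f(q),3}$ versus $V_{\overline f(q),1}\cup V_{\overline f(q),4}$ dichotomy in the definition of $\xi_q$, together with the $\Aut$-identification of gluings along multiple fibers producing the factors $\mu_{\{q,\overline q\}}$ and the value $\xi_q=1$. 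This gives an essentially complete and correct derivation of the first assertion, the count $\xi(\overline f)$.

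The second assertion is where the proposal stops short, and it is precisely the substance of the cited lemma. You correctly reduce it to a local question: for each real root $\delta$, sort the $\mu_q-1$ nodal pairs of Proposition \ref{prop:node on E} into solitary nodes in $V_{\overline f(q),1,3}$, solitary nodes in $V_{\overline f(q),2,4}$, non-solitary real nodes, and conjugate pairs. But at that point you state the answer rather than derive it: ``a short sign computation shows'' the $\frac{\mu_q-1}{2}/\frac{\mu_q-1}{2}$-or-zero alternative for $\mu_q$ odd, and for $\mu_q$ even you assert that $\delta$ and $-\delta$ produce respectively the $\frac{\mu_q-2}{2}+\frac{\mu_q}{2}$ distribution and no real nodes at all, and you then explicitly defer this as ``the hard part''. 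Since the proposition \emph{is} the outcome of that computation, announcing its conclusion does not prove it; nothing in the sketch rules out, say, a mixture of solitary and non-solitary real nodes for a single choice of $\delta$, or an asymmetric split between the two regions in the odd case. So the proposal is a correct setup plus an accurate statement of what remains to be checked, but the decisive local analysis --- the content of \cite[Lemma 17]{IKS13} --- is missing.
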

 \begin{figure}[h]
\centering
\begin{tabular}{c}
\includegraphics[height=2.6cm, angle=0]{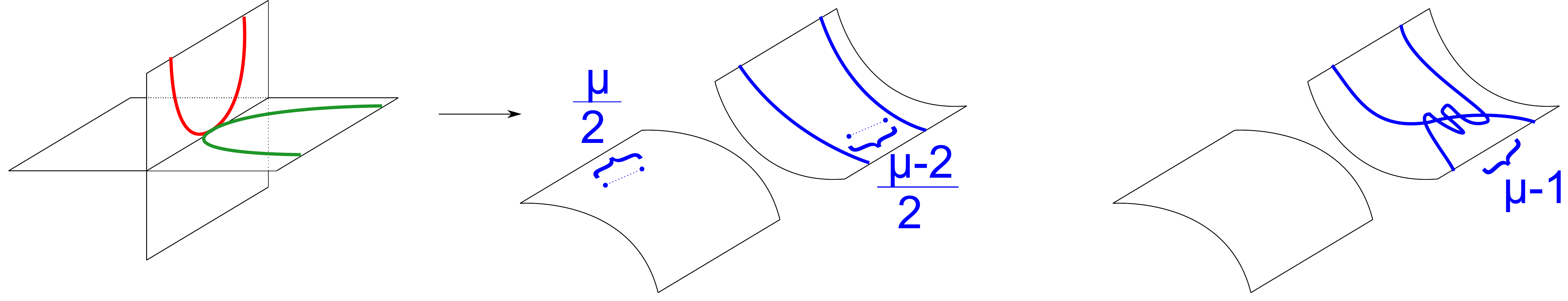}
\\
\\ a) 
\end{tabular}
\caption{Real deformations of a real map 
$\overline f:\overline C\to\X_n\cup \N$, continued}
\label{fig:real part 2}
\end{figure}

\subsection{Proof of Theorem \ref{NFD}}\label{sec:proof NFD}
 Theorem \ref{NFD} is proved by  a recursive use of
Proposition \ref{prop:degeneration}.
The fact that such a recursion is indeed possible is ensured by
Corollary \ref{cor:no sEi}: if $d\ne l[E_i]$ for all $i=1,\ldots,n$ and
$l\ge 2$,
then the same holds for the class realized by the image
of any irreducible
component of $\overline C$.

The union $Y$ of finitely many irreducible algebraic varieties
$Y_1,\ldots, Y_k$
intersecting transversely is called a \emph{chain} if $Y_i\cap Y_j\ne\emptyset $
only when $|i-j|=1$. In this case  denote by $Z^+_i$ (resp. $Z^-_i$)
the intersection $Y_i\cap Y_{i+1}$ viewed as a subvariety of $Y_i$
(resp. $Y_{i+1}$),
and  write 
 $$Y= Y_k\ _{Z^-_{k-1}}\cup_{Z^+_{k-1}} Y_{k-1} \ _{Z^-_{k-2}}\cup_{Z^+_{k-2}}  \ldots
\ _{Z^-_{1}}\cup_{Z^+_{1}} Y_1. $$

Assume now that $d\cdot [D]\ge 1$ and $d\cdot [D]-1+g +|\beta|\ge 1$, 
 all
remaining cases being covered by Proposition \ref{prop:initial values}.
Recall that we have chosen two non-negative integers $r$ and $s$ such
that
$$d\cdot [D] -1+g +|\beta|=r+2s, $$
and that $s>0$ implies that $g=0$.
By iterating the degeneration process of $\X_n$ described in Section
\ref{sec:Li deg}, we construct a flat morphism $\pi: \mathcal Z\to \C$
such that 
\begin{itemize}
\item $\pi^{-1}(t)=\X_n$ for
$t\ne 0$;
\item  $\pi^{-1}(0)$ is a chain of $X_n$ and 
$r+s+1$ copies of $\N$:
$$\pi^{-1}(0)=\X_n \ _{E}\cup_{E_\infty}  \N_{s+r} \ _{E_0}\cup_{E_\infty}  \N_{s+r-1}  \ _{E_0}\cup_{E_\infty} \ldots 
 \ _{E_0}\cup_{E_\infty}  \N_{0}. $$
\end{itemize}

Choose $\x^\circ(t)$ 
a generic  set of $d\cdot [D]-1+g+|\beta|$ 
holomorphic sections $ \C \to \mathcal Z$ 
such that $\x^\circ(0)$ contains exactly one point (resp. two points)
in each $\N_i$ with $i\ge s+1$ (resp. $1\le i\le s$). 
Choose $\x_E(t)$ a generic set of $|\alpha|$ holomorphic sections 
$ \C \to \mathcal Z$ such that $\x_E(t)\in E$ for any $t\in\C^*$.  
In particular $\x_E(0)$ is contained in the divisor
$E_0$ of $\N_0$. Define $\x(t)=\x^\circ(t)\sqcup \x_E(t)$.
Let $\overline f:\overline C\to \pi^{-1}(0)$ be a limit of
  maps in $\CC^{\alpha,\beta}(d,g,\x(t))$ as $t$
  goes to $0$. 
It follows from Propositions \ref{prop:initial values}, 
\ref{prop:initial values N}, \ref{prop:degeneration}, \ref{cor:no sEi},
 and \ref{prop:CHXn}  that 
 $$\overline C =\left(\bigcup_{i=1}^{k_1} \overline C_i
\bigcup_{i=k_1+1}^{k_2} \overline C_i
\ \ \bigcup_{i=k_2+1}^{k_3} \overline C_i\right) 
\left(\bigcup_{i=1}^{l} \overline C'_i \ \ \bigcup_{i=l+1}^{l+|\alpha|+|\beta|}
\overline C'_i \right) 
\left(\bigcup_{i=1}^n \ \ \bigcup_{j=1}^{l_i}
\overline C^{(i)}_j\right)$$
where:
\begin{enumerate}
\item  the curves $ \overline C_i$ are pairwise disjoint
 irreducible rational curves; 
if $i\le k_1$ (resp. $k_1+1\le i\le k_2$, $i\ge k_2+1$), then
$\overline f(\overline C_i)$ 
realizes the class $[E_\infty] +d_{F,i}[F]$ in $\N$
(resp. $[D]-[E_{j_i}]$ in $\X_n$, $[D]$ in $\X_n$);

\item  the curves $\overline C'_i$ are pairwise disjoint
 chains of rational curves, and 
$\overline f(\overline C'_i)$ is a chain of (equimultiple) fibers of
 the $\N_j$'s; if $i\le l$ (resp. $ i\ge l+1$), then
$\overline C_i'$   intersects  exactly two
 (resp. one)
   curves $ \overline C_j$; 

\item the curves $\overline C^{(i)}_j$ are pairwise disjoint
chains of rational curves; exactly one component of 
$\overline C^{(i)}_j$ is mapped (isomorphically) to $E_i$, while  all the others
  are mapped to a chain of (simple) fibers; moreover $\overline C^{(i)}_j$
intersects a unique $ \overline C_a$ with $a\le k_1$, and does not
intersect any curve $\overline C'_{b'}$ nor  $ \overline C_b$ with $b\ge k_1+1$.

\end{enumerate}
Note that by Proposition \ref{prop:initial values}, 
we have
$$\overline f\left(\overline C\right)\cap \N_0= 
\overline f\left(\bigcup_{i=l+1}^{l+|\alpha|+|\beta|}
\overline C'_i\right)\cap \N_0.$$

\begin{lemma}\label{lem:euler graph}
One has 
$$k_3=d\cdot [D]-2k_1 -k_2,\quad l_i=k_1+k_2-d\cdot [E_i],\quad \mbox{and}\quad 
l=d\cdot [D] -k_1 -1+g.$$
\end{lemma}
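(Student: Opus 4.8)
The plan is to extract the three identities from two independent families of linear constraints on the combinatorial data $(k_1,k_2,k_3,l,l_1,\dots,l_n)$ attached to $\overline C$: one coming from the homology class $d$ carried by $\overline f(\overline C)$, and one coming from the fact that $\overline C$ is a connected nodal curve of arithmetic genus $g$.

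For the homological constraints I would use that, since $\overline f(\overline C)$ degenerates from a curve of class $d$ in a general fibre of $\pi$, one has $d\cdot\delta=[\overline C]\cdot\overline H$ in $\mathcal Z$ for any divisor $\overline H$ whose restriction to a general fibre of $\pi$ represents $\delta\in H^2(\X_n;\Z)$. Taking $\overline H$ to be the Zariski closure of $D\times\C^*$ computes $d\cdot[D]$, and the closure of $E_i\times\C^*$ computes $d\cdot[E_i]$. In each case the intersection is supported on $\pi^{-1}(0)$, so it decomposes as a sum of contributions of the pieces $\overline C_i$, $\overline C'_i$, $\overline C^{(i)}_j$: the chains of fibres (and the fibre components of the $\overline C^{(i)}_j$) contribute nothing; a piece $\overline C_i$ with $i\le k_1$ contributes according to its class $[E_\infty]+d_{F,i}[F]$ in the copy $\N_m$ containing it; the pieces with $k_1<i\le k_2$ and with $i>k_2$ contribute according to their classes $[D]-[E_{j_i}]$ and $[D]$ in $\X_n$; and the distinguished component of each $\overline C^{(i)}_j$ mapping isomorphically onto $E_i$ contributes according to its class $[E_i]$. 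Assembling these, the computation of $d\cdot[D]$ yields the first identity, and the computation of $d\cdot[E_i]$ — using condition $(3)$ of the structure statement, namely that each chain $\overline C^{(i)}_j$ is glued to the rest of $\overline C$ along a single point lying on a floor $\overline C_a$ with $a\le k_1$ — yields the second.

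For the third identity I would use instead that $\overline C$ is connected of arithmetic genus $g$, i.e.\ that the first Betti number of the dual graph of $\overline C$ equals $g$, equivalently $\#\{\text{nodes of }\overline C\}-\#\{\text{irreducible components of }\overline C\}+1=g$. Assembling $\overline C$ from its pieces, each of the $k_3$ rational curves $\overline C_i$ contributes $-1$ to $\#\{\text{nodes}\}-\#\{\text{components}\}$; a chain of $\mathbb P^1$'s glued to the rest of $\overline C$ at two of its endpoints contributes $+1$, while one glued at a single endpoint contributes $0$, since the internal length of such a chain cancels against its number of internal nodes. Among the chains $\overline C'_i$ and $\overline C^{(i)}_j$, precisely the $l$ two-ended fibre-chains $\overline C'_i$ with $i\le l$ are glued at two points, so $\#\{\text{nodes}\}-\#\{\text{components}\}=l-k_3$; combining this with the genus relation and with the value of $k_3$ from the previous step gives the last identity.

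The step I expect to be the main obstacle is the homological bookkeeping for the pieces lying in the copies $\N_m$: one must keep careful track of how a class in $H_2(\N_m;\Z)$ enters $d\in H_2(\X_n;\Z)$ through the iterated degeneration — a fibre of $\N_m$ becoming homologically trivial on a general fibre of $\pi$, and the sections of $\pi_E$ being accounted for along $E$ — and one must describe precisely the restrictions to $\pi^{-1}(0)$ of the auxiliary divisors $\overline H$, bearing in mind that these restrictions share components with $\overline C$, so that the relevant intersection numbers are computed component by component inside the appropriate $\N_m$ or $\X_n$. Once this dictionary is in place, the three identities follow by solving an elementary linear system.
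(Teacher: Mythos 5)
Your proposal follows essentially the same route as the paper: the first two identities are obtained by pairing $\overline f(\overline C)$ against limits of the divisor classes $[D]$ and $[D]-[E_i]$ — the paper phrases this as counting intersection points with the degeneration of a generic curve in those classes, which is the same computation as your intersection with the closures of $D\times\C^*$ and $E_i\times\C^*$ — and the third identity comes from computing the Euler characteristic of $\overline C$ (equivalently $b_1$ of its dual graph) in two ways, with the chains' internal components and nodes cancelling exactly as you describe. The proposal is correct and matches the paper's argument.
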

\begin{proof}
By counting intersection points of  $\overline f(\overline C)$ with
the  degeneration in $\X_n\cup\N$ of a
generic curve in $[D]$ or $[D]-[E_i]$,  we get
$$d\cdot [D]=2k_1 + k_2+k_3 \quad \mbox{and}\quad 
d\cdot ([D]-[E_i])=l_i+k_1 +k_3, $$
which gives the first two equalities.

Computing the Euler characteristic of
$\overline C$ in two ways yields
$$2\left(k_1+k_2+k_3+l+l'+\sum_{i=1}^6l_i\right) -2l-l'- \sum_{i=1}^6l_i= 2-2g
+ 2l+l'+ \sum_{i=1}^6l_i, $$
which provides the third equality.
\end{proof}

Given $i\le k_1$, Denote by $a_i$ the integer such that $ \overline
f(\overline C_i)\subset \N_{a_i}$. 
Without loss of generality, we may assume that  
$a_i\ge a_j$ if $i>j$.
Denote also by $A_{\overline C}$ the set composed of couples $(\overline
C_a, i)$ such that either 
$$a\le k_1 \mbox{ and }
\overline C_a \textrm{ is disjoint
    from }
\bigcup_{j=1}^{l_i}
\overline C^{(i)}_j,$$
or
$$k_1+1\le a\le k_2\mbox{ and }
[\overline f(\overline C_a)]=[D]-[E_i].$$
Let us construct an oriented weighted graph $\D_{\overline C}$ as follows:

\begin{itemize}
\item the vertices of $\D_{\overline C}$ are the elements the disjoint union of two sets 
$Vert^\circ(\D_{\overline C})$ and $Vert^\infty(\D_{\overline C})$:
\begin{itemize}
\item  Vertices  in $Vert^\circ(\D_{\overline C})$ are in one-to-one
  correspondence with the curves  $ \overline C_i$, and are denoted by
 $v_i$. 

\item Vertices  in
  $Vert^\infty(\D_{\overline C})$ are in one-to-one 
  correspondence with elements of $A_{\overline
    C}\cup\{1,\ldots,|\alpha|+|\beta|\}$, and are denoted respectively by
  $v^\infty_{\overline C_a, i}$ and $v^\infty_i$.
\end{itemize}

\item edges of $\D_{\overline C}$ are in one-to-one
  correspondence with elements of $A_{\overline
    C}\cup\{1,\ldots,|\alpha|+|\beta|+ l\}$:

\begin{itemize}
\item The edge $e_{\overline C_a, i}$ corresponding to 
  $(\overline C_a, i)\in A_{\overline C}$ is adjacent to the vertices
    $v^\infty_{\overline C_a, i}$ and $v_a$, oriented
  from the former to the latter.  The weight of $e_{\overline
    C_a, i}$ is equal to 1.

\smallskip
\item The edge $e_{i}$ corresponding to 
$i\in\{1+l,\ldots,|\alpha|+|\beta|+l\}$ is adjacent 
  to the vertices $v^\infty_{i-l}$ and $v_a$, oriented
  from the former to the latter, where $a$ is 
such that $\overline C_a\cap\overline C'_i \ne\emptyset$.
 The weight of $e_{i}$ is equal to the degree of the covering
 $\overline f_{|\overline C'_i}$.

\smallskip
\item The edge $e_{i}$ corresponding to
  $i\in\{1, l\}$ is adjacent to the vertices 
 $v_a$ and $v_b$ with $a<b$, oriented
  from the former to the latter, where $a$ and $b$ are such that 
 $\overline C'_i$ intersects $\overline C_a$ and $\overline C_b$;
The weight of $e_{i}$ is equal to the degree of the covering
 $\overline f_{|\overline C'_i}$.
\end{itemize}

\end{itemize}

\begin{lemma}\label{lem:proof N1}
The oriented weighted graph $\D_{\overline C}$ is a floor diagram of
degree $d\cdot [D]$ and genus $g$. 
\end{lemma}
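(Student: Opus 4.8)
The plan is to verify directly that the oriented weighted graph $\D_{\overline C}$ satisfies each of the six defining conditions of a floor diagram, using the structural description of $\overline C$ obtained from Propositions \ref{prop:initial values}, \ref{prop:initial values N}, \ref{prop:degeneration}, \ref{cor:no sEi}, and \ref{prop:CHXn}, together with Lemma \ref{lem:euler graph}. First I would check connectedness and acyclicity. Connectedness of $\D_{\overline C}$ follows from connectedness of $\overline C$: the chains $\overline C'_i$ and $\overline C^{(i)}_j$ collapse to single edges (or are recorded in the sources), and every component meets the rest of the curve, so the quotient graph stays connected. Acyclicity comes from the fact that the curves $\overline C_i$ sit in a linearly ordered chain of copies of $\N$ (indexed by the $a_i$), and every edge is oriented from a curve in a lower copy to one in a higher copy, or from a source; hence any oriented path strictly increases the ambient index and no oriented cycle can close up.

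Next I would treat the source condition and the divergence conditions. By construction, each vertex in $Vert^\infty(\D_{\overline C})$ corresponds either to an element of $A_{\overline C}$ or to one of the marked points $1,\dots,|\alpha|+|\beta|$, and in each case exactly one edge is attached, so every source has valence one. For the divergence: a vertex $v_a$ with $a\le k_1$ corresponds to a curve $\overline C_a$ realizing $[E_\infty]+d_{F,a}[F]$ in some $\N_{a_a}$; its outgoing edges (toward higher copies, or this is the top of its chain) and incoming edges (fibers coming up from below, plus the $e_{\overline C_a,i}$ and $e_i$ edges) have weights summing, by the intersection numbers $\overline f(\overline C_a)\cdot [E_0]$ and $\overline f(\overline C_a)\cdot [E_\infty]$, to give $\mathrm{div}(v_a)=2\cdot 1=2$ — so these are sinks of divergence $2$, consistent with the fourth condition. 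A vertex $v_a$ with $k_1+1\le a\le k_2$ (class $[D]-[E_{j_a}]$) or $k_2+1\le a\le k_3$ (class $[D]$) is a component in $\X_n$ itself; its divergence, again read off as twice the intersection with the relevant divisor, is $2$ or $4$ accordingly, with the degree-$2$ case again being a sink. The inequality $\mathrm{div}(v)\le -1$ at sources is immediate since a source has only one outgoing edge of positive weight.

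Then I would compute $b_1(\D_{\overline C})$ and the degree sum. For the first Betti number: the Euler characteristic computation already performed in the proof of Lemma \ref{lem:euler graph} shows that contracting all the chains $\overline C'_i$ and $\overline C^{(i)}_j$ and passing to the dual graph changes neither the first Betti number (contractions of trees preserve $b_1$) nor leaves extra loops, so $b_1(\D_{\overline C})=b_1(\overline C)=g$ — here I use that $\overline C$ has arithmetic genus $g$ and that all the collapsed pieces are rational chains, hence trees. For the degree sum, $\sum_{v\in Vert^\infty}\mathrm{div}(v)$ equals minus the sum of all edge-weights attached to sources, and by construction these account exactly for the total intersection of $\overline f(\overline C)$ with a generic line-type divisor, degenerated across the chain; tallying via Lemma \ref{lem:euler graph} ($k_3=d\cdot[D]-2k_1-k_2$, etc.) gives $\sum_{v\in Vert^\infty}\mathrm{div}(v)=-2\,d\cdot[D]$, so $d_{\D}=d\cdot[D]$ as claimed.

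The main obstacle I anticipate is the divergence bookkeeping at the vertices $v_a$ with $a\le k_1$: one must carefully match the fiber edges running into and out of $\overline C_a$ (the chains $\overline C'_i$ and the pieces of $\overline C^{(i)}_j$) against the intersection numbers $\overline f(\overline C_a)\cdot[E_0]=n-4+d_{F,a}$ and $\overline f(\overline C_a)\cdot[E_\infty]=d_{F,a}$, being careful that edges toward the $n$ exceptional directions (the $\overline C^{(i)}_j$) and the marked-point edges are all counted once and with the right weight, and that the definitions of $A_{\overline C}$ and of the weights (degrees of the coverings $\overline f_{|\overline C'_i}$) make the signs and multiplicities come out so that $\mathrm{div}(v_a)=2$ exactly. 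Once this local computation is pinned down, the global genus and degree identities follow formally from Lemma \ref{lem:euler graph}, and the remaining conditions are, as indicated above, direct consequences of the chain structure of $\pi^{-1}(0)$.
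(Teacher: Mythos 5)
Your overall strategy -- direct verification of the axioms, with the only substantive computations being $b_1(\D_{\overline C})$ and the total divergence at sources, both extracted from Lemma \ref{lem:euler graph} -- is exactly the paper's (the paper declares everything except those two computations ``immediate from the construction''). Your $b_1$ argument via contraction of the rational chains is a correct and slightly more topological rephrasing of the paper's Euler-characteristic count, and your degree-sum sketch points at the right bookkeeping.

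However, the divergence verification, which you yourself single out as the main obstacle, is carried out backwards, and as written it fails to establish the third and fourth axioms. First, the intersection numbers are swapped: for $\overline C_a$ of class $[E_\infty]+d_{F,a}[F]$ one has $\overline f(\overline C_a)\cdot[E_\infty]=n-4+d_{F,a}$ and $\overline f(\overline C_a)\cdot[E_0]=d_{F,a}$, not the other way around. More importantly, these floors are \emph{not} divergence-$2$ sinks. Counting correctly: the incoming edges at $v_a$ are the fiber chains entering through $E_0$ (total weight $d_{F,a}$) together with the weight-$1$ edges $e_{\overline C_a,i}$, of which there are $n-M$ where $M$ is the number of indices $i$ for which $\overline C_a$ meets some $\overline C^{(i)}_j$; the outgoing edges are the fiber chains leaving through $E_\infty$, of total weight $(n-4+d_{F,a})-M$ since the $M$ intersections with the chains $\overline C^{(i)}_j$ are not edges of $\D_{\overline C}$. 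This gives $div(v_a)=d_{F,a}+(n-M)-\bigl((n-4+d_{F,a})-M\bigr)=4$, so the $\N$-components are the degree-$2$ (white) floors, and they generally have outgoing edges. Conversely, both the $[D]$- and the $[D]-[E_{j_i}]$-components in $\X_n$ have divergence $2$ (two incoming fiber edges of weight $1$ for a $[D]$-curve; one fiber edge plus the edge $e_{\overline C_a,i}$ for a $[D]-[E_i]$-curve), and \emph{these} are the vertices for which the sink condition must be checked -- which holds because they sit at the top of the chain, so all their adjacent edges are incoming. A quick consistency check confirms this assignment: the total divergence of floors must equal $2d\cdot[D]=2(2k_1+k_2+k_3)$ by Lemma \ref{lem:euler graph}, which forces divergence $4$ on the $k_1$ floors in the copies of $\N$ and divergence $2$ on the $k_2+k_3$ floors in $\X_n$. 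Your write-up asserts the opposite assignment, so the verification of the divergence and sink axioms needs to be redone as above.
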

\begin{proof}
We only have to compute $b_1(\D_{\overline C})$ and
$\sum_{v\in Vert^\infty(\D_{\overline C})}div(v)$, the other 
properties  of a floor diagram following immediately from the construction of
$\D_{\overline C}$.
We have
$$\sum_{v\in Vert^\infty(\D_{\overline C})}div(v)=d\cdot [E] + n(k_1 + k_2) -
\sum_{i=1}^nl_i = d\cdot [E] +\sum_{i=1}^nd\cdot
[E_i]=2d\cdot [D]. $$
According Lemma \ref{lem:euler graph}, we have
$$b_1(\D_{\overline C})=1 - |Vert(\D_{\overline C})| + l= g,  $$
and we are done.
\end{proof}

The floor diagram $\D_{\overline C}$ 
is naturally equipped 
with a
marking. 
Let
$A_0,A_1,\ldots,A_n$ be some disjoint sets such that $|A_i|=d\cdot
[E_i]$ for $i=1,\ldots n$,
and $A_0=\{1,\ldots, d\cdot [D] -1+g+|\alpha|+|\beta| \}$. Denote
by $p_{2s+i}$ with $1\le i\le r$
(resp. $p_{2i-1}$ and $p_{2i}$ with $1\le i\le s$) the
point in $\x^\circ(0)\cap \N_{2s+i}$  (resp the two points
 in $\x^\circ(0)\cap \N_{i}$), and define
$A'_0$ to be the union of all pairs $\{2a_i-1, 2a_i\}$
which are contained in $\overline f(\overline C_i)$.
Denote also $\x_E(0)=(p_{i,j})_{0\le j\le\alpha_{i,j}, i\ge 1}$.

Define a map $m_{\overline C}:\left(A_0\setminus A'_0\right)\bigcup_{i=1}^n A_i\to
\D_{\overline C}$ as follows: 
\begin{itemize}
\item for $i=1,\ldots,n$, the restriction of $m_{\overline C}$ on $A_i$ is a
  bijection to the set 
$\{ e_{\overline C_a,i}\ | \ (\overline
  C_a,i)\in A_{\overline C}\};$
\item for $i\in A_0\setminus A'_0$ and $i\ge|\alpha|+1$,  set
 $m_{\overline C}(i)=v_j $ (resp. $e_j$) if $p_i\in \overline
  f(\overline C_j)$ (resp. $p_i\in \overline
  f(\overline C'_j)$);
\item for $0\le j\le \alpha_i$, we set
 $m_{\overline C}\left(\sum_{a=1}^{i-1}\alpha_a + j\right)=v^\infty_b$ if  
$p_{i,j}\in \overline f(\overline C'_b)$.
\end{itemize}
The map $m_{\overline C}$ is injective and increasing by construction.

It follows from Propositions \ref{prop:initial values N} and
\ref{prop:degeneration} that
$\D'=\D_{\overline C}\setminus \left(Im(m_{\overline C})\cup Vert^\infty(\D)\right) $ contains as many
vertices  as edges, and that any
element  of $\D'$ is adjacent to at least one other element of $\D'$.
Suppose that there exist an element $u$ of $\D'$ which is adjacent to only one
other element $v$ of $\D'$. Then either $u$ or $v$ corresponds to a
component $\overline C_i$ of $\overline C$.
Extend the map $m_{\overline C}$ on $\{2a_i-1,2a_i\}$ to
$\{u,v\}$ in
the unique way so that it remains an increasing map.
Extend inductively $m_{\overline C}$  in this way as long as 
$\D'$ contains an element
 adjacent to only one
other element of $\D'$.

\begin{lemma}\label{lem:proof N2}
The resulting map  $m_{\overline C}$ is a $d$-marking of
$\D_{\overline C}$ of type $(\alpha,\beta)$. 
\end{lemma}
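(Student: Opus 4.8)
The plan is to show that the map $m_{\overline C}$, completed by the inductive extension over $A_0'$ described just before the statement, satisfies conditions (1)--(7) of Definition~\ref{def marking}. I would check these seven conditions one at a time, using the structural description of $\overline C$ in items~(1)--(3) above together with Lemma~\ref{lem:euler graph}, Lemma~\ref{lem:proof N1}, Corollary~\ref{cor:no sEi}, Corollary~\ref{prop:CHXn}, and Propositions~\ref{prop:initial values}, \ref{prop:initial values N}, \ref{prop:degeneration}.

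First I would settle that the extension is well posed, i.e. that the pendant-removal procedure terminates with $\D'=\D_{\overline C}\setminus(\mathrm{Im}(m_{\overline C})\cup Vert^\infty(\D))$ exhausted, and that at each step the requirement of being increasing determines the values uniquely. Since $\D'$ has as many vertices as edges, has no element isolated in $\D'$, and since $|A_0'|=2\,|Vert(\D')|$ by the way $\x(0)$ was chosen (one versus two points of $\x^\circ(0)$ per copy of $\N$), the procedure keeps matching a pendant element $u$ of $\D'$ with its unique $\D'$-neighbour $v$, exactly one of $u,v$ being a floor vertex $v_i$; the pair $\{2a_i-1,2a_i\}\subset A_0'$ is then sent to $\{u,v\}$, and the partial order on $\D_{\overline C}$ (which refines the position $a_i$ of $\overline C_i$ in the chain) forces which of $2a_i-1$, $2a_i$ goes to which of $u,v$. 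The point I expect to require the most care is ruling out that a cycle of $\D'$ survives the removal: one observes that if $s>0$ then $g=0$, so $\D_{\overline C}$ is a tree by Lemma~\ref{lem:proof N1} and $\D'$ a forest, while if $s=0$ there are no copies of $\N$ carrying two points and the extension is empty; in either case $\D'$ is a disjoint union of paths, hence exhausted by pendant removal.

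Next I would verify the enumerative conditions. Injectivity in~(1) is immediate, since distinct points of $\x(0)$ lie on distinct components of $\overline C$ and the sets $\{e_{\overline C_a,i}\}$ attached to the various $A_i$ are pairwise disjoint by the definition of $A_{\overline C}$; monotonicity follows from compatibility of the order on $\D_{\overline C}$ with the chain order, which matches the labelling of $A_0$ by the copies $\N_c$; and no floor of degree $1$ lies in $\mathrm{Im}(m_{\overline C})$ because such floors are exactly the $v_i$ with $i>k_1$ (the classes $[D]-[E_{j_i}]$ and $[D]$ in $\X_n$), which carry no point of $\x^\circ(0)$ and become isolated after deleting $Vert^\infty(\D)$, hence are never reached by the extension. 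Conditions~(2) and~(3) are read off directly from the definitions of the sources $v^\infty_{\overline C_a,i}$, $v^\infty_i$ and their adjacent edges $e_{\overline C_a,i}$, $e_i$, using Corollary~\ref{prop:CHXn} and Proposition~\ref{prop:initial values N} to know that an $\N$-component carrying at most two points of $\x^\circ(0)$ realizes a multiple of $[F]$ or the class $[E_\infty]+d_F[F]$. Condition~(4) is the statement that a component $\overline C_a$ meets at most one curve $\overline C^{(i)}_j$ for each fixed $i$, which is part of item~(3) together with the genericity of $\x(0)$.

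Finally, conditions~(5)--(7) encode the contact data with $E$. The first $|\alpha|$ elements of $A_0$ are the $\x_E$-points $p_{i,j}$, which lie on the fibre chains $\overline C'_b$ with $b>l$ reaching $E_0$ of $\N_0$; these chains correspond to edges adjacent to the sources $v^\infty_b$, which gives $m_{\overline C}(\{1,\dots,|\alpha|\})=m_{\overline C}(A_0)\cap Vert^\infty(\D)$, hence~(5). For~(6), the pull-back relation $f^*(E)=\sum i\,q_{i,j}+\sum i\,\tilde q_{i,j}$ forces the chain through $p_{i,j}$ to be a chain of fibres of multiplicity $i$, i.e. the degree of the covering $\overline f_{|\overline C'_b}$, which is by definition the weight of $e_b$; and~(7) records that the remaining $\beta_j$ intersection points $\tilde q_{i,j}$ of $\overline f(\overline C)$ with $E$ of multiplicity $j$ (not among the $\x_E$-points) each produce a fibre chain of weight $j$ carrying one point of $\x^\circ(0)$ and ending on $E_0$ of $\N_0$, hence an edge of $Edge^\infty(\D)$ of weight $j$ in $\mathrm{Im}(m_{\overline C}|_{A_0})$, while conversely every such edge arises this way by Proposition~\ref{prop:initial values}. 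Combining these verifications yields the lemma, the only genuinely delicate step being the well-definedness of the extension discussed above.
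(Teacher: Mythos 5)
Your proposal is correct and follows essentially the same route as the paper: the paper treats conditions (1)--(7) as immediate from the construction and isolates surjectivity as the only issue, resolving it exactly as you do by noting that $\D'$ keeps as many vertices as edges and cannot contain a cycle since $g=0$ whenever $s>0$ (the paper phrases this as ``what remains after the extension is either empty or a union of loops, and loops are excluded''). Your extra verifications of the individual marking conditions are accurate, and the one slightly loose phrase --- calling $\D'$ a ``disjoint union of paths'' rather than merely acyclic with $|Vert(\D')|=|Edge(\D')|$ --- does not affect the argument.
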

\begin{proof}
The only thing  to show is
 that $m_{\overline C}$ is surjective.
The set $\D'$ still contains as many
vertices  as edges, and any  edge in $\D'$ is adjacent to two vertices in 
$\D'$. Hence it is either
empty or a
disjoint collection of loops. Since $g=0$ if $s>0$, this latter case cannot
occur.
\end{proof}

\begin{rem}\label{rem:s=0}
The above lemma is one of the two places where the assumption
that $g=0$ if $s>0$ is used. With a little extra-care, one can  adapt this
construction also in the case where $g>0$ and $s>0$.
\end{rem}

Given a $d$-marked floor diagram $(\D,m)$ of 
degree $d\cdot [D]$  and genus
$g$, we define $B_{\D,m}$ to be the set of edges $e$ of $\D$
which have an adjacent floor $v$ such that
$m(\{2i-1,2i\})=\{v,e\}$ for some $i=1,\ldots, s$. 
Note that $B_{\D_{\overline C},m_{\overline C}}= Edge(\D_{\overline
  C})\cap m(A'_0)$.
Theorem \ref{NFD} now follows from the two following lemmas.

\begin{lemma}\label{lem:proof N3}
The map  $\overline f \mapsto (\D_{\overline C},m_{\overline C})$ is 
surjective from the set $\CC^{\alpha,\beta}(d,g,\x(0))$
 to the set of $d$-marked floor diagram of type $(\alpha,\beta)$, 
degree $d\cdot [D]$, and genus $g$.
More precisely, such a marked floor diagram $(\D,m)$  is the
image of exactly 
$\prod_{e\in B_{\D,m}}w(e)$ elements of
$\CC^{\alpha,\beta}(d,g,\x(0))$. 
\end{lemma}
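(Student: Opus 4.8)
The plan is to reverse the construction of the previous pages: given an abstract $d$-marked floor diagram $(\D,m)$ of type $(\alpha,\beta)$, degree $d\cdot[D]$, and genus $g$, I want to reconstruct all stable maps $\overline f:\overline C\to\pi^{-1}(0)$ in $\CC^{\alpha,\beta}(d,g,\x(0))$ whose associated marked diagram is $(\D,m)$. First I would read off from $(\D,m)$ the combinatorial skeleton of $\overline C$: each floor $v_i$ of $\D$ dictates an irreducible rational curve $\overline C_i$ together with the copy $\N_{a_i}$ of $\N$ it lies in (the height $a_i$ being determined by the partial order of $\D$ via the marking, exactly as in Lemma~\ref{lem:euler graph}) and the class $\overline f_*[\overline C_i]$ ($[E_\infty]+d_{F,i}[F]$, or $[D]-[E_{j_i}]$, or $[D]$ in $\X_n$, according to whether $v_i$ is a floor of degree $1$ touched by $m(A_i)$ or not, etc.); each edge of $\D$ of weight $w$ dictates a chain of $w$-fold multiple fibers; each vertex of $Vert^\infty(\D)$ in the image of $m|_{A_i}$ contributes one of the exceptional-curve chains $\overline C^{(i)}_j$. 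The incidence conditions coming from $\x(0)$ are precisely the points prescribed by $m$ on the $\N_i$'s and on $E_0\subset\N_0$.

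Next I would count, for this fixed skeleton, the number of genuine holomorphic maps realizing it. On each component the count is governed by Propositions~\ref{prop:initial values}, \ref{prop:initial values N}, and \ref{prop:degeneration}: the curves $\overline C_i$ of class $[D]$, $[D]-[E_{j_i}]$, or $[E_\infty]+d_F[F]$ through the prescribed one or two points are unique by Proposition~\ref{prop:initial values N} (the relevant invariants equal $1$), so they contribute a factor $1$; likewise the exceptional chains $\overline C^{(i)}_j$ are rigid by Corollary~\ref{cor:no sEi}. The only genuine freedom is in the edges lying in $B_{\D,m}$, i.e.\ the edges $e$ of weight $w(e)$ that together with an adjacent floor $v$ carry an $s$-pair $m(\{2i-1,2i\})=\{v,e\}$. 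There the component $\overline C_v$ passes through \emph{two} marked points, one of which is the point $p_{2i-1}$ (or $p_{2i}$) on the intersection divisor, and the number of ways to fit a $w(e)$-fold fiber over a prescribed point of $E_\infty\subset\N_{a}$ compatibly with the two incidence conditions is $w(e)$ — this is exactly the ``$j$ solutions of an equation'' phenomenon made explicit in the proof of Proposition~\ref{prop:initial values N} (the equation $(\frac{x_0-b}{x_1-b})^j=\text{const}$ has $j$ roots). All other edges, being pinned on one side by a vertex already rigidified, contribute a factor $1$. Multiplying, a fixed skeleton is realized by exactly $\prod_{e\in B_{\D,m}}w(e)$ maps, and since distinct skeleta give non-isomorphic stable maps, the fiber over $(\D,m)$ has exactly this cardinality; surjectivity is then immediate, since every marked floor diagram arises as some $(\D_{\overline C},m_{\overline C})$ by the construction just described.

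The remaining point is that the map is well-defined in the other direction, i.e.\ that \emph{every} limit $\overline f$ really does produce a skeleton of the restricted shape (1)--(3) listed before Lemma~\ref{lem:euler graph}, with no components mapped into $\X_n\cap\N$, no unexpected multiple covers, and the stated homology classes on each piece. This is where I expect the main obstacle to lie: it is a dimension-counting argument combining Proposition~\ref{prop:degeneration} (which already rules out components mapped into the intersection divisor and pins down the total genus/Euler-characteristic bookkeeping), Corollary~\ref{cor:no sEi} and Corollary~\ref{prop:CHXn} (which force each component of $\overline C$ meeting at most two points of $\x^\circ(0)$ to realize $d_F[F]$ or $[E_\infty]+d_F[F]$), and the genericity of $\x(0)$. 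One must check that these local constraints glue to exactly the configuration (1)--(3), and in particular that the partial order on $\D$ coming from the orientation matches the stratification of $\pi^{-1}(0)$ by the copies $\N_i$; this is the same verification that underlies Lemmas~\ref{lem:proof N1} and~\ref{lem:proof N2}, so once those are in hand the present lemma follows by assembling the per-component counts.
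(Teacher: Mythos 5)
Your argument is correct and is essentially the paper's own proof, which compresses the whole count into a single sentence citing Propositions \ref{prop:initial values} and \ref{prop:initial values N}: every component of the reconstructed limit is rigid except the floors passing through two points of $\x^\circ(0)$, each of which admits exactly $w(e)$ positions for its single free tangency with $E_0\cup E_\infty$ by the last case of Proposition \ref{prop:initial values N}. One small slip in your wording: those $w(e)$ choices arise precisely because that tangency point is \emph{not} prescribed (your phrase ``fit a $w(e)$-fold fiber over a prescribed point'' has it backwards --- a prescribed attachment point would give a unique curve), but you then correctly identify the degree-$w(e)$ equation from the proof of Proposition \ref{prop:initial values N} as the source of the count, so this is a matter of phrasing rather than a gap.
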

\begin{proof}
The number of possibilities  to reconstruct $\overline f$ out of
$(\D,m)$ is given by  Propositions \ref{prop:initial values} and 
\ref{prop:initial values N}.
\end{proof}

\begin{lemma}\label{lem:proof N4}
A morphism $\overline
f:\overline C\to\pi^{-1}(0)$ is the limit of exactly
$$\frac{\mu^\C(\D_{\overline C})}{\prod_{e\in B_{\D_{\overline C},m_{\overline C}}}w(e)}$$
  elements of 
$\CC^{\alpha,\beta}(d,g,\x(t))$ as $t$ goes to 0.
\end{lemma}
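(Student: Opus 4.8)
The plan is to combine the local degeneration formula of Proposition \ref{prop:degeneration} with the bookkeeping already set up in Lemmas \ref{lem:proof N1}--\ref{lem:proof N3}. First I would invoke Proposition \ref{prop:degeneration}: since $d\ne l[E_i]$ with $l\ge 2$, no component of $\overline C$ is entirely mapped to $\X_n\cap\N$, and Corollary \ref{cor:no sEi} guarantees the same for each intermediate degeneration step, so the hypotheses of Proposition \ref{prop:degeneration} are satisfied at every stage of the chain $\pi^{-1}(0)=\X_n\ _E\cup_{E_\infty}\N_{s+r}\ _{E_0}\cup_{E_\infty}\cdots\ _{E_0}\cup_{E_\infty}\N_0$. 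Hence the number of elements of $\CC^{\alpha,\beta}(d,g,\x(t))$ converging to $\overline f$ equals $\mu(\overline f)$, the product over nodes $p\in\overline f^{-1}(\X_n\cap\N)$ of the multiplicities $\mu_p$, times $\prod_{p\in\x^\circ(0)}|\overline f^{-1}(p)|$, times $\prod_i 1/|\Aut(\overline f_{|\overline C_i})|$.

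Next I would evaluate each of these three factors in terms of the combinatorics of $(\D_{\overline C},m_{\overline C})$. By Proposition \ref{prop:generic N} every component mapped to an $\N_j$ with at least one point of $\x^\circ(0)$ on it is birational onto its image with no automorphisms, and the only components carrying nontrivial automorphisms are the multiple-fiber chains $\overline C'_i$ whose image is a multiple of the fiber class and which meet no point of $\x^\circ(0)$; such a chain contributes $1/w$ where $w$ is its covering degree, i.e. the weight of the corresponding edge of $\D_{\overline C}$. Likewise each point $p\in\x^\circ(0)$ lying on such a multiple-fiber component $\overline C'_i$ contributes a factor $|\overline f^{-1}(p)|=w(e_i)$ from the degree of the covering, and these are precisely the edges in $B_{\D_{\overline C},m_{\overline C}}$ (by the observation $B_{\D_{\overline C},m_{\overline C}}=Edge(\D_{\overline C})\cap m(A'_0)$ recorded before the lemma), while points on components that are birational onto their image contribute $1$. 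Finally, the intersection-multiplicity factor: a node of $\overline C$ over $\X_n\cap\N$ sitting on a component whose image realizes a fiber class of multiplicity $w$ has $\mu_p=w$, so the product of the $\mu_p$ over all such nodes equals $\prod_{e\in Edge(\D_{\overline C})\setminus Edge^\infty(\D_{\overline C})}w(e)^2$ times $I^\beta$ — the square coming from the fact that each internal edge corresponds to a fiber chain meeting $\X_n\cap\N$-type divisors at its two ends, and the $I^\beta$ from the $\beta_j$ ends that hit $E$ itself with multiplicity. Assembling: $\mu(\overline f) = I^\beta\prod_{e\notin Edge^\infty}w(e)^2\cdot\prod_{e\in B_{\D_{\overline C},m_{\overline C}}}w(e)\cdot\prod_{e\in B_{\D_{\overline C},m_{\overline C}}}w(e)^{-1}$, and recognizing $\mu^\C(\D_{\overline C})=I^\beta\prod_{e\in Edge(\D)\setminus Edge^\infty(\D)}w(e)^2$ gives exactly $\mu^\C(\D_{\overline C})/\prod_{e\in B_{\D_{\overline C},m_{\overline C}}}w(e)$, wait — I must be careful here: the automorphism factors from the multiple-fiber chains that carry a marked point are cancelled by the $|\overline f^{-1}(p)|$ factors, leaving only the auto-factors $1/w(e)$ from the $B$-edges uncancelled, which is what produces the denominator. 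This sign-of-the-cancellation bookkeeping is the crux.

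The main obstacle I expect is precisely this matching of which $w(e)$-factors survive: one has to track, edge by edge of $\D_{\overline C}$, whether the corresponding fiber chain $\overline C'_i$ (i) carries a point of $\x^\circ(0)$, (ii) is an $s$-pair edge in $B_{\D_{\overline C},m_{\overline C}}$, (iii) is internal or adjacent to a source, and (iv) has an automorphism group of order equal to its weight; the claimed formula asserts that after all cancellations the net contribution of each internal edge is $w(e)^2$, each source edge in $Edge^\infty$ contributes $1$, and each $B$-edge contributes an extra $1/w(e)$. I would organize this as a short case analysis over the four types of components listed in the description of $\overline C$ following Lemma \ref{lem:euler graph}, using Proposition \ref{prop:initial values N} to pin down the exact local contribution (note $GW_\N^{u_l,0,0,u_l}(l[F],0)=1/l$, which is where the automorphism denominators enter). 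With that table in hand the identity $\mu(\overline f)=\mu^\C(\D_{\overline C})/\prod_{e\in B_{\D_{\overline C},m_{\overline C}}}w(e)$ is a bookkeeping verification, and combined with the surjectivity-with-multiplicity statement of Lemma \ref{lem:proof N3} (each marked floor diagram is hit by $\prod_{e\in B_{\D,m}}w(e)$ maps $\overline f$) it yields Theorem \ref{NFD}: summing $\mu(\overline f)$ over all $\overline f$ mapping to a fixed $(\D,m)$ gives $\prod_{e\in B_{\D,m}}w(e)\cdot\mu^\C(\D)/\prod_{e\in B_{\D,m}}w(e)=\mu^\C(\D,m)$, as desired.
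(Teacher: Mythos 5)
Your high-level route is the same as the paper's: the paper's entire proof of Lemma \ref{lem:proof N4} is the single sentence ``This follows from Proposition \ref{prop:degeneration}'', so the whole content of the lemma is exactly the bookkeeping identity $\mu(\overline f)=\mu^\C(\D_{\overline C})/\prod_{e\in B_{\D_{\overline C},m_{\overline C}}}w(e)$ that you set out to verify. The problem is that your verification contains a concrete error which makes it come out wrong: you assert that the points of $\x^\circ(0)$ lying on multiple-fiber chains $\overline C'_i$ ``are precisely the edges in $B_{\D_{\overline C},m_{\overline C}}$''. This is exactly backwards. By construction $B_{\D_{\overline C},m_{\overline C}}=Edge(\D_{\overline C})\cap m(A'_0)$, and $A'_0$ is the union of the $s$-pairs $\{2a_i-1,2a_i\}$ \emph{both} of whose points lie on the floor component $\overline C_i$; the marking is only extended over such a pair by artificially assigning one of its two elements to an adjacent edge. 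So a $B$-edge is precisely an internal edge in the image of $m$ whose nominal marked point does \emph{not} lie on the corresponding fiber chain (it lies on the adjacent floor), whereas a non-$B$ internal edge in $m(A_0)$ carries a genuine point of $\x^\circ(0)$ on its chain. This reversal is why your ``assembling'' line literally evaluates to $I^\beta\prod_{e\notin Edge^\infty}w(e)^2=\mu^\C(\D_{\overline C})$ with no denominator, and why the verbal patch you then offer is self-contradictory: if, as you say, the automorphism factor $1/w$ of a chain is cancelled by $|\overline f^{\ -1}(p)|=w$ exactly when the chain carries a marked point, then under your (incorrect) identification the $B$-edges would be cancelled too.

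The correct ledger is the following. An internal edge $e$ of weight $w$ whose chain has $k$ fiber components contributes $w^{k+1}$ from the $\mu_p$ at its $k+1$ junctions and $w^{-k}$ from the automorphism groups of the $k$ cyclic covers, hence a net $w$; it picks up an extra factor $w$ from $\prod_{p\in\x^\circ(0)}|\overline f^{\ -1}(p)|$ exactly when a point of $\x^\circ(0)$ genuinely lies on the chain. For a non-$B$ internal edge in $m(A_0)$ this happens, giving $w(e)^2$; for a $B$-edge it does not (the point sits on the floor, where $\overline f$ is birational and contributes $1$), giving only $w(e)$, i.e.\ the factor $w(e)^2/w(e)$ responsible for the denominator. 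The surviving $w(e)$ for the $B$-edge is then restored at the level of Theorem \ref{NFD} by Lemma \ref{lem:proof N3}: the floor adjacent to a $B$-edge carries two point conditions and a free order-$w(e)$ tangency with $E_0$, so by Proposition \ref{prop:initial values N} there are $w(e)$ distinct limits $\overline f$ over the same marked diagram. With this correction your case analysis closes; as written, it does not.
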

\begin{proof}
This  follows from Proposition \ref{prop:degeneration}.
\end{proof}

\subsection{Proof of Theorem \ref{WFD}}\label{sec:proof WFD}
Now let us suppose that $\X_n$ is equipped with the real structure
$\X_n(\kappa)$, and that $g=0$.
The previous degeneration $\mathcal Z\to \C$ can obviously be equipped with a
real structure such that $\pi^{-1}(t) = \X_n(\kappa)$ if $t\in\R^*$, and  
 $\pi^{-1}(t) = \X_n(\kappa)\cup \N_{s+r}\cup\ldots \cup \N_0$ where $\N$ is equipped with the real
structure for which $\pi_E:\N\to E_\infty$ is real. Note that
$\R E\ne\emptyset$  and
$\R \N\ne\emptyset$.
Choose the configuration $\x_E(t)$ to be of type $(\alpha^\Re,\alpha^\Im)$ with
$\alpha^\Re+2\alpha^\Im=\alpha$,
the configuration $\x^\circ(t)$ to be $(E,s)$-compatible for $t\ne 0$,
and 
$\x(0)\cap\N_i$ to be a pair of complex conjugated points (resp. to be a
real point) if $1\le i\le s$
(resp. $i\ge s+1$).
There remain two steps to end the proof of Theorem
\ref{WFD}:
\begin{enumerate}
\item identify which elements of $\CC^{\alpha,\beta}(d,0,\x(0))$
  are real;

\item for each such element $\overline f$, determine how many real
elements of 
$\CC^{\alpha,\beta}(d,g,\x(t))$ converge to $\overline f$ as $t$
  goes to 0, and determine their different real multiplicities.
\end{enumerate}

The first step is  straightforward. 
\begin{lemma}\label{lem:real FD}
An element $\overline f:\overline C\to \pi^{-1}(0)$
 of $\CC^{\alpha,\beta}(d,0,\x(0))$
  is real if and only if the marked floor diagram 
$(\D_{\overline C},m_{\overline C})$ is $(s,\kappa)$-real.

In this case $(\D_{\overline C},m_{\overline C})$ is of
type $(\alpha^\Re, \beta^\Re,\alpha^\Im,\beta^\Im)$ 
if and only if
exactly $\alpha^\Re_i+\beta^\Re_i$ 
(resp. 
$2\alpha^\Im_i+2\beta^\Im_i$)
irreducible components 
of  
$\overline C$ are mapped with degree $i$ 
to a real (resp. non-real) fiber of $\N_0$.
\end{lemma}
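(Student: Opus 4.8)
The plan is to show that a real structure on $\overline f$ amounts to a symmetry of the marked floor diagram $(\D_{\overline C},m_{\overline C})$ realising $\rho_{s,\kappa}$, and then to read off the type. First I would record how the real structure of $\mathcal Z\to\C$ acts on the central fibre $\pi^{-1}(0)$: it fixes $\X_n(\kappa)$ and each copy $\N_i$ globally, restricts to the tautological real structure of $\N$ on each $\N_i$ (so $\pi_E$ is real), exchanges the exceptional divisors $E_{2i-1}\leftrightarrow E_{2i}$ of $\X_n(\kappa)$ for $1\le i\le\kappa$ and fixes $E_{2\kappa+1},\dots,E_n$, and on the chosen sections fixes the real points of $\x^\circ(0)$ and of $\x_E(0)$ while exchanging the two points of each conjugate pair (those lying in $\N_i$ for $1\le i\le s$, and the conjugate pairs of $\x_E(0)$). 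A real structure on $\overline f$ is an anti-holomorphic involution $\sigma$ of $\overline C$ with $c\circ\overline f=\overline f\circ\sigma$; such a $\sigma$ permutes the irreducible components of $\overline C$ and, through the explicit description of $\overline C$ obtained above, induces an automorphism $\phi$ of the weighted oriented graph $\D_{\overline C}$ together with bijections of the index sets $A_0,\dots,A_n$ entering the definition of $m_{\overline C}$.

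Next I would verify that this induced symmetry coincides with $\rho_{s,\kappa}$. On $A_0$: the two members of an $s$-pair are marks of two points of a conjugate pair of $\x(0)$; if these two points lie on the same component of $\overline C$ --- equivalently, if their marks are adjacent in $\D_{\overline C}$ --- then that component is $\sigma$-invariant and carries the conjugation on $\C P^1$, so both marks are fixed, otherwise $\sigma$ exchanges the two components and hence the two marks; this is exactly the recipe defining $\psi_{0,s}$, while marks of real points of $\x^\circ(0)$ are fixed. On each $A_i$ the real structure sends the marks over $E_{2i-1}$ to those over $E_{2i}$ for $i\le\kappa$ and fixes the others, which is $\psi_{i,\kappa}$ up to the choice of bijection --- a choice that is immaterial since the equivalence class of a marked floor diagram only depends on $m(A_i)$. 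Therefore $\overline f$ is real if and only if $(\D_{\overline C},m_{\overline C})$ and $(\D_{\overline C},m_{\overline C}\circ\rho_{s,\kappa})$ are equivalent, i.e. if and only if $(\D_{\overline C},m_{\overline C})$ is $(s,\kappa)$-real. For the converse implication I would reconstruct $\sigma$ from an equivalence realising $\rho_{s,\kappa}$, component by component: each irreducible component of $\overline C$ is rational and carries at most three marked or nodal points, hence admits a (unique) compatible real structure when it is $\phi$-fixed, and an anti-holomorphic identification with its $\phi$-image otherwise; these glue along the nodes to the desired $\sigma$.

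Finally, for the statement on the type I would use that every component of $\overline C$ mapped to a fibre of $\N_0$ is the $\N_0$-end of one of the $|\alpha|+|\beta|$ fibre-chains reaching $E_0$, its covering degree being the weight of the corresponding edge of $Edge^\infty(\D_{\overline C})$, and that these chains record all intersections of $f(C)$ with $E$ after deformation. A fibre-chain is $\sigma$-fixed --- so its $\N_0$-component maps to a real fibre --- exactly when its edge is fixed by $\rho_{m,s,\kappa}$, while a conjugate pair of fibre-chains --- two components mapping to the two non-real fibres of a conjugate fibre --- corresponds to edges of $Edge^\infty(\D_{\overline C})$ lying in $m_{\overline C}(\Im(m_{\overline C},s))$; among the $\sigma$-fixed chains, those carrying a point of $\x_E$ are counted by $\alpha^\Re$ and the others by $\beta^\Re$, and conjugate pairs split correspondingly into $\alpha^\Im$ and $\beta^\Im$. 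Comparing with the two conditions of Definition \ref{defi real} yields the claim. The delicate point throughout is the bookkeeping done modulo the equivalence relation on marked floor diagrams --- in particular, checking that the combinatorial definition of $\rho_{s,\kappa}$, and that of $\Im(m,s)$ through adjacency of marks, is exactly mirrored by the involution coming from $\sigma$; once this dictionary is in place the remaining verifications are routine, resting on the near-uniqueness of real structures on rational curves with few special points.
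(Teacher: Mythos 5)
Your forward direction and your treatment of the type statement match the paper's (very terse) argument, but your converse contains a genuine gap. The claim that each $\phi$-fixed irreducible component of $\overline C$ ``carries at most three marked or nodal points, hence admits a (unique) compatible real structure'' is not correct: a floor component realizes a class $[E_\infty]+d_F[F]$ in some $\N_j$ and meets $E_0\cup E_\infty$ in as many points as it has adjacent fibre-chains, which can be arbitrarily many; more importantly, even when the component, its two conjugate marked points, and all its tangency data are combinatorially conjugation-symmetric, this does \emph{not} by itself imply that the image curve is real. By Proposition \ref{prop:initial values N}, a floor through a conjugate pair of points of $\x^\circ(0)$ with the prescribed contact orders is one of exactly $j$ curves in $\FF^{\alpha,u_j,\alpha',0}([E_\infty]+l[F],0,\x)$; complex conjugation permutes this set of $j$ curves, and a priori it could act with $2$-cycles, in which case $\overline f$ would fail to be real even though $(\D_{\overline C},m_{\overline C})$ is $(s,\kappa)$-real. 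Ruling this out is precisely the content of Lemma \ref{lem:m real}, proved by the explicit computation showing that the $j$ solutions of $\left(\frac{x_0-b}{\overline{x_0}-b}\right)^j=\frac{\overline{y_0}Q(x_0)}{y_0Q(\overline{x_0})}$ are all real because $b\mapsto\frac{x_0-b}{\overline{x_0}-b}$ sends the real line to the unit circle. The paper's proof of the converse rests on exactly this lemma (together with Propositions \ref{prop:initial values} and \ref{prop:initial values N}), and without it your reconstruction of $\sigma$ cannot get started on the floors carrying a conjugate pair of point constraints.

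Note also that the lemma asserts the stronger statement that \emph{every} $\overline f$ with a given $(s,\kappa)$-real marked floor diagram is real, not merely that some real representative exists; this is needed later (in Lemma \ref{lem:sign 1} and Corollary \ref{lem:sign 2}) when all limits with a given combinatorial type are summed. Your component-by-component gluing, once repaired by Lemma \ref{lem:m real} for the floors and by the uniqueness statements of Propositions \ref{prop:initial values} and \ref{prop:initial values N} for the remaining components (which are rigid, hence automatically real when their constraints are conjugation-invariant), does yield this stronger conclusion; but as written the key analytic input is missing.
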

\begin{proof}
Suppose that $\overline f$ is real. Then the action of the complex
conjugation on  $\x(0)$ and on 
the irreducible components of $\overline C$ induces an
involution on  $\bigcup_{i=0}^n A_i$ and  
$\D_{\overline C}$ which turns $(\D_{\overline
  C},m_{\overline C})$ into a  $(s,\kappa)$-real marked floor diagram.

Conversely, if $(\D,m)$ is a  $(s,\kappa)$-real marked floor diagram,
 Propositions \ref{prop:initial values} 
and \ref{prop:initial values N} together with Lemma
\ref{lem:m real} implies that any map $\overline f$ in 
$\CC^{\alpha,\beta}(d,0,\x(0))$ such that 
$(\D,m)=(\D_{\overline C},m_{\overline C})$ is real.

The last statement follows from the construction of $(\D_{\overline
  C},m_{\overline C})$.
\end{proof}

Let us fix a real element $\overline f:\overline C\to \pi^{-1}(0)$
of $\CC^{\alpha,\beta}(d,0,\x(0))$, and denote by 
$(\alpha^\Re, \beta^\Re,\alpha^\Im,\beta^\Im)$ the type of 
$(\mathcal D_{\overline C},m_{\overline C})$.
 The number  of  nodal pairs of $\overline f$
 composed of two complex
conjugated elements
is
 denoted by  $m^\circ(\overline f)$. Recall that the integer $o_v$
 associated to an element
$\{v,v'\}\in Vert^\Im(\D)$ has been defined in Section \ref{sec:real FD}.

\begin{lemma}\label{lem:node f 1}
One has
$$m^\circ(\overline f)= \prod_{\{v,v'\}\in Vert_\Im(\D_{\overline C})}(-1)^{o_v}. $$
\end{lemma}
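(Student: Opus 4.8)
The plan is to compute the parity of $m^\circ(\overline f)$ — which is all that the product of signs on the right-hand side records — directly from the explicit description of $\overline C$ obtained in the proof of Theorem~\ref{NFD}. First I would use that description to list the possible images of the irreducible components of $\overline C$ under $\overline f$: a line or a $([D]-[E_j])$-curve in $\X_n$, a section-type curve in class $[E_\infty]+d_F[F]$ in some copy $\N_a$, an exceptional curve $E_i$, or a possibly multiple fiber of some $\N_a$. An adjunction computation shows that in each of the first three cases the image is a smooth rational curve, and in the last case $\overline f$ restricted to the component is a cover of a line, which has no isolated solution of $\overline f(x)=\overline f(y)$. Hence no nodal pair of $\overline f$ lies on a single component, and any nodal pair $\{p,p'\}$ has $p,p'$ on two distinct components $A,B$ with $\overline f(A)$ and $\overline f(B)$ crossing at $\overline f(p)$.

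Next I would determine when such a pair consists of conjugate points. Because the real structure on $\mathcal Z$ preserves every stratum of $\pi^{-1}(0)$ and is compatible with the rulings, conjugation fixes the point $\overline f(p)$ and exchanges the two local branches exactly when it exchanges $A$ and $B$; hence $\{A,B\}$ must be a conjugate pair $\{A,\overline A\}$ lying inside a single stratum, $p'=\overline p$, and $\overline f(p)$ is a solitary node. I would then discard all conjugate pairs of components other than conjugate pairs of floor-curves: a conjugate pair of fibers is disjoint, a conjugate pair of fiber-chains has disjoint images, and a conjugate pair of exceptional chains has images that are either disjoint or meet only along a common exceptional curve, so in all these cases there is no isolated intersection point. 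This leaves exactly the conjugate pairs $\{\overline C_v,\overline C_{\overline v}\}$ of floor-curves, i.e. the pairs indexed by $Vert_\Im(\D_{\overline C})$; for such a pair the number of conjugate nodal pairs it contributes equals the number of real points of $\overline f(\overline C_v)\cap\overline f(\overline C_{\overline v})$, so that $m^\circ(\overline f)$ is the sum over $\{v,v'\}\in Vert_\Im(\D_{\overline C})$ of the number of real points of $\overline f(\overline C_v)\cap\overline f(\overline C_{\overline v})$.

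It remains to evaluate each summand modulo $2$. Since the non-real points of $\overline f(\overline C_v)\cap\overline f(\overline C_{\overline v})$ occur in conjugate pairs, this count is congruent mod $2$ to the intersection number $[\overline f(\overline C_v)]\cdot[\overline f(\overline C_{\overline v})]$. A direct computation of this number from the homology class realized by $\overline C_v$, keeping track of which of these intersection points are forced onto the divisors $\X_n\cap\N$ or into chains attached to the floor — the latter being recorded by the edges of $\D_{\overline C}$ at $v$ coming from exceptional chains over the real blown-up points $E_i$ with $i>2\kappa$ — shows that the number of genuine transverse interior intersection points is congruent mod $2$ to the degree of the floor $v$ plus the number of those edges, that is to $o_v$. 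Multiplying over all conjugate pairs of floors yields $(-1)^{m^\circ(\overline f)}=\prod_{\{v,v'\}\in Vert_\Im(\D_{\overline C})}(-1)^{o_v}$, as claimed.

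The step I expect to be the main obstacle is precisely this last bookkeeping: one must show that the intersection number $[\overline f(\overline C_v)]\cdot[\overline f(\overline C_{\overline v})]$, which a priori also counts intersection points lying on the divisors $\X_n\cap\N$ or absorbed into chains shared with neighbouring strata, reduces modulo $2$ exactly to $o_v$ — equivalently, that those spurious intersections either occur in conjugate pairs or are matched by the combinatorics of the marking. Genericity of $\x(0)$ via Proposition~\ref{prop:degeneration}, together with the transversality statements of Propositions~\ref{prop:shoshu} and \ref{prop:generic N}, is what makes this accounting possible.
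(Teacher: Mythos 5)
Your argument follows the paper's proof exactly: conjugate nodal pairs are first shown to arise only from conjugate pairs of floor components (the paper invokes Propositions \ref{prop:initial values} and \ref{prop:initial values N}, Corollary \ref{prop:CHXn}, and the disjointness of conjugate fibers), and the parity of the real interior intersections of such a pair is then computed homologically. The final bookkeeping you flag as the main obstacle is precisely what the paper compresses into ``for homological reasons'', and it does close: for instance, for a degree-$2$ floor the total intersection $([E_\infty]+d_F[F])^2\equiv n \pmod 2$ loses one boundary intersection over $E\cap E_i$ for each real index $i>2\kappa$ whose exceptional chain meets the floor, which leaves exactly $o_v$ modulo $2$ since the edges adjacent to $m(A_i)$ record the complementary set of real indices.
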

\begin{proof}
If follows from Proposition \ref{prop:initial values},
\ref{prop:initial values N}, and Corollary \ref{prop:CHXn} that a
nodal pair contributing to $m^\circ(\overline f)$ contains two points
in two conjugated irreducible components of $\overline C$. Since two
complex conjugated fibers in $\N$ do not intersect, these two
components  must correspond to two floors in $Vert_\Im(\D_{\overline C})$.
For homological reasons $o_v$ has the same parity 
as the number of nodal pairs contained in these two components and
mapped to $\R\N\setminus \left(E_0\cup E_\infty \right)$.
\end{proof}

For  $t$ a small enough non-null real number, 
 denote respectively by $\R \CC_{\overline f}(d,0,\x(t))$
and $\R\CC_{\overline
  f,L_\epsilon}(d,0,\x(t))$  the sets
of  elements of $\R\CC^{\alpha^\Re,
  \beta^\Re,\alpha^\Im,\beta^\Im}(d,0,\x(t))$ 
and
$\R\CC^{\alpha^\Re, \beta^\Re,\alpha^\Im,\beta^\Im}_{L_\epsilon}(d,0,\x(t))$
which converge to
$\overline f$ as $t$ goes to $0$.

\begin{lemma}\label{lem:sign 1}
One has
$$\sum_{f\in \R\CC_{\overline
     f}(d,0,\x(t))}(-1)^{m_{\R\X_n(\kappa)}(f(C))}=
\frac{\mu^\R_{r,\kappa}(\mathcal D_{\overline C},m_{\overline C})}
{\prod_{e\in B_{\mathcal D_{\overline C},m_{\overline C}}}w(e)}. $$
\end{lemma}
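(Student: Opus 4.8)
The plan is to refine the complex count of Lemma~\ref{lem:proof N4} by keeping track of the real structure and of where the solitary nodes are created along the degeneration. Write $(\D,m)=(\D_{\overline C},m_{\overline C})$ and recall from Proposition~\ref{prop:real degeneration} that $\overline f$ is the limit of exactly $\xi(\overline f)=\xi_0\prod_q\xi_q$ real maps for $t>0$; by Lemma~\ref{lem:real FD} these are precisely the elements of $\R\CC_{\overline f}(d,0,\x(t))$, so this set is either empty or has $\xi(\overline f)$ elements. The first step is to read $\xi(\overline f)$ off $(\D,m)$: the conjugated pairs $\{q,\overline q\}\subset\overline f^{\,-1}(\X_n\cap\N)$ are exactly the transition points lying along the edges exchanged by $\rho_{m,s,\kappa}$ and along the edges of $Edge^\infty(\D)\cap m(\Im(m,s))$, so $\xi_0$ is a product of weights carried by those edges, while a real point $q$ has $\mu_q$ even exactly when it lies along an edge of $\D$ of even weight. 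If such an edge belongs neither to $m(\Im(m,s))$ nor to $Edge^\infty(\D)$, it is an interior even-weight edge fixed by $\rho_{m,s,\kappa}$, and the topology of the smoothing forces the branch of $\R\overline C$ at $q$ to glue the ``wrong'' pair of local sheets, so $\xi_q=0$. Hence $\R\CC_{\overline f}(d,0,\x(t))=\emptyset$ as soon as $m(\Im(m,s))\cup Edge^\infty(\D)$ fails to contain all even-weight edges of $\D$, which is exactly the case $\mu^\R_{r,\kappa}(\D,m)=0$ of Definition~\ref{def:real mult}. From now on I would assume the contrary.

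The second step is to show that the sign $(-1)^{m_{\R\X_n(\kappa)}(f(C))}$ is independent of $f\in\R\CC_{\overline f}(d,0,\x(t))$ and to compute it. By Corollary~\ref{cor:total node} each solitary node of $f(C)$ is either the deformation of a nodal pair of $\overline C$, or is created inside the deformation of a neighbourhood of a point $p\in\mathcal P(\overline f)$. Nodes of the first kind that are solitary come from the conjugated nodal pairs of $\overline f$, and their number $m^\circ(\overline f)$ satisfies $(-1)^{m^\circ(\overline f)}=\prod_{\{v,v'\}\in Vert_\Im(\D)}(-1)^{o_v}$ by Lemma~\ref{lem:node f 1}, independently of $f$. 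For nodes of the second kind, Proposition~\ref{prop:real degeneration} gives $2a$ solitary nodes near $p$ when $\mu_p$ is odd, contributing $(-1)^{2a}=1$; when $\mu_p$ is even, half of the real deformations create $\mu_p-1$ solitary nodes near $p$ and half create none, which would cancel in the signed sum. But after the first step every even-weight edge lies in $m(\Im(m,s))\cup Edge^\infty(\D)$, so every transition point with $\mu_p$ even either belongs to a conjugated pair $\{q,\overline q\}$ (whose nearby nodes are conjugated, not solitary) or lies on the last fibre of a chain meeting $E_0$ of $\N_0$, where Lemmas~\ref{lem:m real} and~\ref{lem:even} show that the solitary node count there is even and that the ``$\mu_p-1$ versus $0$'' alternative does not survive. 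Consequently, for every $f\in\R\CC_{\overline f}(d,0,\x(t))$ one has $(-1)^{m_{\R\X_n(\kappa)}(f(C))}=\prod_{\{v,v'\}\in Vert_\Im(\D)}(-1)^{o_v}$.

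It then remains to count $\xi(\overline f)$, divide by $\prod_{e\in B_{\D,m}}w(e)$, and match with the magnitude in $\mu^\R_{r,\kappa}$. The factor $2^{\beta^\Re_{even}}$ comes from the $N^\pm$ dichotomy of Lemma~\ref{lem:m real} applied at each of the $\beta^\Re_{even}$ real intersection points of even multiplicity with $E$; the weights entering $\xi_0$ produce $I^{\beta^\Im}$ together with the weights of the interior edges that are absorbed into $B_{\D,m}$, so that after dividing by $\prod_{e\in B_{\D,m}}w(e)$ exactly $\prod_{e\in E(\D)}w(e)$ survives — this is the same weight bookkeeping as in Lemmas~\ref{lem:proof N3}--\ref{lem:proof N4}, carried out on the real deformations only. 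Combining with the sign above yields
$$\sum_{f\in\R\CC_{\overline f}(d,0,\x(t))}(-1)^{m_{\R\X_n(\kappa)}(f(C))}=\frac{2^{\beta^\Re_{even}}\,I^{\beta^\Im}\prod_{\{v,v'\}\in Vert_\Im(\D)}(-1)^{o_v}\prod_{e\in E(\D)}w(e)}{\prod_{e\in B_{\D,m}}w(e)}=\frac{\mu^\R_{r,\kappa}(\D_{\overline C},m_{\overline C})}{\prod_{e\in B_{\D_{\overline C},m_{\overline C}}}w(e)}.$$
The main obstacle will be the interface between the local picture of Proposition~\ref{prop:real degeneration} and the combinatorics of $(\D,m)$: pinning down exactly which even-multiplicity transition points carry the cancelling alternative (and verifying that this coincides with the vanishing clause of $\mu^\R_{r,\kappa}$), and tracking the weights through $\xi_0$, the $\xi_q$, $B_{\D,m}$, and $E(\D)$ so that the surviving product is precisely $\prod_{e\in E(\D)}w(e)$, which amounts to rerunning the dimension-and-incidence analysis of Section~\ref{sec:proof NFD} with the real structure kept throughout.
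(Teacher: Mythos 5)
Your proposal follows the paper's own route: the printed proof of this lemma is a one-line citation of Lemma \ref{lem:node f 1}, Corollary \ref{cor:total node}, and Proposition \ref{prop:real degeneration}, and you assemble exactly these ingredients in the expected three steps (matching the vanishing clause of Definition \ref{def:real mult}, constancy of the sign via the conjugate nodal pairs, and the count $\xi(\overline f)$ of real deformations against the weight bookkeeping of Lemmas \ref{lem:proof N3}--\ref{lem:proof N4}). The one soft spot is that you assert, rather than derive, that an even-weight edge outside $m(\Im(m,s))\cup Edge^\infty(\D)$ forces $\xi_q=0$; but since you also record the cancelling ``$\mu_p-1$ versus $0$'' alternative of Proposition \ref{prop:real degeneration}, the signed sum vanishes in that case by either mechanism, so the argument goes through at the same level of detail as the paper's.
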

\begin{proof}
This follows from Lemma \ref{lem:node f 1}, Corollary \ref{cor:total node}, and
Proposition \ref{prop:real degeneration}. 
\end{proof}

Combining Lemmas \ref{lem:sign 1} and \ref{lem:proof N3}, we obtain
Theorem \ref{WFD}$(1)$. 

\medskip
Let us suppose now that $n=2\kappa$. Recall that
$\widetilde L_\epsilon$ denotes the connected component of 
$\R\X_n(\kappa)\setminus \R E$ with Euler characteristic $\epsilon$.
The surface 
$\R \N_i\setminus \left(\R E_0\cup \R E_\infty\right)$ 
has two connected components that are denoted  by $N_i^{0,1}$ in such a way 
that $N_i^1$ deforms to the
interior of $\R E$ for $t\in\R^*$. Define also $\overline
N_i^\epsilon$ to be the topological closure of $N_i^\epsilon$ in $\N_i$.
Note that $\x^\circ(0)$ deforms to an $(E,s,\widetilde L_\epsilon)$-compatible
configuration if and only if  $\x^\circ(0)\subset \bigcup_i
N^\epsilon_i$. In this case  $\x^\circ(0)$  is said to be
 $(E,s,\widetilde L_\epsilon)$-compatible.

\begin{lemma}\label{lem:sign 1bis}
If   $\R\CC_{\overline f,L_\epsilon}(d,0,\x(t))\ne\emptyset$,
then $(\D_{\overline C},m_{\overline C})$ is
$\epsilon$-sided and $\x^\circ(0)$  is
 $(E,s,\widetilde L_\epsilon)$-compatible. 
Moreover one has
$\R\CC_{\overline f,L_\epsilon}(d,0,\x(t))=
\R\CC_{\overline f}(d,0,\x(t)). $
\end{lemma}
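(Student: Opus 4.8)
The plan is to establish Lemma \ref{lem:sign 1bis} by analyzing, for a real limit map $\overline f$ with $(\D_{\overline C}, m_{\overline C})$ its associated $(s,\kappa)$-real marked floor diagram, which connected component of $\R\X_n(\kappa)\setminus\R E$ the real deformations $f(\R P^1)$ land in. First I would recall that by Lemma \ref{lem:real FD} the type $(\alpha^\Re,\beta^\Re,\alpha^\Im,\beta^\Im)$ records how the components of $\overline C$ map to real versus non-real fibers of $\N_0$, and that each chain of real fibers $\overline C'_i$ (resp. real component $\overline C_i$ mapped to $\N_{a_i}$) contributes a real piece whose image in $\R\N_j\setminus(\R E_0\cup\R E_\infty)$ lies in either $N_j^0$ or $N_j^1$. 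The key geometric input is Lemma \ref{lem:even} (applied componentwise in each $\N_j$ with $n$ even, since $n=2\kappa$), together with Lemma \ref{lem:m real}: an edge $e\in Edge_\Re(\D_{\overline C})$ of odd weight forces the corresponding real fiber-chain to cross from one side of $\R E$ to the other, which would make $f(\R P^1)$ meet \emph{both} components $\widetilde L_0$ and $\widetilde L_1$, contradicting $f\in\R\CC_{\overline f,L_\epsilon}$; hence every edge in $Edge_\Re(\D_{\overline C})$ must be of even weight, which is exactly the first half of the definition of $\epsilon$-sided. For the floors of degree $1$: a floor $v$ of degree $1$ not in a pair of $Vert_\Im(\D_{\overline C})$ corresponds to a real component $\overline C_i$ realizing $[D]-[E_{j_i}]$ in $\X_n$, and its real part crosses $\R E$ an odd number of times (since $([D]-[E_{j_i}])\cdot[E]=1$), so if $\epsilon=1$ such a component cannot be confined to $\widetilde L_1\cup\R E$; thus all degree-$1$ floors must be paired, giving the second half of $\epsilon$-sidedness.

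Next I would verify the compatibility of $\x^\circ(0)$: since each point of $\x^\circ(0)$ in $\N_j$ lies on some real component $\overline C_i$ whose real part is forced (by the above) into a single $N_j^\epsilon$, and since under the smoothing $N_j^1$ deforms to the interior side of $\R E$ while $N_j^0$ deforms to the exterior, the fact that $f(\R P^1)\subset \widetilde L_\epsilon\cup\R E$ pins down $\x^\circ(0)\subset\bigcup_j N_j^\epsilon$, i.e. $\x^\circ(0)$ is $(E,s,\widetilde L_\epsilon)$-compatible. For the final equality $\R\CC_{\overline f,L_\epsilon}(d,0,\x(t))=\R\CC_{\overline f}(d,0,\x(t))$: once $(\D_{\overline C},m_{\overline C})$ is $\epsilon$-sided and $\x^\circ(0)$ is $(E,s,\widetilde L_\epsilon)$-compatible, I claim every real deformation of $\overline f$ automatically has image in $\widetilde L_\epsilon\cup\R E$. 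Indeed, the image of $f(\R P^1)$ is a small perturbation of $\overline f(\R\overline C)$, and $\overline f(\R\overline C)$ meets $\R E$ only along the nodes corresponding to edges of $\D_{\overline C}$; the $\epsilon$-sidedness (all real edges even, all degree-$1$ floors paired) guarantees $\overline f(\R\overline C)\setminus\R E$ lies entirely on the $\epsilon$-side, hence so does its perturbation. This uses Lemma \ref{lem:even} once more to control the local picture at each such node: an even-weight real tangency point does not transfer the curve to the other side.

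The main obstacle I anticipate is the careful bookkeeping at the nodes mapped to $\X_n\cap\N$ and, recursively, at the nodes of the iterated degeneration $\mathcal Z\to\C$ — one must check that the "side" ($N_j^0$ vs.\ $N_j^1$) is consistently propagated along the whole chain $\X_n\cup\N_{s+r}\cup\dots\cup\N_0$, and that the matching conditions along the divisors $E_0,E_\infty$ respect this. The subtle point is that a real fiber of even weight glued to a real fiber of even weight stays on one side (by Lemma \ref{lem:even} applied in each copy of $\N$), whereas odd weight flips sides; and similarly for the gluing of $\overline C_{\X_n}$-components to $\overline C_\N$-components along $\R E\subset\R\X_n$. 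Handling the exceptional curves $E_i$ (via the components $\overline C^{(i)}_j$, which always have weight $1$ on their fiber parts) requires noting that $d\cdot[E_i]=d\cdot[E_{n-1}]$ pairing under $\kappa=n/2$ forces these to occur in conjugate pairs, so they do not obstruct $\epsilon$-sidedness. Once this propagation lemma is in place, the rest is a direct unwinding of the definitions in Section \ref{sec:real FD}.
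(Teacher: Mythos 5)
Your overall strategy is the same as the paper's: deduce from $\R\CC_{\overline f,L_\epsilon}(d,0,\x(t))\ne\emptyset$ that $\overline f(\R\overline C)\subset\widetilde L_\epsilon\cup\bigcup_i\overline N_i^\epsilon$, read off from this the evenness of real edge weights and the compatibility of $\x^\circ(0)$, and get the final equality by continuity of the perturbation. Those parts are fine.

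There is, however, a genuine gap in your treatment of the degree-$1$ floors when $\epsilon=1$. You assert that a degree-$1$ floor fixed by the involution corresponds to a real component realizing $[D]-[E_{j_i}]$ and exclude it because $([D]-[E_{j_i}])\cdot[E]=1$ is odd, so the real part must cross $\R E$. But degree-$1$ floors also arise from components realizing the class $[D]$ (the $k_3$ components in the decomposition of $\overline C$), and for these $[D]\cdot[E]=2$ is even; such a component can even be attached to $\R E$ by a single real edge of weight $2$, so neither your parity argument nor the evenness of real edge weights rules out its being a real component. Yet it must be ruled out for $\epsilon=1$, since $\epsilon$-sidedness requires \emph{every} degree-$1$ floor to lie in a pair of $Vert_\Im(\D)$. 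The argument that actually works, and the one the paper uses, is topological rather than parity-based: the real part of a real line in $\C P^2$ is a pseudoline, i.e.\ a non-contractible circle in $\R P^2$, and therefore cannot be contained in the closure of the disk $\widetilde L_1$ (the interior of the real conic). This handles the classes $[D]$ and $[D]-[E_{j_i}]$ uniformly. With that substitution your proof closes up; the remaining bookkeeping along the chain of copies of $\N$ that you flag as a concern is handled exactly as you describe, via the evenness of real contact orders.
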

\begin{proof}
If $\R \CC_{\overline
  f,L_\epsilon}(d,0,\x(t))\ne\emptyset$, then $\overline f(
\R \overline C)\subset \widetilde 
L_\epsilon\bigcup_i \overline N_i^\epsilon $. This
implies that $\x^\circ(0)$  is
 $(E,s,\widetilde L_\epsilon)$-compatible, and 
that any edge in $Edge_\Re(\D_{\overline C})$ has even weight.
If in addition $\epsilon=1$, then no curve $\overline C_i$ with $i\ge
k_1+1$ can be a real component of $\overline C$ since 
the real part of a real line in $\C P^2$
 cannot be contained in the interior of a
real ellipse. Conversely, if
$\overline f(\R \overline C)\subset \widetilde 
L_\epsilon\bigcup_i \overline N_i^\epsilon  $, then 
any map $f$ in $\R\CC_{\overline f}(d,0,\x(t))$ must 
satisfy $f(\R C)\subset \widetilde L_\epsilon\cup \R E$.
\end{proof}

Combining Lemmas \ref{lem:sign 1bis} and \ref{lem:sign 1}, we obtain
the first  assertion of Theorem \ref{WFD}$(2)$. 

\medskip

From now on, let us assume that $\x^\circ(0)$ is
$(E,s,\widetilde L_\epsilon)$-compatible, and that 
  $\overline f:\overline C\to \pi^{-1}(0)$
satisfies
$\overline f(\R \overline C)\subset \widetilde 
L_\epsilon\bigcup_i \overline N_i^\epsilon  $.
Denote by $m_\epsilon^\circ(\overline f)$ the number  of   nodal
pairs of $\overline f$
composed of two complex
conjugated points and  mapped
to 
$L_\epsilon \bigcup_i N^\epsilon_i$.
Consider the three following situations (recall that the involution
  $\rho_{s,\kappa}$ is defined at the
   beginning of Section \ref{sec:real FD}):
\begin{enumerate}
\item There exists $\{v,v'\}\in Vert_{\Im}(\D_{\overline C})$ and  
$i=1,\ldots, \kappa$ such that 
$ v$ is adjacent to an 
   edge adjacent to  $m\left( A_{2i-1}\right)$ and $v'$ is not. 
In this case let $j\in A_{2i-1}$ such that $ v$ is adjacent to the
   edge adjacent to $m_{\overline C}(j)$.
Since
   $(\D_{\overline C},m_{\overline C})$ is $(s,\kappa)$-real,
the vertex $v'$ is adjacent to the
   edge adjacent to  $m_{\overline C}(\rho_{s,\kappa}(j))$. 

\item We are not in the above situation, and
$Edge_\Im(\D_{\overline C})\setminus 
\left(\bigcup_{i=1}^n m_{\overline C}(A_i)\right)$ contains an edge of
odd weight. In this case, since $\D_{\overline C}$ is a tree, there
exists 
$j\in m_{\overline C}^{-1}\left(Edge_\Im(\D_{\overline C})\right)$ 
such that $m_{\overline C}(j)$ is of odd weight and
adjacent to 
a floor of $\D_{\overline C}$
 fixed by $\rho_{s,\kappa}$.

\item None of the two above situations occur.
\end{enumerate}

\begin{rem}\label{rem:s bis}
  Note that the assumption that $g=0$ whenever $s>0$ appears in case $(2)$. Again,
  one could adapt the arguments to avoid this assumption, at the cost
  of 
some extra work.
\end{rem}

In case $(3)$ above, set $\Delta_{\overline C}=\{(\D_{\overline C},m_{\overline C})\}$.
In the case $(1)$ and $(2)$, 
 define $m_{\overline C}'$ to
be the marking of $\D_{\overline C}$ which coincide with $m_{\overline
  C}$
 outside $\{j,\rho_{s,\kappa}(j)\}$ and
with
$m_{\overline C}'(j)=m_{\overline C}(\rho_{s,\kappa}(j))$ and 
$m_{\overline C}'(\rho_{s,\kappa}(j))=m(j)$. 
Clearly $(\D_{\overline C},m_{\overline C}')$
 is a $(s,\kappa)$-real $d$-marked floor diagram of
the same type as $(\D_{\overline C},m_{\overline C})$,
and set 
$\Delta_{\overline C}=\{(\D_{\overline C},m_{\overline C}),(\D_{\overline
  C},m_{\overline C}')\}$. 
Note that neither $i$ nor $j$ might  be
unique, however this does not matter in what follows.
Recall that the integer $o'_v$
 associated to an element
$\{v,v'\}\in Vert^\Im(\D)$ has been defined in Section \ref{sec:real FD}.

\begin{lemma}\label{lem:node f 2}
If $\Delta_{\overline C}=\{(\D_{\overline C},m_{\overline C})\}$, then 
$$m_{\epsilon}^\circ(\overline f)=  (-1)^{\epsilon|Vert_{\Im,1}(\D_{\overline C})|}
\prod_{\{v,v'\}\in Vert_{\Im,2}(\D_{\overline C})} (-1)^{o'_v}. $$
Otherwise
 let $\overline f':\overline C'\to \pi^{-1}(0)$ be a real element
 of $\CC^{\alpha,\beta}(d,0,\x(0))$ such that
$(\mathcal D_{\overline C'},m_{\overline C'})=(\mathcal D_{\overline
   C},m'_{\overline C})$. Then one has
$$m_{\epsilon}^\circ(\overline f) + m_{\epsilon}^\circ(\overline f')= 0. $$
\end{lemma}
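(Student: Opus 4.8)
The plan is to analyze the parity of $m_\epsilon^\circ(\overline f)$, the number of complex conjugated nodal pairs mapped into $L_\epsilon\cup\bigcup_i N_i^\epsilon$, by tracking where nodal pairs can occur. As in the proof of Lemma \ref{lem:node f 1}, Propositions \ref{prop:initial values}, \ref{prop:initial values N} and Corollary \ref{prop:CHXn} force any such nodal pair to consist of two points lying in a pair of complex conjugated irreducible components of $\overline C$; since conjugated fibers of $\N$ are disjoint, such a pair of components corresponds to a pair $\{v,v'\}\in Vert_\Im(\D_{\overline C})$. So the contribution of each pair $\{v,v'\}$ can be computed locally, component by component, and $m_\epsilon^\circ(\overline f)$ is a product over $Vert_\Im(\D_{\overline C})$.

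First I would treat the case $\Delta_{\overline C}=\{(\D_{\overline C},m_{\overline C})\}$, i.e.\ we are in situation $(3)$: every edge in $Edge_\Im(\D_{\overline C})\setminus\bigcup_i m_{\overline C}(A_i)$ has even weight, and the adjacency-to-$m(A_i)$ conditions are $\rho_{s,\kappa}$-symmetric. For a pair $\{v,v'\}$ where $v$ is a floor of degree $1$: the corresponding components $\overline C_v,\overline C_{v'}$ realize $[E_\infty]+[F]$ in some $\N_j$, are conjugated, and their intersections with $\R\N_j$ are governed by Lemmas \ref{lem:m real}, \ref{lem:even}; one computes that $\overline C_v$ meets $\R\N_j$ in a single real point, and the sign contribution to $m_\epsilon^\circ$ is $(-1)^\epsilon$ precisely because exactly one of $N_j^0,N_j^1$ deforms to the interior of $\R E$ (this is where $\epsilon$ enters). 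This accounts for the factor $(-1)^{\epsilon|Vert_{\Im,1}(\D_{\overline C})|}$. For a pair $\{v,v'\}$ with $v$ a floor of degree $2$: here the contribution has the same parity as $o'_v$, the number of edges of weight $2+4l$ adjacent to $v$; this follows by combining Lemma \ref{lem:even} (counting intersection points with $N^\pm$ in terms of $\frac{n_C^+-n_C^-}{2}$) with the observation that an edge of weight $w$ adjacent to $v$ contributes $w$ to $n_C^+$ or $n_C^-$ according to which non-real component of $E_0^\pm\cup E_\infty^\pm$ it hits, and weight $2+4l$ versus $4l$ differ by $2\bmod 4$, flipping the parity of $\frac{n_C^+-n_C^-}{2}$. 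Multiplying over all pairs gives the stated formula.

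Next I would treat the cases $(1)$ and $(2)$, where $\Delta_{\overline C}$ has two elements and there is a real $\overline f'$ with $(\D_{\overline C'},m_{\overline C'})=(\D_{\overline C},m_{\overline C}')$. The strategy is to show that $\overline f$ and $\overline f'$ differ only in a localized way whose effect on $m_\epsilon^\circ$ is to change its sign. In case $(1)$, swapping $m(j)\leftrightarrow m(\rho_{s,\kappa}(j))$ across the pair $\{v,v'\}$ swaps which of the conjugated components $\overline C_v,\overline C_{v'}$ is adjacent to an edge attached to $m(A_{2i-1})$ versus $m(A_{2i})$; since $E_{2i-1},\overline{E_{2i}}$ are conjugated exceptional divisors, this does not change the homology classes but it exchanges the roles of $N_j^0$ and $N_j^1$ in the local picture for this pair, hence flips exactly one factor of the product, so $m_\epsilon^\circ(\overline f')=-m_\epsilon^\circ(\overline f)$. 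In case $(2)$, $m_{\overline C}(j)$ is an odd-weight edge in $Edge_\Im$ adjacent to a floor $w$ fixed by $\rho_{s,\kappa}$; swapping the two preimages changes, by Lemma \ref{lem:even} applied to the real component containing $w$, the parity of the number of intersection points of that component with $N^+$ versus $N^-$ (because the odd weight shifts $n_C^+-n_C^-$ by an odd amount, hence shifts $\frac{n_C^+-n_C^-}{2}$—as a half-integer jump realized by moving the intersection point between $E_0^+$ and $E_0^-$—and correspondingly changes the count of solitary nodes in $N^\epsilon$ by one). Thus again a single sign flips and $m_\epsilon^\circ(\overline f)+m_\epsilon^\circ(\overline f')=0$.

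The main obstacle I expect is the case $(2)$ bookkeeping: making precise how a single odd-weight imaginary edge, when its two conjugated preimages are interchanged, forces the parity of $|C\cap N^\epsilon|$ to flip, rather than staying fixed or changing by an even amount. This requires carefully isolating the affected real component of $\overline C$, applying Lemma \ref{lem:even} to it alone (legitimate since the relevant intersection numbers are additive over components), and checking that the swap changes $n_C^+-n_C^-$ by exactly twice an odd number at that component — equivalently that an odd-weight edge contributes an odd summand to $n_C^+$ or $n_C^-$ — so that $\frac{n_C^+-n_C^-}{2}$ changes parity. The cases $(1)$ and the degree-$1$/degree-$2$ analysis in situation $(3)$ are, by comparison, direct consequences of Lemmas \ref{lem:m real} and \ref{lem:even} together with the definitions of $N_i^\epsilon$, $E_0^\pm$, $E_\infty^\pm$, $o_v$ and $o'_v$.
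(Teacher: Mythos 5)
There is a genuine gap, and it is localized in your treatment of the degree-$1$ floors. You assert that a pair $\{v,v'\}\in Vert_{\Im,1}(\D_{\overline C})$ corresponds to a pair of conjugated components realizing $[E_\infty]+[F]$ in some $\N_j$, and you then run the whole analysis through Lemmas \ref{lem:m real} and \ref{lem:even}. But by the construction of $\D_{\overline C}$ in Section \ref{sec:proof NFD}, every component of $\overline C$ mapped to a copy of $\N$ gives a vertex of divergence $4$, i.e.\ a floor of degree $2$; the floors of degree $1$ are exactly the components $\overline C_i$ with $i\ge k_1+1$, which are mapped to $\X_n$ and realize $[D]$ or $[D]-[E_{j_i}]$ — they are lines in $\C P^2$, not curves in $\N$. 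For such a pair of conjugated lines the intersection number is $1$, so the pair contributes a single real point (a solitary node) lying in $\widetilde L_0$ or $\widetilde L_1$, and the tool that decides which one is Lemma \ref{lem:line conic}: the node lies in the interior of $\R E$ if and only if the two intersection points of the line with $E$ lie in the same connected component of $E\setminus\R E$ (which is what the significance and $\epsilon$-sidedness conditions guarantee in case $(3)$). This is precisely the source of the factor $(-1)^{\epsilon|Vert_{\Im,1}(\D_{\overline C})|}$, and it cannot be extracted from Lemmas \ref{lem:m real} and \ref{lem:even}, which concern curves of class $[E_\infty]+l[F]$ in $\N$. Lemma \ref{lem:line conic} appears nowhere in your argument, whereas the intended proof is exactly the combination of Lemma \ref{lem:line conic} (for degree-$1$ floors) with Lemma \ref{lem:even} (for degree-$2$ floors). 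Your degree-$2$ analysis, by contrast, is essentially right: even-weight edges of weight $2+4l$ flip the parity of $\frac{n_C^+-n_C^-}{2}$, giving $(-1)^{o'_v}$.

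The same misidentification contaminates cases $(1)$ and $(2)$. In case $(1)$ the swap moves one odd-multiplicity intersection point with $E$ (resp.\ with $E_0\cup E_\infty$) from one component of $E\setminus\R E$ to its conjugate, and the resulting sign flip is read off from Lemma \ref{lem:line conic} when $v$ is a degree-$1$ floor and from Lemma \ref{lem:even} when it is a degree-$2$ floor; your phrasing ``exchanges the roles of $N_j^0$ and $N_j^1$'' does not cover the line case. In case $(2)$ you propose to apply Lemma \ref{lem:even} to ``the real component containing $w$'', but that lemma requires the curve to meet $\R\N$ transversely in finitely many points, which fails for a real component (its real part is a curve in $\R\N$); moreover intersections of a real component with $\R\N$ do not produce nodal pairs of conjugated points, so nothing there contributes to $m^\circ_\epsilon$. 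The parity flip must instead be located on a \emph{non-real} component attached to the swapped odd-weight edge, whose tangency point with $E_0\cup E_\infty$ (or with $E$, if it is a line) migrates between $E^+$ and $E^-$ under the swap; only then do Lemmas \ref{lem:even} and \ref{lem:line conic} apply and give $m^\circ_\epsilon(\overline f)+m^\circ_\epsilon(\overline f')=0$.
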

\begin{proof}
This follows from Lemmas \ref{lem:line conic} and \ref{lem:even}.
\end{proof}

\begin{cor}\label{lem:sign 2}
If  $\Delta_{\overline C}=\{(\D_{\overline C},m_{\overline C})\}$, then
$$\sum_{f\in \R\CC_{\overline
     f, L_\epsilon}(d,0,\x(t))}(-1)^{m_{L_\epsilon}(f(C))}=
\frac{\nu^{\R,\epsilon}_{r}(\mathcal D_{\overline C},m_{\overline C})}
{\prod_{e\in B_{\mathcal D_{\overline C},m_{\overline C}}}w(e)}. $$

Otherwise let $\overline f':\overline C'\to \pi^{-1}(0)$ be a real element
 of $\CC^{\alpha,\beta}(d,0,\x(0))$ such that
$(\mathcal D_{\overline C'},m_{\overline C'})=(\mathcal D_{\overline
   C},m'_{\overline C})$. Then one has
$$\sum (-1)^{m_{L_\epsilon}(f(C))}=0, $$
 where the sum is taken over all elements $f:C\to \X_n$ in 
$\R \CC_{\overline
     f,L_\epsilon}(d,0,\x(t))\cup  \R\CC_{\overline
     f',L_\epsilon}(d,0,\x(t))$.
\end{cor}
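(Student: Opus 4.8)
The plan is to deduce Corollary~\ref{lem:sign 2} from Lemma~\ref{lem:node f 2} in exactly the same way Lemma~\ref{lem:sign 1} was deduced from Lemma~\ref{lem:node f 1}, keeping track of the extra data that distinguishes the multiplicity $\nu^{\R,\epsilon}_s$ from $\mu^\R_{s,\kappa}$. First I would recall from Corollary~\ref{cor:total node} and Proposition~\ref{prop:real degeneration} how the nodes of a deformation $f\in\R\CC_{\overline f,L_\epsilon}(d,0,\x(t))$ are accounted for: solitary nodes contained in $L_\epsilon\cup\bigcup_i N_i^\epsilon$ come in two flavours, those that are deformations of complex conjugated nodal pairs of $\overline f$ mapped into that region (there are $m_\epsilon^\circ(\overline f)$ of them, independent of the choice of real deformation), and those that appear near the points $q\in\mathcal P(\overline f)$ with $\overline f(q)\in\R(\X_n\cap\N)$. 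For $q$ with $\mu_q$ odd the count of solitary nodes on the ``interior'' side $V_{\overline f(q),1,3}$ is the same for every real deformation, so these contribute a fixed sign; for $q$ with $\mu_q$ even, half of the $\xi(\overline f)$ real deformations acquire one extra solitary node on the relevant side while the other half acquire none, which is precisely the mechanism producing the factors $2^{2r_m-r'_m}$ and the distinction between $E(\D)$-edges and the rest.

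Next I would assemble the sign and the cardinality. The number of real deformations converging to $\overline f$ is $\xi(\overline f)/\prod_{e\in B}w(e)$ as in Proposition~\ref{prop:real degeneration} combined with Lemma~\ref{lem:proof N3}; the sign $(-1)^{m_{L_\epsilon}(f(C))}$ splits as $(-1)^{m_\epsilon^\circ(\overline f)}$ times the contributions from the even-$\mu_q$ points. Summing over the (finitely many) real deformations, the even-$\mu_q$ contributions combine into the power of $2$ and the product $\prod_{e\in E(\D)}w(e)$ appearing in $\nu^{\R,\epsilon}_s$, exactly as in the proof of Theorem~\ref{NFD} (Lemmas~\ref{lem:proof N3} and~\ref{lem:proof N4}), while $(-1)^{m_\epsilon^\circ(\overline f)}$ is replaced, via Lemma~\ref{lem:node f 2}, by $(-1)^{\epsilon|Vert_{\Im,1}(\D_{\overline C})|}\prod_{\{v,v'\}\in Vert_{\Im,2}(\D_{\overline C})}(-1)^{o'_v}$. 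One should also check the vanishing conventions match: if $(\D_{\overline C},m_{\overline C})$ fails to be significant, one verifies that $\R\CC_{\overline f,L_\epsilon}(d,0,\x(t))$ is empty (because some odd-weight edge in $Edge_\Re(\D)\setminus Edge^\infty(\D)$, or an asymmetry of the $A_i$-adjacencies, forces the real part of $\overline f$ out of $\widetilde L_\epsilon\cup\bigcup_i\overline N_i^\epsilon$, using Lemma~\ref{lem:line conic} and the $\epsilon$-sidedness bookkeeping), so both sides are $0$. Putting these together gives the first displayed identity.

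For the second assertion, suppose we are in case $(1)$ or $(2)$, so $\Delta_{\overline C}=\{(\D_{\overline C},m_{\overline C}),(\D_{\overline C},m'_{\overline C})\}$ and $\overline f'$ realizes the swapped marking $(\D_{\overline C},m'_{\overline C})$. By Lemma~\ref{lem:proof N3} the two marked floor diagrams have the same underlying floor diagram, the same set $B$, the same type, hence the same values of every ingredient of $\nu^{\R,\epsilon}_s$ \emph{except} the sign, and by Lemma~\ref{lem:node f 2} the signs $(-1)^{m_\epsilon^\circ(\overline f)}$ and $(-1)^{m_\epsilon^\circ(\overline f')}$ are opposite. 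The even-$\mu_q$ contributions and the cardinalities are literally the same for $\overline f$ and $\overline f'$ (they depend only on the combinatorial data shared by the two markings, via Corollary~\ref{cor:total node} and Proposition~\ref{prop:real degeneration}), so summing $(-1)^{m_{L_\epsilon}(f(C))}$ over $\R\CC_{\overline f,L_\epsilon}(d,0,\x(t))\cup\R\CC_{\overline f',L_\epsilon}(d,0,\x(t))$ gives two equal-in-absolute-value, opposite-in-sign contributions, hence $0$.

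The main obstacle I expect is bookkeeping rather than conceptual: verifying that the power of $2$ produced by the even-$\mu_q$ smoothings is exactly $2^{2r_m-r'_m+\beta^\Re_{even}}$ and that the surviving product of weights is exactly $\prod_{e\in E(\D)}w(e)$, i.e. correctly matching the geometric edges of $\D_{\overline C}$ (fibers of the various $\N_i$, real versus non-real, in $E(\D)$ versus adjacent to a source, of weight $2+4l$ versus other even weight) against the definition of $\nu^{\R,\epsilon}_s$ in Definition~\ref{def:real mult}. This requires carefully tracing which point $q\in\mathcal P(\overline f)$ corresponds to which edge of the floor diagram and which side $V_{\overline f(q),1,3}$ deforms to $\widetilde L_\epsilon$ versus its complement, using Lemma~\ref{lem:even} to control the parities of $|C\cap N^\pm|$ when $\mu_q$ is even; but all of this is parallel to the argument already carried out for $\mu^\R_{s,\kappa}$ in Lemma~\ref{lem:sign 1}, so no genuinely new idea is needed.
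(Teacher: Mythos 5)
Your proposal is correct and follows essentially the same route as the paper, whose own proof is simply a citation of Lemma \ref{lem:m real}, the node-counting lemmas, Corollary \ref{cor:total node}, and Proposition \ref{prop:real degeneration}; you expand exactly that chain, using the first part of Lemma \ref{lem:node f 2} for the sign in the significant case and its second part (opposite parities of $m_\epsilon^\circ(\overline f)$ and $m_\epsilon^\circ(\overline f')$, with all other factors shared) for the cancellation. The bookkeeping you flag as the remaining work is precisely what the paper leaves implicit, so no divergence in method.
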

\begin{proof}
This follows from Lemmas \ref{lem:m real} and \ref{lem:node f 1},
Corollary \ref{cor:total node}, and 
Propositions \ref{prop:real degeneration}. 
\end{proof}

Now the second identity in  Theorem \ref{WFD}$(2)$ follows immediately
from a combination of Corollary \ref{lem:sign 2} with Lemmas
\ref{lem:proof N3} and \ref{lem:real FD}.

\section{Absolute invariants of $X_7$}\label{sec:X7}

\subsection{Strategy}\label{sec:X7 strategy}
In this section,  absolute
invariants of $X_7$ are expressed in terms of invariants of $\X_8$ by
applying
Li's degeneration formula to the degeneration of $X_7$ described in
next proposition.

\begin{prop}\label{prop:degen X7}
There exists a flat degeneration $\pi:\YY\to \C$
of $X_7$ with $\pi^{-1}(0)=\X_6\cup \X_2$, where 
$\X_6\cap \X_2$ is the distinguished curve $E$ in both 
$\X_6$ and $\X_2$.
\end{prop}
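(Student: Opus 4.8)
The plan is to obtain this degeneration from the classical degeneration of $X_6$ by one extra blow up. Recall first the classical degeneration: embed $X_6$ anticanonically as a smooth cubic surface in $\C P^3$ and place it in a general pencil of cubic surfaces containing a $1$-nodal cubic $S_0$ whose node $p$ lies on no other member; after the base change $t\mapsto t^2$ the total space $\mathcal W\to\C$ of the family acquires an ordinary double point at $p$, its central fibre still being $S_0$. Blowing up $\mathcal W$ at $p$ resolves this singularity and produces a smooth $3$-fold carrying a flat morphism $\mathcal Y_6\to\C$ with $\mathcal Y_{6,t}\cong X_6$ for $t\ne 0$ and $\mathcal Y_{6,0}=\X_6\cup Q$, the components meeting transversely along a curve $E$ (see e.g.\ \cite{Vak2}). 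Here $Q$ is the exceptional divisor of the blow up, a smooth quadric surface, hence $Q\cong\C P^1\times\C P^1$; the component $\X_6$ is the strict transform of $S_0$, i.e.\ its minimal resolution, which is $\C P^2$ blown up at six points on the conic $E$ since the $(-2)$-curve $E$ resolving $p$ is its only $(-2)$-curve; the double curve $E$ is a hyperplane section of $Q$, so a smooth $(1,1)$-curve; and $[E]^2=-2$ in $\X_6$ while $[E]^2=2$ in $Q$, so $\mathcal Y_6$ is smooth along $E$.

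Next I would choose a holomorphic section $\delta:\C\to\mathcal Y_6$ of $\mathcal Y_6\to\C$ with $\delta(0)\in Q\setminus E$; such a section exists because $\mathcal Y_6\to\C$ is smooth at every point of $Q\setminus E$, and after shrinking $\C$ to a disk one may also assume $\delta(t)$ general in $\mathcal Y_{6,t}$ for $t\ne 0$. Let $\pi:\YY\to\C$ be the blow up of $\mathcal Y_6$ along the smooth curve $\delta(\C)$. Since $\delta(\C)$ is horizontal and lies in the smooth locus of $\mathcal Y_6$, the $3$-fold $\YY$ is smooth and $\pi$ is flat; and since $\delta(\C)$ is disjoint from $E$, the total space $\YY$ remains smooth along $E$ and the normal crossings structure of the special fibre is preserved.

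Finally I would identify the fibres of $\pi$. For $t\ne 0$ one has $\YY_t=\mathrm{Bl}_{\delta(t)}X_6$, which is deformation equivalent to $X_7$; as surfaces are considered up to deformation throughout the paper, $\pi$ is a degeneration of $X_7$. For $t=0$ the centre $\delta(0)$ lies on $Q$, away from both $\X_6$ and the double curve $E$, so only the $Q$ component is modified while $E$ is untouched: $\YY_0=\X_6\cup\mathrm{Bl}_{\delta(0)}Q$, the intersection still being $E$. Now $\mathrm{Bl}_{\delta(0)}(\C P^1\times\C P^1)$ is the Del Pezzo surface of degree $7$, hence isomorphic to $\X_2$, and under such an isomorphism the curve $E$, which has self-intersection $2$ and misses $\delta(0)$, has class $2[D]-[E_1]-[E_2]$ — the two rulings of $\C P^1\times\C P^1$ being the strict transforms of the lines through the two blown up points — so $E$ is the strict transform of a smooth conic through those two points; thus $(\mathrm{Bl}_{\delta(0)}Q,E)\cong(\X_2,E)$ and $\YY_0=\X_6\cup\X_2$ with $\X_6\cap\X_2=E$ the distinguished conic of both surfaces, as claimed. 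The only delicate point is the classical input recalled in the first paragraph, namely that the semistable model of a nodally degenerating cubic surface has central fibre $\X_6\cup(\C P^1\times\C P^1)$ with the quadric component attached along a $(1,1)$-curve; the remaining steps are a routine blow up computation.
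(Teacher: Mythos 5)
Your proposal is correct and follows essentially the same route as the paper: both start from the classical degeneration of $X_6$ into $\X_6\cup(\C P^1\times\C P^1)$ glued along $E$ (a $(-2)$-curve in $\X_6$ and a $(1,1)$-curve in the quadric), and then blow up a horizontal section through a point of the quadric component away from $E$. The only difference is presentational: the paper realizes the first degeneration by the explicit hypersurface $P_3+wP_2+w^3t^2=0$ in $\C P^3\times\C$ blown up at $([0:0:0:1],0)$, whereas you obtain it from a pencil of cubics via base change and resolution of the resulting ordinary double point -- these are the same construction.
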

\begin{proof}
Start with the classical flat
degeneration
$\pi_0:\YY_0\to \C$ of $X_6$ with $\pi^{-1}(0)=\X_6\cup (\C
P^1\times \C P^1)$, where 
$\X_6\cap (\C
P^1\times \C P^1)$ is the curve $E$ in $\X_6$, and is a hyperplane
section
 in $\C P^1\times \C P^1$. 
The 3-fold $\YY_0$ can be obtained by blowing up at the point
$([0:0:0:1],0)$  the singular hypersurface in
$\C P^3\times \C$ with equation $P_3(x,y,z) + wP_2(x,y,z) + w^3t^2=0$,
where 
\begin{itemize}
\item $P_i(x,y,z)$ is a homogeneous polynomial of degree $i$;

\item the curves defined in $\C P^2$ by $P_2$ and $P_3$ are smooth and
  intersect transversely.
\end{itemize}
Note that the $\X_6$ component of  $\pi^{-1}(0)$ is precisely the blow
up of $\C P^2$ at the six points in $\{P_2(x,y,z)=0\}\cap\{P_3(x,y,z)=0\}$.

Next,  choose a holomorphic section $p_0:\C \to \YY_0$ such that
$p_0(0)\in (\C P^1\times \C P^1)\setminus \X_6$, and blow up the
divisor $p_0(\C)$ in $\YY_0$. The obtained 3-fold $\YY$ is a
flat degeneration of $X_7$ with the desired properties (recall that 
$\C P^1\times \C P^1$ blown up at a point is also $\CP^2$ blown up
at two points).
\end{proof}

\begin{rem}
Although we started with $\C P^2$ blown up in seven points, 
the degeneration $\pi:\YY\to \C$ distinguishes eight special points on
$E$. Namely, these points are the intersection points of $E$ with a
$(-1)$-curve contained in
either $\X_6$ or $\X_2$. This explains why absolute
invariants of $X_7$ can be expressed in terms of invariants of $\X_8$
rather than those of $\X_7$
(see the proof of Theorem \ref{thm:GWX7} for details).
\end{rem}

\begin{rem}\label{rem:real structure}
By choosing suitable real polynomials $P_i(x,y,z)$, one constructs
 a  flat degeneration $\YY_0$ as above with a real structure such that 
$\pi^{-1}(t)=X_6(\kappa)$ for $t\in\R^*$, and 
\begin{itemize}
\item $\pi^{-1}(0)=\X_6(\kappa)\cup Q(0)$ if $\kappa\le 3$;

\item $\pi^{-1}(0)=\X_6(\kappa-1)\cup Q(2)$ if $\kappa\ge 1$;
\end{itemize}
where $Q(\epsilon)$ denotes $\C P^1\times \C P^1$ equipped with the
real structure satisfying $\R Q(\epsilon)\ne \emptyset$ and  $\chi(\R Q(\epsilon))=\epsilon$.
\end{rem}

All the results from this section are obtained by applying Li's degeneration
formula and its real counterpart 
to $\YY$ and a set of sections $\x:\C\to \YY$ satisfying 
$\x(0)\subset \X_6\setminus \X_2$.
As mentioned in the introduction, no non-trivial covering appears
during this degeneration.

\subsection{Gromov-Witten invariants}
Some additional notation are needed to state Theorem \ref{thm:GWX7}. 
Given $a\in\Z$ and 
$\{a_i\}_{i\in I}$ a finite set of
integer numbers,  define 
$$\binom{a}{\{a_i\}_{i\in I}}=\frac{a!}{\left(a- \sum_{ i\in I}a_i \right)!\prod_{i\in I}a_i!}.$$
Recall also that $(2l)!!=(2l-1)(2l-3)\ldots 1$.

Given a graph $\Gamma$, denote by $\lambda_{v,v'}$  the number of
edges between the distinct vertices 
$v$ and $v'$ of $\Gamma$, by  $\lambda_{v,v}$ twice the number of
loops of $\Gamma$ based at the vertex $v$,
and by $k^\circ_\Gamma$ the number of edges of $\Gamma$.

In this section, we consider curves in $X_7$ and $\X_8$ (and even in
$\X_2$ in the proofs of Theorems \ref{thm:GWX7} and \ref{thm:WX7}). In
order to avoid confusions, let us use the following notation: 
$D$ denotes the pullback of a generic line in both surfaces, and
$E_1,\ldots E_7$ (resp. $\widetilde E_1,\ldots \widetilde E_8$) denote
the $(-1)$-curves coming from the presentation of $X_7$ (resp. $\X_8$)
as a blow up
of $\C P^2$ (resp. of $\C P^2$ at eight points on a conic).
Finally, let $V_8\subset H_2(\X_8; \Z)\setminus \{0\}$ be the set of
effective classes $d\ne l\widetilde E_i$ with $l\ge
2$ or $i=7,8$.
\begin{defi}
A \emph{$X_7$-graph} is a connected graph $\Gamma$
together with 
 three quantities $d_v\in V_8$, $g_v\in\Z_{\ge
  0}$, and 
$\beta_v=\beta_{v,1}u_1+\beta_{v,2}u_2\in\Z^\infty_{\ge  0}$
 associated to each vertex $v$ of $\Gamma$, such 
 that $I\beta_v=d_v\cdot [E]$.

An isomorphism between $X_7$-graphs is  an isomorphism of graphs preserving the
 three quantities associated to each vertex.
\end{defi}
An  $X_7$-graphs is always considered up to isomorphism.
Given a $X_7$-graph $\Gamma$,  define 
$$d_\Gamma= \sum_{v\in Vert(\Gamma)}d_v,\quad    \mbox{and}\quad 
\beta_{\Gamma} = \sum_{v\in Vert(\Gamma)}\beta_{v}.$$
Given $g,k\in\Z_{\ge 0}$ and $d\in H_2(X_7;\Z)$ such that $d\cdot [D]\ge 1$
(if not  the corresponding Gromov-Witten invariants are straightforward to
compute), let $\SS_7(d,g,k)$  be the set of all pairs 
$(\Gamma,P_\Gamma)$ where 
\begin{itemize}
\item $\Gamma$ is a $X_7$-graph  such that 
$$\sum_{v\in Vert(\Gamma)}g_v +b_1(\Gamma)=g $$ 
and 
$$d= (d_\Gamma\cdot [D] + 2k)[D] -
\sum_{i=1}^6 \left(d_\Gamma\cdot [\widetilde E_i]  + k\right)[E_i] - 
\left(k^\circ_\Gamma+ \beta_{\Gamma,2} + d_\Gamma\cdot ([\widetilde E_7]+ [\widetilde E_8] )\right)[E_7];$$

\item $P_\Gamma=\bigcup_{v\in Vert(\Gamma)} U_v $ is a partition of the set
  $\{1,\ldots ,c_1(X_7)\cdot d-1+g \}$ such that
$|U_v|=  d_v\cdot [D] -1 +g_v+|\beta_v|$.

\end{itemize}

Given $(\Gamma,P_\Gamma)\in\SS_7(d,g,k)$, define
$$k^{\circ\circ}= k-\beta_{\Gamma,2} -k^\circ_\Gamma -d_\Gamma\cdot [\widetilde E_7].$$
Denote by $\sigma(\Gamma)$ the number of bijections of
$Vert(\Gamma)$ to itself which are induced by an automorphism of the
graph $\Gamma$.
Define the following complex multiplicities
for  $(\Gamma,P_\Gamma)\in\SS_7(d,g,k)$ and $v\in
Vert(\Gamma)$:
$$\mu^\C(v)= \lambda_{v,v} !!
\binom{\beta_{v,1}}{\{\lambda_{v,v'}\}_{v'\in Vert(\Gamma)}}
GW_{\X_8}^{0,\beta_v}(d_v,g_v),$$
and
$$\mu^\C(\Gamma,P_\Gamma)=\frac{I^{\beta_{\Gamma}}}{\sigma(\Gamma)}
\binom{\beta_{\Gamma,1} -2k^\circ_\Gamma} 
{ k^{\circ\circ}}
\prod_{v\ne v'\in Vert(\Gamma)}\lambda_{v,v'}!
\ \prod_{v\in Vert(\Gamma)}  \mu^\C(v).$$

Note that given $d$ and $g$, there exists only finitely 
elements in $\bigcup_{k\ge 0}\SS_7(d,g,k)$ with a positive multiplicity. 
Also given $(\Gamma,P_\Gamma)\in\SS_7(d,0,k)$, we have
$\lambda_{v,v'}\le 1$ (resp. $\lambda_{v,v}=0$) for each pair of distinct vertices
(resp. each vertex) of $\Gamma$.

\begin{exa}\label{exa:X7 2c1}
There exists element(s) 
in $\SS_7(2c_1(X_7),0,k)$ with a positive multiplicity in the following cases:

\begin{tikzpicture}
  [scale=.8,auto=left,vert/.style={circle,fill=blue!20, text
      centered, minimum width=25pt},
leg/.style={circle,fill=white, text centered}]
  \node[vert] (n6) at (0,1) {$v$};
  \node[leg] (n1) at (-1.5,1) {$\Gamma=$};

  \foreach \from/\to in {}
    \draw (\from) -- (\to);
\end{tikzpicture}

\begin{itemize}
\item $k=1, d_v=4[D]-\sum_{1}^8[\widetilde E_i]$: 
$$\mu^\C(\Gamma,P_\Gamma)=392;$$ 

\item $k=2, d_v=2[D]-a_7[\widetilde E_7] -a_8[\widetilde E_8]$, with  $a_7+a_8 +\beta_{v,2}=2$: 
$$\sum \mu^\C(\Gamma,P_\Gamma)=34;$$ 
\end{itemize}

  \begin{tikzpicture}
  [scale=.8,auto=left,vert/.style={circle,fill=blue!20, text
      centered, minimum width=25pt},
leg/.style={circle,fill=white, text centered}]
  \node[vert] (n6) at (0,1) {$v$};
  \node[vert] (n4) at (3,1)  {$v'$};
  \node[leg] (n1) at (-1.5,1) {$\Gamma=$};

  \foreach \from/\to in {n6/n4}
    \draw (\from) -- (\to);
\end{tikzpicture}

\begin{itemize}
\item$k=2, d_v=2[D]-[\widetilde E_i]-a_7[\widetilde E_7]
  -a_8[\widetilde E_8], d_{v'}=[\widetilde E_i]$, with $a_7+a_8
  +\beta_{v,2}=1$ and $1\le i\le 6$: 
$$\sum \mu^\C(\Gamma,P_\Gamma)=72;$$ 

\item $k=2, d_v=[D]
-a_7[\widetilde E_7] -a_8[\widetilde E_8], d_{v'}=[D]$, with $a_7+a_8 =1$: 
$$\sum \mu^\C(\Gamma,P_\Gamma)=12;$$
 
\end{itemize}

  \begin{tikzpicture}
  [scale=.8,auto=left,vert/.style={circle,fill=blue!20, text
      centered, minimum width=25pt},
leg/.style={circle,fill=white, text centered}]
  \node[vert] (n6) at (0,1) {$v'$};
  \node[vert] (n4) at (3,1)  {$v$};
  \node[vert] (n2) at (6,1)  {$v''$};
  \node[leg] (n1) at (-1.5,1) {$\Gamma=$};

  \foreach \from/\to in {n6/n4, n4/n2}
    \draw (\from) -- (\to);
\end{tikzpicture}

\begin{itemize}

\item $k=2, d_v=2[D] -[\widetilde E_i] -[\widetilde E_j], d_{v'}=[\widetilde E_i], d_{v''}=[\widetilde E_j]$, with $1\le i\ne
  j\le 6$: 
$$\sum \mu^\C(\Gamma,P_\Gamma)=30;$$

\item $k=2, d_v=[D] , d_{v'}=[\widetilde E_i], d_{v''}=[D]-[\widetilde E_i]$, with $1\le i\le 6$: 
$$\sum \mu^\C(\Gamma,P_\Gamma)=36.$$
 \end{itemize}

To get the above sum of multiplicities, I used Theorem \ref{NFD}
and Figure \ref{fig:X7} to compute the following numbers:
$$GW_{\X_8}(4[D]-\sum_{i=1}^8[\widetilde E_i],0)=392 \quad 
 \mbox{and}\quad  
GW^{0,u_1+u_2}_{\X_1}(2[D]-[\widetilde E_1],0)=GW^{0,2u_2}_{\C P^2}(2[D],0)=4.$$
 \begin{figure}[h]
\centering
\begin{tabular}{cc}
\begin{tabular}{ccc}
\includegraphics[width=1.1cm,
    angle=0]{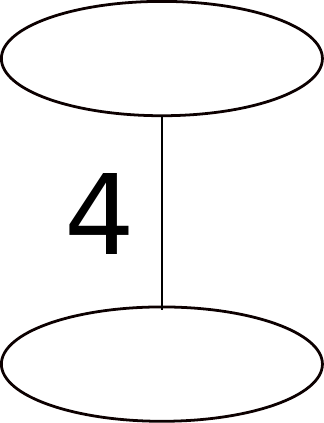}& \hspace{5ex} &
 $\sum \mu^\C(\D,m) = 16 $
\end{tabular} &
\begin{tabular}{ccc}
\includegraphics[width=1.1cm,
    angle=0]{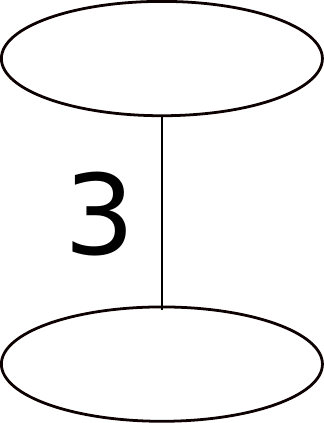}& \hspace{5ex}&
 $\sum \mu^\C(\D,m) = 72 $
\end{tabular} 
\\ \\
\begin{tabular}{c|c|c|c|c|c || c | c }
$s\backslash  \kappa,\epsilon$ & 0 & 1& 2 & 3 & 4 & 0& 1
\\ \hline
0 & 0& 0& 0& 0& 0& 0&0
\\ \hline
1 & 0&0 &0 &0 &0 & 0&0 
\end{tabular}&
\begin{tabular}{c|c|c|c|c|c || c | c}
$s\backslash  \kappa,\epsilon$ & 0 & 1& 2 & 3 & 4 & 0& 1
\\ \hline
0 & 8 &6& 4& 2& 0 & 0&0
\\ \hline
1 & 24& 18& 12& 6&0 & 0&0
\end{tabular}

\\ \\
\begin{tabular}{ccc}
\includegraphics[width=1.1cm,
    angle=0]{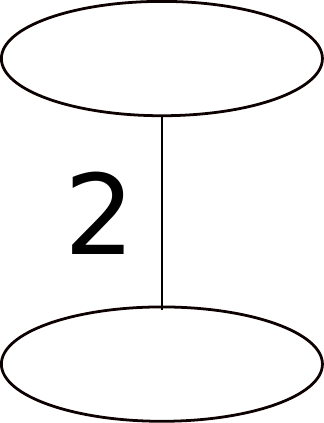}& \hspace{5ex} &
 $\sum \mu^\C(\D,m) = 112 $
\end{tabular} &
\begin{tabular}{ccc}
\includegraphics[width=1.1cm,
    angle=0]{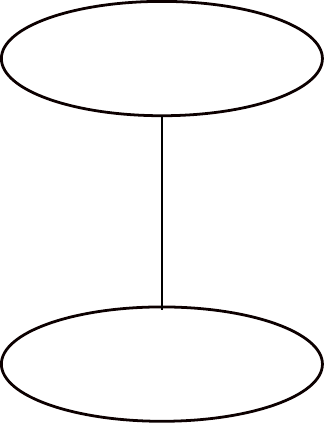}& \hspace{5ex}&
 $\sum \mu^\C(\D,m) = 56 $
\end{tabular} 
\\ \\
\begin{tabular}{c|c|c|c|c|c || c | c}
$s\backslash  \kappa,\epsilon$ & 0 & 1& 2 & 3 & 4& 0& 1
\\ \hline
0 & 0&0 &0 &0 &0 & 16& 16
\\ \hline
1 & 0&0 & 0& 0& 0& 8& 8
\end{tabular}&
\begin{tabular}{c|c|c|c|c|c|| c | c }
$s\backslash  \kappa,\epsilon$ & 0 & 1& 2 & 3 & 4& 0& 1
\\ \hline
0 & 56& 26& 12& 6& 0& 0&0 
\\ \hline
1 & 56& 26& 12& 6& 0& 0&0 
\end{tabular}

\\ \\
\begin{tabular}{ccc}
\includegraphics[width=1.1cm,
    angle=0]{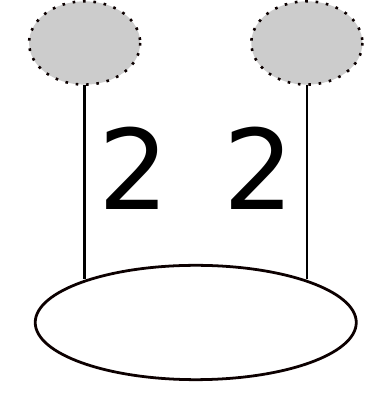}& \hspace{5ex} &
 $\sum \mu^\C(\D,m) = 16 $
\end{tabular} &
\begin{tabular}{ccc}
\includegraphics[width=1.1cm,
    angle=0]{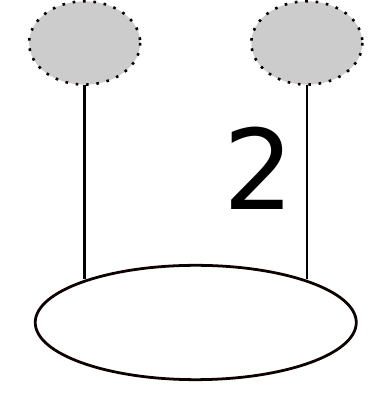}& \hspace{5ex}&
 $\sum \mu^\C(\D,m) = 64 $
\end{tabular} 
\\ \\
\begin{tabular}{c|c|c|c|c|c || c | c }
$s\backslash  \kappa,\epsilon$ & 0 & 1& 2 & 3 & 4& 0& 1
\\ \hline
0 & 0& 0& 0& 0& 0& 16& 0
\\ \hline
1 & 0& 0& 0& 0& 0& 8& 0 
\end{tabular}&
\begin{tabular}{c|c|c|c|c|c || c | c }
$s\backslash  \kappa,\epsilon$ & 0 & 1& 2 & 3 & 4 & 0& 1
\\ \hline
0 & 0& 0& 0& 0& 0& 0& 0
\\ \hline
1 & 0&0 &0 &0 &0 &0 & 0
\end{tabular}

\\ \\
\begin{tabular}{ccc}
\includegraphics[width=1.1cm,
    angle=0]{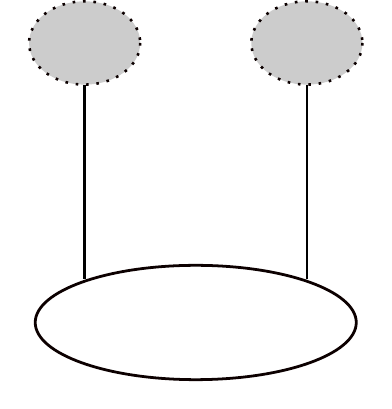}& \hspace{5ex} &
 $\sum \mu^\C(\D,m) = 56 $
\end{tabular} &
\begin{tabular}{ccc}
\includegraphics[width=1.1cm,
    angle=0]{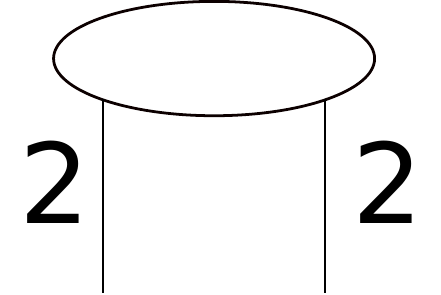}& \hspace{5ex}&
 $ \mu^\C(\D,m) = 4 $
\end{tabular} 
\\ \\
\begin{tabular}{c|c|c|c|c|c || c | c }
$s\backslash  \kappa,\epsilon$ & 0 & 1& 2 & 3 & 4& 0& 1
\\ \hline
0 & 56& 30& 12& 2&0 & 0&0
\\ \hline
1 & 56& 30&12 &2 & 0& 0& 0
\end{tabular}&
\begin{tabular}{ c|c|c| c|c|c|c||c|c}
\multicolumn{2}{c|}{$s \backslash \kappa,\epsilon $}  & $0$& $1$& $2$& $3$& $4$& $0$&$1$
\\\hline
$0$ &  $\beta^\Re_2=2$&  4& 4  &4  &4  &4&  4&4
\\\hline
$1$ & $\beta^\Im_2=1$ &  2&2    &2  &2  &2& 2&2
\\ 
\end{tabular}

\\ \\
\begin{tabular}{ccc}
\includegraphics[width=1.1cm,
    angle=0]{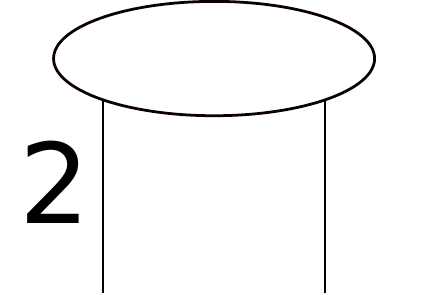}& \hspace{5ex} &
 $ \mu^\C(\D,m) = 4 $
\end{tabular} &
\begin{tabular}{ccc}
\includegraphics[width=1.1cm,
    angle=0]{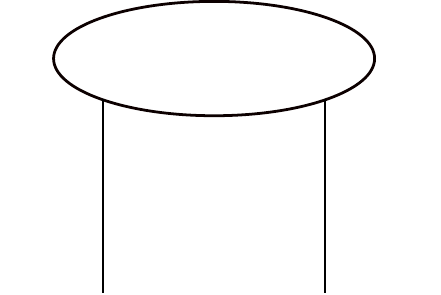}& \hspace{5ex}&
 $ \mu^\C(\D,m) = 1 $
\end{tabular} 
\\ \\
\begin{tabular}{ c|c| c|c|c|c||c|c}
 $s \backslash \kappa,\epsilon $  & $0$& $1$& $2$& $3$& $4$& $0$&$1$
\\\hline
$0$ & 4&4  &4   &4  &4&  0&0  
\\\hline
$1$ & 0 &0  &0    &0  &0&  0& 0 
\\ 
\end{tabular}&
\begin{tabular}{ c|c|c| c|c|c|c||c|c}
\multicolumn{2}{c|}{$s \backslash \kappa,\epsilon $}  & $0$& $1$& $2$& $3$& $4$& $0$&$1$
\\\hline
$0$ &  $\beta^\Re_1=2$&  1&1   & 1 &1  &1&  0&0
\\\hline
$1$ & $\beta^\Im_1=1$ & 1 &1    &1  &1  &1& 1&1
\\ 
\end{tabular}
\end{tabular}
\caption{Marked floor diagrams of genus 0 and degree 4 and 2 used in
  Example \ref{exa:X7 2c1}}
\label{fig:X7}
\end{figure}
\end{exa}

Next theorem reduces the computation of  $GW_{X_7}$ to
the computation of $GW_{\X_8}$.
\begin{thm}\label{thm:GWX7}
Let $g\ge 0$ and $d\in H_2(X_7;\Z)$ such that $d\cdot [D]\ge 1$. Then one
has
$$GW_{X_7}(d,g)=\sum_{k\ge 0}\ \ \sum_{(\Gamma,P_\Gamma)\in\SS_7(d,g,k)}\mu^\C(\Gamma,P_\Gamma) .$$
\end{thm}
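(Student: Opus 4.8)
The plan is to apply Li's degeneration formula to the flat degeneration $\pi:\YY\to\C$ of $X_7$ given by Proposition~\ref{prop:degen X7}, with a set of sections $\x:\C\to\YY$ chosen so that $\x(0)\subset\X_6\setminus\X_2$. Concretely, I would fix a generic real (or simply complex) configuration $\x$ of $c_1(X_7)\cdot d-1+g$ points and specialize all of them into the component $\X_6$ of the central fibre $\pi^{-1}(0)=\X_6\cup\X_2$, the two components meeting along the distinguished conic $E$. A curve of genus $g$ in class $d$ through $\x$ then degenerates into a nodal curve $\overline C$ whose components map either to $\X_6$ or to $\X_2$, matching along $E$ according to the usual tangency data. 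The first step is to carry out the dimension count (exactly as in the proof of Proposition~\ref{prop:degeneration}, invoking Propositions~\ref{prop:shoshu} and the analogous statement for $\X_2$, together with the absence of nontrivial multiple covers asserted in Section~\ref{sec:X7 strategy}) to show that generically \emph{all} of $\x$ lies on the $\X_6$-part, that no component of $\overline C$ maps entirely into $E$ except isomorphic copies of exceptional lines (Corollary~\ref{cor:no sEi} type argument), and that the $\X_2$-part carries no incidence conditions, hence its components realize only the ``minimal'' classes forced by the matching and by Proposition~\ref{prop:initial values} on $\X_2=\widetilde{X}_1$ (equivalently, curves in $\C P^1\times\C P^1$ of the shape appearing in $\SS_7$).

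The second step is the bookkeeping that turns this geometric picture into the combinatorial sum over $(\Gamma,P_\Gamma)\in\SS_7(d,g,k)$. Here $\Gamma$ is the dual graph recording the $\X_6$-components (vertices $v$, carrying $d_v\in V_8$, genus $g_v$, and the tangency profile $\beta_v$ along $E$ — note $\beta_{v,2}$ appears because the conic meets the two extra $(-1)$-curves in $\X_6$ with the stated multiplicities, which is exactly why $\X_8$ rather than $\X_7$ enters, cf.\ the Remark after Proposition~\ref{prop:degen X7}), the edges of $\Gamma$ record matching points on $E$ joining two $\X_6$-components through an intervening fibre-type curve in $\X_2$, and $k$ counts the total ``slack'' absorbed by the $\X_2$-side (lines through one of the two blown-up points, fibres, etc.), which is reflected in the homology relation defining $\SS_7(d,g,k)$. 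The partition $P_\Gamma$ assigns the point conditions to the vertices with the correct cardinalities $|U_v|=d_v\cdot[D]-1+g_v+|\beta_v|$. I would then read off each numerical factor: the relative invariant $GW_{\X_8}^{0,\beta_v}(d_v,g_v)$ at each vertex (Theorem~\ref{NFD} supplies these), the matching multiplicities $\prod w_e$ and the factor $I^{\beta_\Gamma}$ from Li's formula, the binomial and double-factorial factors $\lambda_{v,v}!!$, $\binom{\beta_{v,1}}{\{\lambda_{v,v'}\}}$, $\prod\lambda_{v,v'}!$ counting the ways to distribute the weight-$1$ tangency points of each vertex among the edges, the factor $\binom{\beta_{\Gamma,1}-2k^\circ_\Gamma}{k^{\circ\circ}}$ counting which of the remaining weight-$1$ intersection points on $E$ are connected to the $\X_2$-side in which way, and finally the division by $\sigma(\Gamma)$ for the automorphisms of the configuration of components. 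Euler-characteristic additivity $\sum g_v+b_1(\Gamma)=g$ gives the genus constraint, completing the description of $\SS_7(d,g,k)$.

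The main obstacle is step two's precise accounting of the combinatorial multiplicities and, in particular, verifying that the contribution of the $\X_2$-component is \emph{exactly} what is encoded by $k$, the edge data, and the binomial factors — i.e.\ that every limit curve on the $\X_2$-side is a disjoint union of the elementary pieces listed in Proposition~\ref{prop:initial values} for $\widetilde X_1$, each counted once, with no hidden multiple covers (this is where the assertion from Section~\ref{sec:X7 strategy} that no nontrivial coverings appear is essential) and no extra moduli. A secondary subtlety is keeping track of the reindexing $E_7$ versus $\widetilde E_7,\widetilde E_8$: the eight distinguished points on $E$ (six from $\X_6$'s original blow-ups plus the two from $\X_6$ being a two-point blow-up of $\C P^1\times\C P^1$, see the proof of Proposition~\ref{prop:degen X7}) are why classes live in $H_2(\X_8;\Z)$, and the homology identity in the definition of $\SS_7(d,g,k)$ must be checked to be precisely the push-forward of $[\overline C]$ under the degeneration. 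Once these two points are settled, the theorem follows by summing Li's formula over all central-fibre limit curves, grouped by their dual graph.
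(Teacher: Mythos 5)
Your proposal follows essentially the same route as the paper's proof: degenerate $X_7$ to $\X_6\cup\X_2$, push all point conditions into $\X_6\setminus\X_2$, run the dimension count of Proposition \ref{prop:degeneration} to see that each $\X_2$-component is one of the elementary pieces of Proposition \ref{prop:initial values} (class $[D]$ meeting $E$ transversely, class $[D]$ tangent to $E$, class $[D]-[\widetilde E'_i]$, or $[\widetilde E'_i]$), encode the limit by the dual graph $(\Gamma,P_\Gamma)$ with vertex classes read in $\X_8$, and match the multiplicities. One minor inaccuracy worth noting: the entries $\beta_{v,2}$ record order-two tangencies forced by the $\X_2$-components of class $[D]$ tangent to $E$ (the second of the four cases), not the conic meeting the two extra $(-1)$-curves $\widetilde E'_1,\widetilde E'_2$ --- those two curves, via their intersection points $p_7,p_8$ with $E$, only explain why the vertex classes live in $H_2(\X_8;\Z)$ rather than $H_2(\X_7;\Z)$.
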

\begin{proof}
The proof follows the lines of Section \ref{sec:Li deg}. 
 Let $E=\X_6\cap \X_2$, and 
$\widetilde E'_1$ and $\widetilde E'_2$ be the two $(-1)$-curves in
 $\X_2$
intersecting  $E$. Denote  respectively by  $p_7$ and $p_8$ the
corresponding intersection points.
Let $T_1,\ldots, T_6$ be the six rational curves  in $\X_2$ 
 such that $T_i^2=T_i\cdot \widetilde E'_2=0$, and
such that $T_i$ passes through $\widetilde E_i\cap E$.
Denote also by 
$\widetilde E'_7$ the $(-1)$-curve arising from the blown up of $\C
P^1 \times \C P^1$ at the point $p_0(0)$ 
(alternatively, $\widetilde E'_7$ is the $(-1)$-curve in $\X_2$ which
does not intersect $E$).
We may further assume that we chose $\widetilde E'_1$ and $E_1,\ldots,
E_7$ 
such that
the seven curves $\widetilde E_1\cup T_1,\ldots \widetilde
E_6\cup T_6, $ and 
$\widetilde  E'_7$ in $\pi^{-1}(0)$ respectively deform  to 
$E_1,\ldots, E_7$ in $X_7$. 

Let us choose
$\x(t)$ a generic set of $c_1(X_7)\cdot d-1+g$ sections $\C\to\YY$
such that $\x(0)\subset \X_6\setminus \X_2$. 
For each $t\ne 0$, we denote by $\CC(d,g,\x(t))$
 the set of maps $f:C\to X_7$ with $C$ an irreducible curves of geometric 
genus $g$,  such that $f(C)$ realizes
 the class $d$ in $H_2(X_7;\Z)$, and contains  all points in $\x(t)$.
We denote by $\CC(d,g,\x(0))$ the set of limits, as $t$ goes to $0$, of
elements $\CC(d,g,\x(t))$.

Exactly as in the proof of Proposition \ref{prop:degeneration}, we
have that the set $\CC(d,g,\x(0))$ is finite, and that its cardinal 
does not depend on
$\x(0)$ as long as this latter is generic. We also deduce that if 
$\overline f:\overline C\to\pi^{-1}(0)$ is an element of 
$\CC(d,g,\x(0))$, and $p$ and $p'$ are two points on $\overline C$ with
the same image on $E$, then 
$\{\overline f(p)\}=E \cap \widetilde E_i$ or $\{\overline
f(p)\}=E \cap \widetilde E'_i$.
Denote by $\overline C_{\X_6}$ the union of  irreducible
components of $\overline C$ mapped to $\X_6$, and  by $\overline C_{\X_2}$ the union
of those mapped to $\X_2$.

The same proof
as for Corollary \ref{cor:no sEi} combined with Proposition
\ref{prop:shoshu} yields that the restriction of $\overline f$
on each irreducible component of $\overline C$
 is
birational onto its image.
Proposition \ref{prop:initial values} applied to curves in
$\X_2$ gives that if $\overline C'$ is an irreducible component of
$\overline C_{\X_2}$, then one of the four following situations occurs:
\begin{enumerate}
\item $\overline f(\overline C')$ realizes the class $[D]$, and
  intersect $E$ in two points determined by
  $\overline f(\overline C_{\X_6})$, distinct from $p_7$ and $p_8$;
 
\item $\overline f(\overline C')$ realizes the class $[D]$, and
 is tangent to $E$ at a point  determined by
  $\overline f(\overline C_{\X_6})$, distinct from $p_7$ and $p_8$;

\item   $\overline f(\overline C')$ realizes the class $[D]-[\widetilde
  E'_i]$, $i=1,2$, and
 intersects $E$ in a point  determined by
  $\overline f(\overline C_{\X_6})$, distinct from $p_7$ and $p_8$;

\item   $\overline f(\overline C')$ realizes the class $[\widetilde
  E'_i]$, $i=1,2$.
\end{enumerate}
Let $a$ be the number of components in 
cases $(1)$ and $(2)$, let $b$ be the number of components in  case $(3)$ with
$i= 1$, let $c$ be the number of components in  case $(4)$ with
$i= 1$, and let $d_{\X_6}$ be the homology class realized by 
$\overline f(\overline C_{\X_6})$ in $H_2(\X_6;\Z)$. Then one has
$$d=\left( d_{\X_6}\cdot [D] -2(a+b+c)\right)[D] -
\sum_{i=1}^6\left(d_{\X_6}\cdot [\widetilde E_i]
 +a+b+c \right)[E_i] + (a+c)[E_7]  .$$

Let us construct  an $X_7$-graph
$\Gamma_{\overline f}$ out of an element 
$\overline f$ of  $\CC(d,g,\x(0))$ as follows:
\begin{itemize}
\item  vertices $v$ of $\Gamma_{\overline f}$ are in one-to-one correspondence with irreducible components $\overline C_v$ 
of $\overline
C_{\X_6}$; the quantities $d_v$, $g_v$, and $\beta_v$ record
respectively the homology class realized by 
$\overline f(\overline C_v)$ in $\X_6$ blown up at $p_7$ and $p_8$,
the genus of $\overline C_v$, and the intersections  of
    the strict transforms of 
$\overline f(\overline C_v)$ and $E$ in $\X_8$;
\item edges of $\Gamma_{\overline f}$ are in one-to-one correspondence with irreducible components of $\overline
C_{\X_2}$ in case $(1)$ above; 
each such component  $\overline C'$
correspond to an edge between $v$ and $v'$, where 
$\overline C_v$ and
$\overline C_{v'}$ are the components of $\overline
C_{\X_6}$ intersecting $\overline C'$ (note that we may have $v=v'$).
\end{itemize}

If $\x(0)=\{p_1,\ldots,p_{c_1(X_7)\cdot d-1+g}\}$,
then 
define 
$U_v\subset\{1,\ldots, c_1(X_7)\cdot d-1+g\}$ for $v\in
Vert(\Gamma_{\overline f})$ 
as the set corresponding to
points in $\x(0)$ contained in $\overline f(\overline C_v)$. 
Note that $|U_v|= [D]\cdot d_v-1+g_v +[\beta_v|$ by the same arguments as
in the proof of Proposition \ref{prop:degeneration}.
Finally
 denote by $P_{\overline f}$  the partition of $\{1,\ldots, c_1(X_7)\cdot
d-1+g\}$ defined by the sets $U_v$, where $v$ ranges over all vertices
of $\Gamma$.

It follows from the above arguments that
 $(\Gamma_{\overline f},P_{\overline f})$ is an element of
$\SS_7(d,g,a+b+c)$. The theorem now follows from the  fact that 
for any $(\Gamma,P_{\Gamma})\in \SS_7(d,g,a+b+c)$, 
the multiplicity $\mu^\C(\Gamma,P_{\Gamma})$ is
precisely the number of elements of $\CC(d,g,\x(t))$ converging, as $t$
goes to 0, to an element $\overline f$ in $\CC(d,g,\x(0))$ with
$(\Gamma_{\overline f},P_{\overline f})=(\Gamma,P_{\Gamma})$.
\end{proof}

\begin{rem}
I consider the degeneration $\YY$ having in mind the enumeration  of real
curves, see Section \ref{sec:WX7}. If one is only interested in the
computation of Gromov-Witten invariants of $X_7$, then it is probably
 simpler
to consider the degeneration of $X_7$ to 
$\X_7\cup \X_1$, the resulting formula being the same. 
In this perspective, Theorem \ref{thm:GWX7} is
then analogous to {\cite[Theorem 2.9, Example 2.11]{Br18}}.
\end{rem}

\begin{exa}
Thanks to Theorem \ref{thm:GWX7} and Example \ref{exa:X7 2c1}, one
verifies that 
$$GW_{X_7}(2c_1(X_7),0)=576.$$
 Performing analogous computations
in genus up to $3$, we obtain the value listed in Table \ref{tab:comp
  X7}.
 The value in the rational case has been
first  computed  by G\"ottsche and
Pandharipande in {\cite[Section 5.2]{PanGot98}}. The cases of higher genus have been first
treated in \cite{Shu13}.
\begin{table}[!h]
\begin{center}
\begin{tabular}{ c|c| c|c|c}
 $g $  & $0$& $1$& $2$& $3$
\\\hline
 $GW_{X_7}(2c_1(X_7),g)$   & 576  & 204  & 26 & 1 

\end{tabular}
\end{center}

\medskip
\caption{$GW_{X_7}(2c_1(X_7),g)$}
\label{tab:comp X7}
\end{table}
The value $GW_{X_7}(2c_1(X_7),1)=204$ corrects  the incorrect value
announced in {\cite[Example 3.2]{Shu13}}. 
\end{exa}

\subsection{Welschinger invariants}\label{sec:WX7}

Denote by $X_7(\kappa)$ with $\kappa= 0,\ldots, 3$,
and by $X_7^\pm(4)$ the surface $X_7$ 
equipped with the real structure such that:
$$\R X_7(\kappa)=\R P^2_{7-2\kappa},\quad 
\R X_7^-(4)=\R P^2\sqcup \R P^2,
\quad \R X_7^+(4)=S^2\sqcup \R P^2_1.$$ 
Recall  that these are all real structures on $X_7$ with a
non-orientable real part, and represent half of the possible real
structures on $X_7$.
Note that
$$\chi(\R X_7^\pm(\kappa))=-6+2\kappa .$$

For $\kappa=0,\ldots,3$,  define the two involutions $\tau^0_\kappa$ and
$\tau^1_\kappa$ on $H_2(\X_8;\Z)$ as follows:
$\tau^0_\kappa$ (resp. $\tau^1_\kappa$) fixes the elements $[D]$ and
$[\widetilde E_i]$ with $i\in\{ 2\kappa+1,\ldots,8\}$ (resp. $i\in\{
2\kappa+1,\ldots,6\}$),
and exchanges the elements $[\widetilde E_{2i-1}]$ and $[\widetilde E_{2i}]$ with 
$i\in\{1,\ldots,\kappa\}$ (resp. $i\in\{1,\ldots,\kappa,4\}$).
These two involutions  take into account that for each real
structure on $\X_6$, there are two possible real structures on $\X_2$,
depending on the real structure on $\pi^{-1}(0)$.

\medskip
From now on, let us  fix $g=0$,  an integer $\kappa\in\{0,\ldots,3\}$, and a
class
 $d\in H_2(X_7;\Z)$.
Set 
 $\zeta=  c_1(X_7)\cdot d-1$ and
$A=\{1,\ldots, \zeta\}$,
and choose two integer  $r,s\ge 0$ such that 
$\zeta=r+2s$.
Define the involution $\rho_s$ on $A$ as follows:
$\rho_{s|\{2i-1,2i\}}$ is the non-trivial transposition for $1\le i\le
s$, and  $\rho_{s|\{2s+1,\zeta \}}=Id$.

Given $\epsilon\in\{0,1\}$,
  denote by $\R\SS_7^\epsilon(d,k,s,\kappa)$
 the set of triples $(\Gamma,P_\Gamma,\tau)$
where
\begin{itemize}
\item  $(\Gamma,P_\Gamma)\in\SS_7(d,0,k)$;
\item $\tau: \Gamma\to \Gamma$ is an involution such that
for any vertex $v$ of $\Gamma$, one has
$\beta_{v}=\beta_{\tau(v)}$,
$d_{\tau(v)}=\tau_\kappa^\epsilon(d)$, and  $\rho_s(U_v)=U_{\tau(v)}$;
\item to each vertex $v$ fixed by $\tau$ is associated a decomposition
$\beta_v=\beta^\Re_{v}+ 2\beta^\Im_v$ with
 $\beta^\Re_{v},\beta^\Im_v\in\Z_{\ge0}^\infty$. 
\end{itemize}

Given $(\Gamma,P_\Gamma,\tau)\in \R\SS_7^{\epsilon}(d,k,s,\kappa)$, 
denote by $\sigma(\Gamma,\tau)$ the number of bijections of
$Vert(\Gamma)$ to itself which are induced by an automorphism of the
graph $\Gamma$ commuting with $\tau$.
Note that $\tau=Id$ if $s=0$.
Denote also by $Vert_\Im(\Gamma)$ (resp. $k^{\circ,\Im}_{\Gamma}$) 
the set of pairs of vertices 
 (resp. the number of pairs of edges) exchanged by $\tau$,
 and by $Vert_\Re(\Gamma)$ (resp. $k^{\circ,\Re}_{\Gamma}$) 
the set of vertices (resp. the number of  edges) 
fixed by $\tau$. 
Next, define
$$
\beta^\Re_{\Gamma}=\sum_{v\in Vert_\Re(\Gamma)}\beta^\Re_{v},\quad\mbox{and}\quad
\beta^\Im_{\Gamma}= \sum_{v\in Vert_\Re(\Gamma)}\beta^\Im_{v} + \sum_{\{v,v'\}\in
    Vert_\Im(\Gamma)}\beta_{v}.$$
Let us associate different real multiplicities to elements of
$\R\SS_7^{\epsilon}(d,k,\kappa)$, acounting all possible
smoothings  of $\R\pi^{-1}(0)$.

Given $\{v,v'\}\in Vert_\Im(\Gamma)$,  define
$$\mu^{\R}(\{v,v'\})=(-1)^{d_v\cdot d_{v'}} 
\binom{\beta_{v,1}}{\{\lambda_{v,v''}\}_{v''\in Vert(\Gamma)}}
GW_{\X_8(\kappa)}^{0,\beta_v}(d_v,0). $$

Let $v\in Vert_\Re(\Gamma)$. Denote  respectively by $r_v$ and $s_v$ the
number of points in $U_v$ fixed by $\rho_{s}$ and the number of
 pairs of points in $U_v$  exchanged by $\rho_{s}$. By definition we
 have
 $|U_v|=r_v+2s_v$. Denote also by $k_v^{\circ,\Im}$ the number of pairs of
 edges of $\Gamma$ adjacent to $v$ and exchanged by $\tau$.
Define 
$$\mu^{\R,\epsilon}_{s,\kappa}(v)=
2^{k_v^{\circ,\Im}}
\binom{\beta^\Re_{v,1}}{\{\lambda_{v,v'}\}_{v'\in Vert_\Re(\Gamma)}}
\binom{\beta^\Im_{v,1}}{\{\lambda_{v,v'}\}_{\{v',v''\}\in Vert_\Im(\Gamma)}}
FW_{\X_8(\kappa+\epsilon)}^{0,\beta^\Re_v,0,\beta_v^\Im}(d_v^\epsilon,s_v), $$
where $d_v^0=d_v$, and $d_v^1$ is obtained from $d_v$ by
exchanging\footnote{This  additional complication is purely formal and comes from
  the convention used to define the numbers $FW$ in section
  \ref{sec:real FD}.}
 the
coefficients of $\widetilde E_{2\kappa-1}$ and $\widetilde E_7$, and $\widetilde E_{2\kappa}$ and $\widetilde E_8$.
Define also
$$\eta^{\R}_{s,\epsilon}(v)=
2^{k_v^{\circ,\Im}}
\binom{\beta^\Re_{v,1}}{\{\lambda_{v,v'}\}_{v'\in Vert_\Re(\Gamma)}}
\binom{\beta^\Im_{v,1}}{\{\lambda_{v,v'}\}_{\{v',v''\}\in Vert^\Im(\Gamma)}}
FW_{\X_8(4),\epsilon}^{0,\beta^\Re_v,0,\beta_v^\Im}(d_v,s_v), $$
and
$$\nu^{\R}_{s,\epsilon}(v)=
FW_{\X_8(4),\epsilon,\epsilon}^{0,\beta^\Re_v,0,\beta_v^\Im}(d_v,s_v). $$

Let $\R\SS^{0}_{7,m}(d,k,s,\kappa)$ be the subset of
$\R\SS^{0}_7(d,k,s,\kappa)$ formed by elements with 
$\beta^\Re_{\Gamma,2}=0$.
Given $(\Gamma,P_\Gamma,\tau)\in \R\SS^{0}_{7,m}(d,k,s,\kappa)$, 
 define the following multiplicity:
$$\mu^{\R,0}_{s,\kappa}(\Gamma,P_\Gamma,\tau)=
\frac{(-1)^{k^{\circ,\Im}_\Gamma+\beta^\Im_{\Gamma,2}}
\ I^{\beta^\Im_\Gamma}}{\sigma(\Gamma,\tau)}
\prod_{v\in
  Vert_\Re(\Gamma)}\mu^{\R,0}_{s,\kappa}(v)
\prod_{\{v,v'\}\in
Vert_\Im(\Gamma)}\mu^{\R}(\{v,v'\}) \times$$
$$\times 
\sum_{k^{\circ\circ}=r'+2s'}
\binom{\beta^\Re_{\Gamma,1} -2k^{\circ,\Re}_\Gamma}{r'}
\binom{\beta^\Im_{\Gamma,1}-2k^{\circ,\Im}_\Gamma}{s'}. $$

Let $\R\SS^{1}_{7,m}(d,k,s,\kappa)$ be the subset of
$\R\SS_7^{1}(d,k,s,\kappa)$ formed by elements with 
$\beta^\Re_{\Gamma}=2k_\Gamma^{\circ,\Re}u_1$ and $k^{\circ\circ}=\beta_{\Gamma,1}^\Im-2k_\Gamma^{\circ,\Im}$.
Given $(\Gamma,P_\Gamma,\tau)\in \R\SS^{1}_{7,m}(d,k,s,\kappa)$,
 define the following multiplicity
$$\mu^{\R,1}_{s,\kappa}(\Gamma,P_\Gamma,\tau)=
\frac{(-1)^{k^{\circ,\Im}_\Gamma} \ 
(-2)^{|\beta^\Im_{\Gamma}|-2k^{\circ,\Im}_\Gamma}}
{\sigma(\Gamma,\tau)}
\prod_{v\in
  Vert_\Re(\Gamma)}\mu^{\R,1}_{s,\kappa}(v)
\prod_{\{v,v'\}\in
Vert_\Im(\Gamma)}\mu^{\R}(\{v,v'\}).$$

Note that $\R\SS^{1}_{7,m}(d,k,s,3)$ is composed of elements with 
$\beta^\Re_{\Gamma}=k_\Gamma^{\circ,\Re}=0$.
Given $(\Gamma,P_\Gamma,\tau)\in \R\SS^{1}_{7,m}(d,k,s,3)$ and
$\epsilon\in\{0,1\}$, define the following multiplicity
$$\eta^{\R}_{s,\epsilon}(\Gamma,P_\Gamma,\tau)=
\frac{(-1)^{k^{\circ,\Im}_\Gamma} \ 
(-2)^{|\beta^\Im_{\Gamma}|-2k^{\circ,\Im}_\Gamma}}
{\sigma(\Gamma,\tau)}
\prod_{v\in
  Vert_\Re(\Gamma)}\eta^{\R}_{s,\epsilon}(v)
\prod_{\{v,v'\}\in
Vert_\Im(\Gamma)}\mu^{\R}(\{v,v'\}).$$

Let $\R\SS^1_{7,2}(d,k,s,3)$ (resp. $\R\SS^1_{7,3}(d,k,s,3)$) be the subset of
$\R\SS^{1}_{7}(d,k,s,3)$ formed by elements with 
$k^\circ_\Gamma=\beta_{\Gamma,1}=0$ (resp.
$k^\circ_\Gamma=\beta_{\Gamma,1}=\beta^\Re_{\Gamma,2}=0$).
Note that any element of  $\R\SS^1_{7,2}(d,k,s,3)$ or 
$\R\SS^1_{7,3}(d,k,s,3)$ has a single vertex.

In the following theorem, the connected component of $\R X_7^+(4)$
with Euler characteristic $\epsilon$ is denoted by $L_\epsilon$.

\begin{thm}\label{thm:WX7}
Let    $d\in H_2(X_7;\Z)$ such that
$d\cdot [D]\ge 1$, and
 $r,s\in\Z_{\ge 0}$ such that
 $c_1(X_7)\cdot d-1 =r+2s$. Then  one has
\[\begin{aligned}
& W_{X_7(\kappa)}(d,s) =
\sum_{k\ge 0}\ \ \sum_{(\Gamma,P_\Gamma,\tau)\in\R\SS^0_{7,m}(d,k,s,\kappa)}\mu^{\R,0}_{s,\kappa}(\Gamma,P_\Gamma,\tau)
\quad \mbox{if } \kappa\in\{0,\ldots, 3\},\\&
W_{X_7(\kappa+1)}(d,s)=
\sum_{k\ge 0}\ \ \sum_{(\Gamma,P_\Gamma,\tau)\in\R\SS^1_{7,m}(d,k,s,\kappa)}\mu^{\R,1}_{s,\kappa}(\Gamma,P_\Gamma,\tau)
\quad \mbox{if }\kappa\in\{0,\ldots, 2\},\\&
W_{X_7^-(4),\R P^2,\R X_7^-(4)}(d,s)=
\sum_{k\ge 0}\ \ \sum_{(\Gamma,P_\Gamma,\tau)\in\R\SS^1_{7,m}(d,k,s,3)}\eta^{\R}_{s,\epsilon}(\Gamma,P_\Gamma,\tau)
\quad \forall \epsilon\in\{0,1\},\\&
W_{X_7^+(4),L_{2\epsilon},\R X_7^+(4)}(d,s)=
\sum_{k\ge
  0}\ \ \sum_{(\Gamma,P_\Gamma,\tau)\in\R\SS^1_{7,m}(d,k,s,3)}\eta^{\R}_{s,\epsilon}(\Gamma,P_\Gamma,\tau)
\quad \forall \epsilon\in\{0,1\}, \\&
W_{X_7^-(4),L_1,L_1}(d,s)=
\sum_{k\ge
  0}\ \ \sum_{(\Gamma,P_\Gamma,\tau)\in\R\SS^1_{7,2}(d,k,s,3)}2^{\beta^\Re_{\Gamma,2}+\beta^\Im_{\Gamma,2}}
\nu^{\R}_{s,1}(v), \\&
W_{X_7^-(4),L_1,L_1}(d,s)=
\sum_{k\ge
  0}\ \ \sum_{(\Gamma,P_\Gamma,\tau)\in\R\SS^1_{7,3}(d,k,s,3)}(-2)^{\beta^\Im_{\Gamma_2}}
\nu^{\R}_{s,0}(v), 
\\&
W_{X_7^+(4),L_{0},L_0}(d,s)=
\sum_{k\ge
  0}\ \ \sum_{(\Gamma,P_\Gamma,\tau)\in\R\SS^1_{7,2}(d,k,s,3)}2^{\beta^\Re_{\Gamma,2}+\beta^\Im_{\Gamma,2}}
\nu^{\R}_{s,0}(v), 
\\&
W_{X_7^+(4),L_2,L_2}(d,s)=
\sum_{k\ge
  0}\ \ \sum_{(\Gamma,P_\Gamma,\tau)\in\R\SS^1_{7,3}(d,k,s,3)}(-2)^{\beta^\Im_{\Gamma_2}}
\nu^{\R}_{s,1}(v). 
\end{aligned}\]
\end{thm}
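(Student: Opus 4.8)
The plan is to mirror the proof of Theorem \ref{thm:GWX7}, adding to it the real bookkeeping in the spirit of the proof of Theorem \ref{WFD}. First I would equip the degeneration $\pi:\YY\to\C$ of Proposition \ref{prop:degen X7} with one of the real structures produced in Remark \ref{rem:real structure}, chosen to match the desired real structure on the generic fiber $X_7$: for $W_{X_7(\kappa)}$ and $W_{X_7(\kappa+1)}$ one takes $\pi^{-1}(0)=\X_6(\kappa)\cup Q(0)$, respectively $\X_6(\kappa)\cup Q(2)$, and for $W_{X_7^\pm(4)}$ one takes $\X_6(3)\cup Q$ with $Q=Q(0)$ or $Q(2)$ accordingly. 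The section $p_0$ blown up to produce $\YY$ should be chosen to pass through the connected component of $\R Q\setminus\X_6$ that yields, in the $\pm(4)$ cases, the prescribed component $L_\epsilon$ of $\R X_7^\pm(4)$. Then I would pick a generic \emph{real} family $\x:\C\to\YY$ of $c_1(X_7)\cdot d-1$ sections with $\x(0)\subset\X_6\setminus\X_2$, with $r$ of them real and $s$ of them in complex conjugate pairs, and apply Li's degeneration formula \cite{Li04} and its real counterpart \cite{IKS13,BP14,Br20}. As in the proof of Theorem \ref{thm:GWX7}, a limit curve $\overline f:\overline C\to\X_6\cup\X_2$ produces an $X_7$-graph $\Gamma_{\overline f}$ and a partition $P_{\overline f}$, which in the real setting is equipped with the involution $\tau$ induced by complex conjugation; the real structure imposed on the $\X_2$-component (depending on $\kappa$ and $\epsilon$) forces $d_{\tau(v)}=\tau^\epsilon_\kappa(d_v)$, while $\rho_s(U_v)=U_{\tau(v)}$ records the conjugation on $\x(0)$.

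Next I would determine which triples $(\Gamma,P_\Gamma,\tau)$ are realized by real limits. Arguing as in Lemma \ref{lem:real FD}, a limit $\overline f$ is real exactly when $(\Gamma_{\overline f},P_{\overline f},\tau)$ lies in the appropriate set $\R\SS_7^\epsilon(d,k,s,\kappa)$, and the refined subsets $\R\SS^0_{7,m}$, $\R\SS^1_{7,m}$, $\R\SS^1_{7,2}$, $\R\SS^1_{7,3}$, together with the $\epsilon$-sided and significant conditions on the floor diagrams appearing on the $\X_8$-side, encode — via Lemma \ref{lem:line conic} and the fact that the real part of a real line cannot lie inside the interior of a real conic — which real components of $\overline C$ are allowed or forced to have real part inside $\R E$. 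This is the analogue of Lemmas \ref{lem:sign 1bis} and \ref{lem:node f 2}, and is exactly what produces the splitting of the target component ($L_1$ versus $L_2$, resp.\ $L_0$ versus $L_2$) in the last four displayed identities. I would also use here the observation of Section \ref{sec:X7 strategy} that no non-trivial ramified covering occurs in this degeneration, so that the deformation count is purely local at the nodes of $\overline C$ lying over $E$.

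Then, for a fixed real $\overline f$, I would count the real smoothings in $X_7$ converging to it, weighted by $(-1)$ to the power of the number of their solitary nodes. A real component lying in $\X_6$ becomes, after blowing up the two points $p_7,p_8$, a real curve in $\X_8$ with prescribed contact $(\beta^\Re_v,\beta^\Im_v)$ with $E$, so Theorem \ref{WFD} applied with $n=8$ contributes the factor $FW_{\X_8(\kappa+\epsilon)}^{0,\beta^\Re_v,0,\beta^\Im_v}(d_v^\epsilon,s_v)$ (the passage from $d_v$ to $d_v^\epsilon$ being forced by the labelling convention of Section \ref{sec:real FD}); a conjugate pair $\{v,v'\}$ of components contributes the complex count $GW_{\X_8(\kappa)}^{0,\beta_v}(d_v,0)$ with its multinomial gluing factors, and the solitary nodes arising at their mutual intersections account for the sign $(-1)^{d_v\cdot d_{v'}}$, as in Lemma \ref{lem:node f 1}. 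The combinatorics governing the matching along $E$ (the $\lambda_{v,v'}$'s, the multinomials, $k^{\circ\circ}$) are inherited verbatim from Theorem \ref{thm:GWX7}; the genuinely new ingredient is that a real node over $E$ of even multiplicity splits its real smoothings into two halves distinguished by their numbers of solitary nodes (Proposition \ref{prop:real degeneration} and Corollary \ref{cor:total node}), which generates the powers $2^{k_v^{\circ,\Im}}$, $(-2)^{|\beta^\Im_\Gamma|-2k^{\circ,\Im}_\Gamma}$, $2^{\beta^\Re_{\Gamma,2}+\beta^\Im_{\Gamma,2}}$, $(-2)^{\beta^\Im_{\Gamma,2}}$ and the global sign $(-1)^{k^{\circ,\Im}_\Gamma+\beta^\Im_{\Gamma,2}}$. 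Finally, the line components of the $\X_2$-side are counted by their elementary real versions (there is always a single real line through a conjugate pair of points of $E$, or tangent to $E$ at a prescribed real point), and whether their real part lands in $L_\epsilon$ is read off from Lemma \ref{lem:line conic}; this fixes the $\epsilon$-dependence of $\eta^\R_{s,\epsilon}$ and $\nu^\R_{s,\epsilon}$. Multiplying all these local contributions and summing over $k$ and over $(\Gamma,P_\Gamma,\tau)$ should yield the seven stated identities, one for each combination of real structure on $X_7$, choice of connected component of $\R X_7$, and choice of mass ($L'=\R X_7$ or $L'=L$).

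The hard part will be the sign bookkeeping: reconciling the Welschinger weight — the parity of the \emph{total} number of solitary nodes of a smoothing — with the many combinatorial sign factors packaged into $\mu^{\R,\epsilon}$, $\eta^\R$ and $\nu^\R$. This demands a careful local study at each node of $\overline C$ over $E$, distinguishing nodes over the eight special points $E\cap\widetilde E_i$ from those over generic points of $E$, and, in the two-component cases $X_7^\pm(4)$, tracking on which side of $\R E$ each local branch of each smoothing lies — precisely where Lemmas \ref{lem:line conic}, \ref{lem:even}, \ref{lem:node f 2} and Proposition \ref{prop:real degeneration}, transplanted from the proof of Theorem \ref{WFD}, come into play.
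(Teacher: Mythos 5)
Your strategy coincides with the paper's own proof: the same real degenerations of Remark \ref{rem:real structure} applied to the 3-fold of Proposition \ref{prop:degen X7}, the same graph-partition-involution combinatorics inherited from Theorem \ref{thm:GWX7}, the contributions of the $\X_6$-components read off from Theorem \ref{WFD} after blowing up $p_7,p_8$, the global sign obtained from the parity of conjugate nodal pairs as in Lemma \ref{lem:node f 1}, and Lemma \ref{lem:line conic} together with Proposition \ref{prop:real degeneration} handling the last four identities. One small correction to your setup: both $X_7^-(4)$ and $X_7^+(4)$ arise from the \emph{same} central fiber $\X_6(3)\cup\X_2(1)$ (i.e.\ $Q(2)$, never $Q(0)$, whose blow-up only yields $X_7(3)$), the two cases being distinguished --- exactly as your next sentence correctly states --- by which connected component of $\R Q(2)\setminus \R E$ contains $p_0(0)$, equivalently which component $\widetilde L'_\epsilon$ of $\R\X_2(1)\setminus\R E$ is glued to $\widetilde L_0$.
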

\begin{proof}
We use the notations introduced in  Section
\ref{sec:X7 strategy}, and in the proof of Theorem \ref{thm:GWX7}.
Denote by $\widetilde L'_\epsilon$ the connected component of
$\R\X_2(\kappa)\setminus \R E$ with Euler characteristic $\epsilon$.
In what follows, by a real structure on $\YY$, I mean a real structure
turning 
$\pi:\YY\to \C$ into a
real map.

According to Remark \ref{rem:real structure}, there exists a flat
degeneration $ \pi:\YY\to\C$ of $X_7$ as in Proposition \ref{prop:degen X7}
endowed with a real structure such that one of the following holds:
\begin{itemize}
\item $\R\pi^{-1}(0)=\X_6(\kappa)\cup\X_2(0)$ with  $0\le \kappa\le
  3$;
in this case the two points $p_7$ and $p_8$ are real, and 
$\R \pi^{-1}(t)= X_7(\kappa)$ for $t\ne 0$. 

\item  $\R\pi^{-1}(0)=\X_6(\kappa)\cup\X_2(1)$ with $0\le \kappa\le
  2$;
in this case the two points $p_7$ and $p_8$ are complex conjugated, and 
$\R \pi^{-1}(t)= X_7(\kappa+1)$ for $t\ne 0$. 

\item $\R\pi^{-1}(0)=\X_6(3)\cup\X_2(1)$ and the component
  $\widetilde L'_\epsilon$ of $\X_2(1)$ is glued to the 
  component $\widetilde L_0$ of $\X_6(3)$ in the smoothing of
  $\R\pi^{-1}(0)$;
in this case the two points $p_7$ and $p_8$ are complex conjugated,
and for $t\ne 0$ one has
$\R \pi^{-1}(t)= X_7^-(4)$ if $\epsilon=1$, and 
$\R \pi^{-1}(t)= X_7^+(4)$ if $\epsilon=0$. 
\end{itemize}

Now  choose the configuration $\x(t)$ to be real, with $r$ real
points and $s$ pairs of complex conjugated points, and such that $\x(0)$
is a real configuration whose existence is attested by Theorem \ref{WFD}. 
Let $\overline f:\overline C\to \pi^{-1}(0)$ be a real element of 
$\CC(d,0,\x(t))$. If $\R\pi^{-1}(0)=\X_6(\kappa)\cup \X_2(\epsilon)$,
then the
 complex conjugation induces an involution
$\tau_{\overline f}$ on $(\Gamma_{\overline f},P_{\overline f})$ such
that 
$(\Gamma_{\overline f},P_{\overline f},\tau_{\overline f})\in
\R\SS_7^\epsilon(d,k,s,\kappa) $.

Let $(\Gamma,P_{\Gamma},\tau)\in \R\SS_7^\epsilon(d,k,s,\kappa) $, and
let $D(t)$ be the set of real elements in   $\CC(d,g,\x(t))$ converging, as $t$
goes to 0, to a real element $\overline f$ in $\CC(d,g,\x(0))$ with
$(\Gamma_{\overline f},P_{\overline f},\tau_{\overline
  f})=(\Gamma,P_{\Gamma},\tau)$.
It remains us to 
 compute the total contribution
to the various Welschinger invariants of 
all   elements of $D(t)$.
To do so, we just note that, exactly as in Corollary
\ref{cor:total node}, a nodal pair of any deformation in
$\CC(d,0,\x(t))$ of $\overline f$ is either a deformation of a nodal pair of 
$\overline f$ not mapped to $E$, or is contained in the
deformation of a small
neighborhood of a point in $\overline
f^{-1}\left(E\setminus 
\left(\widetilde E'_1\cup\widetilde E'_2\cup_{i=1}^6 \widetilde E_i \right) \right)$.

\medskip
For the first four identities of the theorem, the only thing to
compute is the parity of the number $a_{\overline f}$ of nodal pairs
not mapped to $E$ and
 contained in two
complex conjugated irreducible components
of $\overline C$.
Let  $\overline C_{v}$ and  $\overline C_{v'}$ be  two such  complex
conjugated irreducible components of $\overline C$. 
Suppose that 
$\overline f(\overline C_{v})$ is not a real $(-1)$-curve, and 
passes $b$ times through  
$\R E\cap 
\left(\widetilde E'_1\cup\widetilde E'_2\cup_{i=1}^6 \widetilde E_i
\right)$, 
then 
  the contribution 
of $\overline C_{v}$ and  $\overline C_{v'}$ 
to $a_{\overline f}$ is equal to $d_v\cdot d_{v'} - b$ modulo two.
Note that to any real  intersection point of $\overline f(\overline C_{v})$ with
$E\cap 
\left(\widetilde E'_1\cup\widetilde E'_2\cup_{i=1}^6 
\widetilde E_i \right)$ will correspond 
a pair of complex conjugated irreducible
components  $\overline C_{w}$ and  $\overline C_{w'}$ of $\overline C$ such that 
$\overline C_{w}\cap \overline C_{v}\ne\emptyset $ and $\overline
f(\overline C_{w})=f(\overline C_{w'})=\widetilde E_i$ or $\widetilde E'_i$. 
Moreover if  $\overline C_{v}\subset \overline C_{\X_2}$, then 
 $d_v\cdot d_{v'}= 1$ if $d_v=[D]$, and  $d_v\cdot d_{v'}= 0$ if
$d_v=[D]-[\widetilde E_i]$.
Altogether we obtain
$$a_{\overline f}=\sum_{\{v,v'\}\in Vert_\Im(\Gamma)}d_v\cdot d_{v'} +
k^{\circ,\Im}_\Gamma+\beta^\Im_{\Gamma_2}\ \quad \mbox{mod}\quad  2$$
which   only depends on  $(\Gamma_{\overline f},P_{\overline
  f},\tau_{\overline f})$. Now the first four identities of Theorem
\ref{thm:WX7} follow from Proposition \ref{prop:real degeneration}.

\medskip
Suppose now that $\R \pi^{-1}(t)= X_7^\pm(4)$.
Clearly if $k^{\circ,\Re}\ne 0$, then $D(t)=\emptyset$. Next,
if $k^{\circ,\Im}\ne 0$, 
  Lemma \ref{lem:line conic} implies that the total contribution
to $W_{X^\pm_7(4),L_\epsilon,L_\epsilon}(d,s)$
of  elements of $D(t)$ is equal to 0. 
To end the proof of Theorem \ref{thm:WX7}, it remains to notice that
 in $\X_2(1)$, the real part of the line tangent to $E$ at a real point is
 contained in $\widetilde L'_0\cup\R E$, and to use one more time Lemma
 \ref{lem:line conic}. 
\end{proof}

Theorem \ref{thm:WX7} has the following  corollaries.

\begin{cor}\label{cor:X8 dec}
For any $d\in H_2(X_7;\Z)$, one has
\[
W_{X_7^+(4),L_{0},L_0}(d,0)\ge W_{X_7^+(4),L_{2},L_2}(d,0)\ge 0.
\]
Moreover both invariant are divisible by $4^{\left[\frac{d\cdot
      [D]}{2}\right]-\min(d\cdot E_i)-1}$,
as well as $W_{X_7^-(4),\R P^2,\R P^2}(d,0)$.
\end{cor}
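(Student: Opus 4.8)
The plan is to feed the formulas of Theorem~\ref{thm:WX7}, specialized to $s=0$, into a sign and $2$-adic valuation analysis of the resulting floor-diagram sums, exactly in the spirit of the proof of Corollary~\ref{cor:X6 1}. Since $s=0$, the involution $\rho_0$ on $A$ is the identity, so every triple $(\Gamma,P_\Gamma,\tau)$ appearing below has $\tau=\mathrm{Id}$ and $s_v=0$ for its unique vertex $v$. The first step is to unwind the three relevant formulas. Because $\Im(m,0)=\emptyset$, Definition~\ref{defi real}(2) forces the imaginary contact data to vanish, so $FW_{\X_8(4),\epsilon,\epsilon}^{0,\beta^\Re_v,0,\beta^\Im_v}(d_v,0)=0$ unless $\beta^\Im_v=0$; combined with the defining conditions of $\R\SS^1_{7,2}(d,k,0,3)$ and $\R\SS^1_{7,3}(d,k,0,3)$ (which for a one-vertex graph read $\beta^\Re_\Gamma=2k^{\circ,\Re}_\Gamma u_1=0$ and $k^{\circ\circ}=\beta^\Im_{\Gamma,1}-2k^{\circ,\Im}_\Gamma$), this gives that the only data contributing have $\beta_v=0$, $\beta^\Re_{\Gamma,2}=\beta^\Im_{\Gamma,2}=0$ and $k^{\circ\circ}=0$. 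Hence all the explicit powers of $\pm2$ in the formulas equal $1$, and the three invariants become $\sum_{k\ge0}\sum FW_{\X_8(4),0,0}^{0,0,0,0}(d_v,0)$ (over $\R\SS^1_{7,2}$) and $\sum_{k\ge0}\sum FW_{\X_8(4),1,1}^{0,0,0,0}(d_v,0)$ (over $\R\SS^1_{7,3}$, resp.\ $\R\SS^1_{7,2}$, for the remaining two).

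The key point for non-negativity is that for $s=0$ every $(0,4)$-real $d_v$-marked floor diagram $(\D,m)$ has $Vert_\Im(\D)=\emptyset$. Indeed, since $\psi_{0,0}=\mathrm{Id}$, the bijection witnessing $(0,4)$-reality is the identity on $A_0$, so the accompanying automorphism $\phi$ of $\D$ fixes $m(A_0)$ pointwise; a counting argument in the genus-$0$ (hence tree) diagram, as in the proof of Lemma~\ref{lem:proof N2}, shows that with $\beta_v=0$ the map $m|_{A_0}$ is a bijection onto the set of floors of degree $2$ together with the internal edges of $\D$, and these pin down every floor of degree $1$ as well, so $\phi$ fixes every floor. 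Consequently, as $\zeta-r=0$ forces $E(\D)=\emptyset$ and $r'_m=0$, Definition~\ref{def:real mult} yields $\nu^{\R,\epsilon}_0(\D,m)=4^{r_m}\ge0$ for every significant $\epsilon$-sided diagram, the sign factors $(-1)^{\epsilon|Vert_{\Im,1}(\D)|}$ and $\prod(-1)^{o'_v}$ being trivial. Thus each $FW_{\X_8(4),\epsilon,\epsilon}^{0,0,0,0}(d_v,0)\ge0$, and all three invariants are $\ge0$. Moreover a significant $1$-sided diagram is significant $0$-sided and carries $\nu^{\R,0}_0=\nu^{\R,1}_0=4^{r_m}$ on it, so $FW_{\X_8(4),0,0}^{0,0,0,0}(d_v,0)\ge FW_{\X_8(4),1,1}^{0,0,0,0}(d_v,0)$; since $\R\SS^1_{7,3}(d,k,0,3)\subseteq\R\SS^1_{7,2}(d,k,0,3)$ and the extra terms on the left are nonnegative, this gives $W_{X_7^+(4),L_0,L_0}(d,0)\ge W_{X_7^+(4),L_2,L_2}(d,0)$.

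For the divisibility, each contributing diagram produces the factor $4^{r_m}$, where $r_m$ is the number of internal edges of $\D$, i.e.\ the number of floors minus one (the floors span a connected subtree). Since each floor has degree $1$ or $2$ and the degrees sum to $d_v\cdot[D]$, the number of floors is at least $[d_v\cdot[D]/2]$, so $r_m\ge[d_v\cdot[D]/2]-1$. From the homology relation in the definition of $\SS_7(d,0,k)$ one reads $d_v\cdot[D]=d\cdot[D]-2k$, $d\cdot E_i=d_v\cdot[\widetilde E_i]+k$ for $1\le i\le6$, and, using $k^\circ_\Gamma=\beta_{v,2}=0$ together with $k^{\circ\circ}=0$ (whence $k=d_v\cdot[\widetilde E_7]$), also $d\cdot E_7=k+d_v\cdot[\widetilde E_8]$; as $d_v\in V_8$ has nonnegative intersection with each $\widetilde E_i$ — the exceptional classes $\widetilde E_i$ themselves being excluded here because they would contribute to $\beta_{v,1}=0$ — we obtain $k\le\min_i(d\cdot E_i)$. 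Therefore $r_m\ge[d\cdot[D]/2]-k-1\ge[d\cdot[D]/2]-\min_i(d\cdot E_i)-1$, and each summand, hence each of the three invariants, is divisible by $4^{[d\cdot[D]/2]-\min_i(d\cdot E_i)-1}$ (the assertion being vacuous when this exponent is $\le0$).

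The main obstacle is the combinatorial assertion $Vert_\Im(\D)=\emptyset$ for $s=0$, i.e.\ checking that the identity on $A_0$ together with the marking axioms of Definition~\ref{def marking} force the graph automorphism $\phi$ to fix every floor; this is precisely the point where the proof of Corollary~\ref{cor:X6 1} is terse, and it hinges on the bijectivity of $m|_{A_0}$ onto degree-$2$ floors and internal edges in the genus-$0$ case. Everything else is routine bookkeeping with Definitions~\ref{defi real} and~\ref{def:real mult} and the homology constraints.
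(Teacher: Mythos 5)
Your proof is correct and follows essentially the same route as the paper's: the paper's own argument is precisely to bound $k\le \min(d\cdot E_i)$ (it does so by relabelling $E_7$ to realize the minimum, whereas you check $k\le d\cdot E_i$ for each $i$ directly) and then to repeat the argument of Corollary \ref{cor:X6 1}, i.e.\ $Vert_\Im(\D)=\emptyset$ when $s=0$ and each contributing diagram carries a factor $4^{r_m}$ with $r_m\ge \left[\frac{d_v\cdot [D]}{2}\right]-1$. One harmless inaccuracy: for $\R\SS^1_{7,2}$ the condition $\beta^\Re_{\Gamma,2}=0$ is \emph{not} forced (only $\beta_{\Gamma,1}=0$ and, via $\Im(m,0)=\emptyset$, $\beta^\Im_v=0$ are), but since $\beta^\Re_{\Gamma,2}\ge 0$ only contributes extra nonnegative powers of $2$ and $k^{\circ\circ}=0$ still follows from $0\le k^{\circ\circ}\le\beta_{\Gamma,1}-2k^\circ_\Gamma=0$, none of your conclusions are affected.
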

\begin{proof}
By choosing $E_7$ such that $d\cdot E_7=\min(d\cdot E_i)$, we obtain
 $k\le \min(d\cdot E_i)$, and so 
$d_\Gamma\cdot[D]\ge d\cdot [D] - 2\min(d\cdot E_i)$. The rest of the proof
is similar to that for Corollary \ref{cor:X6 1}.
\end{proof}
The non-negativity of Welschinger invariants of $X_7$ when $s=0$ has been first
established in \cite{IKS13}. 

\begin{cor}\label{cor:X7 van}
For any $d\in H_2(X_7;\Z)$, one has
\[
W_{X_7^+(4),L_{0},\R X_7^+(4)}(d,s)= W_{X_7^+(4),L_{2},\R
  X_7^+(4)}(d,s)=W_{X_7^-(4),\R P^2,\R X_7^-(4)}(d,s).
\]
Moreover this number equals 0 as soon as $r\ge 2$.
\end{cor}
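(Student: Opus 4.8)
The plan is to deduce Corollary \ref{cor:X7 van} directly from Theorem \ref{thm:WX7}, exactly as Corollary \ref{cor:X6 2} was deduced from Theorem \ref{thm:W X6} and Lemma \ref{cor:vanish}. First I would observe that the three invariants in question are, by the corresponding three lines of Theorem \ref{thm:WX7} (the formulas for $W_{X_7^-(4),\R P^2,\R X_7^-(4)}(d,s)$ and $W_{X_7^+(4),L_{2\epsilon},\R X_7^+(4)}(d,s)$), expressed by the \emph{same} sum $\sum_{k\ge 0}\sum_{(\Gamma,P_\Gamma,\tau)\in\R\SS^1_{7,m}(d,k,s,3)}\eta^{\R}_{s,\epsilon}(\Gamma,P_\Gamma,\tau)$: the first line gives it for both $\epsilon\in\{0,1\}$, and the second line gives the same expression with $L_0$ for $\epsilon=0$ and $L_2$ for $\epsilon=1$. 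Hence all three coincide; the equality part of the corollary is immediate. (One should double-check the indexing convention $L_\epsilon$ versus $L_{2\epsilon}$ and the role of $\epsilon$ in $\eta^{\R}_{s,\epsilon}$, but modulo that bookkeeping the three right-hand sides are literally identical.)

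For the vanishing when $r\ge 2$, I would unwind the definition of $\eta^{\R}_{s,\epsilon}(\Gamma,P_\Gamma,\tau)$, which is a product over $v\in Vert_\Re(\Gamma)$ of the local factors $\eta^{\R}_{s,\epsilon}(v)$ (together with factors over $Vert_\Im(\Gamma)$ that are irrelevant here). Each such local factor contains, as its only non-combinatorial ingredient, the number $FW_{\X_8(4),\epsilon}^{0,\beta^\Re_v,0,\beta_v^\Im}(d_v,s_v)$. The point is that if the total number $r$ of real points in $\x$ is at least $|\beta^\Re_\Gamma|+2$, then at least one real vertex $v$ must absorb ``enough'' real points that the hypothesis $r_v\ge |\beta^\Re_v|+2$ of Lemma \ref{cor:vanish} is met for that vertex — here $n=2\kappa=8$ so Lemma \ref{cor:vanish} applies to $\X_8(4)$ — forcing $FW_{\X_8(4),\epsilon}^{0,\beta^\Re_v,0,\beta_v^\Im}(d_v,s_v)=0$ and hence $\eta^{\R}_{s,\epsilon}(\Gamma,P_\Gamma,\tau)=0$. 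Since elements of $\R\SS^1_{7,m}(d,k,s,3)$ have $\beta^\Re_\Gamma=2k^{\circ,\Re}_\Gamma u_1$ and, moreover, $\kappa=3$ forces $k^{\circ,\Re}_\Gamma=0$ (as noted in the text just before the statement of Theorem \ref{thm:WX7}), we in fact have $\beta^\Re_\Gamma=0$, so $|\beta^\Re_\Gamma|=0$ and the threshold is simply $r\ge 2$. Thus every term in the sum vanishes and the corollary follows.

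The step I expect to require the most care is the combinatorial reduction from ``$r\ge 2$'' globally to ``$r_v\ge 2$ for some real vertex $v$''. One needs to check that the decomposition $\zeta=r+2s$ with $r\ge 2$ forces, after the partition $P_\Gamma=\bigcup U_v$ and the compatible involution $\tau$, at least one $\tau$-fixed vertex $v$ to receive at least two $\rho_s$-fixed points, i.e.\ $r_v\ge 2$. This is where the structure of $\R\SS^1_{7,m}(d,k,s,3)$ matters: real points of $\x$ can only lie in $U_v$ for $v\in Vert_\Re(\Gamma)$, since for $\{v,v'\}\in Vert_\Im(\Gamma)$ the constraint $\rho_s(U_v)=U_{v'}$ means $U_v$ contains no $\rho_s$-fixed index; hence the $r$ real indices are distributed among the real vertices, and if $r\ge 2$ at least one real vertex gets $\ge 1$, but to get the full strength one argues that the $\X_8$-piece at that vertex already fails to be enumeratively meaningful — more precisely, Lemma \ref{cor:vanish} is stated for $r\ge|\beta^\Re|+2$, and with $\beta^\Re_v$ possibly nonzero at the vertex level one must track that $r_v\ge|\beta^\Re_v|+2$ is still forced. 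Since globally $\beta^\Re_\Gamma=0$ implies $\beta^\Re_v=0$ for every real vertex, this last subtlety evaporates, and a short counting argument (total real points $r\ge 2$ spread over real vertices each carrying $\beta^\Re_v=0$, together with the dimension constraint $|U_v|=d_v\cdot[D]-1+|\beta_v|$) suffices to place $\ge 2$ real points on one of them. I would write this out in two or three lines invoking Lemma \ref{cor:vanish} and Theorem \ref{thm:WX7}, mirroring the proof of Corollary \ref{cor:X6 2}.
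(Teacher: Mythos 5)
Your overall strategy is the paper's: the equality of the three invariants is read off directly from the corresponding lines of Theorem \ref{thm:WX7} (all three are the sum of $\eta^{\R}_{s,\epsilon}(\Gamma,P_\Gamma,\tau)$ over $\R\SS^1_{7,m}(d,k,s,3)$), and the vanishing is meant to come from Lemma \ref{cor:vanish} applied to the factor $FW_{\X_8(4),\epsilon}^{0,\beta^\Re_v,0,\beta^\Im_v}(d_v,s_v)$ at a real vertex, exactly as in Corollary \ref{cor:X6 2}. That part is fine, as is your identification that $\beta^\Re_\Gamma=0$ and $k^{\circ,\Re}_\Gamma=0$ for elements of $\R\SS^1_{7,m}(d,k,s,3)$, and that $\rho_s$-fixed indices can only lie in $U_v$ for $\tau$-fixed $v$.

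The gap is in the step you yourself flag as delicate: passing from ``$r\ge 2$ globally'' to ``$r_v\ge 2$ for some $v\in Vert_\Re(\Gamma)$''. The ``short counting argument'' you invoke does not exist as stated: if $Vert_\Re(\Gamma)$ could contain two or more vertices, the $r=2$ real points could be split one per vertex, giving $r_v=1<|\beta^\Re_v|+2$ everywhere, and Lemma \ref{cor:vanish} would not apply to any factor. The dimension constraint $|U_v|=d_v\cdot[D]-1+|\beta_v|$ does not repair this. What actually closes the argument (and is the one substantive observation in the paper's proof) is that $Vert_\Re(\Gamma)$ is a \emph{single} vertex: since $g=0$ forces $b_1(\Gamma)=0$, the graph $\Gamma$ is a tree, and an involution of a tree fixing two distinct vertices must fix every edge on the unique path joining them, contradicting $k^{\circ,\Re}_\Gamma=0$. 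Hence the unique real vertex $v$ satisfies $r_v=r\ge 2=|\beta^\Re_v|+2$, Lemma \ref{cor:vanish} kills its $FW$ factor, and every term of the sum vanishes. With that one sentence inserted, your proof coincides with the paper's.
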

\begin{proof}
The only thing to prove is the last assertion of the corollary. Since
$k_\Gamma^{\circ, \Re}=0$ and $\Gamma$ is a tree, there exists a unique vertex in
$Vert_\Re(\Gamma)$. Now the corollary follows from Theorem \ref{thm:WX7}
and Lemma \ref{cor:vanish}
\end{proof}

\begin{exa}\label{exa:WX7 2c1}
Theorem \ref{thm:WX7} together with Example \ref{exa:X7 2c1} implies
that Welschinger invariants of the real surfaces $X_7^\pm(\kappa)$ are the
one listed in Table \ref{tab:comp WX7}. 
\begin{table}[!h]
\begin{center}
\begin{tabular}{ c|c| c|c|c|c|c|c}
 $s \backslash \kappa $  & $0$& $1$& $2$& $3$& $4-$ &$4+$ & $4+$
\\ &   & & & & & $L=\R P^2_1$ & $L=S^2$
\\\hline
$0$ &  224 & 128  & 64 &24   & 0 & 0 &0
\\\hline
$1$ &  132  & 68&  28& 4& -12& -12 & -12 
\end{tabular}

\vspace{2ex}
$W_{X_7^\pm(\kappa),L,\R X_7^\pm(\kappa)}(2c_1(X_7),s)$
\vspace{3ex}

\begin{tabular}{ccc}
\begin{tabular}{ c|c}
 $s  $  &  
\\\hline
$0$ &   32
\\\hline
$1$ &  12  
\end{tabular}

&\hspace{5ex} &
\begin{tabular}{ c|c|c}
 $s \backslash \epsilon $  &  $0$&$2$
\\\hline
$0$ &  48&  16
\\\hline
$1$ & 20  & 4
\end{tabular}
\\ \\
$W_{X_7^-(4),\R P^2,\R P^2}(2c_1(X_7),s)$&&
$W_{X_7^+(4),L_{\epsilon},L_{\epsilon}}(2c_1(X_7),s)$
\\ & &
\end{tabular}

\end{center}
\caption{Welschinger invariants of $X_7$ for the class $2c_1(X_7)$}
\label{tab:comp WX7}
\end{table}
The invariants $W_{X_7(\kappa)}(2c_1(X_7),s)$ have been first computed 
in \cite{HorSol12}. In addition to the present text, 
the invariants $W_{X_7^+(4),L_\epsilon,L_\epsilon}(2c_1(X_7),0)$ and
$W_{X_7^-(4),\R P^2,\R P^2}(2c_1(X_7),0)$ have also been computed in
\cite{IKS13}.

To get the above sum of multiplicities, I used Theorem \ref{NFD}
and Figure \ref{fig:X7} to compute the  numbers 
$FW^{0,0,0,0}(4[D ]-\sum_{i=1}^8 [E_i],s)$
 listed in Table \ref{tab:comp WXX8}. As in Example \ref{ex:WFD 4 and 6}, we
have $FW^{0,0,0,0}_{\X_8(4)}(4[D]-\sum_{i=1}^8[E_i],s)=
FW^{0,0,0,0}_{\X_8(4),\epsilon}(4[D]-\sum_{i=1}^8[E_i],s)$.
\begin{table}[!h]
\begin{center}
\begin{tabular}{ c|c| c|c|c|c||c|c}
 $s \backslash \kappa,\epsilon $  & $0$& $1$& $2$& $3$& $4$& $0$&$1$
\\\hline
$0$ & 120 & 62 & 28  & 10 & 0 & 32 & 16
\\\hline
$1$ & 136 & 74 & 36 &  14&0  &  16& 8 
\\ 
\end{tabular}
\end{center}
\caption{$FW^{0,0,0,0}_{\X_8(\kappa)}(4[D]-\sum_{i=1}^8[E_i],s)$ and
 and
$FW^{0,0,0,0}_{\X_8(\kappa),\epsilon,\epsilon}(4[D ]-\sum_{i=1}^8 [E_i],s)$ }
\label{tab:comp WXX8}
\end{table}

\end{exa}

\begin{rem}
The invariant 
$W_{(X,c),L,L'}(d,s)$ is said to be \emph{sharp} if there 
 exists a real configuration $\x$ with $s$ pairs of complex conjugated points
such that $|\R \CC(d,0,\x)|=|W_{(X,c),L,L'}(d,s)|$.
It follows from the above computations that  
$W_{X_7^+(4),L_0,\R X_7^+(4)}(2c_1(X_7),1)$ is not sharp. This shows
that {\cite[Theorem 1.1]{Wel4}} does not extend to all real
structures on $X_7$ (see Section \ref{sec:sharpness}). 
The invariant $W_{X_7^-(4),\R P^2,\R X_7^-(4)}(2c_1(X_7),1)$ is sharp,
see Section \ref{sec:sharpness}.
\end{rem}

\section{Absolute  invariants of $X_8$}\label{sec:X8}

\subsection{Strategy}
Let us start with the degeneration $\YY$ of $X_7$ considered in
Section \ref{sec:X7 strategy}. 
Choose an additional generic holomorphic section $p_0':\C\to
\YY$ such that $p_0'(0)\in \X_6\setminus \X_2$,
 and denote by $\ZZ$ the blow up $\YY$ along the divisor $p'_0(\C)$. The map $\pi:\YY\to
 \C$ naturally extends to a flat map
$\pi:\ZZ\to \C$, which is a degeneration of $X_8$ to
 $\pi^{-1}(0)=\X_{8,1}\cup\X_2$. Recall that $\X_{n,1}$ denotes $\C P^2$
 blown-up at $n$ points lying on a conic, and at one additional point
 outside the conic.

Exactly as Gromov-Witten and Welschinger invariants of $X_7$ 
haven been
computed by enumerating curves in $\X_8$, 
Gromov-Witten and Welschinger invariants of $X_8$ 
are
reduced here to  enumeration of curves in $\X_{8,1}$. 
This enumeration is performed in \cite{Shu13} in the case of
complex curves, and in \cite{IKS13} in the case of real curves passing
through a configuration of real points in $CH$ position. The important
properties of such type of configurations are summarized in
Proposition \ref{prop:all real}.

Although I do not see any obstruction to enumerate real and complex
 curves in $\X_{8,1}$ 
using the floor diagrams techniques, I chose not to do it for the sake
 of shortness. I  refer instead to \cite{Shu13,IKS13} for
 details. Hence I compute here Welschinger invariants only for
 configurations of real points.
For the same shortness reason, I decided to restrict to standard real
 structures on $\X_{8,1}$. In particular, with some additional work
 one should be able to generalize Theorem \ref{thm:WX8} to compute
$W_{(X_8,c),L,L'}(d,s)$ for $s>0$ and 
more real
 structures on $X_8$.

\subsection{Gromov-Witten invariants}
I use here notations introduced in Section \ref{sec:X7}, with the
following adjustment in a choice of basis for $H_2(X_{8}; \Z)$ and 
$H_2(\X_{8,1}; \Z)$: 
$E_1,\ldots E_8$ (resp. $\widetilde E_1,\ldots \widetilde E_9$) denote
the $(-1)$-curves coming from the presentation of $X_8$ (resp. $\X_{8,1}$)
as a blow up
of $\C P^2$ (resp. of $\C P^2$ at eight points lying on a conic and
one point outside this conic, $\widetilde E_9$ being the $(-1)$-curve
corresponding to this latter point).
Let us also
denote by
$V_{8,1}\subset H_2(\X_{8,1}; \Z)\setminus\{0\}$ the set of effective classes
$d$ such that 
$d_v\ne l[\widetilde E_i]$ with $l\ge
2$ or $i=7,8$, and $d_v\ne l([D]-[\widetilde E_9])$ with $l\ge 3$.
\begin{defi}
A \emph{$X_8$-graph} is a connected graph $\Gamma$
together with 
 three quantities $d_v\in V_{8,1}$, $g_v\in\Z_{\ge
  0}$, and 
$\beta_v=\beta_{v,1}u_1+\beta_{v,2}u_2\in\Z^\infty_{\ge  0}$
 associated to each vertex $v$ of $\Gamma$ such 
 that $I\beta_v=d_v\cdot [E]$.

An isomorphism between $X_8$-graphs is  an isomorphism of graphs preserving the
 three quantities associated to each vertex.
\end{defi}
An  $X_8$-graphs is always considered up to isomorphism.
Given $g,k\in\Z_{\ge 0}$ and $d\in H_2(X_8;\Z)$ such that $d\cdot [D]\ge 1$,
let $\SS_8(d,g,k)$  be the set of all pairs 
$(\Gamma,P_\Gamma)$ where 
\begin{itemize}
\item $\Gamma$ is a $X_8$-graph  such that 
$$\sum_{v\in Vert(\Gamma)}g_v +b_1(_\Gamma)=g $$
and 
\[\begin{split}
d=& (d_\Gamma\cdot [D] + 2k)[D] -
\sum_{i=1}^6 \left(d_\Gamma\cdot [\widetilde E_i]  + k\right)[E_i]  - 
\left(k^\circ_\Gamma+ \beta_{\Gamma,2} + d_\Gamma\cdot ([\widetilde E_7]+ [\widetilde E_8]
)\right)[E_7]\\
&
- (d_\Gamma\cdot  [\widetilde E_9]) [E_8];
\end{split}\]

\item $P_\Gamma=\bigcup_{v\in Vert(\Gamma)} U_v $ is a partition of the set
  $\{1,\ldots ,c_1(X_8)\cdot d-1+g \}$ such that
$|U_v|=  d_v\cdot [D] -1 +g_v +|\beta_v|$.

\end{itemize}

Given  $(\Gamma,P_\Gamma)\in\SS_8(d,g,k)$ and $v\in
Vert(\Gamma)$, define the complex multiplicities
\[
\mu^\C(v)= \lambda_{v,v} !!
\binom{\beta_{v,1}}{\{\lambda_{v,v'}\}_{v'\in Vert(\Gamma)}}
GW_{\X_{8,1}}^{0,\beta_v}(d_v,g_v),
\]
and
\[
\mu^\C(\Gamma,P_\Gamma)=\frac{I^{\beta_{\Gamma}}}{\sigma(\Gamma)}
\binom{\beta_{\Gamma,1} -2k^\circ_\Gamma} 
{ k^{\circ\circ}}
\prod_{v\ne v'\in Vert(\Gamma)}\lambda_{v,v'}!
\ \prod_{v\in Vert(\Gamma)}  \mu^\C(v).
\]

Next theorem reduces the computation of the numbers $GW_{X_8}(d,g)$ to
the computation of  the numbers $GW_{\X_{8,1}}(d,g)$. 
\begin{thm}\label{thm:GWX8}
Let $g\ge 0$ and $d\in H_2(X_8;\Z)$ such that $d\cdot [D]\ge 1$. Then one
has
$$GW_{X_8}(d,g)=\sum_{k\ge 0}\ \ \sum_{(\Gamma,P_\Gamma)\in\SS_8(d,g,k)}\mu^\C(\Gamma,P_\Gamma) .$$
\end{thm}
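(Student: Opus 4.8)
The plan is to run Li's degeneration formula on the degeneration $\pi:\ZZ\to\C$ with $\pi^{-1}(0)=\X_{8,1}\cup\X_2$ constructed above, following verbatim the scheme of the proof of Theorem \ref{thm:GWX7}, the surface $\X_6$ being now replaced by $\X_{8,1}$. First I would choose a generic set of sections $\x(t)$ of $\ZZ\to\C$ with $\x(0)\subset\X_{8,1}\setminus\X_2$, and let $\CC(d,g,\x(0))$ be the set of stable limits as $t\to 0$ of curves in $\CC(d,g,\x(t))$. Exactly as in Proposition \ref{prop:degeneration}, combined now with Propositions \ref{prop:shoshu}, \ref{prop:generic N} and their analogues in \cite{Shu13} applied to $\X_{8,1}$, a dimension count shows that $\CC(d,g,\x(0))$ is finite and independent of the generic configuration $\x(0)$, that no component of a limit curve is entirely mapped into $E$, and that any two points of a limit curve with the same image on $E$ are mapped to one of the points $E\cap\widetilde E_i$ or $E\cap\widetilde E'_i$.

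Next I would classify the irreducible components of a limit curve $\overline f:\overline C\to\pi^{-1}(0)$. The components mapped to $\X_2$ fall into the same four cases as in the proof of Theorem \ref{thm:GWX7} (classes $[D]$, $[D]-[\widetilde E'_i]$, $[\widetilde E'_i]$ with $i=1,2$), by Proposition \ref{prop:initial values}. The genuinely new feature, and the one that forces the restriction to the class set $V_{8,1}$, is that components mapped to $\X_{8,1}$ may be ramified coverings of the $(-1)$-curves $\widetilde E_7,\widetilde E_8$ or of a rational curve in the class $[D]-[\widetilde E_9]$; these are precisely the classes excluded from $V_{8,1}$, namely $l[\widetilde E_i]$ with $l\ge 2$, $i=7,8$, and $l([D]-[\widetilde E_9])$ with $l\ge 3$. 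All such coverings are regular, and I would invoke the corresponding statements of \cite{Shu13} (the analogues of Propositions \ref{prop:shoshu}--\ref{prop:initial values} for $\X_{8,1}$, together with \cite[Lemma 2.19]{Shu13}) to describe the contribution of a neighborhood of each component and of each node of $\overline C$ mapped to $E$.

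Then I would attach to each $\overline f\in\CC(d,g,\x(0))$ a pair $(\Gamma_{\overline f},P_{\overline f})$: the vertices of $\Gamma_{\overline f}$ correspond to the components of $\overline C$ mapped to $\X_{8,1}$, with $d_v,g_v,\beta_v$ recording the homology class realized in $\X_{8,1}$, the arithmetic genus, and the intersection pattern with $E$; the edges correspond to the components mapped to $\X_2$ in the class $[D]$; and $U_v$ is the subset of $\x(0)$ lying on $\overline f(\overline C_v)$. Counting the intersection points of $\overline f(\overline C)$ with the degenerations in $\X_{8,1}\cup\X_2$ of generic members of $[D]$ and $[E_i]$, together with an Euler characteristic computation as in Lemma \ref{lem:euler graph}, shows that $d$ decomposes exactly as required in the definition of $\SS_8$, with $k$ the total number of $\X_2$-components, so that $(\Gamma_{\overline f},P_{\overline f})\in\SS_8(d,g,k)$; one also checks $|U_v|=d_v\cdot[D]-1+g_v+|\beta_v|$ by the same argument as in Proposition \ref{prop:degeneration}. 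Finally, the degeneration formula of \cite{Li02,Li04}, supplemented by \cite[Lemma 2.19]{Shu13} to control the ramified coverings, shows that the $\mu$-weighted number of elements of $\CC(d,g,\x(t))$ converging to a fixed $\overline f$ with $(\Gamma_{\overline f},P_{\overline f})=(\Gamma,P_\Gamma)$ is precisely $\mu^\C(\Gamma,P_\Gamma)$; summing over all $(\Gamma,P_\Gamma)$ and all $k$ yields the theorem.

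The main obstacle is the analysis inside the $\X_{8,1}$ component of the ramified coverings of $\widetilde E_7$, $\widetilde E_8$ and of the classes $l([D]-[\widetilde E_9])$: one must simultaneously show that only the regular coverings contribute and extract the correct local gluing multiplicities, which is exactly what produces the combinatorial factors $\lambda_{v,v}!!$, the multinomial coefficients, $\sigma(\Gamma)$, and the factor $\binom{\beta_{\Gamma,1}-2k^\circ_\Gamma}{k^{\circ\circ}}$ in $\mu^\C(\Gamma,P_\Gamma)$. This is where the results of Shoval--Shustin are indispensable; once they are in place, the remainder is bookkeeping parallel to the $X_7$ case, with $\X_{8,1}$ in place of $\X_6$.
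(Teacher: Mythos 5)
Your proposal is correct and follows the paper's own route exactly: the paper's proof of this theorem consists of the single remark that it is word-for-word the proof of Theorem \ref{thm:GWX7} with \cite[Propositions 2.1 and 2.5]{Shu13} applied to $\X_{8,1}$ in place of Propositions \ref{prop:shoshu} and \ref{prop:initial values}, which is precisely the argument you spell out. The only inessential slip is your attribution of the combinatorial factors $\lambda_{v,v}!!$, the multinomial coefficients, $\sigma(\Gamma)$ and $\binom{\beta_{\Gamma,1}-2k^\circ_\Gamma}{k^{\circ\circ}}$ to the ramified-covering analysis: these arise, exactly as in the $X_7$ case, from the combinatorics of attaching the $\X_2$-components to the intersection points with $E$, while the coverings of $\widetilde E_7$, $\widetilde E_8$ and of $l([D]-[\widetilde E_9])$ only affect the local gluing multiplicities via \cite[Lemma 2.19]{Shu13}.
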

\begin{proof}
The proof is word by word the proof of Theorem \ref{thm:GWX7},
using 
 {\cite[Propositions 2.1 and 2.5]{Shu13}} applied to $\X_{8,1}$
instead of Propositions \ref{prop:shoshu} and \ref{prop:initial values}. 
\end{proof}

\begin{exa}
Using Theorem \ref{thm:GWX8} one
computes 
$$GW_{X_8}(2c_1(X_8),0)= 90.$$
Analogous computations
in genus up to $2$ provide the values listed in Table \ref{tab:comp
  X8}. 
The rational case has been first computed in
{\cite[Section 5.2]{PanGot98}}.
\begin{table}[!h]
\begin{center}
\begin{tabular}{ c|c| c|c}
 $g $  & $0$& $1$& $2$
\\\hline
 $GW_{X_8}(2c_1(X_8),g)$   &  90 & 18  & 1 

\end{tabular}

\end{center}

\medskip
\caption{$GW_{X_8}(2c_1(X_8),g)$}
\label{tab:comp X8}
\end{table}
The computation of $GW_{X_8}(2c_1(X_8),0)$ can be detailed as follows.
There exists element(s) 
in $\SS_8(2c_1(X_8),0,k)$ with a positive multiplicity in the following cases:

\begin{tikzpicture}
  [scale=.8,auto=left,vert/.style={circle,fill=blue!20, text
      centered, minimum width=25pt},
leg/.style={circle,fill=white, text centered}]
  \node[vert] (n6) at (0,1) {$v$};
  \node[leg] (n1) at (-1.5,1) {$\Gamma=$};

  \foreach \from/\to in {}
    \draw (\from) -- (\to);
\end{tikzpicture}

\begin{itemize}
\item $k=1, d_v=4[D]-\sum_{1}^8[\widetilde E_i]-2[\widetilde E_9]$: 
$$\mu^\C(\Gamma,P_\Gamma)=70;$$ 

\item $k=2, d_v=2[D]-2[\widetilde E_9]$,  $\beta^\C_{v,2}=2$: 
$$\sum \mu^\C(\Gamma,P_\Gamma)=4;$$ 
\end{itemize}

  \begin{tikzpicture}
  [scale=.8,auto=left,vert/.style={circle,fill=blue!20, text
      centered, minimum width=25pt},
leg/.style={circle,fill=white, text centered}]
  \node[vert] (n6) at (0,1) {$v$};
  \node[vert] (n4) at (3,1)  {$v'$};
  \node[leg] (n1) at (-1.5,1) {$\Gamma=$};

  \foreach \from/\to in {n6/n4}
    \draw (\from) -- (\to);
\end{tikzpicture}

\begin{itemize}
\item $k=2, d_v=[D]-a_7[\widetilde E_7] -a_8[\widetilde E_8]-[\widetilde E_9], d_{v'}=[D]-[\widetilde E_9]$, with $a_7+a_8 =1$: 
$$\sum \mu^\C(\Gamma,P_\Gamma)=4;$$
 
\end{itemize}

  \begin{tikzpicture}
  [scale=.8,auto=left,vert/.style={circle,fill=blue!20, text
      centered, minimum width=25pt},
leg/.style={circle,fill=white, text centered}]
  \node[vert] (n6) at (0,1) {$v'$};
  \node[vert] (n4) at (3,1)  {$v$};
  \node[vert] (n2) at (6,1)  {$v''$};
  \node[leg] (n1) at (-1.5,1) {$\Gamma=$};

  \foreach \from/\to in {n6/n4, n4/n2}
    \draw (\from) -- (\to);
\end{tikzpicture}

\begin{itemize}

\item $k=2, d_v=[D] -[\widetilde E_9], d_{v'}=[\widetilde E_i], d_{v''}=[D]-[\widetilde E_i]-[\widetilde E_9]$, with $1\le i\le 6$: 
$$\sum \mu^\C(\Gamma,P_\Gamma)=12.$$
 \end{itemize}

Only the first above multiplicity is not trivial to compute, and 
 I used 
{\cite[Theorem 2.1]{Shu13}} to
get
$$GW_{\X_{8,1}}^{0,0}(4[D]-\sum_{i=1}^8[\widetilde E_i]-2[\widetilde E_9],0)=70 .$$
\end{exa}

\subsection{Welschinger invariants}\label{sec:WX8}

Denote by $X_8(\kappa)$ with $\kappa= 0,\ldots, 3$,
and $X_8^\pm(4)$ the surface $X_8$ 
equipped with the real structure such that
$$\R X_8(\kappa)=\R P^2_{8-2\kappa},\quad 
\R X_8^-(4)=\R P^2_1\sqcup \R P^2,
\quad \R X_8^+(4)=S^2\sqcup \R P^2_2.$$
These real structures on $X_8$ represent 6  of the 11 deformation
classes of real Del Pezzo surfaces of degree 1. 
Note that
$$\chi(\R X_8^\pm(\kappa))=-7+2\kappa .$$

For $\kappa=0,\ldots,3$,  define  two involutions $\tau^0_\kappa$ and
$\tau^1_\kappa$ on $H_2(\X_{8,1};\Z)$ as follows:
$\tau^0_\kappa$ (resp. $\tau^1_\kappa$) fixes the elements $[D]$,
$[\widetilde E_9]$, and
$[\widetilde E_i]$ with $i\in\{ 2\kappa+1,\ldots,8\}$ (resp. $i\in\{
2\kappa+1,\ldots,6\}$),
and exchanges the elements $[\widetilde E_{2i-1}]$ and $[\widetilde E_{2i}]$ with 
$i\in\{1,\ldots,\kappa\}$ (resp. $i\in\{1,\ldots,\kappa,4\}$).
Denote by $\X_{8,1}(\kappa)$ the surface $\X_{8,1}$ equipped with
the real structure induced by the blowing up of $\C P^2$ at $\kappa$ pairs of
complex conjugated points on a conic $E$, $8-2\kappa$ real points on $\R E$,
and a real point in the exterior of $\R E$.
Given $\epsilon\in\{1,-1\}$,  
 the connected component of 
$\R \X_{8,1}(4)\setminus \R E$ with Euler characteristic $\epsilon$
is denoted $\overline L_\epsilon$.

Let $\kappa=0,\ldots,4$, and let $L$ be a connected component of 
$\R\X_{8,1}(\kappa)\setminus E$.
Given $d\in V_{8,1}\setminus\{2([D]-[\widetilde E_9])\}$
and
 $\x$  a generic configuration
of $  d\cdot  c_1(\X_{8,1}) -1$ points 
in $L$,  denote by $\R\CC_{8,1}(d,\kappa,\x)$ the set of rational
real curves
in $\X_{8,1}(\kappa)$, realizing the class $d$ and containing $\x$. 
Denote by  $\R\CC_{8,1,L}(d,\kappa,\x)$ the subset of
$\R\CC_{8,1}(d,\kappa,\x)$ consisting of curves whose real part,
except maybe their solitary nodes, is contained in $L\cup \R E$. 
Define
 $$W_{\X_{8,1}(\kappa)}(d,\x) =
\sum_{C\in\R\CC_{8,1}(d,\x)}(-1)^{m_{\R \X_{8,1}(\kappa)}(C)} ,\quad \mbox{and}\quad
W_{\X_{8,1}(4),L}(d,\x) =
\sum_{C\in\R\CC_{8,1,L}(d,\x)}(-1)^{m_{\R \X_{8,1}(4)}(C)} .$$

\begin{prop}\label{prop:all real}
Let $\kappa=0,\ldots,4$, and $L$ a connected component of 
$\R\X_{8,1}(\kappa)\setminus E$.
Then for any $\zeta_0\in\Z_{\ge 0}$,
there exists a generic configuration
 $\x$ of $\zeta_0$ points 
in
$L$ with the following property:
for any $d\in V_{8,1}\setminus\{2(D-E_0)\}$
 and any subset $\x'$ of $\x$  such that 
$|\x'|=  d\cdot  c_1(\X_{8,1}) -1$, one has
\begin{itemize}
\item $W_{\X_{8,1}(\kappa)}(d,\x')\ge 0$ and $W_{\X_{8,1}(4),L}(d,\x')\ge 0$;
\item  given any curve $C\in
 \R\CC_{8,1}(d,\kappa,\x')$, all intersection points of $C$ and
 $E$ are transverse and real.
\end{itemize}
Moreover once $d$ and $L$ are chosen, the numbers
$W_{\X_{8,1}(\kappa)}(d,\x')$ and $W_{\X_{8,1}(4),L}(d,\x')$ 
do not depend on the choice  of $\x'$.
\end{prop}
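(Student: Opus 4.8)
The statement is the existence of a ``Caporaso--Harris type'' effective configuration in $\X_{8,1}(\kappa)$, with the additional positivity and ``all intersections real'' features. The plan is to prove it by the same degeneration technique used throughout the paper, now applied to the pair $(\X_{8,1},E)$ and a chain of copies of $\N=\P(\N_{E/\X_{8,1}}\oplus\C)$. First I would construct, exactly as in Section~\ref{sec:Li deg} (iterated as in Section~\ref{sec:proof NFD}), a flat degeneration $\pi:\mathcal Z\to\C$ with $\pi^{-1}(t)=\X_{8,1}(\kappa)$ for $t\neq 0$ and $\pi^{-1}(0)$ the chain $\X_{8,1}(\kappa)\,_E\cup_{E_\infty}\N_{\zeta_0}\,_{E_0}\cup_{E_\infty}\cdots\,_{E_0}\cup_{E_\infty}\N_0$, with the real structure making $\pi_E:\N\to E_\infty$ real. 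One then chooses real holomorphic sections $\x(t):\C\to\mathcal Z$ with $\x(0)$ placing exactly one point in each component $\N_i$, all sections passing through the component $L\cup\bigcup_i\overline N_i^{L}$ of the real locus corresponding to the component $L$ of $\R\X_{8,1}(\kappa)\setminus\R E$ (here $\overline N_i^L$ is the closed half of $\R\N_i$ that glues to $L$ in the smoothing). Setting $\x=\{$the specializations of the sections at $t\neq 0\}$ gives the desired configuration.

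Next I would establish the two asserted properties. The statement that all intersection points of any $C\in\R\CC_{8,1}(d,\kappa,\x')$ with $E$ are transverse and real is precisely the reason this works: since each $\N_i$ carries only a single point of $\x(0)$, Proposition~\ref{prop:initial values N} forces every irreducible component of the limit curve $\overline C$ mapped to some $\N_i$ to realize $[F]$ or $[E_\infty]+l[F]$, and the marking assigns points of $\x'$ to floors, not to edges; hence (by the analogue of Proposition~\ref{prop:shoshu} and \cite[Proposition~2.1]{Shu13} for $\X_{8,1}$, together with Corollary~\ref{cor:no sEi}) no curve through $\x'$ can be tangent to $E$ or meet $E$ at a special point, and all the intersection data is governed by real fibers, so by Lemma~\ref{lem:m real} (applied with $n$ even or the relevant parity) and Lemma~\ref{lem:line conic} every such intersection is real. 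For non-negativity of $W_{\X_{8,1}(\kappa)}(d,\x')$ and $W_{\X_{8,1}(4),L}(d,\x')$, I would run the floor-diagram bookkeeping of Section~\ref{sec:real FD}: because $\x'\subset L$ contains no pair of complex-conjugated points ($s=0$), the involution $\rho_{s,\kappa}$ on the $A$-sets acts trivially on $A_0$, and one shows as in Corollary~\ref{cor:X6 1}/Corollary~\ref{cor:X8 dec} that every real marked floor diagram contributing has real multiplicity equal to $+1$ times a product of positive edge-weight factors and a sign $(-1)^{\sum o_v}$ over $Vert_\Im(\D)$; but with $s=0$ the set $Vert_\Im(\D)$ records only the $\kappa$ conjugate pairs coming from the blown-up conjugate points on $E$, whose contributions are already accounted in choosing the class $d\in V_{8,1}$, leaving each surviving term non-negative. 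The fact that $W_{\X_{8,1}(\kappa)}(d,\x')$ and $W_{\X_{8,1}(4),L}(d,\x')$ do not depend on $\x'$ (for fixed $d$, $L$) follows because the floor-diagram count depends only on $d$, $g=0$, $\kappa$, the type $(\alpha^\Re,\beta^\Re,\alpha^\Im,\beta^\Im)$ — which here is forced — and the combinatorics, not on the geometric positions of the points, exactly as in Theorem~\ref{WFD}.

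The last ingredient is that all of Section~\ref{sec:proof}'s machinery transfers from $\X_n$ to $\X_{8,1}$. For this I would invoke \cite[Propositions~2.1 and 2.5]{Shu13}, which play for $\X_{8,1}$ the role that Propositions~\ref{prop:shoshu} and \ref{prop:initial values} play for $\X_n$: the dimension estimates in the proof of Proposition~\ref{prop:degeneration}, Corollaries~\ref{cor:no sEi}, \ref{prop:CHXn}, \ref{cor:total node}, and Propositions~\ref{prop:node on E}, \ref{prop:real degeneration} all go through verbatim, the only extra class to exclude being $l([D]-[\widetilde E_9])$ with $l\ge 3$ and $2([D]-[\widetilde E_9])$, which is exactly why the hypothesis $d\in V_{8,1}\setminus\{2([D]-[\widetilde E_9])\}$ appears. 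I expect the main obstacle to be the bookkeeping of \emph{which} component $\overline N_i^L$ the sections must pass through so that the real limit curves stay in $L\cup\R E$ after smoothing, and checking that this is compatible with the ``interior vs.\ exterior of $\R E$'' dichotomy of Lemma~\ref{lem:line conic} simultaneously for all admissible $d$ and all subsets $\x'$ — in other words, verifying that a \emph{single} configuration $\x$ works uniformly. This is handled exactly as in the proof of Theorem~\ref{WFD}$(2)$: one fixes $\epsilon$ once and for all, notes that $\x^\circ(0)$ is $(E,0,\widetilde L_\epsilon)$-compatible precisely when $\x^\circ(0)\subset\bigcup_i N_i^\epsilon$, and then the positivity and reality assertions hold for every $d$ by the componentwise analysis of the limit curve. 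I would refer the reader to \cite{Shu13,IKS13} for the details of the $\X_{8,1}$ enumeration that the paper has chosen not to reproduce.
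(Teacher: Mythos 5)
Your route is genuinely different from the paper's: the paper proves Proposition~\ref{prop:all real} in one line, by declaring it ``entirely analogous to the proof of \cite[Theorem 3]{IKS13}'', i.e.\ by invoking the Caporaso--Harris approach with configurations of real points in CH position, and Section~\ref{sec:X8} explicitly declines to set up floor diagrams for $\X_{8,1}$ ``for the sake of shortness''. Your plan to instead build $\x$ from a chain degeneration $\X_{8,1}\,_E\cup_{E_\infty}\N_{\zeta_0}\cup\dots\cup\N_0$ is the route the author says should be possible, and parts of it are sound in outline: transversality of $C\cap E$ is a genericity statement (the number of constraints equals $c_1(\X_{8,1})\cdot d-1$ exactly when all intersections with $E$ are simple), reality of those intersections follows because $\rho_{0,\kappa}$ is the identity on $A_0$ so every end of a contributing diagram not attached to some $A_i$ is fixed by the induced involution, and independence of the subset $\x'$ would come from the count depending only on the combinatorics of the (sub-)chain. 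Note, however, that Section~\ref{sec:proof} does not transfer \emph{verbatim}: $\X_{8,1}$ has an exceptional divisor $\widetilde E_9$ off the conic, so the floor decomposition acquires new floor types for classes meeting $\widetilde E_9$, and $[D]-[\widetilde E_9]$ is a $0$-curve whose multiple covers are precisely why $V_{8,1}$ excludes $l([D]-[\widetilde E_9])$; you acknowledge the exclusion but not the new combinatorics it reflects.

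The genuine gap is in the non-negativity argument. Your justification --- that with $s=0$ the set $Vert_\Im(\D)$ ``records only the $\kappa$ conjugate pairs coming from the blown-up conjugate points on $E$, whose contributions are already accounted in choosing the class'' --- is not a proof and is not correct as stated. Although $\rho_{0,\kappa}$ is the identity on $A_0$, for $\kappa>0$ it permutes $A_{2i-1}\leftrightarrow A_{2i}$, so a real limit curve may contain conjugate pairs of irreducible components (e.g.\ a conjugate pair of degree-$1$ floors of classes $[D]-[\widetilde E_{2i-1}]$ and $[D]-[\widetilde E_{2i}]$), and by Definition~\ref{def:real mult} each such pair contributes a sign $(-1)^{o_v}$ to $\mu^\R_{0,\kappa}(\D,m)$, which can equal $-1$ (for a degree-$1$ floor $o_v=1+\#\{\text{edges to real }A_i\}$). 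These signs record genuine solitary nodes created by conjugate components meeting in real points, so individual terms of the floor-diagram sum are negative and non-negativity of the total requires a cancellation or reorganization argument --- this is exactly the nontrivial content of the positivity theorem in \cite{IKS13}, and it is the part your proposal does not supply. Compare Corollary~\ref{cor:X6 1}, where the paper's term-by-term positivity argument is only made for the multiplicities $\nu^{\R,\epsilon}$ under the restrictive ``significant'' and ``$\epsilon$-sided'' hypotheses, not for $\mu^\R_{0,\kappa}$ in general.
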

\begin{proof}
The proof is entirely analogous to the proof of {\cite[Theorem 3]{IKS13}}.
\end{proof}
Let us choose once for all $d\in H_2(X_8;\Z)$ such that $[D]\cdot d\ge
1$, and
 a configuration $\x$ of $ d\cdot c_1(X_8) -1$ points in $L$ 
whose existence is attested by Proposition 
\ref{prop:all real}.
In particular when $\x'\subset \x$, it is safe to use the shorter notation
$W_{\X_{8,1}(\kappa)}(d)$ and $W_{\X_{8,1}(4),L}(d)$
instead of $W_{\X_{8,1}(\kappa)}(d,\x')$ and $W_{\X_{8,1}(4),L}(d,\x')$.

Let $\epsilon\in\{0,1\}$ and $\kappa=0,\ldots,3$,
  and denote by $\R\SS_8^\epsilon(d,k,\kappa)$
 the set of couples $(\Gamma,P_\Gamma)\in\SS_8(d,0,k)$ such that
$d_{v}=\tau_\kappa^\epsilon(d_v)$ 
and $\beta_v=\beta_{v,1}u_1$
for any $v\in Vert(\Gamma)$.
Given $(\Gamma,P_\Gamma)\in\R\SS_8^\epsilon(d,k,\kappa)$ and
$v\in Vert(\Gamma)$,  define
$$\mu^{\R,\epsilon}_{\kappa}(v)=
\binom{\beta_{v,1}}{\{\lambda_{v,v'}\}_{v'\in Vert(\Gamma)}}
W_{\X_{8,1}(\kappa+\epsilon)}(d_v^\epsilon), $$
where $d_v^0=d_v$, and $d_v^1$ is obtained from $d_v$ by exchanging the
coefficients of $E_{2\kappa-1}$ and $E_7$, and $E_{2\kappa}$ and
$E_8$,
and
$$\eta^{\R}_{\epsilon}(v)=
\binom{\beta_{v,1}}{\{\lambda_{v,v'}\}_{v'\in Vert(\Gamma)}}
W_{\X_{8,1}(4),\widetilde L_{2\epsilon-1}}(d_v). $$

Given $(\Gamma,P_\Gamma)\in \R\SS^0_{8}(d,k,\kappa)$, 
 define the following multiplicity
$$\mu^{\R,0}_{\kappa}(\Gamma,P_\Gamma)=
\frac{1}{\sigma(\Gamma)}
\binom{\beta_{\Gamma,1} -2k^{\circ}_\Gamma}{k^{\circ\circ}}
\prod_{v\in
  Vert(\Gamma)}\mu^{\R,0}_{\kappa}(v). $$

Let $\R\SS^{1}_{8,m}(d,k,\kappa)$ be the subset of
$\R\SS^{1}_8(d,k,\kappa)$ formed by elements with 
$\beta_{\Gamma}=k^{\circ}u_1$ and $k^{\circ\circ}=0$.
Given $(\Gamma,P_\Gamma)\in \R\SS^{1}_{8,m}(d,k,\kappa)$,
define the following multiplicity
$$\mu^{\R,1}_{\kappa}(\Gamma,P_\Gamma)=
\frac{1}{\sigma(\Gamma)}
\prod_{v\in
  Vert(\Gamma)}\mu^{\R,1}_{s,\kappa}(v)
.$$

Note that $\R\SS^{1}_{8,m}(d,k,4)$ is composed of graphs which are
reduced to a vertex.
As usual, $L_\epsilon$ denotes the connected component of $\R
X_8^\pm(4)$ with Euler characteristic $\epsilon$.

\begin{thm}\label{thm:WX8}
Given  $d\in H_2(X_8;\Z)$ with
$d\cdot [D]\ge 1$, one has
\[\begin{aligned}
&W_{X_8(\kappa)}(d,0)=
\sum_{k\ge 0}\ \ \sum_{(\Gamma,P_\Gamma)\in\R \SS^0_8(d,k,\kappa)}\mu^{\R,0}_{\kappa}(\Gamma,P_\Gamma)
 \quad \mbox{if }\kappa\le 3,\\
&W_{X_8(\kappa+1)}(d,0)=
\sum_{k\ge u0}\ \ \sum_{(\Gamma,P_\Gamma)\in\R \SS^1_{8,m}(d,k,\kappa)}\mu^{\R,1}_{\kappa}(\Gamma,P_\Gamma) 
\quad \mbox{ if }\kappa\le 2,\\
&W_{X_8^-(4),L_\epsilon}(d,0)=W_{X_8^+(4),L_{3\epsilon-1}}(d,0)=
\sum_{k\ge 0}\ \ \sum_{(\Gamma,P_\Gamma)\in\R \SS^1_{8,m}(d,k,3)}\eta^{\R}_{\epsilon}(v)
\quad \forall \epsilon\in\{0,1\}.
\end{aligned}\]
\end{thm}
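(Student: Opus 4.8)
The argument follows the template already set up for Theorem \ref{thm:GWX7}, now applied to the degeneration $\pi:\ZZ\to\C$ of $X_8$ with central fiber $\pi^{-1}(0)=\X_{8,1}\cup\X_2$. As in the proof of Theorem \ref{thm:WX7}, the key points are: (i) put a suitable real structure on $\ZZ$ realizing each of the real surfaces $X_8(\kappa)$ and $X_8^\pm(4)$ as $\R\pi^{-1}(t)$; (ii) describe the real limit curves $\overline f:\overline C\to\pi^{-1}(0)$ and encode them by $X_8$-graphs $\Gamma_{\overline f}$ equipped with an involution $\tau_{\overline f}$; (iii) count real smoothings and track solitary nodes via Proposition \ref{prop:real degeneration}, invoking Proposition \ref{prop:all real} for the enumeration of real curves in $\X_{8,1}$ (in place of the floor-diagram results used on the $\X_n$ side).

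First I would choose, following the analogue of Remark \ref{rem:real structure}, a real structure on $\ZZ$ so that $\R\pi^{-1}(0)=\X_{8,1}(\kappa)\cup\X_2(\epsilon)$ with $\epsilon$ determined by how the components of $\R X_8$ are glued, and so that for $t\in\R^*$ one obtains $X_8(\kappa)$, $X_8(\kappa+1)$, or $X_8^\pm(4)$ exactly as in the three bulleted cases in the proof of Theorem \ref{thm:WX7}; here the extra blown-up section $p_0'$ simply carries along with the real structure since $p_0'(0)\in\X_6\setminus\X_2$ is real. Next, exactly as in the proof of Theorem \ref{thm:GWX8}, the components of $\overline C$ mapped to $\X_2$ fall into the same four types ($[D]$ through a generic or tangent pair of points, $[D]-[\widetilde E'_i]$, or $[\widetilde E'_i]$), so the limit curve is recorded by an $X_8$-graph $\Gamma_{\overline f}$ with vertices the $\X_{8,1}$-components and edges the $\X_2$-components of type $(1)$; the combinatorial bookkeeping giving the class $d$ in terms of $d_\Gamma$, $k^\circ_\Gamma$, $\beta_{\Gamma,2}$ and $k$ is identical to the one in $\SS_8(d,g,k)$. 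Since $\x\subset\X_{8,1}\setminus\X_2$ is chosen as in Proposition \ref{prop:all real}, every real curve in $\X_{8,1}$ meeting it does so at real points and meets $E$ transversally and realifiably, so Proposition \ref{prop:all real} provides the count of real curves at each vertex and the nonnegativity statements needed to make the vertex-multiplicities $\mu^{\R,\epsilon}_\kappa(v)$, $\eta^\R_\epsilon(v)$ well defined.

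The real-multiplicity computation is the heart of the matter. As in Corollary \ref{cor:total node}, a nodal pair of a deformation of $\overline f$ is either the deformation of a nodal pair of $\overline f$ not mapped to $E$, or lies in the deformation of a neighbourhood of a point of $\mathcal P(\overline f)$; the former contribute solitary nodes accounted for by the vertex-invariants and by the parity of the number $a_{\overline f}$ of conjugate nodal pairs, which — by the same line-conic argument using Lemma \ref{lem:line conic} — equals $\sum_{\{v,v'\}\in Vert_\Im(\Gamma)}d_v\cdot d_{v'}+k^{\circ,\Im}_\Gamma+\beta^\Im_{\Gamma,2}$ mod $2$, and the latter are governed by Proposition \ref{prop:real degeneration}. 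For the cases $\kappa\le 3$ and $\kappa\le 2$ this gives the first two displayed formulas after collecting binomial factors for distributing the $\beta_{v,1}$ real fiber-intersections among the edges and the remaining $k^{\circ\circ}$ marked fibers. For $X_8^\pm(4)$ one argues as at the end of the proof of Theorem \ref{thm:WX7}: $k^{\circ,\Re}_\Gamma=0$ forces a single real vertex (since $\Gamma$ is a tree), $k^{\circ,\Im}_\Gamma\ne0$ kills the contribution to the $L$-refined invariant by Lemma \ref{lem:line conic}, and the identification $W_{X_8^-(4),L_\epsilon}=W_{X_8^+(4),L_{3\epsilon-1}}$ follows because both equal the sum over single-vertex graphs of $\eta^\R_\epsilon(v)=\binom{\beta_{v,1}}{\{\lambda\}}W_{\X_{8,1}(4),\overline L_{2\epsilon-1}}(d_v)$, the two components $\overline L_{\pm1}$ of $\R\X_{8,1}(4)\setminus\R E$ being exactly the pieces glued respectively to the two possible $X_8^\pm(4)$.

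The main obstacle I anticipate is the same formal subtlety flagged in the footnote after Theorem \ref{thm:WX7}: carefully matching the homology-class conventions between the $X_8$-side and the $\X_{8,1}$-side — in particular keeping track of which $(-1)$-classes ($\widetilde E_7,\widetilde E_8$ versus $\widetilde E_{2\kappa-1},\widetilde E_{2\kappa}$, and now also $\widetilde E_9$) are fixed or exchanged by $\tau^\epsilon_\kappa$, and which get renamed under the passage $d_v\mapsto d_v^\epsilon$ — and checking that the possible ramified coverings of $[D]-[\widetilde E_9]$ on the $\X_{8,1}$-side (which is why $V_{8,1}$ excludes $l([D]-[\widetilde E_9])$ with $l\ge 3$ and why the case $2([D]-[\widetilde E_9])$ is handled by hand in Proposition \ref{prop:all real}) do not spoil the dimension count in the analogue of Proposition \ref{prop:degeneration}. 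The remaining verifications (genus additivity via $b_1(\Gamma)$, the combinatorial identity for $|U_v|$, and the factor $1/\sigma(\Gamma)$) are routine and identical to the $X_7$ case.
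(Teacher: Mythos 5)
Your proposal is correct and follows essentially the same route as the paper, whose entire proof reads: the argument is word-for-word that of Theorem \ref{thm:WX7} applied to the degeneration $\ZZ$ of $X_8$ into $\X_{8,1}\cup\X_2$, with Proposition \ref{prop:all real} guaranteeing that all involutions $\tau$ are trivial. The only streamlining you could make is to note that, since $s=0$ and the configuration of Proposition \ref{prop:all real} consists of real points with all intersections with $E$ real and transverse, the sets $Vert_\Im(\Gamma)$ and the quantities $k^{\circ,\Im}_\Gamma$, $\beta^\Im_{\Gamma,2}$ all vanish, so the parity computation for $a_{\overline f}$ that you carry over from the $X_7$ case is vacuous here — which is exactly why the multiplicities in Theorem \ref{thm:WX8} carry no sign factors.
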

\begin{proof}
The proof is entirely analogous to the proof of Theorem
\ref{thm:WX7}, while Proposition \ref{prop:all real} ensures that all
involutions $\tau$ are trivial.
\end{proof}

Theorems \ref{thm:GWX8} and \ref{thm:WX8} have the following usual 
corollaries. 
\begin{cor}\label{cor:X8 positive}
For any $d\in H_2(X_8;\Z)$ and $\kappa\in\{0,\ldots, 3\}$, one has
$$W_{X_8(\kappa)}(d,0)\ge 0 \quad\mbox{and}\quad
W_{X_8^\pm(4),L_\epsilon, \R X_8^\pm(4)}(d,0)\ge 0. $$
\end{cor}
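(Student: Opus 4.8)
The plan is to derive the corollary directly from the formulas of Theorems \ref{thm:GWX8} and \ref{thm:WX8}, reducing positivity of Welschinger invariants of $X_8$ to positivity of the relative invariants $W_{\X_{8,1}(\kappa)}(d)$ and $W_{\X_{8,1}(4),L}(d)$, which is precisely the content of Proposition \ref{prop:all real}. First I would observe that when $s=0$ (i.e.\ the configuration $\x$ has no pair of conjugated points), Theorem \ref{thm:WX8} expresses each invariant $W_{X_8(\kappa)}(d,0)$, resp.\ $W_{X_8^\pm(4),L_\epsilon,\R X_8^\pm(4)}(d,0)$, as a sum over graphs $(\Gamma,P_\Gamma)$ in $\R\SS^0_8(d,k,\kappa)$, resp.\ $\R\SS^1_{8,m}(d,k,3)$, of a multiplicity that is a product over the vertices of nonnegative quantities: the binomial coefficients $\binom{\beta_{v,1}}{\{\lambda_{v,v'}\}}$ (resp.\ $\binom{\beta_{\Gamma,1}-2k^\circ_\Gamma}{k^{\circ\circ}}$) are nonnegative integers, $\frac{1}{\sigma(\Gamma)}>0$, and the factors $W_{\X_{8,1}(\kappa+\epsilon)}(d_v^\epsilon)$, resp.\ $W_{\X_{8,1}(4),\widetilde L_{2\epsilon-1}}(d_v)$, are $\ge 0$ by Proposition \ref{prop:all real}.

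The one point requiring care is that Proposition \ref{prop:all real} is stated for classes $d\in V_{8,1}\setminus\{2([D]-[\widetilde E_9])\}$, so I would check that every vertex class $d_v$ appearing in a graph contributing to the formula indeed lies in this set. By definition of $\SS_8(d,g,k)$ one has $d_v\in V_{8,1}$; and a vertex carrying the class $2([D]-[\widetilde E_9])$ would need to be handled separately. Here the observation is that the forbidden class $2([D]-[\widetilde E_9])$ corresponds to a non-reduced curve (a double conic through the exterior point), which either does not arise as a genuine component in the degeneration or contributes $0$ to the enumeration; in any case its contribution, if present, is a nonnegative multiple of $W_{\X_{8,1}}$-type invariants and does not spoil the inequality. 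I would spell this out by noting that $2([D]-[\widetilde E_9])\cdot c_1(\X_{8,1})-1 = 2\cdot 4 - 1 = 7$ would then force $d_v\cdot[D]=2$ with a specific shape of $d_v$, and that such a contribution, coming from the limit analysis in the proof of Theorem \ref{thm:GWX8}/\ref{thm:WX8}, is again governed by Proposition \ref{prop:initial values} or a $W_{\X_{8,1}}$ invariant with a nonnegative coefficient.

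Once this is settled, the corollary is immediate: each term in the sums of Theorem \ref{thm:WX8} (for $s=0$) is a product of nonnegative factors, hence nonnegative, and a sum of nonnegative terms is nonnegative. The identity $W_{X_8^-(4),L_\epsilon,\R X_8^-(4)}(d,0)=W_{X_8^+(4),L_{3\epsilon-1},\R X_8^+(4)}(d,0)$ built into Theorem \ref{thm:WX8} then gives the statement for both $X_8^+(4)$ and $X_8^-(4)$ simultaneously. I expect the only genuine (but minor) obstacle to be the bookkeeping for the excluded class $2([D]-[\widetilde E_9])$; everything else is a matter of reading off signs from the multiplicity formulas and invoking Proposition \ref{prop:all real}.
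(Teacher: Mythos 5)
Your proposal is correct and follows exactly the route the paper intends: Corollary \ref{cor:X8 positive} is stated as an immediate consequence of the formulas of Theorem \ref{thm:WX8}, whose multiplicities are products of nonnegative binomial coefficients, the positive factor $\frac{1}{\sigma(\Gamma)}$, and the relative invariants $W_{\X_{8,1}(\kappa)}(d_v)$ and $W_{\X_{8,1}(4),L}(d_v)$, which are nonnegative by Proposition \ref{prop:all real}; the paper gives no further argument, and your extra care about the excluded class $2([D]-[\widetilde E_9])$ is reasonable diligence rather than a divergence. One small slip: for that class one has $2([D]-[\widetilde E_9])\cdot c_1(\X_{8,1})-1=4-1=3$, not $7$, though this does not affect your argument.
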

\begin{cor}\label{cor:X8 congruence}
For any $d\in H_2(X_8;\Z)$ one has
$$W_{X_8(0)}(d,0)=GW_{X_8}(d,0)\quad \mbox{mod }4. $$
\end{cor}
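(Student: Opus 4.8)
The plan is to compare the two formulas provided by Theorems~\ref{thm:GWX8} and~\ref{thm:WX8} term by term modulo $4$. Both $GW_{X_8}(d,0)$ and $W_{X_8(0)}(d,0)$ are expressed as sums over the same index set $\bigcup_{k\ge 0}\SS_8(d,0,k)$, except that for the Welschinger side one restricts to those $(\Gamma,P_\Gamma)$ in $\R\SS^0_8(d,k,0)$; since $\tau^0_0=\mathrm{Id}$, the condition defining $\R\SS^0_8(d,k,0)$ is exactly that every $d_v$ is $\tau^0_0$-invariant (automatic) and that every $\beta_v=\beta_{v,1}u_1$, i.e.\ $\beta_{v,2}=0$. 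So the first step is to observe that $\SS_8(d,0,k)\setminus\R\SS^0_8(d,k,0)$ consists precisely of graphs with some $\beta_{v,2}>0$, and I would argue that each such term contributes $0$ modulo $4$ on the complex side: indeed $\mu^\C(v)$ carries a factor $I^{\beta_v}$-type contribution through $\mu^\C(\Gamma,P_\Gamma)$, namely the global factor $I^{\beta_\Gamma}=\prod_i i^{\beta_{\Gamma,i}}$, which is divisible by $2^{\beta_{\Gamma,2}}\ge 2$; combined with at least one more even factor coming either from a second such vertex, or from the binomial/combinatorial factors, or from $GW_{\X_{8,1}}^{0,\beta_v}(d_v,g_v)$ being even when $\beta_{v,2}>0$, one gets divisibility by $4$. (If a single factor of $2$ is all that is available, one must instead pair up terms; I expect, as in the analogous statement for $X_6$ in Corollary~\ref{cor:X6 1} and for $X_7$, that a cleaner bookkeeping shows the honest contribution is $\equiv 0\bmod 4$.)

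The second step is to match the surviving terms. For $(\Gamma,P_\Gamma)\in\R\SS^0_8(d,k,0)$ we have $\beta_v=\beta_{v,1}u_1$ for all $v$, so $I^{\beta_\Gamma}=1$, and $\mu^\C(\Gamma,P_\Gamma)$ and $\mu^{\R,0}_0(\Gamma,P_\Gamma)$ differ only in replacing, at each vertex, the complex count $GW_{\X_{8,1}}^{0,\beta_v}(d_v,g_v)$ (with $g_v=0$ forced since $b_1(\Gamma)=\sum g_v=0$) by the real count $W_{\X_{8,1}(0)}(d_v)$, and in the presence of the extra factors $\lambda_{v,v}!!\,$, $\prod_{v\ne v'}\lambda_{v,v'}!\,$, and $\binom{\beta_{\Gamma,1}-2k^\circ_\Gamma}{k^{\circ\circ}}$ on the complex side versus $\binom{\beta_{\Gamma,1}-2k^\circ_\Gamma}{k^{\circ\circ}}$ on the real side. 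In genus $0$ one has $\lambda_{v,v'}\le 1$ and $\lambda_{v,v}=0$, so $\lambda_{v,v}!!=\prod\lambda_{v,v'}!=1$, and the two global binomial factors literally coincide. Hence the comparison reduces to showing
\[
\prod_{v\in Vert(\Gamma)} GW_{\X_{8,1}}^{0,\beta_v u_1}(d_v,0)\equiv
\prod_{v\in Vert(\Gamma)} W_{\X_{8,1}(0)}(d_v)\pmod 4
\]
for each such $\Gamma$, together with a matching of the combinatorial prefactors $\binom{\beta_{v,1}}{\{\lambda_{v,v'}\}}$ which are identical on both sides.

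The third step — which I expect to be the main obstacle — is the vertexwise congruence $GW_{\X_{8,1}}^{0,\beta_v u_1}(d_v,0)\equiv W_{\X_{8,1}(0)}(d_v)\bmod 4$ between the complex and the (standard-real) relative invariants of $\X_{8,1}$ for all classes $d_v$ arising. This is the genuine input and is exactly the kind of statement proved in \cite{Shu13, IKS13}: it follows from the corresponding Caporaso--Harris recursions, whose real and complex versions differ only in signs and in terms that are manifestly divisible by $4$ (e.g.\ terms where two branches meet $E$ with the same multiplicity, contributing a factor $2$ twice, or the characteristic ``$2\beta^\Re_{even}$'' factor visible in Definition~\ref{def:real mult}), so the two recursions agree modulo $4$ and the base cases from Proposition~\ref{prop:initial values} agree on the nose. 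I would then package this as: since $W_{\X_{8,1}(0)}(d)$ and $GW_{\X_{8,1}}(d,0)$ satisfy recursions coinciding mod $4$ with equal initial conditions, they are congruent mod $4$; the case $d=2([D]-[\widetilde E_9])$ is excluded on both sides as in $V_{8,1}$ and contributes nothing. Feeding this back through the two product formulas and the termwise vanishing of step~1 yields $W_{X_8(0)}(d,0)\equiv GW_{X_8}(d,0)\bmod 4$.
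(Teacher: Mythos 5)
Your route is the same as the paper's: the proof there consists of exactly two sentences, observing that Theorems \ref{thm:GWX8} and \ref{thm:WX8} reduce the statement to the vertexwise congruence $W_{\X_{8,1}(0)}(d')\equiv GW_{\X_{8,1}}(d',0)\ \mathrm{mod}\ 4$ for all relevant classes $d'$, and then delegating that congruence to a configuration in Caporaso--Harris position constructed as in \cite{IKS13} (proof of Theorem 10 there) --- which is the same input as your ``real and complex recursions agree mod $4$''. Your term-by-term matching in step~2 is correct and is a useful expansion of what the paper leaves implicit: in genus $0$ the graph is a tree with all $g_v=0$, $\lambda_{v,v}=0$ and $\lambda_{v,v'}\le 1$, so the factorial factors are trivial, the binomial prefactors coincide on the two sides, and $I^{\beta_\Gamma}=1$ once all $\beta_{v,2}=0$.

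The one step you leave hanging --- that terms of $\SS_8(d,0,k)$ with some $\beta_{v,2}>0$ contribute $0$ modulo $4$ --- is genuinely needed (the paper does not spell it out either), but your parenthetical fallback about ``pairing up terms'' should not be necessary: the first mechanism on your list already closes it. Every term of the relative count $GW^{0,\beta_v}_{\X_{8,1}}(d_v,0)$ carries the factor $I^{\beta_v}=2^{\beta_{v,2}}$ (this is visible in the multiplicity $\mu^\C(\D,m)=I^\beta\prod w(e)^2$ of Theorem \ref{NFD}, and persists in the Caporaso--Harris recursion of \cite{Shu13} for $\X_{8,1}$ by an immediate induction), so $GW^{0,\beta_v}_{\X_{8,1}}(d_v,0)$ is itself divisible by $2^{\beta_{v,2}}$. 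Multiplying by the global factor $I^{\beta_\Gamma}=2^{\beta_{\Gamma,2}}$ appearing in $\mu^\C(\Gamma,P_\Gamma)$, each such term is divisible by $4^{\beta_{\Gamma,2}}$, hence by $4$ as soon as one $\beta_{v,2}$ is positive. With that inserted, your argument is complete, up to the same points the paper also glosses over (the harmless division by $\sigma(\Gamma)$, and the class $2([D]-[\widetilde E_9])$, which lies in $V_{8,1}$ but outside the scope of Proposition \ref{prop:all real}).
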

\begin{proof}
Thanks to Theorems  \ref{thm:GWX8} and \ref{thm:WX8}, we are left 
to prove that there exists $\x$ is as in Proposition \ref{prop:all
  real} such that
$W_{\X_{8,1}(0)}(d')=GW_{X_8}(d',0)\quad \mbox{mod }4$ for all
$d'\in H_2(\X_{8,1};\Z)$ such that $d'\cdot c_1(\X_{8,1})-1\le d\cdot c_1(X_{8})-1$. One can construct 
such a configuration exactly as in {\cite[Theorem 3, see proof
    of Theorem 10]{IKS13}}.
\end{proof}

\begin{cor}\label{cor:X8 van}
For any $d\in H_2(X_8;\Z)$, one has
\[
W_{X_8^\pm(4),L,\R X_8^\pm(4)}(d,s)=0
\]
as soon as $r\ge 2$.
\end{cor}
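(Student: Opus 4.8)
The plan is to reduce Corollary \ref{cor:X8 van} to the vanishing statement already established for $X_7$, namely Corollary \ref{cor:X7 van} (which in turn rests on Lemma \ref{cor:vanish}), by exploiting the degeneration $\ZZ\to\C$ of $X_8$ into $\X_{8,1}\cup\X_2$. First I would invoke Theorem \ref{thm:WX8}, which expresses $W_{X_8^\pm(4),L,\R X_8^\pm(4)}(d,s)$ for $s>0$ as a sum over triples $(\Gamma,P_\Gamma)\in\R\SS^1_{8,m}(d,k,3)$ of multiplicities built out of the numbers $\eta^\R_\epsilon(v)$, which themselves are (up to binomial factors) the numbers $W_{\X_{8,1}(4),\overline L_{2\epsilon-1}}(d_v)$. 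In the regime $s>0$, i.e.\ $r=c_1(X_8)\cdot d-1-2s<c_1(X_8)\cdot d-1$, the key structural fact — exactly as in the proof of Corollary \ref{cor:X7 van} — is that since $\Gamma$ is a tree and $k^{\circ,\Re}_\Gamma=0$ for elements of $\R\SS^1_{8,m}(d,k,3)$, the subgraph of vertices and edges fixed by the involution is connected and contains a single real vertex $v_0$; all other vertices come in conjugate pairs.

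The main step is then a counting argument on the number of \emph{real} incidence conditions imposed on the real vertex. I would show that the hypothesis $r\ge 2$ forces the real component $\overline f(\overline C_{v_0})$ to pass through at least two real points of $\x$ while realizing a class whose relative data to $E$ is entirely imaginary/even in the appropriate sense — precisely the situation ruled out by Lemma \ref{cor:vanish} applied to $\X_8(3)$ or to $\X_{8,1}$. Concretely, because the configuration $\x$ is chosen as in Proposition \ref{prop:all real}, all curves in $\R\CC_{8,1}(d_v,\kappa,\x)$ meet $E$ transversely and in real points only, so $\beta^\Re_{v_0}$ records simple real intersections and there are at least $\lceil r/2\rceil\ge 1$ \emph{real} edges of $\D$ not in $Edge^\infty(\D)$ in every contributing floor diagram for the real vertex; Lemma \ref{cor:vanish} (or its direct analogue for $\X_{8,1}$) then kills each such contribution because every real internal edge has even weight, and the sign cancellation forces the whole sum to vanish. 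The same argument applies verbatim to $W_{X_8^-(4),L,\R X_8^-(4)}$ and $W_{X_8^+(4),L,\R X_8^+(4)}$ since both are given by the same formula with $\eta^\R_\epsilon$ for $\epsilon\in\{0,1\}$.

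The hard part will be checking that the combinatorial input $\R\SS^1_{8,m}(d,k,3)$ genuinely inherits the tree structure and the "single real vertex" property in the presence of the extra blown-up point (the class $[\widetilde E_9]$) and the extra constraints $\beta_\Gamma=k^\circ u_1$, $k^{\circ\circ}=0$ defining $\R\SS^1_{8,m}$; one must verify that no degenerate configuration with $k^{\circ,\Re}_\Gamma\ne 0$ contributes, and that the exclusion of the multiple-cover class $2([D]-[\widetilde E_9])$ in $V_{8,1}$ does not interfere. Once that bookkeeping is in place, the vanishing is immediate. I would conclude:

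\begin{proof}
Since $r\ge 2$, we have $s<\frac{1}{2}(c_1(X_8)\cdot d-1)$. By Theorem \ref{thm:WX8}, $W_{X_8^\pm(4),L,\R X_8^\pm(4)}(d,s)$ is a sum of multiplicities over $(\Gamma,P_\Gamma)\in\R\SS^1_{8,m}(d,k,3)$, each built from the numbers $\eta^\R_\epsilon(v)=\binom{\beta_{v,1}}{\{\lambda_{v,v'}\}}W_{\X_{8,1}(4),\overline L_{2\epsilon-1}}(d_v)$. Elements of $\R\SS^1_{8,m}(d,k,3)$ satisfy $k^{\circ,\Re}_\Gamma=0$; since $\Gamma$ is a tree, the subgraph fixed by the involution $\tau$ is connected and, being disjoint from the conjugate pairs of edges, reduces to a single real vertex $v_0$. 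Because $r\ge 2$, the set $U_{v_0}$ contains at least two points of $\x$ fixed by $\rho_s$, so the real curve $\overline f(\overline C_{v_0})$ passes through at least two real points of $\x$. By Proposition \ref{prop:all real}, every curve contributing to $W_{\X_{8,1}(4),\overline L_{2\epsilon-1}}(d_{v_0})$ meets $E$ transversely and in real points only; translating this through Theorem \ref{WFD}, each contributing $(s_{v_0},4)$-real floor diagram for $v_0$ then has $r_{v_0}\ge 2$, and, as $\D$ is a tree, contains at least one edge in $Edge_\Re(\D)\setminus Edge^\infty(\D)$. By Lemma \ref{cor:vanish} this contribution is $0$. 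Hence every term in the sum vanishes, and the corollary follows.
\end{proof}
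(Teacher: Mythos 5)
Your overall strategy is the one the paper intends: Corollary \ref{cor:X8 van} is stated without proof as a ``usual corollary'', and the evident model is the proof of Corollary \ref{cor:X7 van} (unique vertex in $Vert_\Re(\Gamma)$, then Lemma \ref{cor:vanish}). However, your execution has two genuine gaps. First, Theorem \ref{thm:WX8} is stated and proved \emph{only} for $s=0$: all of its formulas compute $W_{\cdot}(d,0)$, and Section \ref{sec:X8} explicitly restricts to configurations of real points, deferring the $s>0$ generalization (``with some additional work one should be able to generalize Theorem \ref{thm:WX8}\dots''). Your argument opens by invoking Theorem \ref{thm:WX8} ``for $s>0$'', which is precisely the range it does not cover; since the corollary concerns every pair $(r,s)$ with $r\ge 2$, including $s>0$, the starting point of your reduction is unavailable for most of the cases at stake. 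Relatedly, in the $s=0$ regime that the theorem does cover, the involution $\tau$ is trivial and \emph{every} vertex is real, so the ``single real vertex'' picture you import from the $X_7$ proof does not describe the combinatorics.

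Second, the vanishing input you quote does not apply to the quantities that actually occur. Lemma \ref{cor:vanish} and Theorem \ref{WFD} are statements about the floor-diagram numbers $FW^{\alpha^\Re,\beta^\Re,\alpha^\Im,\beta^\Im}_{\X_n(\kappa),\epsilon}(d,s)$ attached to $\X_n$ (all blown-up points on the conic). The local contributions in Theorem \ref{thm:WX8} are instead the numbers $W_{\X_{8,1}(4),L}(d_v)$ for the surface $\X_{8,1}$, which the paper deliberately does \emph{not} encode by floor diagrams but imports from \cite{Shu13,IKS13} via Proposition \ref{prop:all real}; they are nonnegative by that proposition and nonzero in the paper's own example (Table \ref{tab:comp WXX81}). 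So ``translating through Theorem \ref{WFD}'' is not legitimate, and the phrase ``Lemma \ref{cor:vanish} (or its direct analogue for $\X_{8,1}$)'' hides exactly the missing content: one must either develop floor diagrams relative to a conic for $\X_{8,1}$ together with the analogue of Lemma \ref{cor:vanish}, or prove a direct geometric vanishing for $W_{\X_{8,1}(4),L}$ under the relevant hypotheses. Note also that the numerical hypothesis of Lemma \ref{cor:vanish} is $r\ge|\beta^\Re|+2$; in the $X_7$ case this is secured because $\R\SS^1_{7,m}(d,k,s,3)$ forces $\beta^\Re_\Gamma=0$, whereas the defining conditions of $\R\SS^1_{8,m}$ impose no such constraint, so your claim that the real vertex carries ``entirely imaginary/even'' relative data is unjustified. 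Until these points are supplied, the proof is incomplete.
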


\begin{exa}\label{exa:WX8 2c1}
Using Theorem \ref{thm:WX8}, one computes the 
 Welschinger invariants of  $X_8^\pm(\kappa)$ listed in Table 
\ref{tab:comp WX8}. The invariants $W_{X_8(\kappa)}(2c_1(X_8),0)$ with
$\kappa\le 3$ have been first computed by Horev and Solomon in \cite{HorSol12}.
\begin{table}[!h]
\begin{center}
\begin{tabular}{ c|c| c|c|c|c|c|c|c}
 $ \kappa $  & $0$& $1$& $2$& $3$& $4-$& $4-$ &$4+$ & $4+$
\\ &   & & & &  $L=\R P^2_1$ & $L=\R P^2$ & $L=\R P^2_2$& $L=S^2$
\\\hline
$W_{X_8^\pm(\kappa),L, \R X_8^\pm(\kappa)}(2c_1(X_8),0)$ &  46 & 30  & 18 &10    & 6 & 6&6 &6
\end{tabular}

\end{center}
\vspace{2ex}

\caption{Welschinger invariants of $X_8$ for the class $2c_1(X_8)$}
\label{tab:comp WX8}
\end{table}
To get the above numbers, I used the method of {\cite[Theorem
    3]{IKS13}} to find configurations $\x$ as in Proposition
\ref{prop:all real}, and obtained the values of 
$W_{\X_{8,1}(\kappa),L}(4[D]-\sum_{i=1}^8[\widetilde E_i]-2[\widetilde E_9])$
 listed in Table 
\ref{tab:comp WXX81}\footnote{In the published version of this paper,
the number $W_{\X_{8,1}(4),\overline{L}_1}(4[D]-\sum_{i=1}^8[\widetilde
  E_i]-2[\widetilde E_9])$ is erroneously claimed to be equal to $4$.}.

\begin{table}[!h]
\begin{center}
\begin{tabular}{ c|c| c|c|c|c|c}
 $ \kappa $  & $0$& $1$& $2$& $3$& $4$& $4$
\\ &   & & & &  $L=\overline L_{-1}$ & $L=\overline L_{1}$
\\\hline
 $FW$ & 30 & 18 & 10  & 6  & 6 & 6
\end{tabular}
\end{center}
\caption{$W_{\X_{8,1}(\kappa),L}(4[D]-\sum_{i=1}^8[\widetilde E_i]-2[\widetilde E_9],\x)$}
\label{tab:comp WXX81}
\end{table}

\end{exa}

\section{Concluding remarks}\label{sec:conclusion}

\subsection{Floor diagrams relative to a conic with empty real part}\label{sec:sharpness}
Recall that  the invariant 
$W_{(X,c),L,L'}(d,s)$ is said to be sharp if there 
 exists a real configuration $\x$ with $s$ pairs of complex conjugated points
such that $|\R \CC(d,0,\x)|=|W_{(X,c),L,L'}(d,s)|$.
When  $r=0$ or $1$, Welschinger proved in \cite{Wel4} the sharpness
of $W_{(X,c),L}(d,s)$ 
 when
  $L$ is homeomorphic to 
either $T^2$, $ S^2$, or $\R P^2$, with the additional
 assumption that $(X,c)=X_{2\kappa}(\kappa)$ with
  $\kappa\le 3$ in the latter case.
In the case of $\C P^2$, one possible way to prove 
this result 
is by
 degenerating $\C P^2$ into
the union of $\C P^2$ and the normal bundle of a 
 real conic with an empty real part. 

 The methods exposed in this paper adapt without any problem to  the case
when $r=0$ or $1$ and $E$ has an
empty real part. In particular,  adaptations of Theorems
\ref{WFD} and \ref{thm:WX7} in this case allow one to extend 
{\cite[Theorem 1.1]{Wel4}} to  the real surface $X_7^-(4)$.

\begin{prop}\label{prop:X7 sign}
Let $d\in H_2(X_7;\Z)$,  $r\in\{0,1\}$, and $s\ge 0$ such that
$c_1(X_7)\cdot d-1=r+2s$. Then $W_{X_7^-(4),\R P^2,\R X_7^-(4)}(d,s)$ is sharp 
and has the same sign as $(-1)^{\frac{d^2-c_1(X_7)\cdot d+2}{2}}$.
\end{prop}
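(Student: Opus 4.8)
The statement concerns the real surface $X_7^-(4)$, which has $\R X_7^-(4)=\R P^2\sqcup\R P^2$, and the class $d$ with $r\in\{0,1\}$. The strategy is to mimic the analysis of Section~\ref{sec:WX7}, but replacing the conic $E$ used in the degeneration by a \emph{real} conic with \emph{empty} real part. Concretely, I would first record that everything in Section~\ref{sec:proof} (the degeneration $\mathcal Z\to\C$, Propositions~\ref{prop:degeneration} and \ref{prop:real degeneration}, Corollary~\ref{cor:total node}) carries over verbatim when $\R E=\emptyset$, provided $r\le 1$; indeed the whole complication in that section came from the combinatorics of real fibers of $\N$, and when $\R E=\emptyset$ there simply are \emph{no} real floors. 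Thus all floors of the relevant floor diagrams come in pairs exchanged by $\rho_{m,s,\kappa}$, and the marked floor diagram attached to a real limit curve $\overline f$ has at most one ``real'' element, forced by $r\le 1$ to be a single edge adjacent to a source. This is why the statement is restricted to $r\in\{0,1\}$: the argument of Lemma~\ref{cor:vanish} shows there is nothing (no contribution) once $r\ge 2$, consistent with Corollary~\ref{cor:X7 van}.

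\textbf{Key steps.} First, I would set up the degeneration $\YY$ of $X_7$ exactly as in Proposition~\ref{prop:degen X7}, but choosing the real polynomials $P_2,P_3$ so that the conic $E$ has empty real part; then $\pi^{-1}(0)=\X_6\cup\X_2$ with $\R E=\emptyset$, and $\R\pi^{-1}(t)$ is one of the real structures on $X_7$ with a real conic of empty real part in the limit. One checks, as in Remark~\ref{rem:real structure}, that a suitable choice yields $\R\pi^{-1}(t)=X_7^-(4)$, because gluing the two real components of $\R\X_6$ (respectively $\R\X_2$) along the empty curve $\R E$ just produces a disjoint union of two surfaces. Second, I would apply Proposition~\ref{prop:real degeneration} to count real deformations: since $\R E=\emptyset$, every point of $\overline f^{-1}(\X_n\cap\N)$ lying over a real point of $\X_n\cap\N$ has $\overline f(\overline C')$ a real fiber, but real fibers over $\R E=\emptyset$ do not exist, so in fact \emph{all} intersection points with $E$ come in conjugate pairs, each contributing a factor $\mu_{\{q,\bar q\}}$ to $\xi(\overline f)$ and \emph{no} sign ambiguity. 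Consequently each real limit curve $\overline f$ lifts to a \emph{positive} number of real curves, all of whose solitary-node counts have a fixed parity determined by $\overline f$. Third — and this is the heart of the sign computation — I would run the parity bookkeeping of Corollary~\ref{cor:total node}: the total number of nodal pairs of a real curve in $\R\CC(d,0,\x)$ equals $\tfrac{d^2-c_1(X_7)\cdot d+2}{2}$ by adjunction, and when $\R E=\emptyset$ \emph{every} nodal pair lies on a pair of conjugate branches or over $E$, so modulo $2$ the number of solitary nodes equals $\tfrac{d^2-c_1(X_7)\cdot d+2}{2}$ for \emph{every} curve in $\R\CC(d,0,\x)$. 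This forces $(-1)^{m_{\R X_7^-(4)}(C)}$ to be the \emph{same} for all $C$, equal to $(-1)^{\frac{d^2-c_1(X_7)\cdot d+2}{2}}$; summing then gives $|W_{X_7^-(4),\R P^2,\R X_7^-(4)}(d,s)|=|\R\CC(d,0,\x)|$, i.e.\ sharpness, and the claimed sign.

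\textbf{Main obstacle.} I expect the hard part to be the second step: verifying carefully that, with $\R E=\emptyset$, \emph{no} solitary node of a deformation can arise from smoothing a node over $E$ in a way that changes parity, i.e.\ that the ``half of the real deformations'' dichotomy of Proposition~\ref{prop:real degeneration} for $\mu_q$ even either does not occur (because such $q$ must be conjugate to $\bar q$ and hence $\mu_q$ odd, or the branch is a multiple fiber which has no real part) or contributes uniformly. One must check that when $\R E=\emptyset$ every point of $\mathcal P(\overline f)$ lying over a real intersection point actually has odd multiplicity, so that only the first bullet of Proposition~\ref{prop:real degeneration} applies and the solitary-node count over such points has a well-defined parity $\tfrac{\mu_q-1}{2}$; tracking these parities through the adjunction identity of Corollary~\ref{cor:total node} is the delicate combinatorial point. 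Once this is in place, sharpness and the sign formula follow formally, and the restriction to $r\le 1$ is exactly what guarantees (via Lemma~\ref{cor:vanish}-type reasoning) that no obstruction from a genuinely real edge of positive weight spoils the uniformity of the sign.
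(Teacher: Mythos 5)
Your overall strategy --- rerunning the degenerations of Sections \ref{sec:proof} and \ref{sec:WX7} with a real conic $E$ whose real part is empty, and concluding that every real curve in the count contributes the same sign $(-1)^{\delta}$ with $\delta=\frac{d^2-c_1(X_7)\cdot d+2}{2}$ --- is the one the paper alludes to in Section \ref{sec:sharpness}. But the central step of your argument contains a genuine gap. You claim that when $\R E=\emptyset$ ``there simply are no real floors'', hence that ``every nodal pair lies on a pair of conjugate branches or over $E$'', hence that $m_{\R X_7^-(4)}(C)\equiv\delta\pmod 2$ for every curve. The first claim is false: \emph{real} does not mean \emph{with non-empty real part}, and a floor can perfectly well be fixed by the conjugation --- for instance a degree-$1$ floor is a real line through a pair of conjugate points of $E$, and its real part is an entire $\R P^1$ inside $\R P^2$ even though $\R E=\emptyset$ (likewise a real component of $\overline C_{\X_2}$ in class $[D]$). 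The danger this creates is precisely the one your parity bookkeeping cannot see: two such real components meet in a real point lying on two \emph{real} branches, i.e.\ a non-solitary real node, which changes the parity of the number of solitary nodes and destroys the uniformity of the sign.

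What actually rescues the argument --- and what is missing from your proposal --- is a bound on the number of real irreducible components of $\overline C$. When $\R E=\emptyset$, no edge of the marked floor diagram can be fixed by $\rho_{m,s,\kappa}$: a fixed edge would force a real fiber of some $\N_i$ or a real intersection point with $E$, $E_0$ or $E_\infty$, all of which have empty real part. Hence every fixed vertex is isolated in the fixed subgraph, and since the fixed locus of an involution of a tree is connected (this is exactly the mechanism of Lemma \ref{cor:vanish}), the limit curve $\overline C$ has at most one real irreducible component; the analogous connectedness argument must then be run on the tree $\Gamma_{\overline f}$ of Section \ref{sec:X7} to control real components mapped to $\X_2$. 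Only after this does your claim ``every node is solitary or comes in a conjugate pair'' hold, and the rest of your parity computation (Corollary \ref{cor:total node} together with Proposition \ref{prop:real degeneration}, whose even-$\mu_q$ dichotomy is indeed vacuous here since $\R\bigl(\overline f^{\,-1}(\X_n\cap\N)\bigr)=\emptyset$) goes through. A secondary omission: since $\R\N_i=\emptyset$, the single real point in the case $r=1$ cannot be specialized into its own copy of $\N$ as in Section \ref{sec:proof NFD} and must instead be placed in the bottom component, which modifies the marking combinatorics and is part of the ``adaptation'' the paper leaves implicit.
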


Recently, Koll\'ar proved in \cite{Kol14}  the optimality of
some real
enumerative invariants of 
 projective spaces of any dimension, by
specializing the constraints to a real quadric with an empty real
part. It could be interesting to 
try to generalize Koll\'ar's examples, and to
tackle 
the optimality problem of  the invariants defined in 
\cite{Wel2,GeoZin13}
 via  floor diagrams relative to a quadric in $\C P^n$.

\subsection{Other Welschinger invariants of $X_8$}
Since this paper is already rather long, 
I restricted  in Section
\ref{sec:X8} to the case
$s=0$ and to  standard real structures on $\X_{8,1}$. However
I do not see any obstruction other than technical to
 extend Section
\ref{sec:X8}
to the enumeration of real curves in
$\X_{8,1}$ for arbitrary $r,s$ and any real structure on
$\X_{8,1}$. In particular Theorem \ref{thm:WX8} should  generalize
to  Welschinger invariant of $X_8$ 
for almost all, if not all, real structures.
The standard methods from \cite{IKS11,IKS13,Br6b} should  also apply here
to study logarithmic asymptotic of Welschinger invariants.

The method of this paper should also apply to compute the invariants
recently defined in \cite{Shu14}.

\subsection{Sign of Welschinger invariants}
The sign of Welschinger invariants 
seem to obey to some mysterious rule related
to the topology of the real part of the ambient manifold. The present
work together with
\cite{Wel4}, \cite{IKS3}, \cite{IKS11},  \cite{Br20}, and \cite{BP14} explicit this
rule in a few cases, namely when $L=T^2$, or $S^2$ and $r=0,1$,  when
$X=X_8$ and $s$ is very small, or when $L$ intersects a real Lagrangian
sphere in a single point and $r=1$. 
In the particular case of Del Pezzo surfaces,
floor diagrams relative to a conic, with either empty or non-empty real
part, provide a unified
 way to prove this rule when either $r$ or $s$ is small.
Unfortunately, the rule controlling the sign of Welschinger invariants
in its full generality still remains mysterious.

As an example, I describe how the signs of Welschinger invariants
of $\C P^2$
seem to  behave:
as $r$ goes from $3d-1$ to $0$ or 1, the numbers $W_{\C P^2}(d,s)$ are
first positive, and starting from some mysterious threshold, 
have an alternating sign.
This observation has been made experimentally using Solomon's
real version of WDVV equations \cite{Sol1} for $\C P^2$.

\subsection{Relation with tropical Welschinger invariants and refined
  Severi degrees}
Invariance of Gromov-Witten and Welschinger invariants combined with
Theorems \ref{thm:NFD2}, \ref{thm:W X6}, \ref{thm:GWX7}, \ref{thm:WX7},
\ref{thm:GWX8}, and \ref{thm:WX8} 
provide non-trivial relations among
marked floor diagrams counted with their various multiplicities. It is not
obvious to me how those relations  follow from a purely
combinatorial study of marked floor diagrams. 

Denote by 
$W^{\alpha^\Re,\beta^\Re,\alpha^\Im,\beta^\Im}_{\X_n(\kappa)}(d,g,s,\x)$ 
the straightforward
generalization to any genus of the numbers  
$W^{\alpha^\Re,\beta^\Re,\alpha^\Im,\beta^\Im}_{\X_n(\kappa)}(d,s,\x)$ defined in
Section \ref{sec:real Xn}. 
In the case when $s=0$,  all definitions from Section \ref{sec:real FD}  also
make sense  for positive genus, and Theorem \ref{WFD} still
holds (the proof is exactly the same, see Remarks \ref{rem:s=0} and
\ref{rem:s bis}).
If $\x^\circ$ is a configuration of real points in
$\R\X_n(\kappa)$ as in the proof of Theorems \ref{NFD} and \ref{WFD}, then
one sees easily from the proof of Theorem \ref{WFD} that
   the numbers $W^{0,\beta^\Re_1u_1,0,\beta^\Im_1
  u_1}_{\X_n(\kappa)}(d,g,0,\x^\circ\sqcup\x_E)$  do not depend on the
 position  in each copy of $\N$ of the points in $\x^\circ$.

More surprisingly, the numbers $W^{0,(d\cdot
  E)u_1,0,0}_{\X_n(\kappa)}(d,g,0,\x^\circ\sqcup\x_E)$  I computed on
a few examples,  
with $\x^\circ$ as in the proof of Theorems \ref{NFD} and \ref{WFD},
also satisfy relations analogous to Theorems \ref{thm:W X6}, \ref{thm:WX7},
 and \ref{thm:WX8} for positive genus. 
Furthermore in the case of $X_3$, 
the numbers I obtained in this way are the corresponding 
tropical Welschinger
invariants (see \cite{IKS3} for a definition).
This observation is certainly in favor of the existence
of a more conceptual definition
and signification of those tropical Welschinger invariants. Up to my
knowledge, 
only some tropical Welschinger invariants of the second
Hirzebruch surface yet found such an
interpretation in \cite{Br20,BP14}, where they are shown to correspond to
genuine Welschinger invariants of the quadric ellipsoid.

Tropical Welschinger invariants are also related to \emph{refined
  Severi degrees} \cite{GotShe13,Blo13,BlGo14,IteMik13}. Still in the
case $s=0$, it would have  been
possible to define and compute analogous polynomials 
 interpolating between  real and complex
multiplicities of marked floor
diagrams relative to a conic. Unfortunately, no relations, even conjectural, are
known yet  between refined Severi
degrees and Welschinger invariants when $s>0$. 
Since
 I was interested here in the computation of
those latter for any values of $s$ and $r$, I  chose not to
develop  the refined Severi degree aspect of my computations.

\bibliographystyle {alpha}
\bibliography {../../../Biblio.bib}

\end{document}